\documentclass[pagebackref,colorlinks,citecolor=blue,linkcolor=blue,urlcolor=blue,filecolor=blue]{article}
\pdfoutput=1
\usepackage[margin=1in]{geometry}
\usepackage{tikz-cd}
\usetikzlibrary{calc}
\usepackage{float}
\usepackage{amsmath}
\usepackage{amsfonts}
\usepackage{amsthm}
\usepackage{enumitem}
\usepackage{dsfont}
\usepackage{amssymb}
\usepackage{pifont}
\usepackage{mathtools}
\usepackage{comment}
\usepackage{graphicx, caption} 
\usepackage{float}
\usepackage{spectralsequences}
\usetikzlibrary{matrix}
\usepackage[T1]{fontenc}
\usepackage[utf8]{inputenc}
\usepackage{epigraph}

\usepackage{pinlabel} 

\usepackage{hyperref}

\newtheorem*{T1}{Theorem~\ref{space homological stability}}

\newtheorem*{T2}{Theorem~\ref{almost sharp homological stability for k=1}}

\newtheorem{thm}{Theorem}[section]
\newtheorem{lem}[thm]{Lemma}
\newtheorem{prop}[thm]{Proposition}
\newtheorem{conj}[thm]{Conjecture}

\newtheorem{exam}[thm]{Example}
\newtheorem{question}[thm]{Question}
\theoremstyle{definition}

\theoremstyle{definition}
\newtheorem{const}[thm]{Construction}

\newtheorem{defn}[thm]{Definition}

\newtheorem{remark}[thm]{Remark}

\newtheorem*{claim*}{Claim}
\newtheorem*{quest*}{Question}
\newtheorem*{remark*}{Remark}
\newtheorem*{fact*}{Fact}

\newcommand{\thmtext}{
If $w\ge h\ge k+2$ and $wh-n\ge \max\big\{(k+1)(k+2), hk+2\big\}$, then the inclusion of $SF_{n}(R_{w,h})$ into $F_{n}(\R^{2})$
induces an isomorphism
\[
H_{k}\big(SF_{n}(R_{w,h})\big)\cong H_{k}\big(F_{n}(\R^{2})\big).
\]
}

\newcommand{\thmtextone}{
If $w\ge h \ge 3$ and $wh-n\ge 6$, then the inclusion of $SF_{n}(R_{w,h})$ into $F_{n}(\R^{2})$ induces an isomorphism
\[
H_{1}\big(SF_{n}(R_{w,h})\big)\cong H_{1}\big(F_{n}(\R^{2})\big).
\]
}

\newcommand{\Z}{\ensuremath{\mathbb{Z}}}

\newcommand{\R}{\ensuremath{\mathbb{R}}}

\makeatletter
\newcommand{\blfootnote}[1]{%
  \begingroup
  \renewcommand{\thefootnote}{}%
  \footnotetext{#1}%
  \endgroup
}
\makeatother

\title{The 15 Puzzle and homological stability in the space direction}
\author{Jes\'{u}s Gonz\'{a}lez
\and
Matthew Kahle
\and 
Nicholas Wawrykow}

\date{}

\begin{document}

\maketitle

\begin{abstract}
The ordered configuration space of $n$ open unit squares in the $w$ by $h$ rectangle exhibits homological stability in the space direction.
That is, for fixed $n$ and fixed homological degree $k$, once the underlying rectangle is large enough, making it any larger does not change the $k$-th homology of the square configuration space.
In this paper, we sharpen the stable range.
Finding bounds for $w$ and $h$ in terms of $n$ and $k$, we prove that most rectangles can be almost entirely filled with squares and there still be an isomorphism between the $k$-th homology of the resulting square configuration space and the $k$-th homology of the ordered configuration space of $n$ points in the plane.
\end{abstract}

\section{Introduction}

\blfootnote{
\textbf{2020 Mathematics Subject Classification:} 55R80, 52C15, 82B26.
}

\blfootnote{
\textbf{Keywords and phrases:}
Configuration space of squares in a rectangle, homological stability in the space direction, Mayer--Vietoris spectral sequence, Browder bracket, complex of injective words.
}

In 1896 Sam Loyd offered \$1,000, a sum worth over \$38,000 today, for a solution to the 15 Puzzle. 
The goal of the puzzle is to slide 15 labeled unit squares around a $4$ by $4$ rectangle to move from  the configuration depicted on the left of Figure \ref{15puzzlenew} to the configuration depicted on the right.
Loyd's money was safe, as 17 years prior Johnson and Story proved that these configurations are in different path-components of $SF_{15}(R_{4,4})$, the ordered configuration space of 15 open unit squares in the $4$ by $4$ rectangle \cite{johnson1879notes}.
Here we define the \emph{ordered configuration space of $n$ open unit squares in the $w$ by $h$ rectangle} to be 
\[
SF_{n}(R_{w,h}):=\left\{(x_{1}, y_{1},\dots, x_{n}, y_{n})\in \R^{2n}\bigg|\substack{{\displaystyle-w+\frac{1}{2}\le x_i\le -\frac{1}{2}, -h+\frac{1}{2}\le y_i\le -\frac{1}{2}, \text{ and }}\\\displaystyle\max\{|x_{i}-x_{j}|, |y_{i}-y_{j}|\}\ge 1\text{ for all }i\neq j}\right\}.
\]

\begin{figure}[h]
\centering
\captionsetup{width=.8\linewidth}
\includegraphics[width=7.5cm]{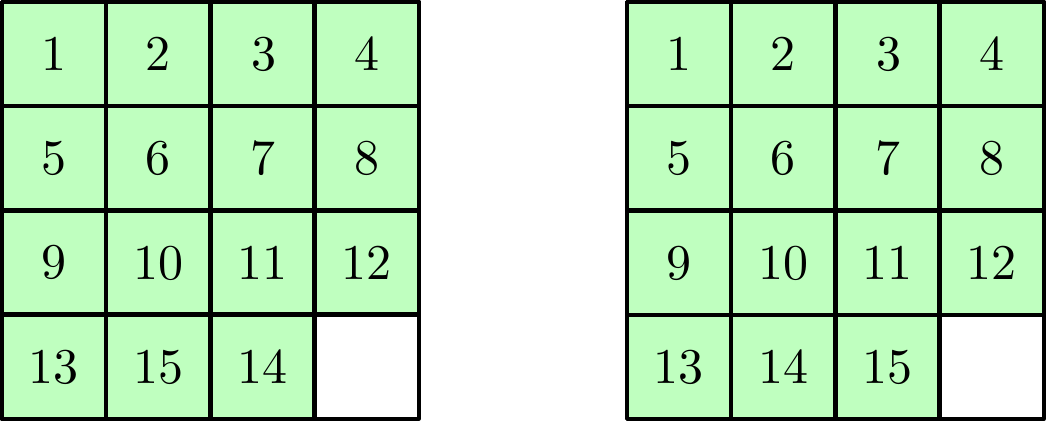}
\caption{The initial and target states of the 15 Puzzle.
}
\label{15puzzlenew}
\end{figure}

The impossibility of the 15 Puzzle raises several questions.
How big does the ambient rectangle $R_{w,h}$ need to be to guarantee that $SF_{15}(R_{w,h})$ is path-connected?
More generally:

\begin{question}\label{n puzzle quest}
What are conditions on $w$ and $h$ that make the ``$n$ Puzzle'' solvable?
That is, how big must $w$ and $h$ be to ensure that $H_{0}\big(SF_{n}(R_{w,h})\big)\cong \Z$?
Alternatively, how much \emph{empty space} $wh-n$ is needed to guarantee this isomorphism?
\end{question}

This question has a classical solution: As long as $w,h\ge 2$ and $wh-n\ge 2$, the square configuration space $SF_{n}(R_{w,h})$ is path-connected.
Moreover, Johnson and Story showed that if $wh-n=1$, then, $H_{0}\big(SF_{n}(R_{w,h})\big)\cong \Z^{2}$ \cite{johnson1879notes}.
Thus, the 14 Puzzle has a solution, but the 15 Puzzle does not.

In this paper, we study higher homological versions of Question \ref{n puzzle quest}.
Note that if $w$ and $h$ are large enough with respect to $n$ and the homological degree $k$, then there are isomorphisms
\[
H_{k}\big(SF_{n}(R_{w,h})\big)\cong H_{k}\big(F_{n}(\R^{2})\big),
\]
where $F_{n}(\R^{2})$ is the ordered configuration space of $n$ points in the plane.
Indeed, if $w$ and $h$ increase, the size of the unit squares becomes smaller relative to the size of the rectangle, and the squares behave more like points.
Moreover, if (and only if) $\min\{w,h\}\ge n$, the inclusion of $SF_{n}(R_{w,h})$ into $F_{n}(\R^{2})$ is a homotopy equivalence   \cite{plachta2021configuration}.
One can interpret this as saying that these square configuration spaces exhibit \emph{homological stability in the space direction}; see Subsection \ref{stability patterns}.
This raises the following question:

\begin{question}\label{homological stability question}
What is the \emph{stable range}?
That is, fixing $n$ and $k$, how big must $w$ and $h$ be to ensure that the inclusion of $SF_{n}(R_{w,h})$ into $F_{n}(\R^{2})$ induces an isomorphism
\[
H_{k}\big(SF_{n}(R_{w,h})\big)\cong H_{k}\big(F_{n}(\R^{2})\big)?
\]
Alternatively, what are conditions on the amount of empty space $wh-n$ that yield the above isomorphism?
\end{question}

Our main theorems answer this question, describing conditions on $w$, $h$, $k$, and $n$ that guarantee that $H_{k}\big(SF_{n}(R_{w,h})\big)$ is isomorphic to $H_{k}\big(F_{n}(\R^{2})\big)$.
In the following, we assume without loss of generality that $w\ge h$.

\begin{thm}\label{space homological stability}
\thmtext
\end{thm}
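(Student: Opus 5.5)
The plan is to compare $SF_n(R_{w,h})$ with $F_n(\R^2)$ through a cover indexed by which squares can be ``frozen'', and to read off $H_k$ from the resulting Mayer--Vietoris spectral sequence. This follows the keywords of the paper: the Mayer--Vietoris spectral sequence, the Browder bracket, and the complex of injective words.

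\textbf{Generators.} First I will show that the map on $H_k$ is surjective. $H_*(F_n(\R^2))$ is generated, as a module over the little-disks/Browder structure, by iterated brackets of classes of the points. Each monomial in $H_k$ is a product of at most $n$ factors whose total bracket length is $k$. Each bracket $[\,\cdot,\cdot\,]$ requires one block of squares to orbit another, and I will realize such a cycle inside a $(j+1)$-square cluster that occupies roughly a $(j+2)\times(j+2)$ sub-box. The condition $h\ge k+2$ ensures that the largest cluster, of $k+1$ squares, fits. The remaining $n-k-1$ squares must be packed rigidly into the rest of the rectangle, which still leaves enough empty cells for the cluster to move. Both $wh-n\ge (k+1)(k+2)$ and $wh-n\ge hk+2$ are exactly the statements that there is enough empty space. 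The first guarantees room around the cluster. The second guarantees that roughly $k$ spare columns of height $h$ remain, plus $2$ extra cells so the ``$n$-puzzle'' sliding of the frozen squares stays connected, as in the $H_0$ case. Using 15-puzzle style sliding moves, I will check that the permutations needed to relabel a monomial can be realized, so that every monomial lies in the image.

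\textbf{Injectivity.} Here I will use a cover of $SF_n(R_{w,h})$ by open sets $U_\sigma$. Here $\sigma$ records an injective word, namely an ordered choice of squares that are pinned in a column or packed against the boundary. Each $U_\sigma$ should deformation retract onto a smaller square configuration space of the form $SF_{n-|\sigma|}(R_{w',h})$ times a contractible factor. The nerve of the cover is modeled on the complex of injective words, which is highly connected. Feeding this into the Mayer--Vietoris spectral sequence, with the corresponding cover of $F_n(\R^2)$ (or the trivial comparison), the induction hypothesis on $k$ and on smaller $n$ and $w$ gives isomorphisms on the $E^1$-terms in total degree $\le k$ and a surjection in degree $k+1$. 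I must also check that the hypotheses $w\ge h\ge k+2$ and the empty-space bounds are preserved when passing to the pieces $U_\sigma$ for $|\sigma|\le k$. The comparison of spectral sequences then yields the isomorphism on $H_k$.

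\textbf{Expected obstacle.} The main difficulty will be the bookkeeping of the empty-space inequality under the inductive steps. Removing a pinned column of height $h$ reduces $wh-n$ by at most $h$ while lowering the relevant degree by one, which is why the bound $hk+2$ appears. The connectivity of the injective-words complex must exceed $k$ precisely in this range. I will first attempt a clean induction on $k$, with base case $k=0$ given by the classical $wh-n\ge2$ result of Johnson--Story type. If the bound $(k+1)(k+2)$ does not propagate, I will fall back to a direct argument using Browder-bracket generators at the top degree.
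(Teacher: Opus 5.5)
\textbf{The gap.} Much of your framework matches the paper. It compares augmented Mayer--Vietoris spectral sequences for covers of $SF_n(R_{w,h})$ and $F_n(\R^2)$ by ``which squares are right-most'', indexes the $E^1$ terms by injective words, realizes wheel bases as big-square cycles, and inducts on $k$. Your heuristic for the $hk+2$ bound (discarding a column of height $h$ costs $h$ empty cells and one homological degree) is also exactly how the paper uses it.

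The step that fails is the claim that each piece $U_\sigma$ deformation retracts onto $SF_{n-|\sigma|}(R_{w',h})$ times something contractible. Suppose the pinned squares $i_0,\dots,i_p$ are the right-most ones in a column but $p+1<h$. Then the free squares can still occupy the remaining cells of that column. So $SF_n(R_{w,h};i_0,\dots,i_p)$ is not a configuration space in a smaller rectangle. The paper stresses that the point-configuration equivalences $F_n(\R^2;i_0,\dots,i_p)\simeq F_{n-p-1}(\R^2)$ and $F_n(\R^2;j_0,\dots,j_s;i_0,\dots,i_p)\simeq F_n(\R^2;j_0,\dots,j_s,i_0,\dots,i_p)$ have no analogue for squares.

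Your reduction is valid only for a full column, $SF_{n+h}(R_{w+1,h};i_0,\dots,i_{h-1})\simeq SF_n(R_{w,h})$, and pieces of that type do not cover the space. Consequences:
\begin{itemize}
\item An induction hypothesis about unconstrained $SF_{n'}(R_{w',h'})$ never applies to your $E^1$ terms, so the induction does not close.
\item In particular, $E^1_{0,k}$ requires degree-$k$ information about the pieces, which induction on $k$ cannot supply.
\item The connectivity of the injective-word complex does not compensate, since the pieces have nontrivial higher homology.
\end{itemize}

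\textbf{How the paper fills it.} The missing idea is to strengthen the statement to all partially pinned spaces $SF_n(R_{w,h};i_0,\dots,i_p)$ with $0\le p+1\le h$ (Theorem~\ref{generalized main theorem}). The paper proves this by induction on $k$, then on $w$ (first for $h=k+2$, then for general $h$ after rotating the rectangle). Inside that is a downward induction on $p$, whose base case is exactly the full-column equivalence above (Proposition~\ref{beginning of inductive step on w}).

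In the inductive step, the cover by $SF_n(R_{w,h};j;i_0,\dots,i_p)$ produces doubly pinned spaces $SF_n(R_{w,h};j_0,\dots,j_s;i_0,\dots,i_p)$. In these the $j$'s can sit in the second column or be shuffled into the first. The paper handles them in two ways:
\begin{itemize}
\item A second Mayer--Vietoris argument (Lemma~\ref{cover j,i}, a cover indexed by shuffles) identifies their top-degree homology with that of $SF_n(R_{w,h};j_0,\dots,j_s,i_0,\dots,i_p)$, which the downward induction on $p$ covers.
\item The bound $wh-n\ge hk+2$ identifies their lower-degree homology with that of $SF_{n-p-1}(R_{w-1,h};j_0,\dots,j_s)$.
\end{itemize}
Only then does the paper match differentials with the arc-resolution spectral sequence of $F_n(\R^2)$, using big-square configurations. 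You need some replacement for this whole layer before your spectral sequence comparison can be run.
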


\begin{remark}
If $h=k+2$ or $k+3$, then $wh-n\ge (k+1)(k+2)$ is the more restrictive of the two bounds on the amount of empty space, whereas if $h>k+3$, then $wh-n \ge hk+2$ is the pertinent bound.  
\end{remark}

The calculations in Table 1 at the end of \cite{alpert2023homology} show that these bounds on $wh-n$ are sharp for $k\le 3$ if $w=h=k+2$.
Theorem \ref{space homological stability} proves that if $h$ and $k$ are fixed and $n$ varies, then in the minimal width $w(n)$ rectangle $R_{w(n),h}$ in which space directional homological stability occurs, asymptotically none of the rectangle is empty, that is
\[
\lim_{n\to \infty}\frac{w(n)\cdot h-n}{w(n)\cdot h}=0.
\]
This is markedly different from the unoccupied portion in the naive stable range given by the homotopy equivalence of $SF_{n}(R_{n,n})$ and $F_{n}(\R^{2})$---since $h\ge n$, the height $h$ cannot be fixed, and asymptotically all of the rectangle $R_{n,n}$ must be empty.
Moreover, Theorem \ref{space homological stability} significantly sharpens the previous best known bounds that arise from considering rectangles $R_{w(n),h}$ that model the infinite-width strip of height $h$, where $w(n)=n$ and asymptotically $\frac{h-1}{h}\ge \frac{k+1}{k+2}$ of the rectangle is empty \cite[Theorem 1.2]{alpert2021configuration}.

For $k=1$, we can remove the constraint $wh-n\ge hk+2$ in Theorem \ref{space homological stability}, showing that as long as there are $6$ unoccupied squares in $R_{w,h}$, then $H_{1}\big(SF_{n}(R_{w,h})\big)$ and $ H_{1}\big(F_{n}(\R^{2})\big)$ are isomorphic.

\begin{thm}\label{almost sharp homological stability for k=1}
\thmtextone
\end{thm}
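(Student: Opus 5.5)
We plan to prove Theorem \ref{almost sharp homological stability for k=1} by combining a direct analysis of generators of $H_1$ with the general machinery behind Theorem \ref{space homological stability}. Recall that $H_1\big(F_n(\R^2)\big)\cong \Z^{\binom n2}$, generated by the classes $\alpha_{ij}$ in which point $i$ orbits point $j$ while all other points stay fixed far away. The argument has two parts: surjectivity of $H_1\big(SF_n(R_{w,h})\big)\to H_1\big(F_n(\R^2)\big)$ and injectivity.

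\textbf{Surjectivity.} It suffices to realize each $\alpha_{ij}$ by a loop in $SF_n(R_{w,h})$. With $wh-n\ge 6$ and $w\ge h\ge 3$, we can slide squares (using the $n$-puzzle connectivity from Question \ref{n puzzle quest}, which needs only two empty cells) until squares $i$ and $j$ sit next to each other in a $3\times 3$ block that contains at most two other squares. In that block, square $i$ can travel around square $j$ by a loop of grid moves. The loop's image in $F_n(\R^2)$ has winding number one of $i$ about $j$ and zero winding for every other pair, because the other squares remain fixed while $i$ orbits. Its class is therefore $\alpha_{ij}$. This is the step that uses $h\ge3$.

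\textbf{Injectivity.} We use the cell/discrete model of $SF_n(R_{w,h})$, or equivalently the Mayer--Vietoris spectral sequence over the cover by configurations with prescribed ``blocking'' patterns, which presumably underlies the proof of Theorem \ref{space homological stability}. The aim is to show that $H_1\big(SF_n(R_{w,h})\big)$ is generated by local loops supported in small windows containing few squares, and that the relations among these loops are already present in the small-window configuration spaces. Concretely, we show that every local $1$-cycle (a square circulating around an obstacle formed by $\le 3$ squares in a $2\times 3$ or $3\times 3$ window) is homologous to a sum of the $\alpha_{ij}$-loops. We also show that $\alpha_{ij}$-loops based at different configurations are homologous, by transporting them along paths. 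Transport needs enough free space to move the orbiting pair together with the rest of the configuration, and this is where $6$ empty cells are required: the $3\times 3$ block needs roughly $3$ empty cells for the orbit, and the other empty cells let the block move through the puzzle. Once these are established, $H_1\big(SF_n(R_{w,h})\big)$ is generated by $\binom n2$ classes. These classes map to a basis of $\Z^{\binom n2}$, so the map is injective.

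\textbf{Main obstacle.} The hardest step is the transport and commutation argument: proving that two realizations of $\alpha_{ij}$ based at different configurations are homologous, and that commutators of loops in $\pi_1$ become trivial in homology, when only $6$ cells are free and $h$ may equal $3$. In the Mayer--Vietoris spectral sequence this amounts to killing the relevant $E^2_{1,0}$ and $E^2_{0,1}$ contributions beyond the $\binom n2$ classes. We would first try an explicit combinatorial argument in the discretized cube complex: enumerate the local $2$-cells, namely pairs of commuting moves and the squares realizing braid-type relations, and check case by case that they identify all the remaining loops. We expect the bound $6$ to be exactly what makes enough disjoint commuting moves available.
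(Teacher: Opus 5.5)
Your overall skeleton is logically sound: if $H_1\big(SF_n(R_{w,h})\big)$ is generated by $\binom n2$ classes whose images form a basis of $H_1\big(F_n(\R^2)\big)\cong\Z^{\binom n2}$, then the map is an isomorphism. The gap is that this generation statement, which is the entire content of the theorem, is never proved. The claims that $H_1$ is generated by loops supported in small windows, that these local loops are sums of $\alpha_{ij}$'s, and that differently based realizations of $\alpha_{ij}$ are homologous are stated as goals. The only support offered is the expectation that a case-by-case check of the $2$-cells of $DF_n(R^*_{w,h})$ will succeed with six empty cells.

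Nothing in the proposal controls non-local loops. $H_1$ of the cube complex is generated by edge loops, and these include long cyclic slides of many squares around a cycle of grid cells, as in the 15 Puzzle. There is no a priori reason such loops decompose into local orbits. Even the local claims are delicate: loops inside a window of height $2$ are exactly where non-homologous classes arise (Figure~\ref{twononhomologouscycles}), so whatever identifies them must use space outside the window, and you do not say how. Also, ``commutators becoming trivial in homology'' is not an obstacle at all, since $H_1$ is the abelianization of $\pi_1$ of the path-connected space $SF_n(R_{w,h})$.

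Relatedly, you attribute the hypothesis $h\ge 3$ to surjectivity. But surjectivity only needs a $2\times 2$ window containing just $i$ and $j$, which two empty cells and Proposition~\ref{n le wh-2 is connected} provide, so it already holds for $h=2$. Use that window rather than your $3\times 3$ orbit, which fails if the other squares sit on the ring around $j$. The hypothesis $h\ge3$ is needed for injectivity: for $h=2$ a singleton wheel cannot pass a $2\times 2$ wheel (Remark~\ref{wh ge k+2}), and for example $H_1\big(SF_8(R_{w,2})\big)\cong\Z^{1218}$ for $w\ge 8$.

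The paper supplies the missing mechanism differently. It proves the stronger Theorem~\ref{k 1 bound p} for $SF_n(R_{w,h};i_0,\dots,i_p)$, with $p+1$ squares pinned on the right. It starts from $h=3$, where Theorem~\ref{generalized main theorem} applies since $6\ge 3+2$, then inducts on $w$ and downward on $p$, with base case Proposition~\ref{beginning of inductive step on w}. In the augmented Mayer--Vietoris spectral sequence for the cover by the $SF_n(R_{w,h};j;i_0,\dots,i_\pi)$, only $E^1_{0,1}$ and $E^2_{1,0}$ can hit $E_{-1,1}$. Both are reduced to path-connectedness results (Propositions~\ref{connected single right configuration space}, \ref{connected double right configuration space}, \ref{free aj in k=1}). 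The remaining step is an explicit analysis of $E'^{2}_{1,0}$ for the auxiliary cover $\{V_l\}$, which is where the $3\times 3$ window and $wh-6\ge n$ enter. This turns the global $H_1$ question into $H_0$ questions plus the inductive hypothesis. Your proposal has no comparable reduction, so as written it establishes only surjectivity.
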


\begin{remark}
Theorems \ref{space homological stability} and \ref{almost sharp homological stability for k=1} each consist of two bounds, one concerning the lengths of the sides of the rectangle and one concerning the amount of empty space in the rectangle.
The necessity of the side length condition is handled in Remark \ref{wh ge k+2}---the only other instance in which there is a non-trivial isomorphism $H_{k}\big(SF_{n}(R_{w,h})\big)\cong H_{k}\big(F_{n}(\R^{2})\big)$ occurs when $n=\min\{w,h\}=k+1$.
As such, the majority of this paper is dedicated to proving the validity of the empty spaces bounds $wh-n\ge \max\big\{(k+1)(k+2), hk+2\big\}$ and $wh-n\ge 6$.
\end{remark}

Finally, we note that Theorem \ref{almost sharp homological stability for k=1} positively resolves the following conjecture of Alpert, the second author, and MacPherson  for $k=1$, and Theorem \ref{space homological stability} positively resolves it for general $k$ if $h=k+2$ or $k+3$.

\begin{conj}
(Alpert--Kahle--MacPherson \cite[Conjecture 6 ii]{alpert2023asymptotic})
Assume $w\ge h\ge k+2$ and $wh-n\ge (k+1)(k+2)$, then the inclusion of $SF_{n}(R_{w,h})$ into $F_{n}(\R^{2})$ induces an isomorphism
\[
H_{k}\big(SF_{n}(R_{w,h})\big)\cong H_{k}\big(F_{n}(\R^{2})\big).
\]
\end{conj}

\subsection{Stability Patterns in Configuration Spaces}\label{stability patterns}
Our results fall in the storied tradition of homological stability, in particular, that of stability phenomena in configuration spaces.

A sequence $X_{1}\to X_{2}\to X_{3}\to \cdots$ of groups, spaces, etc. exhibits homological stability if the induced map $H_{k}(X_{n})\to H_{k}(X_{n+1})$ is an isomorphism once $n$ is large enough with respect to $k$.
In the 1970s, McDuff \cite{mcduff1975configuration} and Segal \cite{segal1979topology} proved that if $M$ is a connected non-compact finite type manifold of dimension at least $2$, then the $k$-th homology of $C_{n}(M)$, the unordered configuration space of $n$ points in $M$, stabilizes once $n\ge 2k$.
That is, the inclusion map
\[
\iota:C_{n}(M)\to C_{n+1}(M)
\]
that ``adds a point at infinity'' induces an isomorphism
\[
\iota_{*}:H_{k}\big(C_{n}(M)\big)\xrightarrow{\cong} H_{k}\big(C_{n+1}(M)\big)
\]
for $n\ge 2k$.
In short, they showed that after there are sufficiently many points in a configuration, the $k$-th homology does not change after adding more.

Instead of fixing the underlying space and letting the number of objects vary, we fix the number of objects and let the underlying space vary.
There are inclusion maps from $R_{w,h}$ to $R_{w+1,h}$ and from $R_{w,h}$ to $R_{w,h+1}$, and these maps induce maps on configuration space
\[
\iota_{w}:SF_{n}(R_{w,h})\to SF_{n}(R_{w+1,h})\indent\text{and}\indent\iota_{h}:SF_{n}(R_{w,h})\to SF_{n}(R_{w,h+1}).
\]
Theorems \ref{space homological stability} and \ref{almost sharp homological stability for k=1} prove that if $w,h\ge k+2$, any sequence of these maps exhibits homological stability; see Figure \ref{fig:stability for H1}.
That is, eventually they induce isomorphisms 
\[
(\iota_{w})_{*}:H_{k}\big(SF_{n}(R_{w,h})\big)\xrightarrow{\cong} H_{k}\big(SF_{n}(R_{w+1,h})\big)\indent\text{and}\indent(\iota_{h})_{*}:H_{k}\big(SF_{n}(R_{w,h})\big)\xrightarrow{\cong} H_{k}\big(SF_{n}(R_{w,h+1})\big),
\]
with the \emph{stable homology} being isomorphic to $H_{k}\big(F_{n}(\R^{2})\big)$.
Furthermore, our theorems also constitute a calculation of the \emph{stable range} for this space directional homological stability.

\begin{figure}[h]
\centering
 \scalebox{0.80}{
\begin{tikzpicture}
\large
\draw[opacity=0.5]  [->](-0.5,0.5) -- (7,0.5);
\draw [opacity=0.25] [->] (-0.5,1.5) -- (7,1.5);
\draw [opacity=0.25] [->] (-0.5,2.5) -- (7,2.5);
\draw [opacity=0.25] [->] (-0.5,3.5) -- (7,3.5);
\draw [opacity=0.25] [->] (-0.5,4.5) -- (7,4.5);
\draw [opacity=0.25] [->] (-0.5,5.5) -- (7,5.5);
\draw [opacity=0.25] [->] (-0.5,6.5) -- (7,6.5);
\draw [opacity=0.25] [->] (-0.5,7.5) -- (7,7.5);

\draw[opacity=0.5] [->] (-0.5,0.5) -- (-0.5,8);
\draw [opacity=0.25] [->] (0.5,0.5) -- (0.5,8);
\draw[opacity=0.25] [->] (1.5,0.5) -- (1.5,8);
\draw [opacity=0.25] [->] (2.5,0.5) -- (2.5,8);
\draw[opacity=0.25] [->] (3.5,0.5) -- (3.5,8);
\draw [opacity=0.25] [->] (4.5,0.5) -- (4.5,8);
\draw [opacity=0.25] [->] (5.5,0.5) -- (5.5,8);
\draw [opacity=0.25] [->] (6.5,0.5) -- (6.5,8);

 \node at (7.5,0.5) {\large\bf $w$};
 \node at (-0.5,8.5) {\large\bf  $h$};

\small
\node at (-1,1/2) {\large 1};
\node at (-1,3/2) {\large 2};
\node at (-1,5/2) {\large 3};
\node at (-1,7/2) {\large 4};
\node at (-1,9/2) {\large 5};
\node at (-1,11/2) {\large 6};
\node at (-1,13/2) {\large 7};
\node at (-1,15/2) {\large 8};

\node at (-1/2,0) {\large 1};
\node at (1/2,0) {\large 2};
\node at (3/2,0) {\large 3};
\node at (5/2,0) {\large 4};
\node at (7/2,0) {\large 5};
\node at (9/2,0) {\large 6};
\node at (11/2,0) {\large 7};
\node at (13/2,0) {\large 8};

\node at (-1/2,1/2) {\large$0$};
\node at (-1/2,3/2) {\large$0$};
\node at (-1/2, 5/2) {\large$0$};
\node at (-1/2, 7/2) {\large$0$};
\node at (-1/2, 9/2) {\large$0$};
\node at (-1/2, 11/2) {\large$0$};
\node at (-1/2, 13/2) {\large$0$};
\node at (-1/2, 15/2) {\large$0$};

\node  at (1/2,1/2) {\large$0$};
\node at (1/2,3/2) {\large$0$};
\node at (1/2, 5/2) {\large$0$};
\node at (1/2, 7/2) {\large$0$};
\node at (1/2, 9/2) {\large$?$};
\node at (1/2, 11/2) {\large$?$};
\node at (1/2, 13/2) {\large$?$};
\node at (1/2, 15/2) {\large$1218$};

\node  at (3/2,1/2) {\large$0$};
\node  at (3/2,3/2) {\large$0$};
\node  at (3/2, 5/2) {$15(8!)+2$};
\node  at (3/2, 7/2) {\large$?$};
\node  at (3/2, 9/2) {\large$ \bf 28$};
\node  at (3/2, 11/2) {\large$ \bf 28$};
\node  at (3/2, 13/2) {\large$ \bf 28$};
\node  at (3/2, 15/2) {\large$ \bf 28$};

\node   at (5/2, 1/2) {\large$0$};
\node   at (5/2, 3/2) {\large$0$};
\node   at (5/2, 5/2) {\large$?$};
\node   at (5/2, 7/2) {\large$ \bf 28$};
\node   at (5/2, 9/2) {\large$ \bf 28$};
\node   at (5/2, 11/2) {\large$ \bf 28$};
\node   at (5/2, 13/2) {\large$ \bf 28$};
\node   at (5/2, 15/2) {\large$ \bf 28$};

\node   at (7/2, 1/2) {\large$0$};
\node   at (7/2, 3/2) {\large$?$};
\node   at (7/2, 5/2) {\large$\bf 28$};
\node   at (7/2, 7/2) {\large$ \bf 28$};
\node   at (7/2, 9/2) {\large$ \bf 28$};
\node   at (7/2, 11/2) {\large$ \bf 28$};
\node   at (7/2, 13/2) {\large$ \bf 28$};
\node   at (7/2, 15/2) {\large$ \bf 28$};

\node   at (9/2, 1/2) {\large$0$};
\node   at (9/2, 3/2) {\large$?$};
\node   at (9/2, 5/2) {\large$\bf 28$};
\node   at (9/2, 7/2) {\large$ \bf 28$};
\node   at (9/2, 9/2) {\large$ \bf 28$};
\node   at (9/2, 11/2) {\large$ \bf 28$};
\node   at (9/2, 13/2) {\large$ \bf 28$};
\node   at (9/2, 15/2) {\large$ \bf 28$};

\node   at (11/2, 1/2) {\large$0$};
\node   at (11/2, 3/2) {\large$?$};
\node   at (11/2, 5/2) {\large$\bf 28$};
\node   at (11/2, 7/2) {\large$ \bf 28$};
\node   at (11/2, 9/2) {\large$ \bf 28$};
\node   at (11/2, 11/2) {\large$ \bf 28$};
\node   at (11/2, 13/2) {\large$ \bf 28$};
\node   at (11/2, 15/2) {\large$ \bf 28$};

\node   at (13/2, 1/2) {\large$0$};
\node   at (13/2, 3/2) {\large$1218$};
\node   at (13/2, 5/2) {\large$\bf 28$};
\node   at (13/2, 7/2) {\large$ \bf 28$};
\node   at (13/2, 9/2) {\large$ \bf 28$};
\node   at (13/2, 11/2) {\large$ \bf 28$};
\node   at (13/2, 13/2) {\large$ \bf 28$};
\node   at (13/2, 15/2) {\large$ \bf 28$};

\end{tikzpicture}
}
\caption{
The first Betti number of $SF_{8}(R_{w,h})$.
If $w,h\ge 3$ and $wh-8\ge 6$, then Theorem \ref{almost sharp homological stability for k=1} tells us that $H_{1}\big(SF_{8}(R_{w,h})\big)$ stabilizes to $H_{1}\big(F_{8}(\R^{2})\big)\cong \Z^{28}$.
Results of Alpert and Manin \cite{alpert2021configuration1} prove that if $w\ge 8$, then $H_{1}\big(SF_{8}(R_{w,2})\big)\cong H_{1}\big(SF_{8}(\mathbb{S}_{2})\big)\cong \Z^{1218}$, where $SF_{8}(\mathbb{S}_{2})$ is the ordered configuration space of $8$ open unit squares in the infinite strip of height $2$.
}

\label{fig:stability for H1}
\end{figure}

Note that while we are considering ordered configuration spaces, our theorems are homological stability results in the vein of McDuff and Segal and not representation stability results \`{a} la Church, Ellenberg, and Farb \cite{church2015fi}, as we are holding $n$ fixed and the natural symmetric group action does not come into play.

\subsection{Disk Configuration Spaces}

The study of disk configuration spaces, in the form of the 15 Puzzle---disks in the $L_{\infty}$ metric are squares---predates Hurwitz's and Artin's foundational works on the configuration space of points in the plane in connection with the braid groups \cite{hurwitz1891riemann, artin1925theorie, artin1947braids, artin1947theory}.
This field of research, which studies the space of embeddings of unit disks in a metric space, remained active in statistical mechanics and chemistry though it lay dormant in the mathematical community for over a century.
This changed in the late 2000s, when Diaconis posed several questions about these spaces in \cite{diaconis2009markov}.
Shortly thereafter,
the mathematical study of disk configuration spaces was resurrected by the work of Carlsson, Gorham, the second author, and Mason \cite{carlsson2012computational}, the second author \cite{kahle2012sparse}, and Baryshnikov, Bubenik, and the second author \cite{BBK}.
These papers, which relied on Morse theoretic ideas, focused on extremal spaces in which increasing the size of the underlying space resulted in changes to the Betti numbers of the configuration space.

Building on these works, Alpert, the second author, and MacPherson, turned to studying the ordered configuration space of $n$ open unit disks in the infinite strip of height $h$, which we denote by $BF_{n}(\mathbb{S}_{h})$ \cite{alpert2021configuration}.
They found cellular models for these spaces, which can be used to see that the inclusion of $BF_{n}(\mathbb{S}_{h})$ into $F_{n}(\R^{2})$ is $(h-1)$-connected, but not $h$-connected.
Varying $h$, $k$, and $n$, Alpert, the second author, and MacPherson used these models to determine when its homology goes through a \emph{phase transition} from the \emph{gas phase}, where there is an isomorphism $H_{k}\big(BF_{n}(\mathbb{S}_{h})\big)\cong H_{k}\big(F_{n}(\R^{2})\big)$, to the \emph{liquid phase}, where there is no such isomorphism and $H_{k}\big(BF_{n}(\mathbb{S}_{h})\big)$ is not trivial.
Alpert and Manin used this cellular model to find a basis for $H_{k}\big(BF_{n}(\mathbb{S}_{h})\big)$ for all $h$, $k$, and $n$ \cite{alpert2021configuration1}, and, in calculating this basis, they found a family of non-trivial classes called \emph{filters} that arise solely due to height constraints.
Later, the third author built on these results to prove that $H_{*}\big(BF_{\bullet}(\mathbb{S}_{h})\big)$ exhibits notions of (higher order) representation stability \cite{wawrykow2023representation}.
Despite these efforts, which elucidated the homology of $BF_{n}(\mathbb{S}_{h})$, much remained unknown about configurations of disks, round or square, in rectangles.

Unlike the infinite strip $\mathbb{S}_{h}$, the rectangle $R_{w,h}$ is bounded on all sides, and this significantly complicates the topology of $SF_{n}(R_{w,h})$ in comparison to $BF_{n}(\mathbb{S}_{h})$.
Although Plachta proved that if the width and height of the underlying rectangle are large enough with respect to $n$, then $SF_{n}(R_{w,h})$ is homotopy equivalent to $F_{n}(\R^{2})$ \cite{plachta2021configuration}, Plachta also showed that the homotopy type of $SF_{n}(R_{w,h})$ goes through a series of changes as $w$ and $h$ shrink, the first of which occurs when $\min\{w,h\}=n$.
Moreover, unlike the discrete model for $BF_{n}(\mathbb{S}_{h})$, there is no obvious homotopy equivalence between the $k$-skeleton of the discrete model for $SF_{n}(R_{w,h})$ found by Alpert, Bauer, the second author, MacPherson, and Spendlove \cite{alpert2023homology} and the $k$-skeleton of any known discrete model for $F_{n}(\R^{2})$ for any $k$.
sAdditionally, the calculations in Table 1 at the end of \cite{alpert2023homology} prove the existence of classes in $H_{k}\big(SF_{n}(R_{w,h})\big)$ outside of our stable range that seem to be worse-behaved than the filter classes in $H_{k}\big(BF_{n}(\mathbb{S}_{h})\big)$.

Despite the increased difficulty in understanding $SF_{n}(R_{w,h})$ compared to $BF_{n}(\mathbb{S}_{h})$, Alpert, Bauer, the second author, MacPherson, and Spendlove were able to prove that the homology groups of these configuration spaces also undergo phase transitions \cite{alpert2023homology}.
In the liquid phase, Alpert, the second author, and MacPherson computed asymptotic Betti numbers \cite{alpert2023asymptotic}.
Additionally, Alvarado-Gardu$\tilde{\text{n}}$o and the first and second authors proved that if $h=2$, then the unordered configuration space of $n$ open unit-squares in $R_{w,2}$ is homotopy equivalent to the unordered configuration space of $n$ open unit disks in $\mathbb{S}_{2}$ as long as $2w-n\ge 5$ \cite{MattOmarJesus}.
Despite these efforts, much remained unknown about the transition between the gas and liquid regimes of $H_{k}\big(SF_{n}(R_{w,h})\big)$.

Our results can be interpreted as a honing in on this phase transition.
That is, Theorems \ref{space homological stability} and \ref{almost sharp homological stability for k=1} give conditions on $w$, $h$, $k$, and $n$ that ensure that there are isomorphisms $H_{k}\big(SF_{n}(R_{w,h})\big)\cong H_{k}\big(F_{n}(\R^{2})\big)$.
Moreover, these conditions are sharp for $k=0$ \cite{johnson1879notes}, and for general $k$ spatial constraints suggest that these conditions are near optimal; see Remark \ref{pyramids are cool too}.
Namely, if $k>0$ and $k(k+1)\le wh-n<(k+1)^{2}$, we expect that the inclusion of $SF_{n}(R_{w,h})$ into $F_{n}(\R^{2})$ induces an epimorphism $H_{k}\big(SF_{n}(R_{w,h})\big)\to H_{k}\big(F_{n}(\R^{2})\big)$ but not an isomorphism.
For $wh-n<k(k+1)$, no such epimorphism is expected and the relationship between $H_{k}\big(SF_{n}(R_{w,h})\big)$ and $H_{k}\big(F_{n}(\R^{2})\big)$ becomes muddled; see, for example, Figure \ref{fig:stability for H1}.

Unlike previous works concerning disk configuration spaces, our paper eschews discrete Morse theory entirely. 
Though we rely on the discrete model of \cite{alpert2023homology}, we do so only to ensure the well-definedness of our methods.
Instead, we use a new paradigm for studying these spaces in the form of spectral sequences.

\subsection{Outline}
We begin by recalling several facts about square and point configuration spaces in Section \ref{Square Configuration Space}.
Since we are trying to determine when there is an isomorphism $H_{k}\big(SF_{n}(R_{w, h})\big)\cong H_{k}\big(F_{n}(\R^{2})\big)$, we recall a basis for the latter that will be used throughout the paper.
Next, we prove a series of results about the connectedness of certain square configuration spaces, culminating in Propositions \ref{free aj in k=1} and \ref{move the big stretchy squares around}.
Together, these results allow us to determine when certain classes in $H_{k}\big(SF_{n}(\R^{2})\big)$ are homologous, though they do not preclude the existence of exotic classes that arise from spatial constraints.
This ensures that the differentials of a pair of spectral sequences can be identified provided the entries of these spectral sequences are isomorphic.

In Section \ref{augmented MV}, we recall the definition of the augmented Mayer--Vietoris spectral sequence, the main tool we use to prove our homological stability results.
This spectral sequence allows one to recover the homology of a space via the homology of the subspaces covering it.
We specialize to $SF_{n}(R_{w,h})$ and $F_{n}(\R^{2})$, covering these spaces with sets of the form $SF_{n}(R_{w,h}; i_{0}, \dots, i_{p})$ and $F_{n}(\R^{2}; i_{0}, \dots, i_{p})$, in which the squares, respectively, points, $i_{0}, \dots, i_{p}$ are the right-most.
Then, we calculate the differentials of the resulting spectral sequences on a subset of the entries.
Using the results of Section \ref{Square Configuration Space}, we prove Proposition \ref{differentials are well defined as long as we have enough space}.
This allows us to show that as long as a certain subset of the $E^{1}_{p,q}$-entries of these sequences are naturally isomorphic, then their $E^{1}_{-1,k}$-entries are isomorphic, which would prove Theorems \ref{space homological stability} and \ref{almost sharp homological stability for k=1}.

Our proof of Theorem \ref{space homological stability} is contained in Section \ref{section main theorem}.
As our proof relies on knowing the terms of an augmented Mayer--Vietoris spectral sequence, we prove Theorem \ref{generalized main theorem}, a stronger result depending on a multiply inductive argument that handles the more general square configuration spaces $SF_{n}(R_{w,h}; i_{0}, \dots, i_{p})$ corresponding to the $E^{1}_{p,q}$-terms of our original spectral sequence.
Unlike point configuration spaces where one can ignore certain points in a configuration by sending them to ``infinity,'' the fact that we cannot freely homotope squares about $R_{w,h}$ due to their volume makes determining the entries of our spectral sequences an exacting process.
Moreover, unlike many classical homological stability results, which rely on the fact that certain entries vanish on the first few pages of a spectral sequence, our theorems require understanding non-trivial entries on the first $k+1$ pages of our spectral sequences.
We overcome these difficulties via Propositions \ref{super proposition} and \ref{higher terms in spectral sequence}.
To prove those propositions, we need to understand the subspaces $SF_{n}(R_{w,h}; j_{0},\dots, j_{s}; i_{0},\dots, i_{p})$ of $SF_{n}(R_{w,h}; i_{0},\dots, i_{p})$ in which the squares $j_{0},\dots, j_{p}$ are the next right-most.
It is reasonable to believe that to understand the $SF_{n}(R_{w,h}; j_{0},\dots, j_{s}; i_{0},\dots, i_{p})$, one would need to repeat this process leading to an arbitrarily long recursive argument, where at each level the corresponding spaces would look less and less like the configuration space of points in the plane.
We use a different augmented Mayer--Vietoris spectral sequence in Lemma \ref{cover j,i} to avoid this problem.

Finally, in Section \ref{tighter bound section}, we eliminate the $wh-n\ge hk+2$ term in the bounds of Theorem \ref{space homological stability} in the case $k=1$ and prove Theorem \ref{almost sharp homological stability for k=1}.
The argument is similar to that of Section \ref{section main theorem}, in that we prove a stronger result, namely, Theorem \ref{k 1 bound p}, in a quest to understand the terms of our augmented Mayer--Vietoris sequence for $SF_{n}(R_{w,h})$.
This argument uses a second type of augmented Mayer--Vietoris spectral sequence analogous to the spectral sequence that appears in Lemma \ref{cover j,i}.
Unlike that spectral sequence where the problem of determining the $k$-th homology of $SF_{n}(R_{w,h}; j_{0},\dots, j_{s}; i_{0},\dots, i_{p})$ is concentrated in the $E^{1}_{0,k}$-entry, this more detailed spectral sequence concentrates the problem in the $E^{p+1}_{p, k-p}$-entries, where $1\le p\le k$.
Since we are concerned with the case $k=1$, this becomes a problem of understanding the connectedness of a certain subspace of the square configuration space, which is possible due to the results of Section \ref{Square Configuration Space}.

\subsection{Acknowledgments}
The third author was supported in part by an AMS-Simons travel grant.
The third author greatly benefited from conversations with Benson Farb, Peter May, Daniel Minahan, Nathalie Wahl, and Jennifer Wilson.
We would also like to thank Fedor Manin for helpful comments on an earlier draft of this paper.

\section{Square Configuration Space}\label{Square Configuration Space}
Our proofs of Theorems \ref{space homological stability} and \ref{almost sharp homological stability for k=1} rely on a pair of augmented Mayer--Vietoris spectral sequences that arise from similar covers of $SF_{n}(R_{w,h})$ and $F_{n}(\R^{2})$.
In this section, we study the sets that define these covers and prove several facts about their connectedness that will be critical to our arguments.

If we identify $R_{w,h}$ with the Euclidean rectangle whose vertices lie at $(0,0)$, $(-w, 0)$, $(0, -h)$, and $(-w,-h)$, we can view the ordered configuration space of $n$ open unit squares in $R_{w,h}$ as a subspace of $\R^{2n}$ by sending each square to its center:
\[
SF_{n}(R_{w,h}):=\left\{(x_{1}, y_{1},\dots, x_{n}, y_{n})\in \R^{2n}\bigg|\substack{{\displaystyle-w+\frac{1}{2}\le x_i\le -\frac{1}{2}, -h+\frac{1}{2}\le y_i\le -\frac{1}{2}, \text{ and }}\\\displaystyle\max\{|x_{i}-x_{j}|, |y_{i}-y_{j}|\}\ge 1\text{ for all }i\neq j}\right\}.
\]
Using this identification, we call the unit squares tiling $R_{w,h}$ whose vertices lie at integer points of $\R^{2}$ \emph{grid-squares}.
Furthermore, this identification induces an inclusion of configuration spaces
\[
\iota:SF_{n}(R_{w,h}) \hookrightarrow F_{n}(\R^{2}).
\]
We will show that the induced map on homology
\[
\iota_{*}:H_{k}\big(SF_{n}(R_{w,h})\big)\to H_{k}\big(F_{n}(\R^{2})\big)
\]
is an isomorphism as long as $\min\{w, h\}\ge k+2$ and $wh-n\ge \max\big\{(k+1)(k+2), hk+2\big\}$.

We begin by noting that this map on homology is an isomorphism for all $k$ if $\min\{w,h\}\ge n$, as, in this case, $SF_{n}(R_{w,h})$ is homotopy equivalent to $F_{n}(\R^{2})$.

\begin{prop}\label{homotopy equivalence}
(Plachta \cite[Corollary 17]{plachta2021configuration})
If $\min\{w,h\}\ge n$, then
\[
SF_{n}(R_{w,h})\simeq F_{n}(\R^{2}).
\]
\end{prop}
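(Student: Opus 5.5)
Since the paper attributes this to Plachta, the plan is to give a self-contained argument showing that the inclusion $\iota\colon SF_{n}(R_{w,h})\hookrightarrow F_{n}(\R^{2})$ is a homotopy equivalence when $\min\{w,h\}\ge n$. We build a homotopy inverse out of the \emph{scaling and separation} idea, using the extra room to push squares apart along a suitable direction. Identify $F_{n}(\R^{2})$ up to homotopy with its subspace $F_{n}^{\mathrm{sep}}$ of configurations with $\max\{|x_i-x_j|,|y_i-y_j|\}\ge 1$. This is a deformation retract via the radial rescaling $z\mapsto z/\min\{1,\min_{i\ne j}\|z_i-z_j\|_\infty\}$. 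That rescaling is continuous, and straight-line interpolation from the identity to it stays in configuration space, because scaling preserves distinctness. So it suffices to deformation retract $F_{n}^{\mathrm{sep}}$ onto $SF_{n}(R_{w,h})$, or at least to produce a map $r\colon F_{n}^{\mathrm{sep}}\to SF_{n}(R_{w,h})$ with $r\circ\iota\simeq\mathrm{id}$ and $\iota\circ r\simeq \mathrm{id}$.

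The key construction is a ``compression'' map. Given a separated configuration, we want to slide its squares into an $n\times n$ box without collisions, continuously in the configuration.

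\begin{enumerate}
\item Order the points by a generic linear functional, for instance $\ell(x,y)=x+\epsilon y$. This order is only locally constant, so we do not use it combinatorially.
\item Use a continuous ``rank-like'' compression instead. Replace the coordinates by $x_i'=\sum_{j}\phi(x_i-x_j)$, where $\phi$ is a fixed monotone function with $\phi(t)=0$ for $t\le -1$ and $\phi(t)=1$ for $t\ge 1$, interpolating linearly in between; define $y_i'$ in the same way.
\item Check the two needed properties. Each new coordinate lies in $[0,n]$, roughly. Separation is preserved: if $|x_i-x_j|\ge1$ then $|x_i'-x_j'|\ge1$, since every summand for the larger point dominates, and the $\phi(\pm(x_i-x_j))$ terms alone already differ by $1$.
\item Note that the map $(x,y)\mapsto(x',y')$ is coordinatewise monotone. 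So the straight-line homotopy from $(x,y)$ to $(x',y')$ keeps $\infty$-separation throughout.
\end{enumerate}

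For the last point, along the homotopy $x_i-x_j$ and $x_i'-x_j'$ have the same sign whenever $|x_i-x_j|\ge1$. A convex combination of two differences of absolute value $\ge1$ and the same sign again has absolute value $\ge1$.

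After an affine shift, the compressed configuration has centers in $[\tfrac12, n-\tfrac12]^2$ after adjusting $\phi$ slightly; this requires $w,h\ge n$. It therefore lies in $SF_{n}(R_{w,h})$, which gives $r$. The same straight-line homotopy, restricted to configurations already in $R_{w,h}$, stays inside the rectangle: coordinates interpolate between two points of the allowed interval, by convexity of the box, once we translate the compressed box to sit inside $R_{w,h}$. Hence both composites are homotopic to the identity.

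The main obstacle is getting the constants right. We must arrange that $\phi$ both preserves separation under convex interpolation and keeps the image inside a box of side exactly $n$, i.e.\ centers ranging over an interval of length $n-1$. With $x_i'=\sum_{j\ne i}\phi(x_i-x_j)\in[0,n-1]$, separation is preserved only if, when $x_i-x_j\ge1$, we have $x_i'-x_j'\ge1$. This should follow from monotonicity of $\phi$ together with the term $\phi(x_i-x_j)-\phi(x_j-x_i)=1$. We must also verify that pairs separated only in $y$ stay separated in $y$ along the homotopy, which is symmetric. I would check this inequality carefully first, as the sharpness of $n$ hinges on it.
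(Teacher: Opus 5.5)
Your argument is correct, and it takes a genuinely different route from the paper. The paper contains no proof of this statement; it imports it from Plachta, where it sits inside an analysis of how the homotopy type of $SF_{n}(R_{w,h})$ changes as the rectangle shrinks. That analysis is also the source of the sharpness (failure once $\min\{w,h\}<n$), which your argument does not address and the statement does not need.

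The inequality you flag as the main obstacle holds exactly as you sketched. If $x_i-x_j\ge 1$, every term with $k\notin\{i,j\}$ satisfies $\phi(x_i-x_k)\ge\phi(x_j-x_k)$ by monotonicity, and the cross terms contribute $\phi(x_i-x_j)-\phi(x_j-x_i)=1-0$. Hence $x_i'-x_j'\ge 1$ with the same sign, and convex combinations keep absolute value at least $1$.

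With $x_i'=\sum_{k\ne i}\phi(x_i-x_k)\in[0,n-1]$, no adjustment of $\phi$ is needed. The translation $x_i'\mapsto x_i'-n+\tfrac12$ puts the centers in $[-n+\tfrac12,-\tfrac12]\subseteq[-w+\tfrac12,-\tfrac12]$ precisely because $w\ge n$. Convexity of the box then keeps the straight-line homotopy inside $SF_{n}(R_{w,h})$. Your step with the generic linear functional is never used and can be deleted.

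Your route gives an elementary, explicit homotopy inverse, and it shows that the inclusion itself is a homotopy equivalence, which is the form the paper actually uses later. It also covers a related result. The compression preserves equalities and weak inequalities between coordinates, and it turns a gap $x_i-x_j>0$ into a gap of at least $\min\{x_i-x_j,1\}$. So your homotopies (the rescaling, the compression, and the straight-line interpolations) preserve the constraints defining $SF_{n}(R_{w,h};i_{0},\dots,i_{p})$, and the same argument yields Proposition~\ref{homotopy equivalence for i0...ip} directly. What the cited result gives, and yours does not, is information about what happens below the threshold $\min\{w,h\}=n$.
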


Proposition \ref{homotopy equivalence} has two corollaries that we will use throughout the paper.
First, it implies that we can write $SF_{n}(R_{n,n})$ in place of $F_{n}(\R^{2})$.
Second, it tells us that the classes of $H_{k}\big(F_{n}(\R^{2})\big)$ can be realized in $SF_{n}(R_{n,n})$.
Since we are trying to determine when every class in $H_{k}\big(SF_{n}(R_{w,h})\big)$ can be thought of as a class in $H_{k}\big(F_{n}(\R^{2})\big)$ and vice versa, we recall a particularly nice basis for the latter.

Note that there are homotopy equivalences between $F_{1}(\R^{2})$ and $\R^{0}$ and between $F_{2}(\R^{2})$ and  $S^{1}$, and a class generating the latter configuration space's first homology can be interpreted as point $2$ orbiting point $1$ counterclockwise.
This class, which we denote by $W(1,2)\in H_{1}\big(F_{2}(\R^{2})\big)$, can be thought of as the Browder operation
\[
\psi:H_{k}\big(F_{n}(\R^{2})\big)\otimes H_{k'}\big(F_{n'}(\R^{2})\big)\to H_{k+k'+1}\big(F_{n+n'}(\R^{2})\big)
\]
applied to two copies of the fundamental class $W(1):=\big[F_{1}(\R^{2})\big]$ of $F_{1}(\R^{2})$; see \cite{browder1960homology}.
Iterating this operation $n-1$ times on $\big[F_{1}(\R^{2})\big]$ yields the \emph{wheel} $W(1, 2,\dots, n)\in H_{n-1}\big(F_{n}(\R^{2})\big)$, i.e., the class in which particle $2$ orbits particle $1$, and this orbiting pair is independently orbited by particle $3$, so on and so forth, i.e.,
\[
W(1, 2, \dots, n):=\psi\Bigg(\psi\bigg(\cdots\psi\Big(\psi\big(W(1),W(2)\big), W(3)\Big), \cdots\bigg),W(n)\Bigg);
\]
see Figure \ref{W(123n)onpoints}.
Permuting the labels of the points constituting $W(1, 2, \dots, n)$ gives the other wheels on $n$ points.

\begin{figure}[h]
\centering
\captionsetup{width=.8\linewidth}
\includegraphics[width=3cm]{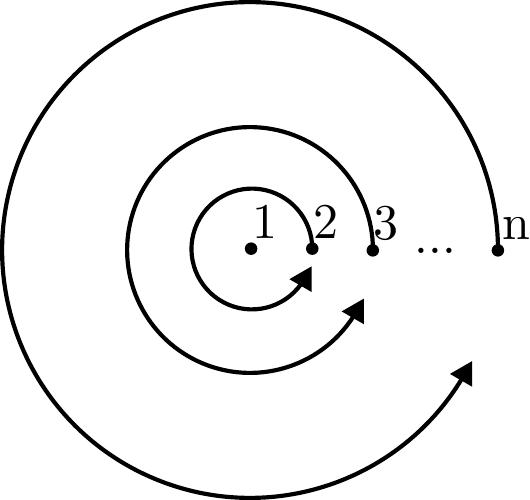}
\caption{The wheel $W(1,2,\dots, n)\in H_{n-1}\big(F_{n}(\R^{2})\big)$.
}
\label{W(123n)onpoints}
\end{figure}

In addition to the Browder operation, one can disjointly embed two copies of $\R^{2}$ in $\R^{2}$, inducing a product on $H_{*}\big(F_{\bullet}(\R^{2})\big)$:
\[
\times:H_{k}\big(F_{n}(\R^{2})\big)\otimes H_{k'}\big(F_{n'}(\R^{2})\big)\to H_{k+k'}\big(F_{n+n'}(\R^{2})\big).
\]
This product endows $H_{*}\big(F_{\bullet}(\R^{2})\big)$ with a ring structure, where Browder brackets of wheels serve as generators.

\begin{prop}\label{little cube operad structure}
\cite[Proposition 2.7, Definition 2.9, and Theorem 4.9]{sinha2006homology}
Browder brackets of wheels form a generating set for $H_{*}\big(F_{\bullet}(\R^{2})\big)$, which is an algebra satisfying the following relations:
\begin{enumerate}
 \item \[
\psi\big(W(i_{1},\dots, i_{n}), W(j_{1},\dots, j_{m})\big)-(-1)^{(n-1)(m-1)}\psi\big(W(j_{1},\dots, j_{m}), W(i_{1},\dots, i_{n})\big)=0,
\]
    \item \begin{multline*}
\psi\Big(\psi\big(W(i_{1},\dots, i_{n}), W(j_{1},\dots, j_{m})\big), W(k_{1},\dots, k_{l})\Big)\\
+\psi\Big(\psi\big(W(k_{1},\dots, k_{l}), W(i_{1},\dots, i_{n})\big), W(j_{1},\dots, j_{m})\Big)\\
+\psi\Big(\psi\big(W(j_{1},\dots, j_{m}), W(k_{1},\dots, k_{l})\big), W(i_{1},\dots, i_{n})\Big)=0,
\end{multline*}
and
    \item \[
W(i_{1},\dots, i_{n})W(j_{1},\dots, j_{m})-(-1)^{(n-1)(m-1)}W(j_{1},\dots, j_{m})W(i_{1},\dots, i_{n})=0.
\]

\end{enumerate}
\end{prop}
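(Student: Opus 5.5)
This is a cited result (Sinha's presentation of Cohen's computation of $H_{*}\big(F_{\bullet}(\R^{2})\big)$ as the Gerstenhaber, or $e_{2}$, operad). If I had to reprove it, I would split the argument into two parts. First, I would check that the three relations hold, which is formal from the little $2$-disks structure. Second, I would show that products of iterated Browder brackets of the classes $W(i)$ span $H_{*}\big(F_{n}(\R^{2})\big)$, which is an induction on $n$ using the Fadell--Neuwirth fibration.

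\emph{The relations.} Both $\psi$ and $\times$ are induced by maps $F_{n}(\R^{2})\times F_{n'}(\R^{2})\to F_{n+n'}(\R^{2})$. The product places the two configurations in disjoint half-planes. The bracket places them in disjoint disks and sweeps one disk once around the other, giving a map from $F_{n}(\R^{2})\times F_{n'}(\R^{2})\times S^{1}$.

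For relation (3), rotating the plane by $\pi$ gives a homotopy between the two orders of placement. The sign $(-1)^{(n-1)(m-1)}$ is the Koszul sign from swapping the factors, since a wheel on $n$ points has degree $n-1$. For relation (1), the same rotation acts on the circle of the bracket. This reverses the parametrization only up to a rotation homotopic to the identity, so one gets graded antisymmetry with exactly the stated sign.

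For relation (2), the Jacobi identity, I would reduce to the universal case $n=m=l=1$, i.e.\ to $H_{2}\big(F_{3}(\R^{2})\big)$, and then use naturality under insertion of configurations. The three brackets $\psi(\psi(W(a),W(b)),W(c))$ correspond to the fundamental classes of three tori $T^{2}\subset F_{3}(\R^{2})$. Their signed sum bounds a $3$-chain; one can see this via Arnold's relation $\omega_{ab}\omega_{bc}+\omega_{bc}\omega_{ca}+\omega_{ca}\omega_{ab}=0$ by pairing against the cohomology basis. The degree shifts in the general case only introduce the Koszul signs visible in (1).

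\emph{Generation.} Forgetting the last point gives a fibration $F_{n}(\R^{2})\to F_{n-1}(\R^{2})$ whose fiber is $\R^{2}$ minus $n-1$ points. This fibration has a section, obtained by adding a point far away, and its monodromy acts trivially on the homology of the fiber. Leray--Hirsch then gives
\[
H_{*}\big(F_{n}(\R^{2})\big)\cong H_{*}\big(F_{n-1}(\R^{2})\big)\otimes H_{*}\Big(\bigvee_{n-1}S^{1}\Big),
\]
with fiber classes represented by point $n$ orbiting point $j$. By induction, a basis class of the base is a product of brackets of wheels. Its tensor with the fiber class ``point $n$ orbiting point $j$'' is then realized by replacing, in that product, the factor containing $j$ by its bracket with $W(n)$; the tensor with the fiber's $H_{0}$ is realized by multiplying by $W(n)$. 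This shows that products of brackets of wheels span, and a count of Poincaré series shows that a suitable subset is even a basis.

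\emph{Main obstacle.} The step I expect to be hardest is justifying that the bracket-with-$W(n)$ class is a valid Leray--Hirsch lift. The point $n$ orbits the whole cluster containing $j$, not $j$ alone, so the two classes differ in the fiber. I would handle this by a triangularity argument: the difference consists of orbits around the other points in that cluster, each of which is already accounted for by the lifts of the other fiber generators, so the transition matrix is unitriangular and the lifts still span. Tracking the signs in this step is the remaining bookkeeping.
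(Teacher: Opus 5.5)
The paper gives no proof of this statement; it simply cites Sinha (building on Cohen). Your overall plan is the classical route: formal relations from the little disks structure, then generation by induction along the Fadell--Neuwirth fibration.

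Your treatment of the relations is fine in outline, with one caveat about (2). Naturality under insertion produces Koszul signs from cyclically permuting the inputs. Write $a,b,c$ for the three wheels, with $|a|=n-1$ and so on. In the convention that gives (1), the second and third terms of (2) pick up the signs $(-1)^{|c|(|a|+|b|)}$ and $(-1)^{|a|(|b|+|c|)}$. So the unsigned display is only literally valid in special cases, such as when $n,m,l$ all have the same parity, and you should not try to prove it verbatim.

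The genuine gap is in the generation step. Let $e_j$ be the fiber class of point $n$ orbiting point $j$. Your lift of $\beta\otimes e_j$ depends only on the factor $P$ of $\beta=P\cdot Q$ that contains $j$, not on $j$ itself. Every $j\in P$ yields the same class $\psi\big(P,W(n)\big)\cdot Q$. Its image in $E^{\infty}_{k-1,1}\cong H_{k-1}\big(F_{n-1}(\R^{2})\big)\otimes H_{1}(\text{fiber})$ is $\pm\beta\otimes\sum_{i\in P}e_i$.

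So for fixed $\beta$ you get one class per factor of $\beta$, not $n-1$ classes. The ``lifts of the other fiber generators'' you want to subtract are literally the same class. The transition matrix therefore has repeated rows and is singular whenever $\beta$ has a factor on at least two points, so your triangularity argument cannot work.

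The induction already breaks at $n=3$. Here $H_{1}\big(F_{2}(\R^{2})\big)$ is generated by $W(1,2)$ and $H_{2}\big(F_{2}(\R^{2})\big)=0$. Your recipe produces only $\psi\big(W(1,2),W(3)\big)$, which maps to $W(1,2)\otimes(e_1+e_2)$. Yet $H_{2}\big(F_{3}(\R^{2})\big)\cong H_{1}\big(F_{2}(\R^{2})\big)\otimes\Z^{2}$ has rank $2$.

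The missing idea is to insert $W(n)$ at the leaf $j$ instead of bracketing the whole cluster. View the factor containing $j$ as an iterated bracket of singletons and replace $W(j)$ by $\psi\big(W(j),W(n)\big)$. Geometrically, point $n$ orbits $x_j$ at a radius smaller than every other distance in the cycle. Up to sign, this is the pushforward of $\beta\times[S^{1}]$ along the map $F_{n-1}(\R^{2})\times S^{1}\to F_{n}(\R^{2})$, $(x,t)\mapsto\big(x,\,x_j+\varepsilon(x)e^{it}\big)$, where $\varepsilon(x)$ is less than half the minimal distance between points of $x$.

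This class has the properties you need:
\begin{itemize}
\item it is still a product of brackets of wheels;
\item it maps to zero when point $n$ is forgotten;
\item its image in $E^{\infty}_{k-1,1}$ is exactly $\pm\beta\otimes e_j$.
\end{itemize}
Together with your classes $\beta\cdot W(n)$, which correctly lift $E^{\infty}_{k,0}=H_{k}\big(F_{n-1}(\R^{2})\big)$ via the section, these span. The transition matrix is diagonal, not merely triangular. For $n=3$ the new classes are $\psi\big(\psi(W(1),W(3)),W(2)\big)$ and $\psi\big(W(1),\psi(W(2),W(3))\big)$.

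Iterating the Jacobi identity shows that your class $\psi\big(P,W(n)\big)$ is the signed sum of these leaf insertions over the points of $P$. This is the algebraic form of ``orbiting a cluster is the sum of orbiting its points,'' and it is exactly why your class cannot separate the individual $e_j$.
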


The first two relations allow one to replace any $\psi\big(W(i_{1},\dots, i_{n}), W(j_{1},\dots, j_{m})\big)$, with a sum of wheels on the letters $i_{1},\dots, i_{n}, j_{1},\dots, j_{m}$, implying that $H_{*}\big(F_{\bullet}(\R^{2})\big)$ is generated by wheels.
The third relation allows one to describe a basis for $H_{k}\big(F_{n}(\R^{2})\big)$ in terms of products of wheels; in the spirit of disk configuration spaces, we take Alpert and Manin's formulation of such a basis.

\begin{prop}\label{basis for homology of FnR2}
(\cite[Theorem B]{alpert2021configuration1}) For fixed $n$ and $k$, the homology group $H_{k}\big(F_{n}(\R^{2})\big)$ is free abelian with a basis consisting of products of wheels
\[
W(i_{1,1}, \dots, i_{n_{1}, 1})\cdots W(i_{1,l}, \dots, i_{n_{l}, l}),
\]
such that
\begin{enumerate}
    \item $i_{1, j}<i_{2,j}, \dots, i_{n_{j},j}$, 
    \item $n_{j}\ge n_{j+1}$, and 
    \item $i_{1, n_{j}}>i_{1, n_{j+1}}$, if $n_{j}=n_{j+1}$.
\end{enumerate}
\end{prop}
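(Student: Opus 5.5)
The proposition is cited from Alpert--Manin, but I would prove it directly with a counting-plus-spanning argument. The strategy is to show that the proposed products of wheels span $H_{k}\big(F_{n}(\R^{2})\big)$ over $\Z$. I would then show that their number equals the rank of this free abelian group. A surjection $\Z^{r}\to\Z^{r}$ is an isomorphism, so this suffices.

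\textbf{Step 1: freeness and rank.} Use the Fadell--Neuwirth fibration $F_{n}(\R^{2})\to F_{n-1}(\R^{2})$, forgetting the last point. Its fiber is $\R^{2}$ minus $n-1$ points, which is a wedge of $n-1$ circles. This fibration has a section (add a point far away), and the action of $\pi_{1}$ of the base on the homology of the fiber is trivial. By Leray--Hirsch, or by Arnold's classical computation, $H_{*}\big(F_{n}(\R^{2})\big)$ is free abelian with Poincaré polynomial $\prod_{j=0}^{n-1}(1+jt)$. Hence the rank of $H_{k}$ is the unsigned Stirling number of the first kind $c(n,n-k)$, the number of permutations of $\{1,\dots,n\}$ with $n-k$ cycles.

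\textbf{Step 2: counting the proposed basis.} A product $W(i_{1,1},\dots)\cdots W(i_{1,l},\dots)$ of degree $k$ that uses every label exactly once corresponds to a set partition of $\{1,\dots,n\}$ into $l=n-k$ blocks. Each block of size $m$ is equipped with a linear order starting at its minimum, and there are $(m-1)!$ such orders. Conditions (2) and (3) fix the order of the factors: blocks are sorted by decreasing size, with ties broken by the minimal label. The count is therefore $\sum\prod_{\text{blocks}}(m-1)!$, which is exactly $c(n,n-k)$, since a cyclic order on each block is the same as a cycle.

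\textbf{Step 3: spanning.} By Proposition \ref{little cube operad structure}, $H_{*}\big(F_{\bullet}(\R^{2})\big)$ is generated by products of iterated Browder brackets of the classes $W(i)$. In $H_{*}\big(F_{n}(\R^{2})\big)$ each label appears exactly once, so every class is a $\Z$-combination of products of multilinear iterated brackets on disjoint label sets covering $\{1,\dots,n\}$.

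For a single block, I show by induction on the bracket length that every multilinear bracket on that block is an integral combination of left-normed brackets $\psi(\cdots\psi(W(i_{1}),W(i_{2}))\cdots,W(i_{m}))$ with $i_{1}$ minimal, i.e.\ of wheels whose first entry is the minimum. The inductive step uses graded antisymmetry (relation 1) to move the minimal letter into the innermost position. It then uses the Jacobi identity (relation 2) to rewrite $\psi(X,\psi(Y,Z))$ in left-normed form. All coefficients here are $\pm1$, so the argument works over $\Z$.

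Finally, relation 3 lets me reorder the factors of a product up to sign, so that conditions (2) and (3) hold. Together with Steps 1 and 2, this proves the proposition.

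\textbf{Main obstacle.} The delicate point is the integral reduction to left-normed brackets with the minimal letter first. This is the classical fact that the multilinear part of the free Lie (super)algebra has such a basis, but here it must be checked with the Browder-bracket signs $(-1)^{(n-1)(m-1)}$. I would handle it by tracking degrees carefully in the Jacobi step. Only spanning is needed, since independence follows from the rank count.
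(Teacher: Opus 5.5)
Your proof is correct. It is a genuinely different route only in the sense that the paper gives no proof at all. The paper imports the basis from Alpert--Manin [Theorem B], and adopts that formulation because wheels with the smallest label first are exactly what it later realizes as big squares in $R_{w,h}$.

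You assemble the classical argument from standard inputs:
\begin{itemize}
\item Arnold's Poincar\'e polynomial $\prod_{j=0}^{n-1}(1+jt)$, via Fadell--Neuwirth, gives freeness and rank $c(n,n-k)$.
\item The indexing data is counted by the same Stirling number. That data is a set partition plus a linear order on each block starting at its minimum, with the order of factors forced by conditions (2) and (3).
\item Spanning follows from the generation statement of Proposition~\ref{little cube operad structure} together with the comb-basis reduction for multilinear Lie monomials.
\end{itemize}

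What your approach buys is a self-contained derivation. It also shows that the step you flag as the main obstacle is not delicate. Independence comes from the rank count, so you never need the exact signs in relations 1--2. You only need that each relation has coefficients $\pm1$ and relates distinct monomials on disjoint label sets. This is fortunate, since the Jacobi identity as printed in that proposition suppresses the Koszul signs.

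The input you genuinely must use integrally is the generation statement itself, which holds over $\Z$ by Cohen and Sinha. If it held only rationally, your surjection argument would only produce a finite-index sublattice.

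Finally, condition (3) as printed has a typo: $i_{1,n_{j}}$ should be $i_{1,j}$. Your reading of it as a tie-break by minimal label is the intended one.
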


Given Proposition \ref{homotopy equivalence}, it follows that the wheel $W(1,\dots, n)$ can be realized in $H_{n-1}\big(SF_{n}(R_{n,n})\big)$; see Figure \ref{W(123n)onsquares}.
In general, this wheel can be represented as long there is an otherwise unoccupied $n\times n$ square inside $R_{w,h}$.
Our restrictions on $w$ and $h$ relative to $k$ and $n$, ensure that any basis element described in Proposition \ref{basis for homology of FnR2} can be realized in $H_{k}\big(SF_{n}(R_{w,h})\big)$; see Construction \ref{first construction for connectivity}.
The main challenge of this paper is showing that this set of classes suffices to generate $H_{k}\big(SF_{n}(R_{w,h})\big)$, i.e., there are no extra classes that come from spatial constraints that prevent the existence of some $(k+1)$-dimensional boundaries in $SF_{n}(R_{w,h})$.

\begin{figure}[h]
\centering
\captionsetup{width=.8\linewidth}
\includegraphics[width=4cm]{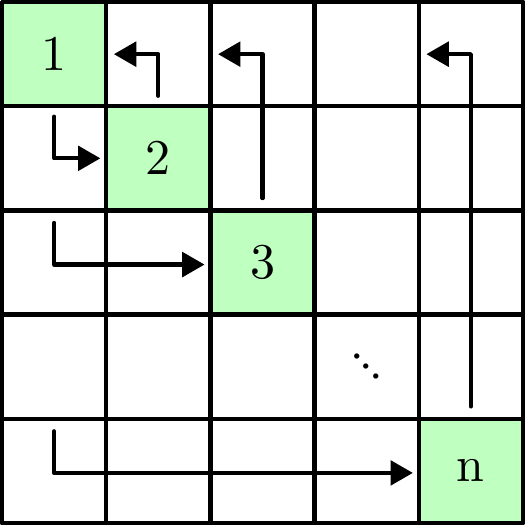}
\caption{The wheel $W(1,2,\dots, n)\in H_{n-1}\big(SF_{n}(R_{n,n})\big)$. 
Squares $1$ and $2$ orbit each other in a $2\times 2$ square, square $3$ orbits this big square in a $3\times 3$ square, etc.
As a class in $H_{n-1}\big(F_{n}(\R^{2})\big)$, this class is depicted in Figure \ref{W(123n)onpoints}.
}
\label{W(123n)onsquares}
\end{figure}

\begin{remark}\label{wh ge k+2}
In order for the wheel $W(1,\dots, k+1)$ to exist, it must be possible to vertically and horizontally align the squares $1,\dots, k+1$ inside $R_{w,h}$. 
It follows that we must have that $\min\{w,h\}\ge k+1$ in order to have any hope that $H_{k}\big(SF_{n}(R_{w,h})\big)\cong H_{k}\big(F_{n}(\R^{2})\big)$.
However, if $n>k+1$, then $\min\{w,h\}=k+1$ is not good enough, that is, in this case, the wheel $W(1,\dots, k+1)$ blocks the singleton wheel $W(k+2)$ from passing from one side of it in $R_{w,h}$ to the other;
see Figure \ref{twononhomologouscycles}.
This creates classes in $H_{k}\big(SF_{n}(R_{w,h})\big)$ that are products of the same set of wheels that are not homologous, destroying any hope of an isomorphism $H_{k}\big(SF_{n}(R_{w,h})\big)\cong H_{k}\big(F_{n}(\R^{2})\big)$.
Thus, one sees the necessity of the constraint $\min\{w,h\}\ge k+2$ in Theorems \ref{space homological stability} and \ref{almost sharp homological stability for k=1}.
\end{remark}

\begin{figure}[h]
\centering
\captionsetup{width=.8\linewidth}
\includegraphics[width=6cm]{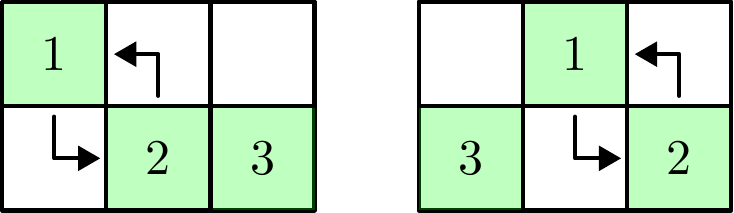}
\caption{Two non-homologous classes in $H_{1}\big(SF_{3}(R_{3,2})\big)$.
Despite the fact these cycles can be viewed as products of the same set of wheels, they represent different classes in homology, as there is no way to move the wheel $W(3)$ past the wheel $W(1,2)$, which needs the entire height of the rectangle.
}
\label{twononhomologouscycles}
\end{figure}

\begin{remark}\label{pyramids are cool too}
One can also realize $W(1,\dots, n)$ in square configuration space inside a square pyramid of base $2n-1$ and height $n$, such that at the $j$-th level, the width of the pyramid is $2n-(2j-1)$; see Figure \ref{squarewheelandpyramidwheel}.
Using this construction, one can recursively construct $W(1, \dots, n)$, treating $W(1, \dots, n-1)$ as a square pyramid of base $2n-1$ and height $n-1$.
The square and pyramid realizations for $W(1,2)$ are homotopy equivalent inside $R_{3,2}$, and in general the pyramid construction of $W(1,\dots, n)$ is homotopy equivalent to the square construction inside $R_{2n-1, n}$.
This pyramid construction plays an important role in the proof of Proposition \ref{move the big stretchy squares around}, but unless noted otherwise, we realize $W(1,\dots, n)$ as an $n\times n$ square.
Finally, we note that we expect that the square and pyramid realizations of $W(1,\dots, n)$ are minimal in the sense that any realization of this class occupies at least $n^{2}$ squares in $R_{w,h}$.
\end{remark}

\begin{figure}[h]
\centering
\captionsetup{width=.8\linewidth}
\includegraphics[width=14cm]{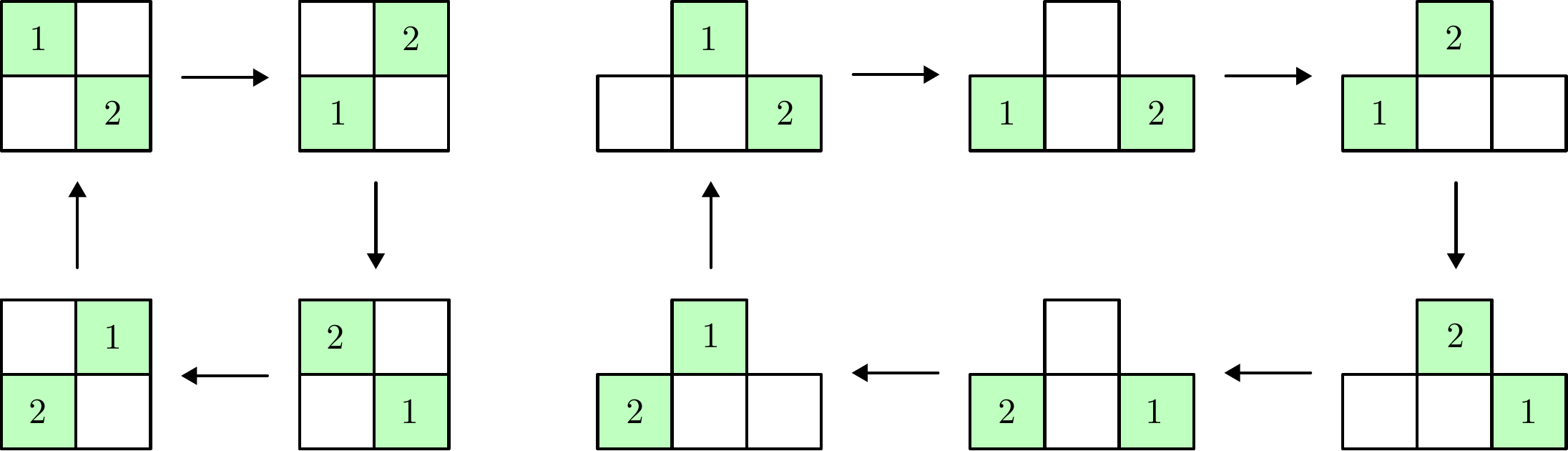}
\caption{On the left, the wheel $W(1,2)$ realized as a square.
On the right, the same wheel realized inside a square pyramid.}
\label{squarewheelandpyramidwheel}
\end{figure}

To fully take advantage of this fact, we recall a cellular model for $SF_{n}(R_{w,h})$ that we will use to establish our spectral sequences.

Since the center of any square in a configuration in $SF_{n}(R_{w,h})$ must be Chebyshev distance at least $\frac{1}{2}$ from the sides of $R_{w,h}$, the center of every square in a configuration lies inside the $(w-1)\times (h-1)$ rectangle $R^{*}_{w,h}$ whose vertices are $\{-\frac{1}{2},-\frac{1}{2}\}$, $\{-w+\frac{1}{2},-\frac{1}{2}\}$, $\{-\frac{1}{2},-h+\frac{1}{2}\}$, and $\{-w+\frac{1}{2},-h+\frac{1}{2}\}$.
This rectangle can be given the structure of a cube complex, with $2$-cells corresponding to the unit squares tiling $R^{*}_{w,h}$.
Given this structure, Alpert, Bauer, Kahle, MacPherson, and Spendlove \cite{alpert2023homology} proved that $SF_{n}(R_{w,h})$ is homotopy equivalent to $DF_{n}(R^{*}_{w,h})$, the \emph{$n$-th discrete ordered configuration space of $R^{*}_{w,h}$}, where for an arbitrary cellular complex $\Sigma$, 
\[
DF_{n}(\Sigma):=\{\sigma_{1}\times\cdots\times \sigma_{n}\in \Sigma^{n}|\overline{\sigma_{i}}\cap \overline{\sigma_{j}}=\emptyset \text{ for }i\neq j\}.
\]

\begin{prop}\label{square is cube}
(Alpert--Bauer--Kahle--MacPherson--Spendlove \cite[Theorem 3.5]{alpert2023homology})
There is an $S_{n}$-equivariant deformation retraction from $SF_{n}(R_{w,h})$ onto $DF_{n}(R^{*}_{w,h})$, where the symmetric group $S_n$ acts freely by permuting ordered configurations.
\end{prop}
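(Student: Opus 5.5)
The plan is to build the retraction by hand, using only the combinatorics of the half-integer grid on $R^{*}_{w,h}$. First I will check that $DF_{n}(R^{*}_{w,h})$ really is a subspace of $SF_{n}(R_{w,h})$ under the center map. The vertices of $R^{*}_{w,h}$ have half-integer coordinates. So if two closed cells $\overline{\sigma_i},\overline{\sigma_j}$ are disjoint, their projections to one of the coordinate axes are disjoint closed intervals with half-integer endpoints, and these are therefore at distance at least $1$. Hence any choice of centers in $\sigma_1\times\cdots\times\sigma_n$ satisfies $\max\{|x_i-x_j|,|y_i-y_j|\}\ge 1$. Everything below will be defined by formulas that treat the labels uniformly, so $S_n$-equivariance will be automatic.

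Next I will stratify $SF_{n}(R_{w,h})$ into convex pieces. For a configuration, record for each pair $i\neq j$ which of the four inequalities $x_i-x_j\ge 1$, $x_j-x_i\ge 1$, $y_i-y_j\ge 1$, $y_j-y_i\ge1$ hold. Call this data a \emph{separation pattern}. For a fixed pattern in which each pair is assigned one witnessing inequality, the set of configurations satisfying it is a convex polytope $P$ cut out by difference constraints. Moreover $SF_{n}(R_{w,h})$ is the union of these polytopes, and every intersection of such polytopes is again convex. The analogous statement holds for $DF_{n}(R^{*}_{w,h})$. There a witnessing inequality is required to hold between entire closed cells, which forces the two relevant coordinates to take values in half-integer intervals whose indices differ by at least $1$. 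In particular each $P\cap DF_{n}$ is a subcomplex.

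The core step is a \emph{pushing} homotopy, carried out one coordinate direction at a time. Fix the $x$-direction. Among the constraints of a pattern $P$, the $x$-constraints $x_i\ge x_j+1$ form a directed acyclic graph. Given a configuration in $P$, I will define its target by a longest-path (Bellman--Ford type) formula: $x_i\mapsto$ the smallest half-integer lattice value that dominates $x_j+1$ for every $x$-predecessor $j$ and is at least the rounded-down value of $x_i$. Pairs with $x_i-x_j\in[1,2)$, whose cells touch, then get pushed to lattice distance at least $1$, so their closed cells become disjoint in the $x$-projection. This target depends piecewise-linearly and monotonically on the configuration, and it commutes with the translations $x\mapsto x+1$. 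The remaining point is to replace the discontinuous rounding by a continuous monotone snapping that fixes cells already in good position, and then run the straight-line homotopy inside the convex set $P$. Convexity keeps the path inside $SF_{n}$, and the wall constraints $-w+\tfrac12\le x_i\le -\tfrac12$ must be respected; they can be, because a pushed chain occupies exactly as many lattice columns as it did before. After the $x$-direction I will do the same for the $y$-direction, using the $y$-witnessed pairs.

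The main obstacle is gluing across patterns. A configuration typically lies in several polytopes $P$ at once, for instance when a pair is separated both horizontally and vertically, and the pushing maps defined on different polytopes have to agree on their overlaps. My first attempt will make the push depend only on the \emph{maximal} separation pattern that the configuration actually satisfies. This requires a pair separated in both directions never to be pushed, which is consistent, since pushing in one direction cannot destroy separation in the other. I expect verifying continuity at the boundaries between patterns to be the delicate part. If it fails, the fallback is to abandon an explicit formula and argue via nerves: cover both spaces by the closed convex pieces $P$ and $P\cap DF_{n}$, observe that corresponding intersections are contractible and nonempty simultaneously, and conclude a homotopy equivalence. Because $DF_{n}(R^{*}_{w,h})\hookrightarrow SF_{n}(R_{w,h})$ is then a cofibration and a homotopy equivalence, it can be upgraded to an equivariant strong deformation retraction; the $S_n$-action is free and compatible with the cover, so the standard equivariant Whitehead argument applies.
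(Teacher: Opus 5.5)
The paper gives no argument for this statement; it imports it from Alpert--Bauer--Kahle--MacPherson--Spendlove. So your proof has to stand on its own, and as written it has a genuine gap.

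Your explicit pushing route never produces a continuous map. If predecessors' targets are propagated, as your Bellman--Ford description suggests, the formula returns exactly the componentwise lattice floor, because the floor already preserves $x_i\ge x_j+1$. All the content then sits in the ``continuous monotone snapping'', which you do not construct. That snap cannot act on each coordinate separately while commuting with unit translations. Two squares side by side at equal height slide together through a continuum of $x$-positions. Such a pair lies in $DF_{n}(R^{*}_{w,h})$ only at lattice $x$-values, so the snap would have to be lattice-valued on an interval while fixing the lattice points in it, which is impossible. The map must therefore depend on the contact pattern, and the gluing across patterns that you postpone is the whole difficulty, not a routine check.

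Your nerve fallback is the right strategy, but its decisive step is only asserted, and the pieces are wrong. The set $P\cap DF_{n}(R^{*}_{w,h})$ is neither convex nor a subcomplex. Take two squares at non-lattice $x$-values with $x_1=x_2+1$, sitting in different lattice rows: this gives a point of $P\cap DF_{n}(R^{*}_{w,h})$ whose carrier cell leaves $P$.

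Use instead $U_P$, the union of the closed cells of $DF_{n}(R^{*}_{w,h})$ contained in $P$. These sets cover $DF_{n}(R^{*}_{w,h})$, they satisfy $U_{P_1}\cap\cdots\cap U_{P_m}=U_Q$ for $Q=P_1\cap\cdots\cap P_m$, and $U_Q\subseteq Q$.

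The claim that $U_Q$ is contractible, and nonempty exactly when $Q$ is, is the real content and is not automatic. For a thin convex strip along a diagonal, the lattice cells inside it are isolated vertices. The claim holds here because $Q$ is cut out by difference constraints with lattice data.
\begin{itemize}
\item Nonemptiness: the lattice floor of a point of $Q$ is a vertex lying in $Q$.
\item Contractibility: $U_Q$ is the set of points of the box with $\lfloor x_i\rfloor-\lceil x_j\rceil\ge1$ (lattice floor and ceiling) for every constraint $x_i-x_j\ge1$, and likewise in $y$. So it is compact and closed under coordinatewise $\max$ and $\min$, and hence has a least element $B$ and a greatest element $T$. The map $z\mapsto\max\bigl(z,\gamma(t)\bigr)$ contracts it, where $\gamma$ is a lattice-edge path in $U_Q$ from $B$ to $T$.
\item The path $\gamma$: raise one coordinate by one lattice step at a time. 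Always choose a coordinate $i$ for which every $k$ with a constraint $x_k-x_i\ge1$ is already at its final value; acyclicity of the constraints guarantees such an $i$ exists.
\end{itemize}

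You also need the natural form of the nerve theorem: a map of finite closed CW covers that is a homotopy equivalence on every intersection is a homotopy equivalence. This matters because the inclusion itself, not merely the two spaces, must be a homotopy equivalence.

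With these supplied, your last step goes through. The $S_n$-invariant hyperplane arrangement gives a free $S_n$-CW pair, the equivariant Whitehead theorem applies, and a cofibration that is a homotopy equivalence yields a strong deformation retraction.
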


Next, we recall a fact dating back to the nineteenth century about the non-path-connectedness of $SF_{wh-1}(R_{w,h})$, which we will use in conjunction with Proposition \ref{square is cube} to establish the path-connectedness of $SF_{n}(R_{w,h})$ for $wh-n\ge 2$.

\begin{prop}\label{15 puzzle has two components}
(Johnson--Story \cite{johnson1879notes})
If $\min\{w,h\}\ge 2$, then $SF_{wh-1}(R_{w,h})$ has two path-components.
\end{prop}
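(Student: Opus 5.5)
The plan is to replace the continuous problem by a purely combinatorial one using Proposition \ref{square is cube}, and then run the classical parity argument for the 15 Puzzle. Set $n=wh-1$. The cube complex $R^{*}_{w,h}$ has exactly $wh$ vertices (the grid-square centers), and its cells are these vertices, the unit edges between adjacent centers, and unit $2$-cells.

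In $DF_{n}(R^{*}_{w,h})$ the closures of the $n$ cells must be pairwise disjoint, and they jointly contain at least $n=wh-1$ vertices. So at most one cell can contain more than one vertex, and that cell contains exactly two vertices, so it is an edge and never a $2$-cell. Hence $DF_{n}(R^{*}_{w,h})$ is a graph, and we describe it as follows.
\begin{enumerate}
\item Its vertices are the injective labelings of all but one of the grid positions by $1,\dots,n$, that is, a labeling together with the position of the \emph{hole}.
\item Its edges are the single slides, in which the square adjacent to the hole moves into it.
\end{enumerate}
By Proposition \ref{square is cube}, the path-components of $SF_{n}(R_{w,h})$ are in bijection with the connected components of this graph. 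It therefore suffices to show that the graph of puzzle states under slides has exactly two components.

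For a lower bound of two, we build an invariant. Regard the hole as carrying the label $wh$, so that a state becomes a permutation $\sigma\in S_{wh}$ of the grid positions after fixing an ordering of the positions. A slide multiplies $\sigma$ by a transposition. It also changes by one the taxicab distance $d$ from the hole to the corner $(-\tfrac12,-\tfrac12)$. Hence $\mathrm{sgn}(\sigma)\cdot(-1)^{d}$ is invariant under slides. Both values occur: swapping the labels of two squares changes the sign while leaving the hole fixed. So there are at least two components.

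For the upper bound, we first slide the hole to the corner in every state. It then suffices to show that the states with the hole at the corner and $\mathrm{sgn}(\sigma)=+1$ are mutually reachable, and similarly for the odd ones. Since $w,h\ge 2$, a $2\times 2$ block containing the corner exists. Moving the hole once around this block cyclically permutes its three squares, which gives a $3$-cycle.

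Conjugating this $3$-cycle by a path of slides that drags the hole elsewhere and back realizes a $3$-cycle on any three positions forming an L-shape in a $2\times2$ block. Such $3$-cycles generate the alternating group on the $wh-1$ non-corner positions, because the adjacency graph of the grid minus a corner is connected. One can either invoke the standard fact that $3$-cycles on overlapping triples along a connected graph generate $A_m$, or show directly that any $3$-cycle can be obtained by conjugation. It follows that each parity class is a single component.

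The main obstacle I expect is this generation step. It requires carefully justifying that conjugating by hole-paths yields $3$-cycles on arbitrary triples, and doing so uniformly for thin rectangles such as $w\times 2$. I would handle this by an induction that shrinks the rectangle: first solve the top row, or the left column when $h=2$, using $3$-cycles, then recurse on the smaller rectangle, with the base case $2\times2$ checked by hand ($S_3$ restricted to the reachable even permutations).
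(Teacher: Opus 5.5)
The paper gives no argument for this statement. It cites Johnson--Story, and Remark \ref{two components} only records that the two components are labelled by $S_{wh-1}/A_{wh-1}$. Your proposal is therefore a genuinely different, self-contained route, and it is correct in outline.

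Its most useful step is the one the citation skips silently. Johnson--Story is a theorem about the sliding puzzle. Your vertex count shows that $DF_{wh-1}(R^{*}_{w,h})$ is exactly the graph of puzzle states and slides: $wh-1$ pairwise disjoint closed cells use at least $wh-1$ of the $wh$ vertices, so no $2$-cell and at most one edge occurs. Together with Proposition \ref{square is cube}, this is what transports the combinatorial theorem to the continuous space $SF_{wh-1}(R_{w,h})$. The paper uses this reduction only tacitly, for instance in the proof of Proposition \ref{n le wh-2 is connected}. After that you run the classical argument. With the hole fixed, your invariant $\mathrm{sgn}(\sigma)(-1)^{d}$ is exactly the $S_{wh-1}/A_{wh-1}$ labelling of Remark \ref{two components}, and that half is fine as written. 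The citation buys brevity; your route buys an actual proof and makes explicit why the discrete and continuous statements agree.

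One claim in the generation half needs correcting. Suppose the hole goes out along a path $P$ from the corner $c$ to a cell $q$, circles a $2\times 2$ block $B\ni q$, and returns along $P$. The result is the $3$-cycle on $\phi_P^{-1}(T)$, where $T=B\setminus\{q\}$ and $\phi_P$ records how $P$ displaces tiles. Your justification therefore yields the $3$-cycle on the L-shape $T$ itself only when $P$ avoids $T$, and that is not always possible. For example, in $R_{3,2}$ with $c$ the top-right cell, take the left block with $q$ the bottom-left corner. Both neighbours of $q$ lie in $T$, so every path to $q$ enters $T$.

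The fix is to use, for each block $B$, only the triple $T_B=B\setminus\{q_B\}$, with $q_B$ the top-right cell of $B$. The hole reaches $q_B$ from $c$ by moving left along the top row and then down, and this path misses $T_B$. Every non-corner cell lies in some $T_B$. Horizontally or vertically adjacent blocks have triples sharing their common lower-left cell, so the triples form a connected family. $3$-cycles on a connected family of triples generate the alternating group on their union: adjoin one triple at a time, each time enlarging $A_S$ to $A_{S\cup T}$. This connectivity of the realized triples, not connectivity of the grid minus a corner, is what the argument actually needs.
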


\begin{remark}\label{two components}
Given any triple $i,j, k$ of squares, there are two families of cyclic arrangements of these squares in $R_{2,2}$.
Johnson and Story proved that if one embeds this $R_{2,2}$ at the bottom left corner of $R_{w,h}$ and fixes an arrangement of the remaining $wh-4$ squares outside of this $R_{2,2}$, the resulting pair of configurations in $SF_{wh-1}(R_{w,h})$ that come from the two cyclic arrangements of $i,j,k$ are in different path-components of $SF_{wh-1}(R_{w,h})$.
Moreover, the two path-components correspond to the two elements of the quotient $S_{wh-1}/A_{wh-1}$ of the symmetric group by the corresponding alternating group.
\end{remark} 

We use this fact to prove the following well-known result about the path-connectedness of $SF_{n}(R_{w,h})$ for smaller $n$:

\begin{prop}\label{n le wh-2 is connected}
If $\min\{w, h\}\ge 2$ and $wh-n\ge 2$, the configuration space $SF_{n}(R_{w,h})$ is path-connected.
Therefore, the inclusion $SF_{n}(R_{w,h})$ into $F_{n}(\R^{2})$ induces an isomorphism
\[
H_{0}\big(SF_{n}(R_{w,h})\big)\cong H_{0}\big( F_{n}(\R^{2})\big).
\]
\end{prop}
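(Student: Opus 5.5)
We will prove path-connectedness of $SF_{n}(R_{w,h})$ by first reducing to grid configurations and then to the case $n=wh-2$. The second statement then follows at once, since $F_{n}(\R^{2})$ is path-connected and any map between path-connected spaces induces an isomorphism on $H_{0}\cong\Z$.

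\emph{Reduction to grid configurations.} By Proposition \ref{square is cube}, $SF_{n}(R_{w,h})$ deformation retracts onto $DF_{n}(R^{*}_{w,h})$. The $0$-cells of this complex are configurations in which every square sits on a grid-square, and every point lies in a closed cell whose vertices are such grid configurations. So it suffices to show that any two grid configurations can be joined by a sequence of moves, each sliding one square to an adjacent empty grid-square.

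\emph{Reduction to $n=wh-2$.} If $n<wh-2$, pad the configuration with $wh-2-n$ auxiliary squares placed on empty grid-squares, labeled $n+1,\dots,wh-2$. Do this for both endpoints, choosing the same auxiliary positions if convenient; in any case the auxiliary labels may be arbitrary. A path in $SF_{wh-2}(R_{w,h})$ between the padded configurations, after forgetting the auxiliary squares, is a path in $SF_{n}(R_{w,h})$. It therefore suffices to treat $n=wh-2$.

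\emph{The case $n=wh-2$.} Given a grid configuration $c$ in $SF_{wh-2}(R_{w,h})$ with holes $a,b$, add a square labeled $wh-1$ at $b$. This yields a configuration in $SF_{wh-1}(R_{w,h})$, which by Proposition \ref{15 puzzle has two components} and Remark \ref{two components} lies in one of two path-components, distinguished by the parity of the permutation in $S_{wh-1}/A_{wh-1}$.

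\begin{enumerate}
\item We show that any two configurations in $SF_{wh-2}(R_{w,h})$ with the same two hole positions and labelings in the same component can be connected. Forgetting square $wh-1$ turns a path in $SF_{wh-1}$ into a path in $SF_{wh-2}$, though the final hole position may differ. We will therefore also need to move the holes, which is possible with two holes available.
\item We show the parity can be flipped inside $SF_{wh-2}$. With two holes, move them adjacent, say at grid-squares $b$ and $b'$, and slide a square from $b'$ into $b$. This is a single move in $SF_{wh-2}$. Viewed with the auxiliary square $wh-1$ placed on one of the holes, however, it changes which hole is occupied. Concretely, choose the padded square at $b$ before the move and at $b'$ after. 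The two padded configurations then differ by a transposition of labels combined with a blank position change, which flips the parity invariant. Hence both components of $SF_{wh-1}$ become connected after forgetting label $wh-1$.
\item Combining the two steps, any two grid configurations in $SF_{wh-2}(R_{w,h})$ are connected, using $\min\{w,h\}\ge 2$ to guarantee the $2\times2$ rotations the Johnson--Story analysis needs.
\end{enumerate}

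The main obstacle is step 2: verifying carefully that the parity invariant of Remark \ref{two components} really flips. The cleanest way is to compute it as the sign of the labeling permutation times $(-1)$ to the taxicab distance of the blank from a fixed corner. A single slide of the extra square $wh-1$ between $b$ and $b'$ changes that distance by one while leaving the other labels fixed, so it lands in the other component. Forgetting square $wh-1$ then joins the two components, which proves the claim.
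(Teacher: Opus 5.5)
\textbf{There is a genuine gap in step 2.} Your overall strategy matches the paper's: reduce to grid configurations, pad with ghost squares up to $wh-1$, invoke Johnson--Story, then show the two components of $SF_{wh-1}(R_{w,h})$ merge after forgetting a ghost. But step 2 carries all the content, and it fails as you argue it.

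Sliding the extra square $wh-1$ from $b$ into an adjacent empty grid-square $b'$ is itself a path in $SF_{wh-1}(R_{w,h})$. So the two padded configurations you compare lie in the \emph{same} component, and no invariant can separate them. In your formula, the blank must be counted as a letter of the labeling permutation for sign times taxicab parity to be invariant. Once it is, that slide is a transposition, so the sign flips together with the taxicab parity of the blank, and the two changes cancel.

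More generally, padding a fixed configuration at its two holes $a$ and $b$ gives different components exactly when the taxicab distance from $a$ to $b$ is even. Making the holes adjacent is precisely the choice that gains nothing. Also, if $b$ and $b'$ are both holes there is no square at $b'$ to slide, so the ``single move in $SF_{wh-2}$'' is not a move in that space at all. As written, steps 1 and 2 only show that $SF_{wh-2}(R_{w,h})$ has at most two components. Step 1, incidentally, needs no hole-moving: a path in $SF_{wh-1}$ ending at the padding of $c_2$ forgets to a path ending exactly at $c_2$.

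\textbf{How to repair it.} You need a path in $SF_{wh-2}(R_{w,h})$ between two configurations whose paddings differ by a transposition of two labels with the blank in the same cell. Your phrase ``choose the padded square at $b$ before the move and at $b'$ after'' is the right bookkeeping if $b'$ holds a real square $x$ rather than a hole. Take holes at $a$ and $b$, with $x$ at $b'\neq a$ adjacent to $b$, and slide $x$ into $b$. Pad with the ghost at $b$ before the move and at $b'$ after. These two paddings differ by the transposition of $x$ and $wh-1$, with the blank fixed at $a$, so they lie in different components. Meanwhile their forgettings are joined by one slide.

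The paper does the equivalent thing at the end of its Johnson--Story reduction. The padded configuration slides either to the target or to the target with two labels swapped. After forgetting the ghost $wh-1$, those two squares sit next to the two holes in a $2\times 2$ block, where they can be interchanged by cycling around the block.
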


\begin{proof}
Given a configuration $\mathbf{c}$ of $n\le wh-2$ squares in $R_{w,h}$, we will show that there is a path from $\mathbf{c}$ to the configuration in which the squares are arranged in lexicographical order from top to bottom, right to left, starting at the top right corner of $R_{w,h}$; we call this configuration the target configuration.
By Proposition \ref{square is cube} we may assume that each square in $\mathbf{c}$ is centered at a $0$-cell of $R^{*}_{w,h}$.
Since there are at least two unoccupied squares in the resulting configuration in $SF_{n}(R_{w,h})$ we may add $wh-1-n$ ``ghost squares''  to this configuration, labeling them $n+1, \dots, wh-1$ and placing them starting from the top right empty grid square and working down and then left.
By Remark \ref{two components}, there is a sequence of slides, i.e., a path, which allows us to move these squares into either the target configuration on $wh-1$ squares, or the target configuration in which squares $w(h-1)$ and $w(h-1)-1$ are swapped.
In the former case we are done, and in the latter case we may forget the ghost square $wh-1$, allowing us to swap the positions of squares $w(h-1)$ and $w(h-1)-1$ to get to the target configuration on $wh-2$ squares.
Forgetting the remaining ghost squares, we have found a path from any initial configuration to the target configuration, proving that $SF_{wh-2}(R_{w,h})$ is path-connected.
\end{proof}

In addition to its utility in proving Proposition \ref{n le wh-2 is connected}, the discrete model $DF_{n}(R^{*}_{w,h})$ almost gives us a sufficient cellular structure to calculate the homology of $SF_{n}(R_{w,h})$ via a Mayer--Vietoris spectral sequence.
Namely, if $i_{0}, \dots, i_{p}$ are distinct elements of $\{1,\dots, n\}$, then we define $SF_{n}(R_{w,h};i_{0}, \dots, i_{p})$ to be the subspace of $SF_{n}(R_{w,h})$ in which no square can be to the right of any of the squares $i_{0}, \dots, i_{p}$, and in which square $i_{l}$ is above square $i_{l+1}$, i.e.,
\[
SF_{n}(R_{w,h};i_{0}, \dots, i_{p}):=\left\{(x_{1}, y_{1},\dots, x_{n}, y_{n})\in SF_{n}(R_{w,h})\bigg|\substack{{\displaystyle x_{j}\le x_{i_{l}} \text{ for all }1\le j\le n \text{ and } l=0,\dots,p,}\\{\displaystyle\text{and }y_{i_{l}}>y_{i_{l+1}} \text{ for }l=0,\dots, p-1}}\right\}.
\]
One can define the subspace $F_{n}(\R^{2}; i_{0}, \dots, i_{p})$ of $F_{n}(\R^{2})$ in a similar manner.
Additionally, we say that if $j\in \{1,\dots, n\}-\{i_{0},\dots, i_{p}\}$, then square, resp. point, $j$ is \emph{free}.

These subspaces will play a fundamental role in our spectral sequence arguments; however, in $DF_{n}(R^{*}_{w,h})$, they are not cellular.
Fortunately, after subdividing the cells of $DF_{n}(R^{*}_{w,h})$ via hyperplanes of the form $x_{i}=x_{j}$, we get a cellular model for $SF_{n}(R_{w,h})$ in which the $SF_{n}(R_{w,h};i_{0}, \dots, i_{p})$ are subcomplexes, a fact we will use in Section \ref{augmented MV}.

The subspace $SF_{n}(R_{w,h};i_{0}, \dots, i_{p})$ is rather well-behaved. 
Namely, if $p+1>h$ or $p+1<0$, then it is empty, if $p+1=h$, then $SF_{n}(R_{w,h};i_{0}, \dots, i_{p})\simeq SF_{n-h}(R_{w,h})$, and if $p+1=0$, then $SF_{n}(R_{w,h};i_{0}, \dots, i_{p})=SF_{n}(R_{w,h})$.
Moreover, if $s+1+p+1\le h$ and $\{j_{0}, \dots, j_{s}\}$ and $\{i_{0}, \dots, i_{p}\}$ are disjoint, then the path components of the intersection of $SF_{n}(R_{w,h};j_{0}, \dots, j_{s})$ and $SF_{n}(R_{w,h};i_{0}, \dots, i_{p})$ are indexed by elements of $\Sigma\big((j_{0}, \dots, j_{s}),(i_{0}, \dots, i_{p})\big)$, the set of shuffles of the ordered set $(j_{0}, \dots, j_{s})$ into the ordered set $(i_{0}, \dots, i_{p})$.

\begin{prop}\label{intersection is shuffle}
If $s+1+p+1\le h$ and $\{j_0,\ldots,j_s\}\cap\{i_0,\ldots,i_p\}=\varnothing$, then
\[
SF_{n}(R_{w,h};j_{0}, \dots, j_{s})\cap SF_{n}(R_{w,h};i_{0}, \dots, i_{p})=\bigsqcup_{\sigma\in \Sigma\big((j_{0}, \dots, j_{s}),(i_{0}, \dots, i_{p})\big)}SF_{n}(R_{w,h}; \sigma),
\]
where in $SF_{n}(R_{w,h}; \sigma)$ the squares $j_{0}, \dots, j_{s}, i_{0}, \dots, i_{p}$ are arranged from top to bottom such that their order is given by $\sigma$.
\end{prop}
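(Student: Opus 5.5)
The plan is to verify a set-theoretic decomposition and then check that the pieces are open and closed in the intersection. Write $X$ for the intersection and $S=\{j_0,\dots,j_s\}\cup\{i_0,\dots,i_p\}$.

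\emph{First step: all squares of $S$ lie in one column.} Take $\mathbf{c}\in X$. By definition of $SF_{n}(R_{w,h};j_{0},\dots,j_{s})$, every square $m$ satisfies $x_m\le x_{j_a}$, and by definition of $SF_{n}(R_{w,h};i_0,\dots,i_p)$ also $x_m\le x_{i_b}$. Applying this with $m=i_b$ and $m=j_a$ gives $x_{i_b}=x_{j_a}$ for all $a,b$. Hence all squares of $S$ share a common $x$-coordinate $x^*$, which is the maximum $x$-coordinate of the configuration.

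\emph{Second step: $S$ is totally ordered by height.} Any two distinct squares of $S$ have the same $x$-coordinate, so the nonoverlap condition $\max\{|x_{i}-x_{j}|,|y_{i}-y_{j}|\}\ge 1$ forces $|y_{i}-y_{j}|\ge 1$. Thus the $y$-coordinates of $S$ are pairwise distinct, and $S$ is totally ordered from top to bottom. This order restricts to $j_0>\dots>j_s$ on the $j$'s and to $i_0>\dots>i_p$ on the $i$'s, so it is a shuffle $\sigma\in\Sigma\big((j_0,\dots,j_s),(i_0,\dots,i_p)\big)$. Consequently $\mathbf{c}\in SF_n(R_{w,h};\sigma)$, where $SF_n(R_{w,h};\sigma)$ is the set of configurations in which the squares of $S$ are right-most and strictly ordered vertically according to $\sigma$.

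\emph{Third step: the converse inclusion.} Conversely, each $SF_n(R_{w,h};\sigma)$ lies in $X$: right-mostness of all squares of $S$ and the induced orders on the $j$'s and on the $i$'s are exactly the defining conditions of the two subspaces. The sets $SF_n(R_{w,h};\sigma)$ for distinct $\sigma$ are disjoint, since distinct shuffles prescribe different orders. So $X$ is their set-theoretic disjoint union.

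\emph{Fourth step: topological disjointness.} This is the main point to check. Within $X$ the squares of $S$ always satisfy $|y_a-y_b|\ge 1$ pairwise. The map $X\to\R^{s+p+2}$ recording their $y$-coordinates therefore lands in the set where all pairwise differences have absolute value at least $1$. In that set the sign of each difference $y_a-y_b$ is locally constant, so the relative order of the squares of $S$ is locally constant on $X$. Hence each $SF_n(R_{w,h};\sigma)$ is both open and closed in $X$, and the union is a topological disjoint union. All these pieces live in the subcomplex structure described above, so the decomposition is compatible with it.

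\emph{Fifth step: nonemptiness.} The hypothesis $s+1+p+1\le h$ ensures that each piece is nonempty. One can stack the $s+p+2$ squares of $S$ in the rightmost column in the order $\sigma$ and place the remaining squares to their left. This uses that there is room for the remaining squares, which holds whenever $X$ itself is nonempty.

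If the intended statement is also that each piece is path-connected, so that the path-components are exactly indexed by shuffles, I would argue as follows. The fixed column of $S$-squares can be treated as a wall, reducing the question to connectivity of the remaining configurations, as in Proposition~\ref{n le wh-2 is connected}. I expect that connectivity argument to be the real obstacle. The decomposition itself is formal.
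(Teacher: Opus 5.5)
Your argument is correct and is the routine verification the paper leaves implicit, since it states this proposition without proof: mutual right-mostness puts all squares of $S$ at the common maximal $x$-coordinate, and non-overlap then separates them vertically by at least $1$, so the top-to-bottom order of $S$ is a shuffle and is locally constant on the intersection. Nonemptiness is not part of the stated equality, and your justification of it should be replaced by relabeling the $S$-squares in place within any configuration of the intersection, because literally placing all other squares to the left of the $S$-column is impossible once $n-(s+p+2)>(w-1)h$; path-connectedness of each piece is handled separately by Proposition~\ref{connected single right configuration space}, as you anticipated.
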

The path connectivity of each $SF_n(R_{w,h};\sigma)$ is addressed in Proposition \ref{connected single right configuration space} below. Additionally, we may assume that in $SF_{n}(R_{w,h};i_{0}, \dots, i_{p})$ the squares $i_{0}, \dots, i_{p}$ are tangent to the right side of $R_{w,h}$.

\begin{prop}\label{move squares to right side}
There is a deformation retraction from $SF_{n}(R_{w,h};i_{0}, \dots, i_{p})$ onto its subspace in which the squares $i_{0}, \dots, i_{p}$ are tangent to the right side of $R_{w,h}$.
\end{prop}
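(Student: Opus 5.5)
The plan is to push the distinguished squares $i_{0},\dots,i_{p}$ rightward at unit speed until they touch the right wall, while every free square is pushed rightward only as far as needed to avoid collision. Let $m(\mathbf{c})=\max_{j}x_{j}$ denote the common maximal $x$-coordinate, attained by all the $i_{l}$. Let $\delta(\mathbf{c})=-\tfrac12-m(\mathbf{c})\ge 0$ be the distance from the column of the $i_{l}$ to the right wall. A naive homotopy that translates only the $i_{l}$ fails, because free squares with $x_{j}\in(m-1,m]$ may be overlapped. Translating the whole configuration rightward by $\delta(\mathbf{c})$ is better. Every $x_{j}\le m$, so after adding $\delta$ we get $x_{j}\le -\tfrac12$, and all squares stay inside $R_{w,h}$. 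Translation preserves pairwise $L_{\infty}$ distances and preserves the vertical order of the $i_{l}$. It also keeps the $i_{l}$ right-most, so it stays in $SF_{n}(R_{w,h};i_{0},\dots,i_{p})$. Concretely, I would set
\[
H_{t}(\mathbf{c})=\big(x_{1}+t\,\delta(\mathbf{c}),\,y_{1},\dots,x_{n}+t\,\delta(\mathbf{c}),\,y_{n}\big),\qquad t\in[0,1].
\]

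The checks are then routine, in this order. First, $\delta$ is continuous, since on this subspace $m(\mathbf{c})=x_{i_{0}}$, a coordinate function. Hence $H$ is continuous in $(\mathbf{c},t)$. Second, $H_{0}$ is the identity. Third, $H_{1}$ lands in the subspace where $x_{i_{l}}=-\tfrac12$, i.e. where the $i_{l}$ are tangent to the right side. Fourth, on that subspace $\delta=0$, so $H_{t}$ fixes it pointwise for all $t$. Together these give a strong deformation retraction.

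The only step needing care is the boundary condition at the left side. A rightward shift can never violate $x_{j}\ge -w+\tfrac12$, and vertical coordinates are unchanged. The one other thing to confirm is that the $i_{l}$ remain right-most and keep their strict vertical order, and this holds because translation is rigid. There is no genuine obstacle here. If one prefers to stay compatible with the cellular structure of Proposition~\ref{square is cube} and the subdivision by the hyperplanes $x_{i}=x_{j}$, note that $H$ preserves each such hyperplane arrangement cell, since it preserves all differences $x_{i}-x_{j}$. So the retraction is compatible with the subcomplex structure used later.
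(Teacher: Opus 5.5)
Your proof is correct. It differs slightly from the paper's proof, which is the straight-line homotopy that moves only $i_{0},\dots,i_{p}$ rightward to the wall while every free square stays where it is.

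Your stated reason for rejecting that direct homotopy is mistaken. Every free square satisfies $x_{j}\le x_{i_{l}}$, so pushing the $i_{l}$ to the right only increases $|x_{i_{l}}-x_{j}|$ and leaves $|y_{i_{l}}-y_{j}|$ unchanged. Hence $\max\{|x_{i_{l}}-x_{j}|,|y_{i_{l}}-y_{j}|\}$ never drops below $1$. In particular, a free square with $x_{j}\in(m-1,m]$ is already at vertical distance at least $1$ from each $i_{l}$, and that distance never changes, so no overlap can occur. The $i_{l}$ share a common $x$-coordinate and move together, so they cannot collide with one another either.

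Both homotopies are strong deformation retractions onto the same subspace.
\begin{itemize}
\item Your rigid translation makes every check immediate: pairwise distances and the vertical order are preserved automatically. It also preserves every difference $x_{i}-x_{j}$. By contrast, the paper's version can move a free square in the column of the $i_{l}$ from $x_{j}=x_{i_{l}}$ to $x_{j}<x_{i_{l}}$.
\item The paper's version leaves the free part of the configuration untouched. It needs only the monotonicity observation above.
\end{itemize}

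Your closing remark about cellular compatibility is overstated. Translating by a non-integer amount does not preserve the cubes of $DF_{n}(R^{*}_{w,h})$. For example, two squares at horizontally adjacent vertices are sent into abutting edges whose closures meet, so the configuration can even leave $DF_{n}(R^{*}_{w,h})$. Thus $H$ respects only the arrangement $\{x_{i}=x_{j}\}$, not the cells. This is harmless, since the statement is purely topological.
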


\begin{proof}
This retraction is given by the straight-line homotopy that moves the squares $i_{0}, \dots, i_{p}$ to the right.
\end{proof}

Furthermore, if $w$ and $h$ are large with respect to $n$, then $SF_{n}(R_{w,h}; i_{0}, \dots, i_{p})$ and $F_{n}(\R^{2}; i_{0}, \dots, i_{p})$ are homotopy equivalent.

\begin{prop}\label{homotopy equivalence for i0...ip}

If $\min\{w,h\}\ge n$ and $0\le p+1\le h$, then
\[
SF_{n}(R_{w,h}; i_{0}, \dots, i_{p})\simeq F_{n}(\R^{2}; i_{0}, \dots, i_{p}).
\]
\end{prop}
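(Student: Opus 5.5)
Both spaces have explicit models in which the right-most objects are pinned to a vertical line and the remaining objects live to the left of it; the plan is to compare these models using Proposition \ref{homotopy equivalence}. On the point side, $F_{n}(\R^{2}; i_{0}, \dots, i_{p})$ deformation retracts onto the subspace where the points $i_{0},\dots,i_{p}$ lie on a fixed vertical line $x=0$, in their prescribed vertical order. The free points then lie in the closed half-plane $x\le 0$. We shift the free points slightly left away from the line, which is possible because the line contains only finitely many points. The free points now move in an open half-plane homeomorphic to $\R^{2}$, and the pinned points can be straightened to fixed heights. This gives
\[
F_{n}(\R^{2}; i_{0}, \dots, i_{p})\simeq F_{n-p-1}(\R^{2})
\]
for $p\ge 0$. (For $p+1=0$ the statement is Proposition \ref{homotopy equivalence} itself.)

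On the square side, use Proposition \ref{move squares to right side} to assume the squares $i_{0},\dots,i_{p}$ are tangent to the right wall. Then the free squares must have $x$-coordinate at most that of the pinned squares. I would next deform so that the pinned squares occupy the right-most column in fixed positions, for instance the top $p+1$ grid squares of that column in order, while every free square lies in the complementary $(w-1)\times h$ rectangle $R_{w-1,h}$. The pinned squares may be slid vertically along the wall, keeping their order, because the free squares never lie strictly to their right. Free squares sharing that column can be pushed left, and the discrete model of Proposition \ref{square is cube} and its subdivision make this precise cell by cell. The aim is
\[
SF_{n}(R_{w,h}; i_{0},\dots,i_{p})\simeq SF_{n-p-1}(R_{w-1,h}).
\]
Since $w-1\ge n-1\ge n-p-1$ and $h\ge n\ge n-p-1$, Proposition \ref{homotopy equivalence} identifies $SF_{n-p-1}(R_{w-1,h})$ with $F_{n-p-1}(\R^{2})$. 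This matches the point side and gives the result. The hypothesis $p+1\le h$ is needed only so that the space is nonempty.

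The main obstacle is the deformation on the square side, which pushes every free square out of the last column. A free square in the last column may sit between pinned squares, so the vertical ordering of the pinned squares is not rigid. I would first try the following: because $n\le\min\{w,h\}$, there is ample room, and a straight-line homotopy pushing each free square left by the amount it overlaps the last column, with push order determined by decreasing $x$, keeps the configuration valid. The alternative is to avoid an explicit deformation and prove directly that the inclusion into $F_{n}(\R^{2}; i_{0},\dots,i_{p})$ is a homotopy equivalence. For that I would imitate Plachta's argument, since the space of positions of the pinned squares is contractible (an ordered set of heights) with a compatible fibration structure. Then compare fibers over a fixed pinned configuration: on the square side the fiber is a square configuration space in a region with a staircase cut out, which still contains an $(n-p-1)\times(n-p-1)$ box, and on the point side the fiber is $F_{n-p-1}$ of a half-plane.
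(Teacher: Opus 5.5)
This is essentially the paper's proof. The paper pushes the free squares out of the right-most column in one line, applies Proposition~\ref{homotopy equivalence} to the resulting configurations in $R_{w-1,h}$ (valid since $\min\{w-1,h\}\ge n-p-1$), and implicitly identifies the result with $F_{n-p-1}(\R^{2})$; your direct point-side argument supplies that last identification, which the paper only proves later in Proposition~\ref{forget points on the right}.

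The paper asserts the push without detail, so the obstacle you flag is real, but your first suggestion fails as stated. Moving each square left only by its overlap with the last column makes a square lying entirely in that column collide with a free square immediately to its left, and that square is not moved at all. So the push has to cascade, and it must be arranged to vary continuously, e.g.\ by working cell by cell in the subdivided discrete model of Proposition~\ref{square is cube}.
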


\begin{proof}
This follows from Proposition \ref{homotopy equivalence}, noting that we can push the free squares in $SF_{n}(R_{w,h}; i_{0}, \dots, i_{p})$ away from the right-most column of $R_{w,h}$, which contains the $i_{0},\dots, i_{p}$.
\end{proof}

Proposition \ref{homotopy equivalence for i0...ip} implies that  Propositions \ref{intersection is shuffle} and \ref{move squares to right side} hold for $F_{n}(\R^{2}; i_{0}, \dots, i_{p})$, and even more can be said in this setting.

\begin{prop}\label{forget points on the right}
There is a homotopy equivalence between $F_{n}(\R^{2}; i_{0}, \dots, i_{p})$ and $F_{n-p-1}(\R^{2})$.
\end{prop}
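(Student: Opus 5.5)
We will prove that forgetting the points $i_0,\dots,i_p$ is a homotopy equivalence. Concretely, we show that the forgetful map
\[
\pi: F_{n}(\R^{2}; i_{0}, \dots, i_{p})\to F_{n-p-1}(\R^{2})
\]
is a homotopy equivalence. It sends a configuration to the configuration of its free points, relabelled order-preservingly by $\{1,\dots,n-p-1\}$. The strategy has two steps: first deformation retract onto a normalized subspace, then show that $\pi$ restricted to that subspace is a homeomorphism (or at least a homotopy equivalence) onto $F_{n-p-1}(\R^{2})$.

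\textbf{Normalization of the rightmost column.} Write $X=\max_j x_j$. Since every point lies weakly left of each $x_{i_l}$, all of $i_0,\dots,i_p$ share the $x$-coordinate $X$. Because $y_{i_0}>\cdots>y_{i_p}$, they are distinct points on the vertical line $x=X$, and free points may also lie on that line. Let $N$ be the subspace in which:
\begin{itemize}
\item every free point has $x$-coordinate $<0$;
\item $X=0$;
\item $y_{i_l}=-l$ for $l=0,\dots,p$.
\end{itemize}
Restricted to $N$, the map $\pi$ is a homeomorphism onto $F_{n-p-1}(\{x<0\})$. Moreover $\{x<0\}\cong\R^{2}$ by a homeomorphism preserving the ordering of labels, and the inclusion $F_{n-p-1}(\{x<0\})\hookrightarrow F_{n-p-1}(\R^{2})$ is a homotopy equivalence (the half-plane is a deformation of the plane via a translation-and-scaling isotopy). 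It therefore suffices to deformation retract $F_{n}(\R^{2}; i_{0}, \dots, i_{p})$ onto $N$.

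\textbf{The deformation, in three stages.}
\begin{itemize}
\item \emph{Translate.} Apply a horizontal translation so that $X=0$. This is continuous in the configuration, and it preserves both the defining inequalities and the vertical order of the $i_l$.
\item \emph{Shift free points left.} Free points currently on the line $x=0$ must be moved to $x<0$ without collisions. Consider the homotopy $t\mapsto (x_j - t\,\phi(\mathbf c), y_j)$ applied to free points only, where $\phi(\mathbf c)>0$ is continuous. Since all free points move by the same vector, they never collide with each other. A free point is never to the right of the line $x=0$, so after moving it strictly left it cannot hit any $i_l$ (those stay on $x=0$). Thus any constant $\phi>0$, say $\phi=1$, works, and the inequalities $x_j\le x_{i_l}$ remain valid throughout. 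After this stage all free points have $x<0$.
\item \emph{Straighten the column.} With all free points in the open half-plane $x<0$, the $i_l$ lie alone on the line $x=0$ in the prescribed vertical order. Linearly interpolate their $y$-coordinates to $-l$. Strict decrease of the $y$-coordinates is preserved under convex combination of two strictly decreasing sequences, and no collisions with free points can occur. This yields a deformation retraction onto $N$ (after checking that $N$ is fixed throughout, which holds if the first stage is applied as written, with $\phi$ only shifting $x$).
\end{itemize}

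\textbf{Main obstacle.} The delicate point is making the second stage a genuine deformation retraction, namely one that fixes $N$ pointwise. The fix is to replace the uniform shift by $\phi(\mathbf c)=\max\big(0,\,1+\max_{j\text{ free}} x_j\big)$, or a similar function that vanishes once all free points are already at $x\le -1$, and to take $N$ with the condition $x_j\le -1$ instead of $x_j<0$. This is still a homotopy equivalence, since $\{x\le -1\}$ is a closed half-plane whose configuration space is equivalent to that of $\R^{2}$. Alternatively, it suffices to exhibit homotopy inverses rather than a strict retraction, since only a homotopy equivalence is asserted.
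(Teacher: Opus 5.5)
Your proposal is correct and follows essentially the same route as the paper: normalize $i_{0},\dots,i_{p}$ to fixed positions on a vertical line, push the free points into the open half-plane to their left, and identify the resulting subspace with the configuration space of $n-p-1$ points in a half-plane homeomorphic to $\R^{2}$; your explicit translation, uniform left shift, and column-straightening homotopies just spell out what the paper obtains by citing Propositions \ref{move squares to right side} and \ref{intersection is shuffle}. The ``main obstacle'' you flag is not really one: even with $\phi\equiv 1$ your homotopy maps $N$ into itself, so the inclusion of $N$ is already a homotopy equivalence, and the homeomorphism $\{x<0\}\cong\R^{2}$ suffices without any claim about the half-plane inclusion.
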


\begin{proof}
By Propositions \ref{move squares to right side} and \ref{intersection is shuffle}, we may assume in $F_{n}(\R^{2}; i_{0}, \dots, i_{p})$ that the points $i_{0}, \dots, i_{p}$ lie strictly to the right of the other points in configuration.
As such, there is a map from $F_{n}(\R^{2};i_{0}, \dots, i_{p})$ to itself that sends the point $i_{l}$ to $(-\frac{1}{2}, -l+\frac{1}{2})\in \R^{2}$, with the other $n-p-1$ points lying in the open left half plane defined by $x<-\frac{1}{2}$.
This space is homeomorphic to $\R^{2}$, giving the desired homotopy equivalence between $F_{n}(\R^{2}; i_{0}, \dots, i_{p})$ and $F_{n-p-1}(\R^{2})$.
\end{proof}

Next, we note a path-connectedness result that appears rather innocuous on first glance, though we will come to see that it serves as a strong foothold for the inductive arguments of Section \ref{section main theorem}.

\begin{prop}\label{connected single right configuration space}
If $\min\{w-1, h\}\ge 2, wh-n\ge 2$, and $0\le p+1\le h$, then $SF_{n}(R_{w,h}; i_{0}, \dots, i_{p})$ is path-connected.
In particular, the inclusion of $SF_{n}(R_{w,h}; i_{0}, \dots, i_{p})$ into $F_{n}(\R^{2}; i_{0}, \dots, i_{p})$ induces an isomorphism
\[
H_{0}\big(SF_{n}(R_{w,h}; i_{0}, \dots, i_{p})\big)\cong H_{0}\big(F_{n}(\R^{2}; i_{0}, \dots, i_{p})\big).
\]
\end{prop}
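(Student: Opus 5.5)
Reduce to the 15-puzzle argument of Proposition \ref{n le wh-2 is connected}, applied to the rectangle obtained by deleting the right-most column. By Proposition \ref{move squares to right side}, deform $SF_{n}(R_{w,h}; i_{0}, \dots, i_{p})$ onto the subspace where the squares $i_{0},\dots,i_{p}$ are tangent to the right side of $R_{w,h}$. The free squares then lie weakly left of them, with $x_j\le -\tfrac12$. Using a discrete model as in Proposition \ref{square is cube}, suitably subdivided so the subspace is a subcomplex, every configuration can be connected by a path to one whose squares sit on grid squares. In that configuration the $i_l$ occupy $p+1$ grid squares of the right-most column, in top-to-bottom order $i_0,\dots,i_p$.

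Fix a target configuration. Put $i_{0},\dots,i_{p}$ in the top $p+1$ grid squares of the right column and place the free squares in lexicographic order in the left $(w-1)\times h$ rectangle $R_{w-1,h}$, filling its grid squares first. Any free squares left over go into the right column below $i_p$.

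Given a grid configuration $\mathbf c$, the plan has three steps.
\begin{enumerate}
\item Slide the $i_l$ up the right column to the top $p+1$ positions while keeping their order, so the defining inequalities stay valid. To make room, push any free square that sits above some $i_l$ in the right column leftward into an empty grid square. The empty space is at least $2$, and $w-1\ge 2$ gives room to maneuver a hole next to the obstructing square using the sliding moves from Proposition \ref{n le wh-2 is connected}. A free square may legally sit in the right column at the same $x$-coordinate as the $i_l$, since only $x_j\le x_{i_l}$ is required.
\item Freeze the $i_l$ at the top of the right column. What remains is a configuration of the free squares in the region consisting of $R_{w-1,h}$ together with the bottom $h-p-1$ cells of the right column. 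That region still has at least $2$ empty cells.
\item Show this space of free squares is path-connected. Use the ghost-square trick of Proposition \ref{n le wh-2 is connected}: fill with ghosts until one hole remains, apply Remark \ref{two components}, and use the second hole to fix the parity.
\end{enumerate}

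The main obstacle is the shape of the region in step 3. It is an L-shaped union of $R_{w-1,h}$ and a partial column, not a rectangle, so Proposition \ref{15 puzzle has two components} does not apply directly. I would handle this by keeping the empty cells inside $R_{w-1,h}$ and first routing any free squares in the partial column into $R_{w-1,h}$ in a fixed order, when there is room. In $R_{w-1,h}$ we have $\min\{w-1,h\}\ge 2$, so Remark \ref{two components} gives the two parity classes. The two empty cells then let us transpose two free squares, exactly as in Proposition \ref{n le wh-2 is connected}, reaching the target.

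If the free squares cannot all fit in $R_{w-1,h}$, I would instead treat the top-frozen $i_l$ as fixed obstacles. I would then argue directly that the sub-region of grid squares below them, together with $R_{w-1,h}$, contains a $2\times 2$ block adjacent to the holes. Three-cycles in that block generate $A_{m}$ on the free squares via the standard puzzle argument, and the extra hole gives odd permutations.

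The isomorphism on $H_0$ then follows. By Propositions \ref{forget points on the right} and \ref{n le wh-2 is connected}, $F_{n}(\R^{2}; i_{0}, \dots, i_{p})\simeq F_{n-p-1}(\R^{2})$ is connected, and the inclusion maps the single path component onto the single path component.
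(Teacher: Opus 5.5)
Your reduction matches the paper's: push $i_0,\dots,i_p$ to the right side, snap the configuration to the grid, and pack the $i_l$ at the top of the right column by ejecting free squares to the left. The remaining L-shaped region is where you diverge.

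The paper never proves a puzzle-group statement for a non-rectangular region. Instead it splits on the number of empty right-column cells below $i_p$.
\begin{itemize}
\item With none, the free squares live in $R_{w-1,h}$ with at least two holes, and Proposition~\ref{n le wh-2 is connected} applies.
\item With exactly one, that cell is pendant: once $i_p$ sits above it, its only available neighbour is the bottom cell of column $w-1$. The paper uses the base-$3$, height-$2$ pyramid of Remark~\ref{pyramids are cool too} to exchange the square parked there with the desired one, then reapplies Proposition~\ref{n le wh-2 is connected}.
\item With two or more, it alternates applications of Proposition~\ref{n le wh-2 is connected} to the overlapping rectangles $R_{w-1,h}$ and $R_{w,h-p-1}$, keeping two holes in whichever one is in use.
\end{itemize}
Your second branch instead asserts directly that $2\times 2$ rotations, conjugated by routing moves, generate the alternating group on the free squares, and that a second hole supplies a transposition. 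This is uniform across cases and would work for more general regions. However, it is a generation lemma you would still have to prove, whereas the paper needs only one local move plus Proposition~\ref{n le wh-2 is connected}.

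One point needs tightening. Your first branch needs the free squares to fit in $R_{w-1,h}$ \emph{with two holes to spare}; merely fitting is not enough. When $n-p-1=(w-1)h-1$ they fit with only one hole there. Then Proposition~\ref{15 puzzle has two components} gives two parity classes, Proposition~\ref{n le wh-2 is connected} does not apply, and as written neither of your branches covers this case. It belongs in your second branch.

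The delicate instance is $p+1=h-1$ and $n=wh-2$, where the only other hole is the pendant cell. There the transposition must be produced through that cell. One way is the paper's pyramid swap. Another is to slide a free square into the pendant cell, so that $R_{w-1,h}$ temporarily has two holes, perform the swap there, and then undo the routing moves.
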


\begin{proof}
By Proposition \ref{move squares to right side}, we may assume that the squares $i_{0}, \dots, i_{p}$ are tangent to the right side of $R_{w,h}$.
Moreover, Proposition \ref{square is cube} can be extended to tell us that given any such configuration, we may slide the squares so that they are all centered at the $0$-cells of $R^{*}_{w,h}$.
We show that we can slide the squares in any configuration in $SF_{n}(R_{w,h}; i_{0}, \dots, i_{p})$, to get the target configuration in which squares $i_{0}, \dots, i_{p}$ are at the top right of $R_{w,h}$ and the free squares are in lexicographical order when read from top to bottom, right to left, starting directly under $i_{p}$.

Note that in any configuration in which the $i_{j}$ are tangent to the right side of $R_{w,h}$, we may assume that square $i_{l}$ is tangent to square $i_{l+1}$, and that square $i_{0}$ is tangent to the top of $R_{w,h}$.
To see this, assume that there is some $l$ such that there is a free square $j$ between squares $i_{l}$ and $i_{l+1}$.
By sliding the free squares around the rectangle, we may assume that the square immediately to the left of $j$ is unoccupied.
As such, we may slide square $j$ out of the right-most column of $R_{w,h}$, and slide $i_{l}$ up.
It follows that we may assume that the squares $i_{0}, \dots, i_{p}$ are at the top right of $R_{w,h}$.

Next, we note that there are either $0$, $1$, or $2$ or more grid-squares in the right-most column of $R_{w,h}$ that are not occupied by the $i_{0}, \dots, i_{p}$.
If there are $0$ such grid-squares, there are at most $(w-1)h-2$ squares in the rectangle $R_{w-1, h}$ at the left of $R_{w,h}$.
Viewing the free squares as a configuration in $SF_{n-h}(R_{w-1,h})$, we can slide these squares around to get the target configuration since
the resulting configuration space is path-connected by Proposition \ref{n le wh-2 is connected}.

If there is $1$ such grid square below the $i_{j}$, we can slide one of the remaining $n-p-1$ free squares, say square $j$ into it.
As such there are at most $(w-1)h-2$ free squares in the $R_{w-1,h}$ at the left of $R_{w,h}$.
If $j$ is first free square in the lexicographical order, then we can use Proposition \ref{n le wh-2 is connected} to slide the remaining free squares into the target configuration, by treating them as a configuration in $SF_{n-h}(R_{w-1,h})$.
Otherwise, treating the remaining free squares as a configuration in $SF_{n-h}(R_{w-1,h})$, we can can use Proposition \ref{n le wh-2 is connected} to slide the remaining free squares into a configuration such that the two right-most squares in the bottom row of this $R_{w-1,h}$ are unoccupied, and such that first free square in the lexicographical order is in the second from the bottom row in the right-most column of this $R_{w-1,h}$.
This yields a square pyramid of base 3 and height 2 at the bottom right of $R_{w,h}$ containing only the free square $j$ in the right-most column of $R_{w,h}$ and first free square in the lexicographical order.
Using a sequence of slides, we may interchange these squares; see Figure \ref{squarewheelandpyramidwheel}.
This yields a configuration of $n-h$ squares in the left $R_{w-1,h}$ in $R_{w,h}$; using Proposition \ref{n le wh-2 is connected}, there is a sequence of slides that move these squares to the target configuration.

Finally, if there are at least $s\ge 2$ such grid-squares, we see that a similar operation allows us to put $s\le h-p-1$ of the remaining $n-p-1$ free squares below $i_{p}$.
Using Proposition \ref{n le wh-2 is connected}, we may arrange the squares in the left $R_{w-1,h}$ such that the first $h-p-1$ free squares in the lexicographical order are in the $R_{w,h-p-1}$ found at the bottom of $R_{w,h}$; moreover, we can ensure that there are at least $2$ unoccupied squares in this $R_{w,h-p-1}$.
Again, we note that Proposition \ref{n le wh-2 is connected} tells us we can put the first $n-h$ free squares in this subrectangle to the far right, so they lie below $i_{0}, \dots, i_{p}$ in $R_{w,h}$.
Another application of Proposition \ref{n le wh-2 is connected}, tells us we can arrange the remaining squares in the left $R_{w-1,h}$ to be in the target configuration, completing the proof.
\end{proof}

Our proof Theorem \ref{space homological stability} relies on an inductive argument based on the number of squares fixed on the right side of $R_{w,h}$.
As such, we will need to show that if either 
\begin{enumerate}
    \item $\min\{w-1,h\}\ge k+2$, $wh-n\ge \max\big\{(k+1)(k+2),hk+2\big\}$, and $0\le p+1\le h$, or
    \item $w\ge h=k+2$, $wh-n\ge (k+1)(k+2)$, and $p=-1$,
\end{enumerate}
then
\[
H_{k}\big(SF_{n}(R_{w, h};i_{0}, \dots, i_{p})\big)\cong H_{k}\big(F_{n}(\R^{2};i_{0}, \dots, i_{p})\big).
\]
This is done via another Mayer--Vietoris spectral sequence in which we cover $SF_{n}(R_{w, h};i_{0}, \dots, i_{p})$ with sets of the form $SF_{n}(R_{w,h};j;i_{0}, \dots, i_{p})$.
In general, given distinct $j_{0}, \dots, j_{s},i_{0}, \dots, i_{p}$ in $\{1, \dots, n\}$, we define the space $SF_{n}(R_{w,h};j_{0}, \dots, j_{s};i_{0}, \dots, i_{p})$ to be the subspace of $SF_{n}(R_{w,h};i_{0}, \dots, i_{p})$ in which no square other than squares $i_{0}, \dots, i_{p}$ can be to the right of the squares $j_{0}, \dots, j_{s}$, which are aligned vertically in that order.

Like $SF_{n}(R_{w,h};i_{0}, \dots, i_{p})$, we may subdivide $DF_{n}(R_{w,h})$ via hyperplanes to get a complex in which $SF_{n}(R_{w,h};j_{0}, \dots, j_{s};i_{0}, \dots, i_{p})$ is a subcomplex.
Moreover, we may assume that in the configuration space $SF_{n}(R_{w,h};j_{0}, \dots, j_{s};i_{0}, \dots, i_{p})$, the squares $i_{0}, \dots, i_{p}$ are tangent to the right side of $R_{w, h}$ and that the squares $j_{0}, \dots, j_{s}$ are in the two right-most columns of $R_{w,h}$.

\begin{prop}\label{squares in the two rightmost columns}
There is a deformation retraction from $SF_{n}(R_{w,h};j_{0}, \dots, j_{s};i_{0}, \dots, i_{p})$ onto its subspace in which the squares $i_{0}, \dots, i_{p}$ are tangent to the right side of $R_{w,h}$, and the squares $j_{0}, \dots, j_{s}$ are in the two right-most columns of $R_{w,h}$.
\end{prop}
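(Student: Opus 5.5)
The retraction will be built as a composite of two straight-line homotopies, imitating the proof of Proposition \ref{move squares to right side}. In $SF_{n}(R_{w,h};j_{0},\dots,j_{s};i_{0},\dots,i_{p})$ the squares $i_{0},\dots,i_{p}$ have maximal $x$-coordinate among all squares. The squares $j_{0},\dots,j_{s}$ have maximal $x$-coordinate among the free squares other than the $j$'s.

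The first homotopy moves each $i_{l}$ horizontally to $x=-\frac{1}{2}$, keeping everything else fixed. Since nothing lies strictly to the right of $i_{l}$, the vertical strip swept by $i_{l}$ as it moves right is empty of other squares, except possibly another $i_{l'}$. Any such $i_{l'}$ is vertically separated from $i_{l}$ by at least $1$ and moves in parallel. So no overlaps are created. All defining inequalities ($x_{m}\le x_{i_{l}}$ and the vertical ordering) are preserved, and configurations already tangent stay fixed.

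The second homotopy acts on the resulting subspace and moves the squares $j_{0},\dots,j_{s}$ to the right. Set $x^{*}=\max_{l}x_{i_{l}}=-\frac{1}{2}$. Each $j_{t}$ is moved toward $x=-\frac{3}{2}$ if it lies to the left of $-\frac{3}{2}$, and is left alone if it already lies in $[-\frac{3}{2},-\frac{1}{2}]$, i.e.\ in the two right-most columns. The homotopy is $x_{j_{t}}\mapsto x_{j_{t}}+\tau\max\{0,-\frac{3}{2}-x_{j_{t}}\}$, which is continuous and the identity on the target subspace. By hypothesis, the only squares to the right of $j_{t}$ are the $i$'s and the other $j$'s.

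The main obstacle is checking that this second homotopy creates no collisions.

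\emph{Collisions with the $i$'s.} The $i$'s now sit in the right-most column, i.e.\ with centers at $x=-\frac{1}{2}$. A $j$-square whose center has $x\le -\frac{3}{2}$ is at Chebyshev distance at least $1$ from them.

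\emph{Collisions among the $j$'s.} The $j$'s are aligned vertically, so their $y$-coordinates differ pairwise by at least $1$. Any horizontal motion among them is therefore harmless.

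\emph{Collisions with free squares.} Free squares have $x$-coordinate at most $\min_{t}x_{j_{t}}$, and the $j$'s only move right, so they never overlap a free square.

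The constraint defining the subspace must also persist: free squares stay weakly left of the $j$'s, and the $j$'s stay weakly left of the $i$'s, since $x_{j}\le-\frac{3}{2}<-\frac{1}{2}$ or $x_{j}$ is unchanged. Where the definition uses ties $x_{j}=x_{i}$, I would check that the squares in question are vertically separated, which follows from the non-overlap condition.

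Concatenating the two homotopies gives the desired deformation retraction. Each stage fixes the subspace pointwise and preserves the ambient space, and the composition is continuous because the second homotopy is applied to the image of the first.
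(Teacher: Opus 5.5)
Your proposal is correct and takes essentially the paper's approach: the paper only asserts that the retraction is a straight-line homotopy moving the $i_{l}$ to the right edge and the $j_{t}$ rightward into the two right-most columns. Your two-stage version, with the explicit collision and constraint checks, spells that argument out faithfully; the stages could even run simultaneously, since for an $i$ and a $j$ at overlapping heights the horizontal gap moves linearly from an initial value of at least $1$ to a final value of exactly $1$.
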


The desired deformation retraction is a straight-line homotopy, and in $\R^{2}$ even more is true, as the configuration space $F_{n}(\R^{2};j_{0}, \dots, j_{m};i_{0}, \dots, i_{p})$ is homotopy equivalent to $F_{n}(\R^{2};j_{0}, \dots, j_{m},i_{0}, \dots, i_{p})$.

\begin{prop}\label{in points can move two columns left}
There are homotopy equivalences
\[
F_{n}(\R^{2};j_{0}, \dots, j_{s};i_{0}, \dots, i_{p})\simeq F_{n-p-1}(\R^{2};j_{0}, \dots, j_{s})\simeq F_{n}(\R^{2};j_{0}, \dots, j_{m},i_{0}, \dots, i_{p}).
\]
\end{prop}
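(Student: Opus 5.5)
The plan is to prove the two equivalences separately. The first comes from forgetting the points $i_0,\dots,i_p$, in the manner of Proposition \ref{forget points on the right}. The second comes from showing that the rightmost block can be merged with the next block, because points have no volume.

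\textbf{First equivalence.} By Proposition \ref{squares in the two rightmost columns}, transported to points, we may deform $F_{n}(\R^{2};j_{0},\dots,j_{s};i_{0},\dots,i_{p})$ so that $x_{i_l}$ is at least the $x$-coordinate of every other point. The point version is the analogue of the straight-line homotopy of Proposition \ref{move squares to right side}: push the $i_l$ to the right. Along this homotopy we can arrange the following:
\begin{itemize}
\item all $i_l$ share a common $x$-coordinate $c$;
\item $c$ is strictly larger than every other $x$-coordinate;
\item the $i_l$ keep their vertical order.
\end{itemize}
Concretely, set $c=\max_j x_j+1$. The $i_l$ are then pinned on the vertical line $x=c$ in a fixed order, which is a contractible choice. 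Next, move them to the standard positions $(c,-l+\tfrac12)$ by a straight-line homotopy along that line. This homotopy preserves the vertical order, so the points never collide.

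What remains is a configuration of the $n-p-1$ other points in the open half plane $\{x<c\}$. These points still satisfy the $j$-conditions: no free point lies to the right of the $j$'s, and $j_0,\dots,j_s$ are vertically aligned in order. The half plane is homeomorphic to $\R^{2}$ by a homeomorphism preserving $x$-order and vertical alignment, for instance $(x,y)\mapsto(-e^{-(x-c)}\cdot\text{const},y)$, or simply a translation combined with an increasing reparametrization of $x$. This gives
\[
F_{n}(\R^{2};j_{0},\dots,j_{s};i_{0},\dots,i_{p})\simeq F_{n-p-1}(\R^{2};j_{0},\dots,j_{s}).
\]
The homotopy inverse re-inserts the $i_l$ on a line to the right of everything. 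Both composites are homotopic to the identity by straight-line homotopies in the $i$-coordinates. These homotopies keep each $i_l$ to the right of all other points and preserve the order among the $i_l$, so they are well defined.

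\textbf{Second equivalence.} Applying the same argument to $F_{n}(\R^{2};j_{0},\dots,j_{s},i_{0},\dots,i_{p})$ forgets all $s+p+2$ rightmost points. By Proposition \ref{forget points on the right} this gives $F_{n}(\R^{2};j_{0},\dots,j_{s},i_{0},\dots,i_{p})\simeq F_{n-s-p-2}(\R^{2})$. In the same way, Proposition \ref{forget points on the right} applied to $n-p-1$ points gives $F_{n-p-1}(\R^{2};j_{0},\dots,j_{s})\simeq F_{n-p-1-s-1}(\R^{2})$. The two spaces therefore match. This equivalence is compatible with the natural map that slides the $j$-column rightward onto the $i$-column, placing the $j$'s above the $i$'s. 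That compatibility is what later lets the differentials be identified.

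\textbf{Main obstacle.} The only real care needed is to make sure the ordering conditions are open enough that the straight-line homotopies never leave the subspace. The risk is collisions when two constrained columns share an $x$-coordinate. Requiring strict vertical separation within each column, and sending the $i$-column strictly to the right, avoids this.
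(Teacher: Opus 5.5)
Your argument is correct and follows the reasoning the paper relies on (it states this proposition without a separate proof): push the $i$-column strictly to the right by a straight-line homotopy and forget it, as in Proposition~\ref{forget points on the right}, then observe that both outer spaces are $\simeq F_{n-s-p-2}(\R^{2})$. The one slip is your sample map $(x,y)\mapsto(-e^{-(x-c)}\cdot\mathrm{const},y)$, which sends $\{x<c\}$ onto a half plane rather than onto $\R^{2}$ (use e.g.\ $x\mapsto-\log(c-x)$); that step is superfluous anyway, since your forget/re-insert pair, with the straight-line homotopy in the $i$-coordinates, already gives the equivalence.
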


Unfortunately, analogues of Propositions \ref{forget points on the right} and \ref{in points can move two columns left} do not hold for general configuration spaces of squares in a rectangle.
That said, the following proposition does hold for these square configuration spaces, a fact we will use to prove Theorem \ref{almost sharp homological stability for k=1}.

\begin{prop}\label{connected double right configuration space}
If $n$, $w$, $h$, $s$ and $p$ are such that $\min\{w-1,h\}\ge 3$, $wh-n\ge \max\{6, h-p-1\}$, 
$0\le s\le 2$, and $0\le s+1+p+1\le h$, then $SF_{n}(R_{w,h}; j_{0}, \dots, j_{s};i_{0}, \dots, i_{p})$ is path-connected.
In particular, the inclusion of the square configuration space $SF_{n}(R_{w,h}; j_{0}, \dots, j_{s};i_{0}, \dots, i_{p})$ into the point configuration space $F_{n}(\R^{2}; j_{0}, \dots, j_{s};i_{0}, \dots, i_{p})$ induces isomorphisms
\begin{align*}
H_{0}\big(SF_{n}(R_{w,h}; j_{0}, \dots, j_{s};i_{0}, \dots, i_{p})\big)&\cong H_{0}\big(SF_{n}(R_{w,h}; j_{0}, \dots, j_{s}, i_{0}, \dots, i_{p})\big)\\
&\cong H_{0}\big(F_{n}(\R^{2}; j_{0},\dots, j_{s}, i_{0}, \dots, i_{p})\big)\\
&\cong H_{0}\big(F_{n}(\R^{2}; j_{0}, \dots, j_{s}; i_{0}, \dots, i_{p})\big).
\end{align*}
\end{prop}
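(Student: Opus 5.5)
The plan is to reduce to the single-stack case treated in Proposition \ref{connected single right configuration space}. Concretely, every configuration in $SF_{n}(R_{w,h}; j_{0}, \dots, j_{s};i_{0}, \dots, i_{p})$ will be joined by a path to one in which the squares $j_{0},\dots,j_{s}$ sit in the right-most column directly beneath $i_{p}$. That subspace is contained in $SF_{n}(R_{w,h}; j_{0},\dots,j_{s},i_{0},\dots,i_{p})$, which is path-connected by Proposition \ref{connected single right configuration space}, since $wh-n\ge 6\ge 2$ and $s+1+p+1\le h$. Once path-connectedness is established, the chain of $H_{0}$ isomorphisms is formal. Both square spaces are path-connected, and Proposition \ref{in points can move two columns left} identifies the two point spaces up to homotopy with the connected space $F_{n-p-1}(\R^{2};j_0,\dots,j_s)$. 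Hence all four groups are $\Z$, and the inclusions carry a point-class to a point-class.

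For the reduction, first apply Proposition \ref{squares in the two rightmost columns}. Then the $i_{l}$ are tangent to the right wall, and by the argument in the proof of Proposition \ref{connected single right configuration space} they can be slid to the top right, stacked tangentially. The $j$'s lie in the two right-most columns in vertical order. Since $s\le 2$, at most three $j$'s have to be moved, and we will move them one at a time starting with the top one, $j_{0}$.

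Suppose $j_0$ is in the second column. We need the grid-squares of the right column from just below $i_p$ down to the level of $j_0$ to be empty, so that $j_0$ can step right and slide up. Those cells can only be occupied by free squares: they lie right of the $j$'s, which is forbidden for free squares unless they are above or below them, so the obstruction comes from free squares in the right column below $i_p$ and from the vertical ordering. The condition $wh-n\ge h-p-1$ ensures there are at least as many empty grid-squares as the height $h-p-1$ of the column below $i_p$. We will use this to evacuate that column into the region $R_{w-2,h}$ to the left, using Proposition \ref{n le wh-2 is connected} there, together with the pyramid swap of Figure \ref{squarewheelandpyramidwheel} when a single free square must be exchanged across the column boundary.

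Moving the $j$'s one at a time also requires keeping every intermediate configuration inside the subspace, meaning no free square may be to the right of any $j$. We handle this by only moving free squares within the left $R_{w-2,h}$ and within the right column strictly below all $j$'s. After $j_0$ is placed, repeat for $j_1$ and $j_2$.

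The main obstacle is the bookkeeping in the evacuation step. We must show that the free squares sitting in the right-most column, or trapped in the second column between $j$'s, can be pushed left while respecting the constraint that nothing lies right of the $j$'s. The hypothesis $\min\{w-1,h\}\ge 3$ is what supplies room for the needed $2\times 3$ pyramid maneuvers. I would first try to arrange, via Proposition \ref{n le wh-2 is connected} on $R_{w-2,h}$ (which needs $w-2\ge 2$ and at least $2$ empty cells there, guaranteed by $wh-n\ge 6$ after accounting for the at most $4$ empty cells the two right columns can absorb), that two empty cells sit adjacent to whichever column cell must be cleared. Then I would slide the offending square out.
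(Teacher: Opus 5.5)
Your proposal has a genuine gap, and it starts with how you read the defining condition. In $SF_{n}(R_{w,h};j_{0},\dots,j_{s};i_{0},\dots,i_{p})$, ``nothing but the $i_l$ is to the right of the $j$'s'' means $x_m\le x_{j_l}$ for every $m\notin\{i_0,\dots,i_p\}$, including $m=j_{l'}$. It is not a row-by-row condition, as your phrase ``forbidden for free squares unless they are above or below them'' assumes. This has two consequences.
\begin{itemize}
\item The $j$'s always share one $x$-coordinate and can change columns only together. Moving $j_0$ into the right-most column while $j_1,j_2$ stay in the second leaves the space.
\item Whenever the $j$'s are in the second column, the right-most column contains only the $i$'s.
\end{itemize}
So the evacuation you flag as the main obstacle is vacuous, and your room count is wrong. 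With the $j$'s in the second column, the right-most column has $h-p-1$ empty cells, so the $R_{w-1,h}$ to its left has exactly $wh-n-(h-p-1)$ empty cells. This is $0$ in the permitted case $wh-n=h-p-1\ge 6$. In that case, while the $j$'s are in the second column, nothing in columns $2,\dots,w$ can move. Then $j$'s sitting beside the $i$'s cannot descend the second column to get below $i_p$, and Proposition~\ref{n le wh-2 is connected} cannot be applied on $R_{w-2,h}$.

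Your first step also fails when some $j$'s are interleaved with the $i$'s in the right-most column. They are not free squares, so the argument of Proposition~\ref{connected single right configuration space} does not push them out one at a time. The block can leave only after that column is emptied of free squares and every cell to the left of the $j$'s is cleared.

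What is missing is a way to change the shuffle $\sigma$, i.e.\ to connect the different strata $SF_{n}(R_{w,h};\sigma)$. That is the entire content of the proposition. The paper does this by leapfrogging, and it rearranges free squares while the $j$'s sit in the right-most column, because that is when the left region has room.
\begin{enumerate}
\item It slides the $i$'s so the cells to the right of the $j$'s are empty (possible since $s+1+p+1\le h$) and steps the block right.
\item It uses $n\le (w-1)h+p+1$ to put all free squares in the left $R_{w-1,h}$.
\item It uses the available room: at least $6$ empty cells when $s+1+p+1=h$, or at least $s+1$ empty cells in $R_{w-1,h}$ plus a gap in the right-most column when $s+1+p+1<h$. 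With this, Proposition~\ref{n le wh-2 is connected} clears the cells immediately left of the $j$'s and, if needed, of the offending $i_l$.
\item It steps the block left, moves that $i_l$ past it within the right-most column (which now holds only $i$'s), and steps the block back.
\end{enumerate}
This is repeated until the order is $j_0,\dots,j_s,i_0,\dots,i_p$. The key maneuver is that the $j$'s wait in the second column while an $i$ travels in the first, rather than the $j$'s travelling in the second column.

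A minor point: the stratum with the $j$'s beneath $i_p$ is $SF_n(R_{w,h};i_0,\dots,i_p,j_0,\dots,j_s)$, not $SF_n(R_{w,h};j_0,\dots,j_s,i_0,\dots,i_p)$. Any fixed shuffle would serve as a target, and your derivation of the $H_0$ isomorphisms from path-connectedness is fine.
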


\begin{proof}

We show that we can slide the squares in any configuration in $SF_{n}(R_{w,h}; j_{0}, \dots, j_{s};i_{0}, \dots, i_{p})$ to the target configuration in which the squares $j_{0}, \dots, j_{s}, i_{0},\dots, i_{p}$ are the top $s+1+p+1$ squares of the right-most column of $R_{w,h}$ in that order, and the free squares are arranged in lexicographical order from top to bottom, right to left, starting directly below $i_{p}$.

If $p=-1$, then we can apply Proposition \ref{connected single right configuration space} to prove this result.
Otherwise, by Proposition \ref{squares in the two rightmost columns}, we may assume that the squares $j_{0}, \dots, j_{s},i_{0}, \dots, i_{p}$ are in the two right-most columns of $R_{w,h}$.
If the squares $j_{0}, \dots, j_{s}$ are already in the right-most column, then proceed with the next step; otherwise, note that since $s+1+p+1\le h$, we may slide the squares $i_{0}, \dots, i_{p}$ up and down so that the squares immediately to the right the $j_{0}, \dots, j_{s}$ are unoccupied. 
As such, we may slide the squares $j_{0}, \dots, j_{s}$ into the right-most column of $R_{w,h}$.
Moreover, applying Proposition \ref{connected single right configuration space} to this configuration as an element of $SF_{n}(R_{w,h};\sigma)$, where $\sigma$ is the shuffle of $(j_{0},\dots, j_{s})$ into $(i_{0},\dots, i_{p})$, we can move these squares to be at the top of the right-most column of $R_{w,h}$.
Additionally, since $(w-1)h+p+1\ge n$, we may assume that the free squares are all in the $R_{w-1,h}$ in the left side of $R_{w,h}$.

Next, we consider two cases.
If $s+1+p+1=h$, then since $wh-n\ge 6$, Proposition \ref{n le wh-2 is connected} tells us that we may slide the $n-h\le(w-1)h-6$ free squares about the $R_{w-1,h}$ in the left side of $R_{w,h}$ such that the grid-squares immediately to the left of the $j_{0},\dots, j_{s}$ are unoccupied. 
Moreover, if there is an $i_{l}$ directly above one of the $j$, we can further arrange the free squares such that the grid-square immediately to its left is unoccupied.
As such, we may slide $j_{0}, \dots, j_{s}$ into the second right-most column of $R_{w,h}$ and up one square if there is an $i_{l}$ immediately above it, the offending $i_{l}$ down one square, and the $j_{0},\dots, j_{s}$ back to the right, moving the $j_{0},\dots, j_{s}$ above the offending $i_{l}$.
Repeating this process if necessary, we can arrange the $j_{0}, \dots, j_{s}$ to be at the top of the right-most column of $R_{w,h}$ and the $i_{0},\dots, i_{p}$ to be directly below them.
Applying Proposition \ref{n le wh-2 is connected} allows us to rearrange the free squares to get the target configuration.

If $s+1+p+1<h$, then since $(w-1)h+p+1\ge n$, there are at least $s+1$ empty squares in the $R_{w-1,h}$ in the left side of $R_{w,h}$. 
We can slide these free squares about so that the grid-square immediately to the left of each of the $j_{0},\dots, j_{s}$ is unoccupied.
Slide all the $i_{0},\dots, i_{p}$ that are below $j_{s}$ to the bottom of the right-most column of $R_{w,h}$, and let $i_{l}$ be the last of the $i_{0},\dots, i_{p}$ above any of the $j_{0}, \dots, j_{s}$.
Since $s+1+p+1<h$, there is at least one square gap between $j_{s}$ and $i_{l+1}$.
Slide the $j_{0}, \dots, j_{s}$ to the left, the $i_{l}$ down to the square directly above $i_{l+1}$, and then the $j_{0}, \dots, j_{s}$ back to the right.
Repeating this process if necessary, we can arrange the squares such that the $j_{0},\dots, j_{s}$ are above the $i_{0}, \dots, i_{p}$.
Applying Proposition \ref{n le wh-2 is connected}, allows us to rearrange the free squares to get the target configuration.
\end{proof}

One could consider subspaces of $SF_{n}(R_{w,h}; j_{0},\dots, j_{s}; i_{0}, \dots, i_{p})$ where there is a third right-most set of squares.
Fortunately, this is not necessary for our purposes due to Lemma \ref{cover j,i}, so we do not study these spaces.

Having proved several results about the path-connectedness of unit square configuration spaces, we turn to defining ``big square'' configuration spaces, which we will use to study the higher homology groups of $SF_{n}(R_{w,h}; i_{0}, \dots, i_{p})$.
In general, the relevant big square configuration spaces are not path-connected, though if we allow a little plasticity, we can find a path-component that suffices for our purposes.

\subsection{Big Square Configuration Space}\label{Big square config}
Proposition \ref{homotopy equivalence} proves that we can realize the wheel $W(i_{0}, \dots, i_{m})$ inside an $(m+1)\times(m+1)$ square; see Figure \ref{W(123n)onsquares}.
Proposition \ref{basis for homology of FnR2} tells us that products of these wheels form a basis for $H_{k}\big(F_{n}(\R^{2})\big)$; since we wish to determine when
\[
H_{k}\big(SF_{n}(R_{w,h})\big)\cong H_{k}\big(F_{n}(\R^{2})\big),
\]
it will be useful to determine when we can realize these wheels in $SF_{n}(R_{w,h})$.
To do this we treat these wheels as ``big squares'' and consider the configuration space of such squares in $R_{w,h}$.

Recall that a \emph{weighted set} $(A, \mathcal{W})$ is a pair of sets with a fixed surjection from $A$ to $\mathcal{W}$ that sends an element $a\in A$ to its \emph{weight} $w_{a}\in \mathcal{W}$.

\begin{defn}
Given a weighted set $(A, \mathcal{W})$, the \emph{big square configuration space} $SF_{(A, \mathcal{W})}(R_{w,h})$ is the space of ways of embedding a set of $|A|$ open squares with labels in $A$ into the rectangle $R_{w,h}$ such that the square labeled $a$ has side length $w_{a}$.
\end{defn}

Our big square configuration spaces arise by treating the wheel $W(i_{0}, \dots, i_{m})$ as an $(m+1)\times (m+1)$ square labeled $i_{0}\cdots i_{m}$.
In particular, given Proposition \ref{basis for homology of FnR2}, our weighted sets will have labels $a$ consisting of injective words $i_{0}\cdots i_{m}$ corresponding to disjoint ordered sets $(i_{0}, \dots, i_{m})$ that partition $\{1, \dots, n\}$, such that $i_{0}\le i_{1},\dots, i_{m}$, with weight $w_{a}=m+1$.

Like their unweighted counterparts, we will need to consider subspaces of $SF_{(A,\mathcal{W})}(R_{w, h})$ in which certain squares are right-most.
Given a subset $\{a_{0}, \dots, a_{p}\}\subset A$, we write
\[
SF_{(A,\mathcal{W})}(R_{w, h}; a_{0}, \dots, a_{p})
\]
for the subspace of $SF_{(A,\mathcal{W})}(R_{w, h})$ in which square $a_{l}$ is above square $a_{l+1}$, the right sides of the squares $a_{0}, \dots, a_{p}$ are aligned, and none of the other squares are to the right of these squares.
Building off the idea that $SF_{n}(R_{w,h}; i_{0}, \dots, i_{p})$ is connected, we see that if only one square is big, i.e., not a unit square, then $SF_{(A,\mathcal{W})}(R_{w, h}; a_{0}, \dots, a_{p})$ is connected.
We will use the case where the big square has side length $2$ in Section \ref{tighter bound section} to prove Theorem \ref{almost sharp homological stability for k=1}.

\begin{prop}\label{free aj in k=1}
Given $n$, $w$, $h$, $k$ and $p$ such that $\min\{w-1,h\}\ge k+2$, $wh-n\ge (k+1)(k+2)$, and $0\le p+1\le h$, let $(A, \mathcal{W})$ be a weighted set such that $\sum w_{a}=n$, and $w_{a_{j}}=1$ for $j\neq p+1$, whereas for $j=p+1$, $w_{a_{p+1}}=k+1$.
Then the big square configuration space $SF_{(A,\mathcal{W})}(R_{w, h}; a_{0}, \dots, a_{p})$ is path-connected.
\end{prop}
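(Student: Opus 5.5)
We follow the strategy of Proposition \ref{connected single right configuration space}: fix a target configuration and show that every configuration in $SF_{(A,\mathcal{W})}(R_{w,h}; a_{0}, \dots, a_{p})$ can be slid to it. In the target, the unit squares $a_{0}, \dots, a_{p}$ sit at the top of the right-most column in order, and the big square $a_{p+1}$, of side $k+1$, sits in the bottom-left corner of $R_{w,h}$. The remaining unit squares fill, in lexicographical order, the region left over after removing the big square.

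By the straight-line homotopy of Proposition \ref{move squares to right side}, we may assume $a_0,\dots,a_p$ are tangent to the right side. An analogue of Proposition \ref{square is cube} for the big square configuration space (the same cubical argument applies to squares of integer side) lets us assume every square is aligned to the grid. As in the proof of Proposition \ref{connected single right configuration space}, we can then stack $a_0,\dots,a_p$ at the top right. This is possible provided we can clear cells to the left of any intervening free unit square.

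The key step is to move the big square to the bottom-left corner. The big square occupies $(k+1)^2$ grid-squares, and the empty area is at least $(k+1)(k+2)$. Hence, apart from the big square, the unit squares leave at least $k+1$ empty cells plus the $(k+1)^2$ cells of the big square region. We will move the big square one unit left (or down) at a time. To do this, we need the column of $k+1$ cells immediately to its left (respectively the row beneath it) to be empty.

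To empty that strip, consider the unit squares in the complement of the big square and the right column. We will treat the region excluding the big square as a union of rectangles, each of width or height at least $2$; this uses $\min\{w-1,h\}\ge k+2$, which leaves at least one free row or column next to the big square. Proposition \ref{n le wh-2 is connected} (applied to a suitable subrectangle containing at least two empty cells) then lets us rearrange the unit squares so that the $k+1$ cells adjacent to the big square are empty. The big square then slides one step. We repeat until it reaches the corner.

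Once the big square is in the corner, the remaining region is an L-shape: $R_{w,h}$ minus a $(k+1)\times(k+1)$ corner minus the right column. We decompose it as the union of two overlapping rectangles, $R_{w-1,h-k-1}$ on top and $R_{w-k-2,h}$ to the right of the big square. Each has side lengths at least $2$, and together they contain at least $2$ empty cells. Applying Proposition \ref{n le wh-2 is connected} to each piece, and transferring squares through the overlap, puts the unit squares into lexicographical order. Any square not belonging to $a_0,\dots,a_p$ that lies in the right column is first evacuated as in the proof of Proposition \ref{connected single right configuration space}.

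\textbf{Main obstacle.} The main difficulty is the clearing step. We must guarantee that the $k+1$ empty cells can be gathered contiguously next to the big square while the fixed column $a_0,\dots,a_p$ and possibly the big square itself obstruct the free region. This is exactly where the bound $(k+1)(k+2)$ is used. If the big square touches the top or bottom of $R_{w,h}$, the strip beside it is a $(k+1)$-tall strip. Proposition \ref{n le wh-2 is connected} only provides connectivity of configurations in full rectangles, so we must choose a rectangle avoiding the big square that contains the target strip and at least $k+3$ empty cells. The first choice to try is the rectangle of all columns to the left of the big square's right edge, taken with full height when the big square is not in it.
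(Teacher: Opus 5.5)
Your outline (fix a target, slide the big square into a corner, then sort the unit squares with the connectivity propositions) is the same as the paper's. However, the sorting step does not work as written.

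\textbf{The sorting step.} You assert that $R_{w-1,h-k-1}$ and $R_{w-k-2,h}$ both have all sides at least $2$. This is false precisely when $h=k+2$ or $w=k+3$, and the hypothesis $\min\{w-1,h\}\ge k+2$ explicitly allows both.
\begin{itemize}
\item Take $w=k+3$ and $h=k+2$. Once the big square sits in the bottom-left corner, $R_{w,h}$ minus the big square minus the right column is a width-one L-shaped corridor: the row above the big square together with column $k+2$. Unit squares in it can never pass one another. So Proposition~\ref{n le wh-2 is connected} is unavailable, and no sliding inside that region produces the lexicographic order.
\item Evacuating the right column first also fails in general, for lack of room. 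For $k=1$, $w=4$, $h=10$, $p=0$, $n=34$, there are $31$ free unit squares but only $3\cdot 10-4=26$ cells off the right column and the big square.
\end{itemize}
Hence the right column below $a_p$ must be part of the region where you rearrange. There the right tool is Proposition~\ref{connected single right configuration space}, which respects the right-most condition on the $a_l$, not Proposition~\ref{n le wh-2 is connected}.

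This is what the paper does. For $w>k+3$ and $h>k+2$ it applies Proposition~\ref{connected single right configuration space} successively to the bottom $R_{w,h-k-1}$, the right $R_{w-k-1,h}$, and the bottom $R_{w,h-k-1}$ again; all of these contain the right column. It treats $w=k+3$ or $h=k+2$ as separate cases:
\begin{itemize}
\item for $w=k+3$, $h=k+2$ it uses that there are at most $k+3$ unit squares, together with the free row beside the big square;
\item for $w=k+3$, $h>k+2$ it works in the bottom $R_{k+3,h-k-1}$ via Proposition~\ref{15 puzzle has two components}.
\end{itemize}

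\textbf{The clearing step.} The same degeneracy affects this step, which you leave unresolved. Proposition~\ref{n le wh-2 is connected} needs both sides at least $2$, not ``width or height at least $2$''. When $h=k+2$, the only row not met by the big square is a strip of height one, so no such decomposition around the big square exists.

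\textbf{The empty-cell count.} The big square contributes only $k+1$ to $n=\sum w_a$ but covers $(k+1)^2$ cells. So at least $wh-n-k(k+1)\ge 2(k+1)$ cells are empty, not $k+1$. With your count, the $k+3$ empty cells you require in your last paragraph would not exist. With the correct count they do once $k\ge 1$, and the case $k=0$ is Proposition~\ref{connected single right configuration space}.
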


\begin{proof}
The $k=0$ case is proven in Proposition \ref{connected single right configuration space}, so we restrict our attention to $k\ge 1$.
We show that the squares in any configuration can be moved to the target configuration in which squares $a_{0},\dots, a_{p}$ are at the top of the right-most column of $R_{w,h}$, the big square $a_{p+1}$ is at the top left of $R_{w,h}$, and the free unit squares are in lexicographical order when read from top to bottom, right to left, starting below square $a_{p}$.

After moving the big squares to be centered on the grid-squares of $R_{w,h}$---$a_{p+1}$ can be positioned so that it lies in only $(k+1)^{2}$ grid-squares---we see that at least $2(k+1)$ grid-squares of $R_{w,h}$ are unoccupied.
Note, we can push the $a_{0}, \dots, a_{p}$ to the right, to ensure they are in the right-most column of $R_{w,h}$.
Having done so, we can move the free unit squares so that the $k+1$ squares immediately to the left of $a_{p+1}$ are unoccupied, allowing us to slide it to the left.
Repeating this process if necessary, we can slide this $(k+1)\times (k+1)$ square so that it is tangent to the left side of $R_{w,h}$; a similar argument shows that we can move it up so that it is also tangent to the top of $R_{w,h}$. 

Next, in a manner similar to that of Proposition \ref{connected single right configuration space}, we see that we can rearrange the unit squares, so that the $a_{0},\dots, a_{p}$ are at the top right of $R_{w,h}$.

Finally, we note that we can rearrange the free unit squares to be in the target configuration.
To see this, first note we can slide the free unit squares to get a configuration of the same shape as the target configuration.
Next, we consider four cases, depending on $w$, $h$, and $p$.

If $w=k+3$ and $h=k+2$, we leave it to the reader to verify that such a configuration of at most $k+4$ big squares can be moved to the target configuration regardless of $p$---the key idea is that the empty row below square $a_{p+1}$ allows us to interchange the position of any two free squares since there can be at most $k+3$ unit squares in the configuration. 

If $w=k+3$ and $h>k+2$ and $p\ge k$, note that all the free unit squares can be placed in the $R_{k+3,h-k-1}$ at the bottom of $R_{k+3,h}$, and at least $k+1$ grid-squares are unoccupied.
Applying Proposition \ref{connected single right configuration space} to the resulting configuration in $SF_{n-2(k+1)}(R_{k+3,h-k-1}; a_{k+1},\dots, a_{p})$, we see that we can rearrange it to get the target configuration.
If $0\le p<k$, then since there are $2(k+1)$ unoccupied squares, we can rearrange the free unit squares so that they all lie in the $R_{k+3,h-k-1}$ at the bottom of $R_{k+3,h}$.
Since there are at most $(k+3)(h-k-1)-1$ such free squares, Proposition \ref{15 puzzle has two components} and Remark \ref{two components} tell us that we can rearrange this configuration so that the first free square in the lexicographical order is at the top right corner of this subrectangle.
Sliding it up to be tangent to $a_{p}$, and treating it as a right-most square, we see that there are now $(k+3)(h-k-1)-2$ free squares in the $R_{k+3,h-k-1}$ at the bottom of $R_{k+3,h}$.
Using Proposition \ref{n le wh-2 is connected}, we may arrange these free squares, so that the next $k+p-1$ free squares in the lexicographical order are at the top right of this subrectangle.
Sliding them up (or right then up), and reapplying Proposition \ref{n le wh-2 is connected} allows us to see that the squares can be put in the target configuration.
If $p=-1$, then a similar argument as Proposition \ref{connected single right configuration space} works, yielding that this space is path-connected.

If $w>k+3$ and $h=k+2$, then we can ignore the $(k+1)$ left-most columns of $R_{w,k+2}$ and get a configuration of unit squares in $R_{w-2,k+2}$, with at least $(k+1)\ge2$ unoccupied grid-squares.
Applying Proposition \ref{connected single right configuration space}, we see that we can rearrange them to get the target configuration.

If $w>k+3$ and $h>k+2$, then we can once again use Proposition \ref{connected single right configuration space} first on the configuration restricted to the bottom $R_{w,h-k-1}$, then restricted to the right $R_{w-k-1,h}$, and finally again on the bottom $R_{w,h-k-1}$ to get the desired configuration.
\end{proof}

Ideally, more general big square configuration spaces would be connected, as that would greatly simplify our arguments.
Unfortunately, this is not the case.
Still we are able to describe a nice path-component of these configuration spaces in the following construction, which we will use to study the differentials of our spectral sequences.

\begin{const}\label{first construction for connectivity}
Given $n$, $w$, $h$, and $\kappa$, where $\min\{w-1,h\}\ge \kappa+2$, $wh-n\ge \max\big\{(\kappa+1)(\kappa+2), h\kappa+2\big\}$,
 and $0\le s+1, p+1\le h$,
let $(A, \mathcal{W})$ be a weighted set such that $\sum w_{a}=n$, $\sum(w_{a}-1)=k\le \kappa$, and $w_{a_{0}}=\cdots=w_{a_{s+1+p}}=1$.
We describe a big square configuration in $SF_{(A, \mathcal{W})}(R_{w, h}; a_{0},\dots, a_{s}; a_{s+1},\dots, a_{s+1+p})$.

Without loss of generality, we can assume that in $SF_{(A, \mathcal{W})}(R_{w, h}; a_{0},\dots, a_{s}; a_{s+1},\dots, a_{s+1+p})$, the squares $a_{s+1+p+1},\dots, a_{s+1+p+1+m}$ are the free squares of size bigger than $1$.
Moreover, we may assume that $w_{a_{s+1+p+1}}\ge\cdots\ge w_{a_{s+1+p+1+m}}$, and that if $w_{a_{s+1+p+1+j}}=w_{a_{s+1+p+1+j+1}}$, then the smallest label appearing in $a_{s+1+p+1+j}$ is bigger than the smallest label appearing in $a_{s+1+p+1+j+1}$
Additionally, we may assume that $a_{s+1+p+1+m+1},\dots, a_{s+1+p+1+m+l}$ are the free squares of size $1$; without loss of generality we may further assume that in the lexicographical order the label of $a_{s+1+p+1+m+1+j}$ comes before the label of $a_{s+1+p+1+m+1+j+1}$.

If $s+1+p+1\le h$, place the squares $a_{0},\dots, a_{s+1+p}$ in a column starting at the top of the right-most column of $R_{w,h}$.
If $s+1+p+1>h$, place the squares $a_{s+1}, \dots, a_{s+1+p}$ in a column starting at the top of the right-most column of $R_{w,h}$.
Then place the squares $a_{0}, \dots, a_{s}$ in a column starting at the top of the second right-most column of $R_{w,h}$.

Next, we place the big squares $a_{s+1+p+1},\dots, a_{s+1+p+1+m}$.
This depends on $m$, and we consider $4$ cases:
\begin{itemize}
\item $m=0$. In this case, all the squares $a_{s+1+p+1},\dots,a_{s+1+p+1+m+l}$ have side length $1$, and proceed to the next step.
\item $m=1$. In this case, there is a single square $a_{s+1+p+1}$ of size greater than $1$, namely of side length $k+1$. 
Place it at the top left corner of $R_{w,h}$, and proceed to the next step. 
\item $m=2$. In this case, there are two squares $a_{s+1+p+1}, a_{s+1+p+2}$ of side length greater than $1$.
Place square $a_{s+1+p+1}$ at the top left corner of $R_{w,h}$, and square $a_{s+1+p+2}$ directly below it so that their left sides are aligned. 
Proceed to the next step.
\item $m\ge 3$. Pair off the squares $a_{s+1+p+1}$ and $a_{s+1+p+2}$, and $a_{s+1+p+3}$ and $a_{s+1+p+4}$, etc. If $m$ is odd, leave the last square by itself.
Having paired them off, group them into $w_{a_{s+1+p+2j-1}}$ by $w_{a_{s+1+p+2j-1}}+w_{a_{s+1+p+2j}}$ rectangles, so that square $a_{s+1+p+2j-1}$ is directly above square $a_{s+1+p+2j}$, and so that their left sides are aligned.
Place these rectangles so that the top left corner of rectangle $j+1$ touches the top right corner of rectangle $j$, and so that the top left corner of rectangle $1$ touches the top left corner of $R_{w,h}$, and proceed with the next step. 
\end{itemize}

Finally, we place the unit squares $a_{s+1+p+1+m+1},\dots, a_{s+1+p+1+m+l}$, doing so in lexicographical order.
First fill out the column that the squares $a_{0}, \dots, a_{s}$ are in.
Then, starting from the column two to the left of the column the $a_{0}, \dots, a_{s}$ are in, start placing the remaining squares, working top to bottom and right to left avoiding the rectangular hull of the $a_{s+1+p+1},\dots, a_{s+1+p+1+m}$; see Figure \ref{bigsquareconfig}.
\end{const}

\begin{figure}[h]
\centering
\captionsetup{width=.8\linewidth}
\includegraphics[width=12cm]{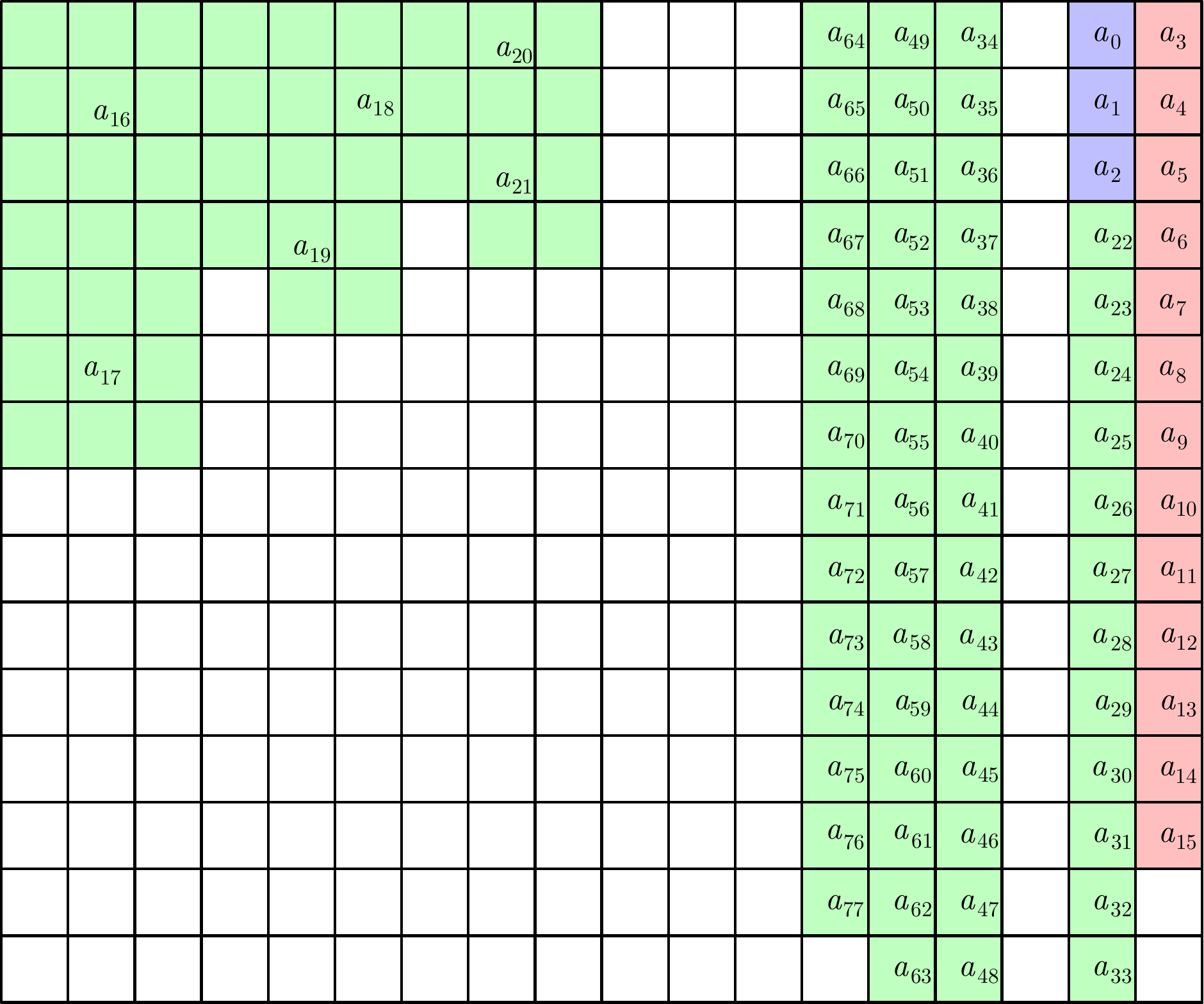}
\caption{A configuration described in Construction \ref{first construction for connectivity}.
In this case, the total weight of the big squares in this configuration is $88$, which is maximal if $\kappa=12$.
This configuration corresponds to a non-trivial class in $H_{12}\big(SF_{88}(R_{18, 15}; a_{3},\dots, a_{15})\big)\cong H_{12}\big(F_{88}(\R^{2}; a_{3},\dots, a_{15})\big)$.
Note that the greater the number of ``big squares,'' the more excess space there is in the rectangle, i.e., there are 158 unoccupied squares in $R_{18, 15}$, but if we replaced squares $a_{16}, \dots, a_{21}$ with a single $11\times 11$ square and five unit squares, we would get another configuration that leads to a class in $H_{12}\big(SF_{88}(R_{18, 15}; a_{3},\dots, a_{15})\big)$, where there are only $72$ unoccupied squares in $R_{18,15}$.
By further replacing this new square and $a_{1}$ and $a_{2}$ with a $13\times 13$ square, we would get another configuration corresponding to a class in $H_{12}\big(SF_{88}(R_{18, 15}; a_{3},\dots, a_{15})\big)$, with only $26$ squares being unoccupied in $R_{w,h}$.
}
\label{bigsquareconfig}
\end{figure}

\begin{remark}
Note that if $m\ge 3$, the width of rectangular hull of the squares $a_{s+1+p+1},\dots, a_{s+1+p+1+m}$ is at most $k+1$ and of height at most $k+1$.
In the $m=2$ case, this rectangle has width at most $k$, and height $k+2$.
In these settings, the total weight of the $a_{s+1+p+1},\dots, a_{s+1+p+1+m}$ is bigger than $k+1$, so it suffices to check that these yield valid configurations if we have either $0$ or $1$ big squares.
This is immediate.
\end{remark}

Note that if we assume that no squares other than $a_{0},\dots, a_{s+1+p}$ that are of size smaller than $r\ge 2$ where $k+r-1\le \kappa-1$, then the $r$ squares immediately to the left of each of the $a_{0},\dots, a_{s}$ are unoccupied.
This allows us to show that in certain circumstances two configurations arising from Construction \ref{first construction for connectivity} are in the same path-connected of the big square configuration space. 
We will use this fact to determine the image of the $d^{r}$-differential into the $E^{r}_{-1,\kappa}$-entry of an augmented spectral sequence in the next two sections.

\begin{prop}\label{move the big squares around}
Given $n$, $w$, $h$, and $\kappa$, such that $\min\{w-1,h\}\ge \kappa+2$, $wh-n\ge \max\big\{(\kappa+1)(\kappa+2)+\left\lfloor\frac{\kappa}{3}\right\rfloor, h\kappa+2\big\}$,
and $0\le s+1, p+1\le h$,
let $(A, \mathcal{W})$ be a weighted set such that $\sum w_{a}=n$, $\sum(w_{a}-1)=k\le \kappa$, and $w_{a_{0}}=\cdots=w_{a_{s+1+p}}=1$.
Let $\textbf{c}$ be the big square configuration in $SF_{(A, \mathcal{W})}(R_{w, h}; a_{0},\dots, a_{s};a_{s+1},\dots, a_{s+1+p})$ described in Construction \ref{first construction for connectivity}.
Moreover, assume that all of the free squares are of size at least $s+1$, where $k+s\le \kappa$.

If we replace the squares $a_{0},\dots, a_{s}$ with an $(s+1)\times (s+1)$ square $a'$ labeled $a_{0}\cdots a_{s}$, and free it, then the resulting configuration is in the same path-component as $\textbf{c'}\in SF_{(A', \mathcal{W}')}(R_{w, h};a_{s+1},\dots, a_{s+1+p})$, where $\textbf{c'}$ is the configuration described in Construction \ref{first construction for connectivity}, and $(A', \mathcal{W}')$ arises from $(A, \mathcal{W})$ by combining $a_{0}, \dots, a_{s}$ into a single $(s+1)\times (s+1)$ square labeled $a_{0}\cdots a_{s}$.
\end{prop}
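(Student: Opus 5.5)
We will construct an explicit path of big square configurations from the configuration obtained by fusing and freeing $a_{0},\dots,a_{s}$ to $\textbf{c'}$. The path has three stages. First we carve out room so that the new $(s+1)\times(s+1)$ square $a'$ exists and can leave the right-hand columns. Then we slide $a'$ across the rectangle into the slot that Construction \ref{first construction for connectivity} reserves for it. Finally we rearrange the unit squares into lexicographical order, which we do by appealing to Propositions \ref{n le wh-2 is connected} and \ref{connected single right configuration space}. The squares $a_{s+1},\dots,a_{s+1+p}$ stay pinned at the top of the right-most column throughout, so every configuration along the path lies in $SF_{(A',\mathcal{W}')}(R_{w,h};a_{s+1},\dots,a_{s+1+p})$.

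\emph{Stage one: forming $a'$.} We use the observation made just before the proposition. All free squares other than unit squares have size at least $s+1$, and $k+s\le\kappa$. Hence in $\textbf{c}$ the $s+1$ grid-squares immediately to the left of each of $a_{0},\dots,a_{s}$ can be emptied, by sliding free unit squares into the unoccupied region. This region has size at least $wh-n\ge h\kappa+2$, which is more than enough. After emptying them, the column of $a_{0},\dots,a_{s}$ together with the columns to its left contains an empty $(s+1)\times(s+1)$ block. This block is either adjacent to the column holding $a_{0},\dots,a_{s}$, or shares rows with it (the case $s+1+p+1>h$). We place $a'$ there. In the case $s+1+p+1\le h$, where the $a_{j}$ sit below the $a_{s+1+l}$ in the right-most column, we first slide the $a_{j}$ one column left. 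This uses the same emptying argument. Then we slide $a_{s+1},\dots,a_{s+1+p}$ back to the top and proceed as in the other case.

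\emph{Stage two: moving $a'$ to its slot.} We translate $a'$ leftwards. At each step we clear the column of $s+1$ grid-squares directly in its path by pushing unit squares into empty space, in the same way as in the proof of Proposition \ref{free aj in k=1}. The big squares from the paired rectangles of Construction \ref{first construction for connectivity} are the real obstacle. In $\textbf{c'}$, $a'$ may need to be inserted into the middle of the sorted list of big squares. This changes the pairing: rectangles to its right get re-paired, and a lone odd square may become paired. We handle this by moving big squares one at a time, and each time we check that there is an empty corridor of height $k+1$ along the top of the rectangle. This is where the hypotheses $\min\{w-1,h\}\ge\kappa+2$ and the extra slack $\lfloor\kappa/3\rfloor$ in $wh-n$ enter. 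Each re-pairing of two rectangles into new pairs temporarily needs extra room, about one grid-square for every three weight units of big squares. Indeed, a pair together with its neighbour has total weight at least $3$ when it exceeds unit size, and there are at most $\lfloor\kappa/3\rfloor$ such triples. This bound is therefore what pays for those temporary spaces. Stage two is the step I expect to be the main obstacle. In particular I expect to need a case split on $m\in\{0,1,2,\ge3\}$ before and after the insertion, as in the construction. For the case $m\ge3$ I plan to argue by induction on the number of big squares lying to the right of the insertion point, shifting rectangles one position right using the pyramid slack from Remark \ref{pyramids are cool too}.

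\emph{Stage three: sorting the unit squares.} Once all big squares sit in their slots as in $\textbf{c'}$, the remaining unit squares occupy the complement of the rectangular hull of the big squares. This complement contains a full-height subrectangle of width at least $2$, and there are at least two empty grid-squares. So Proposition \ref{connected single right configuration space} applies to it, and when the unit squares fill it exactly we first free an extra slot as in the proof of Proposition \ref{free aj in k=1}. This lets us permute the unit squares into lexicographical order while $a_{s+1},\dots,a_{s+1+p}$ stay at the top right. The result is $\textbf{c'}$.
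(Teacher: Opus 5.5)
\paragraph{There is a genuine gap in Stage two.} That stage carries all the content of the proposition, and you leave it as a plan. Moreover, the tool you plan to use for $m\ge 3$, the pyramid slack of Remark \ref{pyramids are cool too}, is not available here. The statement asserts a path in the \emph{rigid} space $SF_{(A',\mathcal{W}')}(R_{w,h};a_{s+1},\dots,a_{s+1+p})$. Deforming a square into a pyramid leaves that space, so such a path proves only the plastic Proposition \ref{move the big stretchy squares around}. The extra $\lfloor\kappa/3\rfloor$ in the hypothesis is exactly what replaces the pyramid trick. Your accounting for it (one grid-square per three weight units of re-paired rectangles) is not an argument and does not match where it is needed. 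The paper uses it in exactly one subcase: $m=2$, where the second big square has size $s+1$ and a smaller label than $a'$. There one parks that square underneath the pinned column $a_{s+1},\dots,a_{s+1+p}$. The other two squares then fit side by side because $w_{a_{s+1+p+1}}+w_{a_{s+1+p+2}}+s+1\le w$.

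\paragraph{You are missing a reduction, partly because of a misreading.} The hypothesis says that \emph{all} free squares have size at least $s+1$, not only the non-unit ones. This gives two cases.
\begin{itemize}
\item If $s\ge 1$, there are no free unit squares at all, so your Stages one and three are vacuous.
\item If $s=0$, then $a'$ is a unit square and the argument is essentially that of Proposition \ref{free aj in k=1}.
\end{itemize}
More importantly, $a'$ has the minimal size among free squares. So in both $\mathbf{c}$ and $\mathbf{c}'$, all squares of size $s+1$ (including $a'$) form the tail at the right end of the hull of the big squares. The larger squares never move, and $a'$ is never inserted ``into the middle'' of rectangles of other sizes, only among equal-size squares. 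With this, the paper splits on $m$:
\begin{itemize}
\item $m=0$: slide $a'$ left.
\item $m=1$: slide $a'$ left and then down. If the big square has equal size and smaller label, first push it down.
\item $m=2$: as in the first paragraph.
\item $m\ge 3$: either $a'$ goes last and simply slides left, or one permutes the equal-size squares on the $(s+1)$-scaled grid. This uses the sliding-puzzle argument of Proposition \ref{connected single right configuration space}. It works because $h\ge 3(s+1)$, which follows from $k\ge ms\ge 3s$ and $k+s\le\kappa$, and because the region to the right of the left-most size-$(s+1)$ square has width at least $3(s+1)$.
\end{itemize}
None of this needs pyramids.
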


\begin{proof}
If $s+1=1$, then the proof is nearly identical to that of Proposition \ref{free aj in k=1}, noting that we have ample space to rearrange the free unit squares.
As such, we omit it for brevity.

If $s+1>1$, note that in the configurations $\textbf{c}$ and $\textbf{c'}$ all of the squares of size $s+1$ can be found at the right of the rectangle defined by the convex hull of the $a_{s+1+p+1},\dots, a_{s+1+p+1+m}$.
We consider four cases depending on $m$.

If $m=0$, then the path is immediate as we can just slide the square $a'$ to the left.

If $m=1$, then $s+1+w_{a_{s+1+p+1}}\le h$.
If $w_{a_{s+1+p+1}}>s+1$, or $w_{a_{s+1+p+1}}=s+1$ and the smallest label of $a_{s+1+p+1}$ is bigger than $a_{0},\dots$, or $a_{s}$, then we can slide $a'$ to the left and then down to be below $a_{s+1+p+1}$. 
Similarly, if $m=1$ and $w_{a_{s+1+p+1}}=s+1$ and the smallest label of $a_{s+1+p+1}$ is smaller than $a_{0},\dots$, or $a_{s}$, then we can slide $a_{s+1+p+1}$ down and square $a'$ to the left to get to $\textbf{c'}$.

If $m=2$ and $w_{a_{s+1+p+2}}>s+1$, or $w_{a_{s+1+p+2}}=s+1$ and the smallest label of $a_{s+1+p+2}$ is bigger than the smallest label of $a'$, then we can simply slide $a'$ to the left.
Otherwise, $w_{a_{s+1+p+2}}=s+1$ and the smallest label of $a_{s+1+p+2}$ is smaller than the smallest label of $a'$.
In this case, we can slide square $a_{s+1+p+2}$ underneath the squares $a_{s+1}, \dots, a_{s+1+p}$ since $wh-n\ge (\kappa+1)(\kappa+2)+\left\lfloor\frac{k}{3}\right\rfloor$.
Doing this allows one to slide the remaining two squares into position noting that $w_{a_{s+1+p+1}}+w_{a_{s+1+p+2}}+s+1\le w$ by assumption.
After doing so, we get the configuration $\textbf{c'}$.

Finally, if $m\ge 3$, then note that if $w_{a_{s+1+p+m}}>s+1$ or $w_{a_{s+1+p+m}}=s+1$ and  the smallest label of $a_{s+1+p+2}$ is bigger than the smallest label of $a'$, then we can simply slide $a'$ to the left to get the configuration $\textbf{c'}$.
Otherwise, note that since $s\ge 1$, we have that $h\ge3(s+1)$ and the distance from the left side of the left-most square of size $s+1$ in $\textbf{c}$ to the right side of $R_{w,h}$ is also at least $3(s+1)$---if $p\neq -1$, it is at least $3(s+1)+1$.
This means that in the part of $R_{w,h}$ to the left of the left side of the left-most square of size $s+1$, there is enough space to rearrange the squares of size $s+1$ in a manner similar to Proposition \ref{connected single right configuration space} to get configuration $\textbf{c}'$, since $wh-n\ge(\kappa+1)(\kappa+2)$.
\end{proof}

One can check that the proof of Proposition \ref{move the big squares around} holds if we replace $wh-n\ge \max\big\{(\kappa+1)(\kappa+2)+\left\lfloor\frac{\kappa}{3}\right\rfloor, h\kappa+2\big\}$
with $wh-n\ge \max\big\{(\kappa+1)(\kappa+2), h\kappa+2\big\}$
in all cases but one.
Namely, if $w=\kappa+3$, $h=\kappa+2$, $\sum_{a\in A'}w_{a}=n$, $\sum_{a\in A;}(w_{a}-1)=\kappa$, $s+1>1$, $m=2$, and either $w_{a_{s+1+p+1}}>w_{a'}$ or $w_{a_{s+1+p+1}}=w_{a_{s+1+p+2}}=w_{a'}$ and the smallest label in $a_{s+1+p+1}$ is bigger than the smallest label in $a'$, which is bigger than the smallest label in $a_{s+1+p+2}$, then $wh-n\ge (\kappa+1)(\kappa+2)$ does not suffice.
Fortunately, in this paper, big square configuration spaces are merely a means to an end.
Recall that big squares represent the wheels $W(i_{1},\dots, i_{m})$, and if we allow ourselves to homotope between the big square $a'$ and big pyramid construction of same wheel as described in Remark \ref{pyramids are cool too}, one can slide it into the bottom right corner of $R_{w,h}$, and then rearrange $a_{p+1+s+1}$ and $a_{p+1+s+2}$ as necessary. 
Once these squares are in place, there is enough space to revert the pyramid to the big square $a'$ and slide it into the appropriate position; see Figure \ref{3bigwheelsannoying} for a visualization of this process.
Thus, we get the following plastic version of Proposition \ref{move the big squares around}.

\begin{figure}[h]
\centering
\captionsetup{width=.8\linewidth}
\includegraphics[width=16cm]{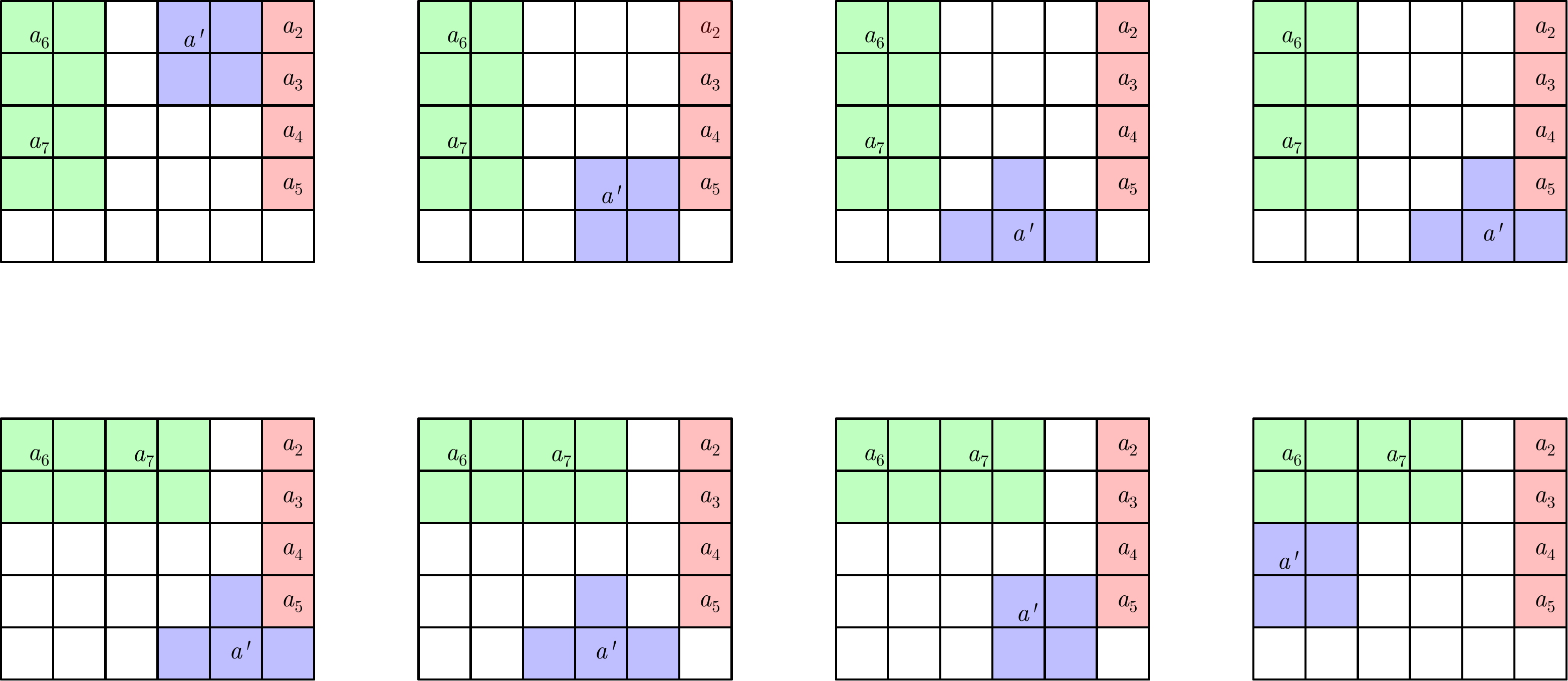}
\caption{The path (read left to right, top to bottom) that connects the big square configuration at the top left to the configuration $\textbf{c'}$ at the bottom right, which only exists if we allow deformations of squares into pyramids.
The green squares are free, the blue square $a'$ arises from combining $a_{0}$ and $a_{1}$ in the configuration $\textbf{c}$, and the red squares must remain right-most.
}
\label{3bigwheelsannoying}
\end{figure}

\begin{prop}\label{move the big stretchy squares around}
Given $n$, $w$, $h$, and $\kappa$, where $\min\{w-1,h\}\ge \kappa+2$, $wh-n\ge \max\big\{(\kappa+1)(\kappa+2), h\kappa+2\big\}$,
and $0\le s+1, p+1\le h$,
let $(A, \mathcal{W})$ be a weighted set such that $\sum w_{a}=n$, $\sum(w_{a}-1)=k\le \kappa$, and $w_{a_{0}}=\cdots=w_{a_{s+1+p}}=1$.
Let $\textbf{c}$ be the big square configuration in $SF_{(A, \mathcal{W})}(R_{w, h}; a_{0},\dots, a_{s};a_{s+1},\dots, a_{s+1+p})$ described in Construction \ref{first construction for connectivity}.
Moreover, assume that all of the free squares are of size at least $s+1$, where $k+s\le \kappa$.

If we replace the squares $a_{0},\dots, a_{s}$ into an $(s+1)\times (s+1)$ square $a'$ labeled $a_{0}\cdots a_{s}$, and free this square, then, if we have enough space to homotope the square $a'$ into a square pyramid of the same volume, and we allow for such homotopies, the resulting configuration is in the same component of the big square and pyramid configuration space as the configuration $\textbf{c'}$ in $SF_{(A', \mathcal{W}')}(R_{w, h};a_{s+1},\dots, a_{s+1+p})$, where $(A', \mathcal{W}')$ arises from $(A, \mathcal{W})$ by combining $a_{0}, \dots, a_{s}$ into a single $(s+1)\times (s+1)$ square labeled $a_{0}\cdots a_{s}$.
\end{prop}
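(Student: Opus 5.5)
The plan is to reduce to Proposition \ref{move the big squares around}, whose proof goes through verbatim under the weaker hypothesis $wh-n\ge \max\big\{(\kappa+1)(\kappa+2), h\kappa+2\big\}$ in every case except one. So I would first re-read that proof case by case (on $s+1=1$ versus $s+1>1$, and then on $m=0,1,2,\ge 3$) and check that the extra $\left\lfloor\frac{\kappa}{3}\right\rfloor$ empty squares are used only in the case $m=2$. There it is used to slide $a_{s+1+p+2}$ under the column $a_{s+1},\dots,a_{s+1+p}$. In the other cases the bound $(\kappa+1)(\kappa+2)$ already leaves enough room to slide $a'$ left or down, or to rearrange the size-$(s+1)$ squares in the region to the left of the left-most such square, as in Proposition \ref{connected single right configuration space}. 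If the parameters avoid the exceptional case identified before the statement, the rigid path of Proposition \ref{move the big squares around} already lies in the big square and pyramid configuration space, and we are done.

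This leaves the exceptional case: $w=\kappa+3$, $h=\kappa+2$, $\sum(w_a-1)=\kappa$, $s+1>1$, $m=2$, and the two free big squares $a_{s+1+p+1},a_{s+1+p+2}$ are ordered so that $a'$ must end up between or below them. Here the total weight and the shape are so constrained that I would write the path down explicitly, following Figure \ref{3bigwheelsannoying}.

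\begin{enumerate}
\item Freeing $a_0,\dots,a_s$ into $a'$ in the second right-most column, use the $s+1\le h$ and the count of empty cells to deform $a'$ into its pyramid realization from Remark \ref{pyramids are cool too}, of base $2(s+1)-1$ and height $s+1$. This deformation is a homotopy of the represented wheel inside $R_{2s+1,s+1}$.
\item Slide the pyramid into the bottom right corner of $R_{w,h}$, below $a_{s+1},\dots,a_{s+1+p}$. Room for this comes from $h\ge \kappa+2\ge k+s+2$ and the fact that in this case the right column has at most $h-(s+1)$ unit squares.
\item With the pyramid out of the way, use the freed upper-left region to swap the positions of $a_{s+1+p+1}$ and $a_{s+1+p+2}$ as needed, moving the unit free squares with Proposition \ref{n le wh-2 is connected}.
\item Revert the pyramid to the square $a'$, which is possible once enough cells are free near it, and slide $a'$ into its slot in $\mathbf{c'}$.
\item Rearrange the free unit squares into lexicographic order with Proposition \ref{connected single right configuration space}.
\end{enumerate}

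The main obstacle is verifying that each intermediate configuration actually fits. Each step must be checked against $w=\kappa+3$, $h=\kappa+2$, $n=wh-(\kappa+1)(\kappa+2)$ (or larger empty space), and the side lengths satisfying $w_{a_{s+1+p+1}}+w_{a_{s+1+p+2}}+(s+1)\le \kappa+3$. In particular, at the moment of the swap the pyramid and the right column must coexist with the two big squares. I would handle this by counting occupied cells row by row in the bottom $s+1$ rows, using that the pyramid's $j$-th row has width $2(s+1)-(2j-1)$, and if needed by pushing unit squares into the empty row beneath the big squares first.
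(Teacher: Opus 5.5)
Your route is the paper's own. Outside the single exceptional case the rigid path of Proposition \ref{move the big squares around} already works; that case is $w=\kappa+3$, $h=\kappa+2$, $m=2$, $w_{a_{s+1+p+1}}+w_{a_{s+1+p+2}}+(s+1)=w$, with $a'$ landing under $a_{s+1+p+1}$. There one turns $a'$ into a pyramid, parks it in the bottom right corner, moves $a_{s+1+p+2}$, reverts, and slides $a'$ into place, as in Figure \ref{3bigwheelsannoying}.

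However, the reason you give for step 2, which is the crux, is false.
\begin{itemize}
\item Since $s\ge 1$ and every free square has size at least $s+1$, there are no free unit squares at all. So in this case $n=w+(p+1)\le 2(\kappa+2)$, and the only bound is $p+1\le h-1$.
\item The rigid proof's move, parking a size-$(s+1)$ square under $a_{s+1},\dots,a_{s+1+p}$, requires the bottom $s+1$ cells of the right column to be empty, i.e.\ $p+1\le h-(s+1)$. This is exactly what the extra $\lfloor\kappa/3\rfloor$ secures, because $3(s+1)\le\kappa+3$.
\item So the configurations that genuinely need the pyramid are those where the right column holds between $h-s$ and $h-1$ squares. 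If your claim held, a rigid $(s+1)\times(s+1)$ square would fit in the corner and no pyramid would be needed.
\end{itemize}

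The missing idea is the pyramid's shape: its edge columns have height $1$. In the corner it therefore uses only the bottom cell of the right column, which is always empty since $p+1\le h-1$. Counting rows from the top, its apex sits in row $h-s=w_{a_{s+1+p+1}}+w_{a_{s+1+p+2}}$ and column $w-s=w_{a_{s+1+p+1}}+w_{a_{s+1+p+2}}+1$. That is just outside the columns through which $a_{s+1+p+2}$ slides right and then up.

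The same issue affects step 4. The square--pyramid homotopy of Remark \ref{pyramids are cool too} takes place in a $(2s+1)\times(s+1)$ box, so you must first slide the pyramid one column left, off the right column, before reverting. Only then can you slide $a'$ left along the bottom $s+1$ rows and up under $a_{s+1+p+1}$.

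Finally, your fallback of pushing unit squares into the empty rows beneath the big squares is not available. The only unit squares are $a_{s+1},\dots,a_{s+1+p}$, and they must stay right-most. For the same reason, the appeals to Propositions \ref{n le wh-2 is connected} and \ref{connected single right configuration space} for free unit squares in steps 3 and 5 are vacuous.
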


In Section \ref{section main theorem} we will use the configurations described in Construction \ref{first construction for connectivity} and Proposition \ref{move the big stretchy squares around} to see that the differentials of a spectral sequence computing $H_{k}\big(SF_{n}(R_{w,h}; i_{0}, \dots, i_{p})\big)$ agree with the differentials of a spectral sequence computing $H_{k}\big(F_{n}(\R^{2}; i_{0}, \dots, i_{p})\big)$.
In the next section, we recall this spectral sequence.

\section{Our Mayer--Vietoris Spectral Sequences}\label{augmented MV}

Our proofs of Theorems \ref{space homological stability} and \ref{almost sharp homological stability for k=1} rely on a pair of augmented Mayer--Vietoris spectral sequences.
In this section, we recall the definition of the augmented Mayer--Vietoris spectral sequence, which arises from taking a cover of a space, and determine its $E^{1}$-page.
Next, we specialize to the spaces $SF_{n}(R_{w,h}; i_{0}, \dots, i_{p})$ and $F_{n}(\R^{2}; i_{0}, \dots, i_{p})$, covering them with sets of the form $SF_{n}(R_{w,h}; j_{0} ; i_{0}, \dots, i_{p})$ and $F_{n}(\R^{2}; j_{0};i_{0}, \dots, i_{p})$, respectively.
In the case of $F_{n}(\R^{2})$, we see in Lemma \ref{MV is Arc} that this augmented Mayer--Vietoris spectral sequence is equivalent to the augmented spectral sequence arising from the arc complex $\text{Arc}_{\bullet}\big(F_{n}(\R^{2})\big)$.
Finally, we use this equivalence to prove Proposition \ref{differentials are well defined as long as we have enough space},  calculating some of the differentials in our Mayer--Vietoris spectral sequence for $SF_{n}(R_{w,h})$ provided we have enough space in $R_{w,h}$.
In Section \ref{section main theorem}, we use our knowledge of these differentials to prove Theorem \ref{space homological stability}.

\subsection{The Mayer--Vietoris Spectral Sequence: An Overview}\label{defn of mv ss section}
This subsection contains an overview of the augmented Mayer--Vietoris spectral sequence; those familiar with this spectral sequence are encouraged to skip ahead to Subsection \ref{our mv ss}.
For a more detailed introduction, see, for example, \cite[Chapter VII.4]{brown2012cohomology}.

Given a  CW-complex $X$, let $\mathcal{U}=\{U_{i}\}_{i\in I}$ be a cover of $X$ by subcomplexes.
Let $K$ be the \emph{nerve complex} of $\mathcal{U}$, i.e., the abstract simplicial complex whose vertex set is $I$, and whose $p$-simplices are the subsets $\sigma\subset I$ of cardinality $p+1$, such that
\[
U_{\sigma}:=\bigcap_{i\in \sigma}U_{i}
\]
is non-empty.
For $p\ge 0$, let $C_{p}$ be the chain complex
\[
C_{p}:=\bigoplus_{\sigma\in K^{(p)}}C(U_{\sigma}),
\]
where $K^{(p)}$ is the set of $p$-simplices of $K$, and $C(U_{\sigma})$ is the chain complex on $U_{\sigma}$ arising from restricting the chain complex
\[
\cdots\xrightarrow{\partial} C_{q+1}(X)\xrightarrow{\partial} C_{q}(X)\xrightarrow{\partial} C_{q-1}(X)\xrightarrow{\partial} \cdots\xrightarrow{\partial}C_{1}(X)\xrightarrow{\partial} C_{0}(X)\to 0
\]
to $U_{\sigma}$.

Given a simplex $\sigma=[i_{0}, \dots, i_{p}]\in K^{(p)}$, set
\[
d_{j}\sigma=[i_{0},\dots, i_{j-1}, \widehat{i_{j}},i_{j+1},\dots, i_{p}].
\]
The inclusions $C(U_{\sigma})\hookrightarrow C(U_{d_{j}\sigma})$ induce chain maps
\[
d_{j}:C_{p}\to C_{p-1},
\]
which yield a boundary map
\[
d:=\sum^{p}_{j=0}(-1)^{j}d_{j}
\]
for the augmented chain complex
\[
\cdots\xrightarrow{d} C_{p}\xrightarrow{d}C_{p-1}\xrightarrow{d}\cdots \xrightarrow{d}C_{0}\xrightarrow{d}C(X)\to 0.
\]
This gives us a chain complex in the category of chain complexes, i.e., a \emph{double complex}
\[
C_{p,q}=\bigoplus_{\sigma\in K^{(p)}}C_{q}(U_{\sigma}).
\]

Given this double complex, we get the \emph{total complex} $TC$, where
\[
(TC)_{n}:=\bigoplus_{p+q=n}C_{p,q},
\]
which has differential $D:(TC)_{n}\to (TC)_{n-1}$ given by
\[
D:=d+(-1)^{p}\partial.
\]
Filtering $TC$ by setting
\[
F_{p}(TC)_{n}:=\bigoplus_{j\le p}C_{j, n-j},
\]
yields the \emph{augmented Mayer--Vietoris spectral sequence} associated to the cover $\mathcal{U}$ of $X$, which we denote by $E[X]$.

The $E^{0}$- and $E^{1}$-pages of this spectral sequence are easily computable from this definition; moreover, as we are considering the augmented spectral sequence, its $E^{\infty}$-page is $0$.

\begin{prop}\label{E0E1EinftyMV}
Let $I$ be a countable set and let $\mathcal{U}=\{U_{i}\}_{i\in I}$ be a cellular cover of a CW-complex $X$.
The $E^{0}$-page of the resulting augmented Mayer--Vietoris spectral sequence is
\[
E^{0}_{p,q}[X]=C_{p,q}=\bigoplus_{\sigma\in K^{(p)}}C_{q}(U_{\sigma}).
\]
The $E^{1}$-page is
\[
E^{1}_{p,q}[X]=\bigoplus_{\sigma\in K^{(p)}} H_{q}(U_{\sigma}).
\]
Moreover, for $r> q+1$
\[
E^{r}_{-1,q}[X]=0, 
\]
and for $p\ge 0$ and $r>p+q+1$
\[
E^{r}_{p,q}[X]=0.
\]
\end{prop}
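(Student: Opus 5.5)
The plan is to compute the spectral sequence directly from the double complex. I will use the two standard filtration spectral sequences of a first-quadrant (augmented) double complex and compare them. For the $E^0$ and $E^1$ pages I use the filtration by $p$, as in the definition of $E[X]$.

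\textbf{The $E^0$ and $E^1$ pages.} The associated graded piece $F_p/F_{p-1}$ in total degree $p+q$ is $C_{p,q}$. The component of $D$ preserving $p$ is $(-1)^p\partial$, so $E^0_{p,q}=C_{p,q}$ with $d^0=\pm\partial$. Taking homology column by column, the column $C_{p,\bullet}=\bigoplus_{\sigma\in K^{(p)}}C_\bullet(U_\sigma)$ is a direct sum of chain complexes. Homology commutes with direct sums, so $E^1_{p,q}=\bigoplus_{\sigma}H_q(U_\sigma)$. The augmented column $p=-1$ gives $H_q(X)$. Countability of $I$ only ensures that these are honest (countable) direct sums; no other finiteness is needed.

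\textbf{Vanishing at $E^\infty$.} The key step is to show that the total complex $TC$ is acyclic. For this I use the \emph{other} filtration, by $q$. Its first page is the homology of the rows $\cdots\to C_{1,q}\to C_{0,q}\to C_q(X)\to 0$. Since each $U_i$ is a subcomplex, $C_q$ is the free module on $q$-cells. Fix a $q$-cell $e$ and let $I_e=\{i: e\subset U_i\}$, which is nonempty because $\mathcal U$ covers $X$. The row then splits as a direct sum over cells $e$ of the augmented simplicial chain complex of the full simplex on $I_e$. Every subset of $I_e$ is indeed a simplex of $K$, since the intersection contains $e$. This complex is acyclic, because the full simplex, even an infinite one, is contractible and is a cone on any vertex. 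So every row is exact, the second spectral sequence is $0$ at $E^1$, and $H_*(TC)=0$. For the first filtration the double complex is bounded ($p\ge -1$, $q\ge 0$), so it converges to $H_*(TC)=0$ and $E^\infty=0$.

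\textbf{The precise vanishing ranges.} These come from degree bookkeeping. The differential $d^r$ has bidegree $(-r,r-1)$. An entry $E^r_{p,q}$ stabilises once no differential can enter or leave it. Outgoing differentials land at $p-r<-1$ once $r>p+1$. Incoming differentials come from $E^r_{p+r,q-r+1}$, which has negative $q$-coordinate once $r>q+1$. So for $r>\max(p+1,q+1)$ the entry equals $E^\infty_{p,q}=0$.
\begin{itemize}
\item For $p=-1$ there are no outgoing differentials at all, so the entry vanishes once $r>q+1$.
\item For $p\ge 0$ the crude bound $r>p+q+1$ already exceeds both $p+1$ and $q+1$ whenever $p,q\ge0$, so the claim follows a fortiori.
\end{itemize}

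\textbf{Main obstacle.} The only step needing care is the exactness of the rows, specifically the cell-wise splitting of the row complex. This is precisely where the hypothesis that the $U_i$ are subcomplexes is used. It is why the paper subdivides $DF_n(R^*_{w,h})$ so that the sets $SF_n(R_{w,h};i_0,\dots,i_p)$ become subcomplexes.
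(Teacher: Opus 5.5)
Your proposal is correct and is essentially the standard argument the paper relies on. The paper gives no separate proof: it reads $E^0$ and $E^1$ off the definition and cites Brown, Ch.~VII.4, for $E^\infty=0$ in the augmented case, and your cell-by-cell decomposition of each row into the augmented chain complex of a full simplex is exactly that row-exactness argument. Your degree bookkeeping also supplies the vanishing ranges the paper leaves implicit, and in fact gives the sharper bound $r>\max(p+1,q+1)$, which implies both stated bounds.
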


Next, we specialize to the cases $X=SF_{n}(R_{w,h}; i_{0}, \dots, i_{p})$ and $X=F_{n}(\R^{2}; i_{0}, \dots, i_{p})$.
By finding similar covers for these spaces, we will show that their $E^{1}_{p,q}$-entries are isomorphic in a range, and that their differentials can be identified in this range, which will allow us to prove Theorem \ref{space homological stability}.

\subsection{Covering $SF_{n}(R_{w,h})$ and $F_{n}(\R^{2})$}\label{our mv ss}

Our augmented Mayer--Vietoris spectral sequences for $SF_{n}(R_{w,h};i_{0}, \dots, i_{p})$ and $F_{n}(\R^{2}; i_{0}, \dots, i_{p})$ arise from the observation that $R_{w,h}$ and $\R^{2}$ have well-defined notions of right, so we can consider the right-most square in $R_{w,h}$, resp., right-most point in $\R^{2}$.
This yields augmented Mayer--Vietoris spectral sequences, whose $E^{1}_{-1,q}$-entries are $H_{q}\big(SF_{n}(R_{w,h}; i_{0}, \dots, i_{p})\big)$ and $H_{q}\big(F_{n}(\R^{2}; i_{0}, \dots, i_{p})\big)$, respectively.

Namely, given the square configuration space $SF_{n}(R_{w,h}; i_{0}, \dots, i_{p})$, cover it by $SF_{n}(R_{w,h}; j;i_{0}, \dots, i_{p})$, where $j\in \{1,\dots, n\}-\{i_{0}, \dots, i_{p}\}$.
Proposition \ref{move squares to right side} allows us to assume that the squares $i_{0}, \dots, i_{p}$ are tangent to the right side of $R_{w,h}$; after subdividing $DF_{n}(R^{*}_{w,h})$ by hyperplanes,  we get a cellular model for $SF_{n}(R_{w,h}; i_{0}, \dots, i_{p})$.
Further subdividing this cell-complex, we can view the $SF_{n}(R_{w,h}; j;i_{0}, \dots, i_{p})$ as subcomplexes, which yields 
an augmented Mayer--Vietoris spectral sequence $E\big[SF_{n}(R_{w,h};  i_{0}, \dots, i_{p})\big]$.

Given ordered sets $(j_{0},\dots, j_{m})$ and $(j'_{0}, \dots, j'_{l})$ whose entries are disjoint, Proposition \ref{intersection is shuffle} extends to give that if $m+1+l+1\le h$, then
\begin{multline*}
SF_{n}(R_{w,h};j_{0}, \dots, j_{m}; i_{0}, \dots, i_{p})\cap SF_{n}(R_{w,h};j'_{0}, \dots, j'_{l};i_{0}, \dots, i_{p})\\=\bigsqcup_{\sigma\in \Sigma\big((j_{0}, \dots, j_{m}),(j'_{0}, \dots, j'_{l})\big)}SF_{n}(R_{w,h}; \sigma;i_{0}, \dots, i_{p}).
\end{multline*}
By Proposition \ref{E0E1EinftyMV}, we have that 
\begin{align*}
E^{1}_{s,t}\big[SF_{n}(R_{w,h};  i_{0}, \dots, i_{p})\big]&=\bigoplus_{\substack{(j_{0}, \dots, j_{s})|j_{l}\in\{1,\dots,n\}\\j_{l}\neq j_{m}\text{ for }l\neq m}}H_{t}\big(SF_{n}(R_{w,h};  j_{0}, \dots, j_{s};i_{0}, \dots, i_{p})\big)\\
&\cong\bigoplus_{\tau\in S_{s+1}}\bigoplus_{1\le j_{0}<\cdots<j_{s}\le n}H_{t}\big(SF_{n}(R_{w,h};  j_{\tau(0)}, \dots, j_{\tau(s)};i_{0}, \dots, i_{p})\big);
\end{align*}
in particular
\[
E^{1}_{s,t}\big[SF_{n}(R_{w,h})\big]\cong\bigoplus_{\tau\in S_{s+1}}\bigoplus_{1\le j_{0}<\cdots<j_{s}\le n}H_{t}\big(SF_{n}(R_{w,h};  j_{\tau(0)}, \dots, j_{\tau(s)})\big).
\]

It follows that if the inclusion of $SF_{n}(R_{w,h};  i_{0}, \dots, i_{p})$ into $F_{n}(\R^{2};  i_{0}, \dots, i_{p})$ induces an isomorphism
\[
H_{q}\big(SF_{n}(R_{w,h};  i_{0}, \dots, i_{p})\big)\cong H_{q}\big(F_{n}(\R^{2};  i_{0}, \dots, i_{p})\big),
\]
then we would have 
\[
E^{1}_{p,q}\big[SF_{n}(R_{w,h})\big]
\cong E^{1}_{p,q}\big[F_{n}(\R^{2})\big].
\]
If this held for enough $p$ and $q$, and we could also show that the $d^{r}$-differentials of this pair of spectral sequences are the same, i.e., 
\[\begin{tikzcd}
	{E^{r}_{p,q}\big[SF_{n}(R_{w,h})\big]} && {E^{r}_{p-r,q+r-1}\big[SF_{n}(R_{w,h})\big]} \\
	\\
	{E^{r}_{p,q}\big[F_{n}(\R^{2})\big]} && {E^{r}_{p-r,q+r-1}\big[F_{n}(\R^{2})\big]}
	\arrow["{d^{r}_{SF_{n}(R_{w,h})}}", from=1-1, to=1-3]
	\arrow["\cong"', from=1-1, to=3-1]
	\arrow["\cong", from=1-3, to=3-3]
	\arrow["{d^{r}_{F_{n}(\R^{2})}}", from=3-1, to=3-3]
\end{tikzcd}\]
then it would follow that
\[
H_{k}\big(SF_{n}(R_{w,h})\big)\cong H_{k}\big(F_{n}(\R^{2})\big).
\]
To calculate these differentials, we take a quick detour into the complex of injective words.

\subsection{The Complex of Injective Words}
We recall a few facts about a simplicial complex introduced by Farmer \cite{farmer1978cellular} called the complex of injective words that will greatly simplify the computation of the differentials of our Mayer--Vietoris spectral sequences. 

If $A=i_{0}\cdots i_{p}$ is a word in the alphabet $\{1,\dots, n\}$, then we write $|A|:=p+1$ for its length, and we say that it is \emph{injective} if each letter in the alphabet appears at most once.
One defines the \emph{complex of injective words} $\text{Inj}_{\bullet}(n)$ by setting
\[
\text{Inj}_{p}(n):=\text{Hom}_{\text{FI}}\big(\{0,\dots, p\},\{1,\dots, n\}\big),
\]
where FI is the category of finite sets and injections.
It follows that $\text{Inj}_{\bullet}(n)$ has the structure of an augmented semi-simplicial set, where the $p$-simplices are the injective words of length $p+1$, and the face maps $d_{j}$ are defined by
\[
d_{j}(i_{0}\cdots i_{p})=i_{0}\cdots i_{j-1}\widehat{i_{j}}i_{j+1}\cdots i_{p}.
\]
Writing $C^{(n)}_{p}$ for the free abelian group on words on $p+1$ letters in $\{1,\dots, n\}$, i.e., $C^{(n)}_{p}=C_{p}\big(\text{Inj}_{\bullet}(n)\big)$, this gives rise to a boundary map $d:C^{(n)}_{p}\to C^{(n)}_{p-1}$ defined by 
\[
d:=\sum_{j=0}^{p}(-1)^{j}d_{j}.
\]

The parallels between this boundary map and the differential $d$ described in Subsection \ref{defn of mv ss section} are immediate and are further deepened by noting that the summands of
\[
E^{1}_{p,q}\big[SF_{n}(R_{w,h})\big]=\bigoplus_{\tau\in S_{p+1}}\bigoplus_{1\le i_{0}<\cdots<i_{p}\le n}H_{q}\big(SF_{n}(R_{w,h};  i_{\tau(0)}, \dots, i_{\tau(p)})\big)
\]
are indexed by the injective words on $p+1$ letters in the alphabet $\{1,\dots, n\}$.
In fact, if there is ``enough space'' in $SF_{n}(R_{w,h})$, these boundary maps are the same, an idea we will formalize shortly.
To take advantage of this, we seek a tractable basis for $C^{(n)}_{p}$, as the naive basis that consists of all injective words of length $p+1$ does not readily lend itself to computations.

The elements of this superior basis will be Lie polynomials.
Recall that $P$, an integer linear combination of words $A$, is a \emph{polynomial} in $\{1,\dots, n\}$, and we write $|P|$ for the length of the longest word in $P$.
For words $A$ and $B$, we write $AB$ to denote their concatenation, and we extend this linearly to polynomials.
This allows us to define a Lie bracket on words by setting
\[
[A,B]:=AB-(-1)^{|A||B|}BA,
\]
which can be extended bilinearly to a Lie bracket on the free abelian group of words in $\{1,\dots, n\}$.
One can check that if $A$, $B$, and $C$ are homogeneous polynomials, this Lie bracket satisfies the graded antisymmetry rule
\[
[A,B]+(-1)^{|A||B|}[B,A]=0
\]
as well as the graded Jacobi identity
\[
(-1)^{|A||C|}\big[A,[B,C]\big]+(-1)^{|A||B|}\big[B,[C,A]\big]+(-1)^{|B||C|}\big[C,\big[A,B]\big]=0.
\]
A \emph{Lie polynomial} is any element of the smallest submodule of the free abelian group on words in $\{1, \dots, n\}$ that contains the elements of $\{1,\dots, n\}$ and is closed under the Lie bracket.
We focus on a set of particularly nice Lie polynomials, the homogeneous Lie polynomials.

For a finite set $\{1,\dots, n\}$, where $n\ge 2$, let $\mathcal{L}_{n}$ denote the subset of homogeneous Lie polynomials of degree $n$ whose terms are all injective words in $\{1,\dots, n\}$.
If $n=1$ or $0$, we define $\mathcal{L}_{n}$ to be $\emptyset$.
Our calculations use a basis for $C^{(n)}_{p}$ that can be described in terms of the $\mathcal{L}_{m}$; as such, we seek a basis for $\mathcal{L}_{m}$.

\begin{prop}\label{Reutenauer basis}
(Miller--Wilson \cite[Theorem 2.33]{miller2019higher})
The abelian group $\mathcal{L}_{m+1}$ is free of rank $m!$ with a basis consisting of all elements of the form
\[
\Bigg[\bigg[\Big[\cdots\big[[1,i_{1}], i_{2}\big],\cdots\Big], i_{m-1}\bigg], i_{m}\Bigg] \quad \text{ for any ordering }(i_{1}, \dots, i_{m})\text{ of }\{2,\dots, m+1\}.
\]
\end{prop}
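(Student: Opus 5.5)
The plan is to identify $\mathcal{L}_{m+1}$ with the multilinear part of the free graded Lie ring generated by the letters $1,\dots,m+1$. Here every letter sits in degree $1$ and the bracket is $[A,B]=AB-(-1)^{|A||B|}BA$. I would then show two things: the $m!$ left-normed brackets
\[
\ell(i_{1},\dots,i_{m}):=\big[\cdots\big[[1,i_{1}],i_{2}\big],\cdots,i_{m}\big]
\]
span $\mathcal{L}_{m+1}$ over $\Z$, and their images form a unimodular (indeed $\pm$ identity) matrix against a suitable set of coordinates. Together these give that $\mathcal{L}_{m+1}$ is free of rank $m!$ with the stated basis.

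\textbf{Independence.} Expand $\ell(i_{1},\dots,i_{m})$ in the word basis. Each bracketing step $[X,i_{j}]=Xi_{j}\pm i_{j}X$ places the new letter either at the end or at the front of each word. So the only word in the expansion that begins with the letter $1$ is $1\,i_{1}i_{2}\cdots i_{m}$, and it occurs with coefficient $+1$. Projecting onto the span of the words beginning with $1$ therefore sends the $m!$ elements $\ell(i_{1},\dots,i_{m})$ bijectively onto the corresponding words. This gives linear independence over $\Z$. It also shows that the $\Z$-span of the $\ell$'s is a direct summand of $C^{(m+1)}_{m}$, so no torsion or saturation issues arise.

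\textbf{Spanning.} I would argue by induction on $m$ that every bracket monomial in the distinct letters $1,\dots,m+1$, each used once, is a $\Z$-combination of the $\ell$'s. Such a monomial has the form $\pm[P,Q]$, with $1$ occurring in $P$, using graded antisymmetry to put the factor containing $1$ on the left. By induction, $P$ is a combination of left-normed brackets starting at $1$ on its letters. For $Q$, I first show by a separate induction that every monomial $Q$ is a combination of left-normed brackets $[\cdots[q_{1},q_{2}],\cdots,q_{r}]$ on its own letters; this only requires Jacobi and antisymmetry, not the condition that the bracket starts with $1$.

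The key step is then that $\mathrm{ad}$ is a graded Lie homomorphism: the graded Jacobi identity can be rewritten as $[X,[Y,Z]]=[[X,Y],Z]\pm[[X,Z],Y]$. Iterating this identity writes $[X,[\cdots[q_{1},q_{2}],\cdots,q_{r}]]$ as a signed sum of terms $[\cdots[X,q_{\pi(1)}],\cdots,q_{\pi(r)}]$. Taking $X$ left-normed starting at $1$, every resulting term is an $\ell(\cdot)$. All coefficients stay in $\Z$ because Jacobi and antisymmetry hold integrally. Since the letters are distinct, the potentially problematic element $[a,a]=2aa$ never occurs, so no $2$-torsion enters.

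\textbf{Expected obstacle.} I expect the main difficulty to be the sign bookkeeping in the graded Jacobi expansion, i.e.\ verifying the derivation formula $\mathrm{ad}_{[Y,Z]}=\mathrm{ad}_{Y}\mathrm{ad}_{Z}\pm\mathrm{ad}_{Z}\mathrm{ad}_{Y}$ with the correct Koszul signs for these odd generators. I would handle this by checking that identity once directly on words. After that, the spanning and independence arguments only ever use "some sign $\pm1$", so the exact signs never need to be tracked.
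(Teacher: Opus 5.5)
Your argument is correct. The paper gives no proof of this statement: it is quoted from Miller--Wilson (the paper calls it Reutenauer's basis), so there is no internal argument to compare against.

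What you have written is the standard proof of the classical fact that left-normed brackets starting at a fixed letter form a basis of the multilinear part of the free Lie ring, carried over to the super-sign bracket $[A,B]=AB-(-1)^{|A||B|}BA$.

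\begin{itemize}
\item Independence uses no signs at all. Each step $[X,a]=Xa\pm aX$ keeps the unique word beginning with $1$ in the $Xa$ term, so the coefficient of $1\,i_{1}\cdots i_{m}$ is $+1$.
\item Spanning uses only the identity $[A,[B,C]]=[[A,B],C]-(-1)^{|B||C|}[[A,C],B]$. This follows in one line from the graded Jacobi identity and graded antisymmetry exactly as the paper states them, so the sign bookkeeping you flagged as the main obstacle is already settled.
\end{itemize}

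Compared with the bare citation, your version is a self-contained check that the odd grading changes nothing in the multilinear component: only signs $\pm1$ ever enter, and no $2$-torsion can arise.

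Two small points to tighten.

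\begin{itemize}
\item $\mathcal{L}_{m+1}$ is defined as the Lie polynomials all of whose words are multilinear. You should say explicitly why such an element is a $\Z$-combination of \emph{multilinear} bracket monomials. Bracket monomials are multihomogeneous, so the components of any other multidegree must sum to zero and can be discarded.
\item Your ``separate induction'' for $Q$ is just the main statement applied with a different distinguished letter. Accordingly, phrase the inductive hypothesis for an arbitrary finite letter set with a distinguished letter, since you apply it both to the letters of $P$ and to the letters of $Q$.
\end{itemize}
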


This is \emph{Reutenauer's basis} for $\mathcal{L}_{m+1}$, and for  general $S\subset \{1,\dots, n\}$, we define the Reutenauer basis for $\mathcal{L}_{S}$ to be the $\big(|S|-1\big)!$ elements as above with the letter $1$ replaced by the smallest element of $S$.
This allows one to define a basis for $C^{(p+1)}_{p}$, which we will use to define a basis for $C^{(n)}_{p}$.

\begin{prop}\label{basis for inj}
(Miller--Wilson \cite[Theorem 2.35]{miller2019higher})
Fix $p+1\ge 2$. 
For each subset $S\subset \{0, \dots, p\}$ with $|S|\ge 2$, let $B_{S}$ be the basis of Proposition \ref{Reutenauer basis} for $\mathcal{L}_{S}$.
For each singleton subset $S=\{i\}\subset \{0, \dots, p\}$, let $B_{S}=\{i\}$.
Put a total order $\le$ on $B=\sqcup_{S\subset \{0,\dots, p\}}B_{S}$.
Then the set $\Pi^{p+1}_{p}(1)$ of Lie polynomials of the form
\[
P_{1}P_{2}\cdots P_{m}\quad \text{such that}\quad \{0,\dots, p\}=S_{1}\sqcup S_{2}\sqcup \cdots\sqcup S_{m}, P_{i}\in B_{S_{i}},\quad \text{and}\quad P_{1}<P_{2}<\cdots< P_{m}\in B
\]
is a $\Z$-basis for $C^{(p+1)}_{p}$.
\end{prop}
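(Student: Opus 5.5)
The plan is a rank count followed by a spanning argument. $C^{(p+1)}_{p}$ is free abelian on the injective words of length $p+1$ in $p+1$ letters, that is, on the $(p+1)!$ permutations of $\{0,\dots,p\}$. First I will show that $\Pi^{p+1}_{p}(1)$ has exactly $(p+1)!$ elements. Then I will show that it spans $C^{(p+1)}_{p}$. A spanning set of $(p+1)!$ elements in a free abelian group of rank $(p+1)!$ is automatically a basis. Spanning is needed here rather than linear independence, because a surjection $\Z^{N}\to\Z^{N}$ is an isomorphism, while an injection need not be.

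\emph{Counting.} An element of $\Pi^{p+1}_{p}(1)$ is determined by a set partition $\{S_{1},\dots,S_{m}\}$ of $\{0,\dots,p\}$ together with a choice of one element of $B_{S_{i}}$ for each block. The ordering $P_{1}<\cdots<P_{m}$ is then forced by the fixed total order on $B$, so each such choice contributes exactly one product. By Proposition~\ref{Reutenauer basis}, $|B_{S}|=(|S|-1)!$, and this also holds for singletons. So the count is
\[
\sum_{\text{partitions}}\prod_{i}(|S_{i}|-1)!.
\]
This sum equals the number of permutations of $\{0,\dots,p\}$, because $(|S|-1)!$ is the number of cyclic orders on $S$, so the sum counts permutations by their cycle decomposition. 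Hence $|\Pi^{p+1}_{p}(1)|=(p+1)!$.

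\emph{Spanning.} I will induct on $p$, using the Poincar\'e--Birkhoff--Witt style straightening in the free associative algebra. The multilinear part of the free associative algebra on $\{0,\dots,p\}$ is exactly $C^{(p+1)}_{p}$. The multilinear Lie elements on a subset $S$ form $\mathcal{L}_{S}$, and these have the Reutenauer bases. Every word $i_{0}\cdots i_{p}$ is a product of the singletons $i_{0},\dots,i_{p}$, each of which is a basis element of some $B_{\{i\}}$. So it suffices to show that any product $Q_{1}\cdots Q_{m}$, with $Q_{j}\in B_{T_{j}}$ and the $T_{j}$ partitioning $\{0,\dots,p\}$, lies in the span of the ordered products.

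I will argue by induction on the number of inversions with respect to $<$, nested inside an induction on $m$. If two adjacent factors satisfy $Q_{j}>Q_{j+1}$, I rewrite
\[
Q_{j}Q_{j+1}=\pm Q_{j+1}Q_{j}\pm[Q_{j},Q_{j+1}],
\]
using the graded bracket $[A,B]=AB-(-1)^{|A||B|}BA$. The first term has fewer inversions. In the second term, $[Q_{j},Q_{j+1}]$ lies in $\mathcal{L}_{T_{j}\sqcup T_{j+1}}$, so by Proposition~\ref{Reutenauer basis} it is an integral combination of elements of $B_{T_{j}\sqcup T_{j+1}}$. Substituting this expansion produces products with $m-1$ factors, which are handled by the induction on $m$. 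Both inductions are well-founded, and in the base case $m=1$ the single factor is already an element of $B$.

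The main obstacle is checking that the straightening really stays over $\Z$ and uses the graded sign conventions consistently. Two points need care. First, membership of the bracket in $\mathcal{L}_{T_{j}\sqcup T_{j+1}}$ must hold integrally, and it does, since $\mathcal{L}$ is defined as a $\Z$-submodule closed under brackets and is spanned integrally by the Reutenauer basis. Second, the signs $(-1)^{|A||B|}$ depend only on the lengths of the factors, so the rewriting is well defined. Once spanning holds, the rank count from the first step finishes the proof.
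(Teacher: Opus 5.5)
Your proof is correct. The paper gives no argument of its own for this statement; it imports it verbatim from Miller--Wilson [Theorem 2.35], so the only comparison is between your proof and a citation.

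Your argument is a self-contained derivation that uses only Proposition~\ref{Reutenauer basis} as input. It has two parts:
\begin{enumerate}
\item The cycle-decomposition count $\sum_{\text{partitions}}\prod_i(|S_i|-1)!=(p+1)!$.
\item A straightening step based on $Q_jQ_{j+1}=[Q_j,Q_{j+1}]+(-1)^{|Q_j||Q_{j+1}|}Q_{j+1}Q_j$.
\end{enumerate}
The nested induction is well-founded: the outer induction is strong induction on the number of factors, and the inner one is on inversions. Adjacent factors are automatically distinct elements of the disjoint union $B$, so each swap lowers the inversion count by exactly one. Your closing step is also oriented correctly: a spanning family of $(p+1)!$ elements in $\Z^{(p+1)!}$ is a basis.

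Your route makes two things visible that the bare citation hides.
\begin{itemize}
\item The proof works for an arbitrary total order on $B$.
\item It explains why no integrality issue arises, even though the bracket here is the super bracket with odd letters.
\end{itemize}
On the second point, appealing to a general Poincar\'e--Birkhoff--Witt theorem over $\Z$ would be unsafe in this graded setting. For a single letter $i$ one has $[i,i]=2\,ii$, so $ii$ is not an integral combination of ordered products of Lie elements. In the multilinear component, however, the factors $Q_j$ have disjoint supports and never repeat, so straightening only brackets distinct factors and never needs to divide by $2$. The one genuine external input is that the Reutenauer elements form a $\Z$-basis of each $\mathcal{L}_S$, not merely a rational one, which is exactly what Proposition~\ref{Reutenauer basis} supplies.

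What the paper's citation buys is brevity, and consistency with the Miller--Wilson framework it relies on immediately afterwards for the basis of $\mathcal{T}_p$.
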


In particular, if we replace $\{0,\dots, p\}$ with any set $T=\{i_{0},\dots, i_{p}\}$, we get a $\Z$-basis $\Pi^{T}_{p}$ for $C^{T}_{p}$.
Taking every possible $\Sigma\subset\{1, \dots, n\}$ of size $p+1$, we get a basis for $C^{(n)}_{p}$. 

\begin{prop}\label{basis for Cnp}
Given a set $T$ of size $p+1$, let $\Pi^{T}_{p}$ denote the $\Z$-basis for $C^{T}_{p}$.
Then
\[
\Pi^{n}_{p}:=\bigsqcup_{T\subset\{1,\dots, n\}\big||T|=p+1}\Pi^{T}_{p}
\]
is a $\Z$-basis for $C^{(n)}_{p+1}$.
\end{prop}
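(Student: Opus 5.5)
The plan is to decompose $C^{(n)}_{p}$ according to the set of letters a word uses, and then apply Proposition \ref{basis for inj} to each summand after relabeling. (Since $\Pi^{T}_{p}$ consists of polynomials in words of length $p+1$, the group in question is the one spanned by injective words of length $p+1$, i.e.\ $C^{(n)}_{p}$ in the indexing of the previous subsection; I read the subscript $p+1$ in the statement as this group.)

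First, I would observe that every injective word $i_{0}\cdots i_{p}$ of length $p+1$ in $\{1,\dots,n\}$ determines its underlying set of letters $T=\{i_{0},\dots,i_{p}\}$, which has exactly $p+1$ elements. Since $C^{(n)}_{p}$ is free on these words, grouping them by underlying set gives an internal direct sum decomposition
\[
C^{(n)}_{p}=\bigoplus_{T\subset\{1,\dots,n\},\,|T|=p+1}C^{T}_{p},
\]
where $C^{T}_{p}$ is the free abelian group on the injective words whose letters are exactly the elements of $T$. These are precisely the orderings of $T$.

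Second, I would identify each $C^{T}_{p}$ with $C^{(p+1)}_{p}$ via the order-preserving bijection $\{0,\dots,p\}\to T$, or equally $\{1,\dots,p+1\}\to T$. This relabeling is compatible with concatenation of words and hence with the Lie bracket. It therefore carries Reutenauer bases to Reutenauer bases, with the smallest element playing the role of $1$, consistent with the convention defining $B_{S}$ for general $S$. So the image of the basis of Proposition \ref{basis for inj} is exactly $\Pi^{T}_{p}$, and $\Pi^{T}_{p}$ is a $\Z$-basis of $C^{T}_{p}$. One point needs checking here: every element of $\Pi^{T}_{p}$ is a product $P_{1}\cdots P_{m}$ of Lie polynomials on the disjoint sets $S_{i}$ partitioning $T$. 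Expanding such a product, every term is an injective word with letter set exactly $T$, so $\Pi^{T}_{p}$ genuinely lies in the summand $C^{T}_{p}$ and not merely in $C^{(n)}_{p}$.

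Finally, I would use the standard fact that a union of bases of the summands of a direct sum is a basis of the whole. Together with the disjointness of the union over $T$, this gives that $\Pi^{n}_{p}$ is a $\Z$-basis. There is no real obstacle; the only thing to be careful about is the containment $\Pi^{T}_{p}\subset C^{T}_{p}$ noted above. Without it, elements indexed by different sets $T$ could interact, and the direct sum argument would not apply.
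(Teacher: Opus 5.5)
Your proposal is correct and follows the paper's own justification, which is the single remark preceding the statement: apply Proposition \ref{basis for inj} to each $(p+1)$-subset $T$ to obtain $\Pi^{T}_{p}$, then take the union over all such $T$. Your decomposition $C^{(n)}_{p}=\bigoplus_{T}C^{T}_{p}$ by underlying letter set supplies the detail the paper leaves implicit, and your reading of the subscript as $C^{(n)}_{p}$ matches how the paper uses the result (it calls $\Pi^{4}_{2}$ a basis for $C^{(4)}_{2}$). The case $p=0$ falls outside the hypothesis $p+1\ge 2$ of Proposition \ref{basis for inj} but is trivial, since each $C^{T}_{0}$ is free on a single one-letter word.
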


Farmer noted that $\|\text{Inj}_{\bullet}(n)\|$ has reduced homology concentrated in the top dimension \cite{farmer1978cellular}; let $\mathcal{T}_{p}:=H_{p-1}\left(C^{(p)}_{*}\right)$ denote this  homology group.
Miller and Wilson found a basis for $\mathcal{T}_{p}$ that we will use to define a basis for the $E^{1}$-entries of our augmented Mayer--Vietoris spectral sequences \cite{miller2019higher}.
Later, we will see that there is a nice subset of this basis that describes the $E^{r}$-entries of our augmented Mayer--Vietoris spectral sequences.

\begin{prop}\label{basis for top homology of inj word}
(Miller--Wilson \cite[Lemma 2.38 and Theorem 2.40]{miller2019higher})
Fix a finite $p+1\ge 2$. 
$\mathcal{T}_{p}$ is equal to the subgroup of $C^{(p)}_{p-1}$ spanned by $\mathcal{L}$-products. 
Moreover, it has the following basis:
As in Proposition \ref{basis for inj}, for each subset $S\subseteq[p+1]$, let $B_{S}$ be the basis for $\mathcal{L}_{S}$ of Proposition \ref{Reutenauer basis}.
Put a total order $\le$ on $B:=\bigcup_{S\subset [p+1], |S|\ge 2}B_{S}$.
Then, the set $\Pi^{*}_{p}$ of polynomials
\[
\big\{P_{1}P_{2}\cdots P_{m}\big|[p+1]=S_{1}\sqcup S_{2}\sqcup \cdots \sqcup S_{m}, P_{i}\in B_{S_{i}}, \text{ and } P_{1}<P_{2}<\cdots< P_{m}\in B\big\}
\]
is a basis for $\mathcal{T}_{p}$.
\end{prop}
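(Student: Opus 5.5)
The plan is to work directly at the chain level. The complex $C^{(p)}_{*}$ has no simplices above degree $p-1$: a top simplex is an injective word of length $p$, i.e.\ a permutation of $[p]$. Hence
\[
\mathcal{T}_{p}=H_{p-1}\big(C^{(p)}_{*}\big)=\ker\big(d:C^{(p)}_{p-1}\to C^{(p)}_{p-2}\big).
\]
So it suffices to do two things: show that the $\mathcal{L}$-products lie in this kernel, and show that $\Pi^{*}_{p}$ spans the kernel and is linearly independent.

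\emph{Step 1 (cycles).} First record how $d$ interacts with concatenation. Deleting letters one at a time gives the graded Leibniz rule
\[
d(AB)=d(A)B+(-1)^{|A|}A\,d(B),
\]
where a single letter $i$ has $d(i)=\varnothing$, the empty word (the augmentation). For $[A,B]=AB-(-1)^{|A||B|}BA$ this yields $d[A,B]=[dA,B]\pm[A,dB]$ with the appropriate Koszul signs. The base case is $d[i,j]=d(ij+ji)=(j-i)+(i-j)=0$. Inductively, every bracket of length at least $2$ is killed by $d$: each term either contains $d$ of a bracket of length at least $2$, which vanishes by induction, or a bracket of the empty word with another word, and $[\varnothing,B]=B-B=0$. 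By Leibniz, any product $P_{1}\cdots P_{m}$ with every $|S_{i}|\ge 2$ is then a cycle. Linear independence of $\Pi^{*}_{p}$ is free, since it is a subset of the basis $\Pi^{p}_{p-1}$ of $C^{(p)}_{p-1}$ from Proposition \ref{basis for inj}.

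\emph{Step 2 (spanning).} Fix the total order on $B$ so that all singleton factors are smaller than all Lie factors. A basis element of Proposition \ref{basis for inj} then has the form $i_{1}\cdots i_{r}\,Q$, where $i_{1}<\cdots<i_{r}$ are singletons and $Q$ is an $\mathcal{L}$-product. By Step 1 and the Leibniz rule,
\[
d(i_{1}\cdots i_{r}Q)=\sum_{l}\pm\, i_{1}\cdots\widehat{i_{l}}\cdots i_{r}\,Q .
\]
The key check is that $d$ is injective on the span of basis elements with $r\ge1$. Take a combination with $r\ge 1$ terms and look at the terms in the image whose deleted letter is the largest singleton. Equivalently, filter by the missing letter and observe that the maps $i_{1}\cdots i_{r}Q\mapsto$ (drop the largest missing-letter-compatible singleton) are triangular with unit diagonal with respect to the bases $\Pi^{T}_{p-2}$ of Proposition \ref{basis for Cnp}. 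Given injectivity, any cycle must have zero component on the elements with $r\ge1$, so the kernel is exactly the span of $\Pi^{*}_{p}$.

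\emph{Main obstacle and fallback.} I expect the triangularity argument to be the main obstacle, because different sources can hit the same target word $i_{1}\cdots\widehat{i_{l}}\cdots i_{r}Q$, and the bookkeeping of which source is leading needs care. If it does not go through cleanly, I would instead use a rank count. By Farmer's result that the homology of $\text{Inj}_{\bullet}(p)$ is concentrated in the top degree, the Euler characteristic gives
\[
\operatorname{rank}\mathcal{T}_{p}=\sum_{j}(-1)^{p-j}\frac{p!}{j!},
\]
the derangement number. By the exponential formula this equals the number of set partitions of $[p]$ into blocks of size at least $2$, each block $S$ weighted by $(|S|-1)!$, which is exactly $|\Pi^{*}_{p}|$. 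Since $\Pi^{*}_{p}$ is part of a $\mathbb{Z}$-basis of $C^{(p)}_{p-1}$, its span is a direct summand, and $\ker d$ is also a direct summand because $C^{(p)}_{p-2}$ is free. Equal ranks together with the inclusion $\operatorname{span}\Pi^{*}_{p}\subseteq\ker d$ then force equality.
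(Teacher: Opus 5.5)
\paragraph{Verdict.} Your proposal is correct. The paper contains no argument for this proposition: it is quoted from Miller--Wilson, so there is no internal proof to match, and yours is a valid self-contained replacement.

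\paragraph{Comparison of routes.} Your main route needs only two ingredients: the derivation property of $d$, and the PBW-type bases of Propositions~\ref{basis for inj} and~\ref{basis for Cnp}. With these it computes $\ker\big(d:C^{(p)}_{p-1}\to C^{(p)}_{p-2}\big)$ directly. It never uses Farmer's theorem that the homology of $\text{Inj}_{\bullet}(p)$ is concentrated in the top degree. Your fallback is also valid, but it depends on Farmer's theorem plus the derangement count. It gains nothing once the direct argument is in place. Your reading with partitions of $[p]$ is the consistent one. The statement's $[p+1]$ is an indexing slip: in the paper's own example, $\Pi^{*}_{3}$ is listed as a basis of $\mathcal{T}_{4}$.

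\paragraph{The step you flagged.} The injectivity step you feared is in fact immediate, with no triangularity or filtration needed. Order the singletons first, so a basis element is $i_{1}\cdots i_{r}Q$. Then
\[
d(i_{1}\cdots i_{r}Q)=\sum_{l=1}^{r}(-1)^{l-1}\,i_{1}\cdots\widehat{i_{l}}\cdots i_{r}Q .
\]
This leaves the Lie part $Q$ untouched. Moreover, $Q$ determines the source, because the singleton set of a top-degree source must be $[p]$ minus the letters of $Q$. So each target basis vector in $\Pi^{[p]\setminus\{i_{l}\}}_{p-2}$ has exactly one source. Images of distinct sources therefore have disjoint supports, with coefficients $\pm1$, and $d$ is injective on the span of the elements with $r\ge 1$. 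There are no collisions to manage.

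\paragraph{Details to make explicit.}
\begin{itemize}
\item You should \emph{extend} the given total order on the non-singleton part of $B$ by placing the singletons below it, rather than choosing a new order. Otherwise $\Pi^{*}_{p}$ need not be a subset of your basis of $C^{(p)}_{p-1}$.
\item On each $T=[p]\setminus\{i_{l}\}$, use the restriction of that order, so the deleted words are again elements of $\Pi^{T}_{p-2}$.
\item In the fallback, the Euler characteristic gives
\[
\operatorname{rank}\mathcal{T}_{p}=\sum_{j=0}^{p}(-1)^{j}\frac{p!}{j!}.
\]
Your sign $(-1)^{p-j}$ is off by $(-1)^{p}$; for $p=3$ it gives $-2$ instead of $2$. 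This is harmless, since that route is not needed.
\end{itemize}
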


Note that all of the Lie polynomials factors of a basis element for $\mathcal{T}_{p}$ have degree at least $2$.
When we calculate the $E^{r}$-entries of our spectral sequences we will need the subspace $\mathcal{T}_{p}^{r}$ which we define to be the subspace $\mathcal{T}_{p}$ spanned by the basis elements of Proposition \ref{basis for top homology of inj word} that are products of polynomials of degree at least $r$.
We write $\Pi^{*}_{p}(r)$ for this set.

\begin{exam}

The basis $\Pi^{4}_{2}$ for $C^{(4)}_{2}$ described by Proposition \ref{basis for Cnp} is
\begin{multline*}
\big[[1,2],3\big], \big[[1,3],2\big], 1[2,3], 2[1,3], 3[1,2], 123, \big[[1,2], 4\big], \big[[1,4],2\big], 1[2,4], 2[1,4], 4[1,2], 124,\\
 \big[[1,3],4\big], \big[[1,4],3\big],1[3,4], 3[1,4], 4[1,3], 134, \big[[2,3],4\big], \big[[2,4],3\big],2[3,4], 3[2,4], 4[2,3], 234.
\end{multline*}

The basis $\Pi^{4}_{3}$ for $C^{(4)}_{3}$ described by Proposition \ref{basis for inj} is 
\begin{multline*}
1234, 12[3,4], 13[2,4], 14[2,3], 23[1,4], 24[1,3], 34[1,2], 1\big[[2,3],4\big], 1\big[[2,4],3\big], 2\big[[1,3],4\big], 2\big[[1,4],3\big], \\ 3\big[[1,2],4\big], 3\big[[1,4],2\big],
4\big[[1,2],3\big], 4\big[[1,3],2\big],  [1,2][3,4], [1,3][2,4], [1,4][2,3], \Big[\big[[1,2],3\big],4\Big], \Big[\big[[1,2],4\big],3\Big],\\\Big[\big[[1,3],2\big],4\Big],
\Big[\big[[1,3],4\big],2\Big],
\Big[\big[[1,4],2\big],3\Big],
\Big[\big[[1,4],3\big],2\Big].
\end{multline*}

The basis $\Pi^{*}_{3}=\Pi^{*}_{3}(2)$ for $\mathcal{T}_{4}=\mathcal{T}^{2}_{4}$ described by Proposition \ref{basis for top homology of inj word} is
\begin{multline*}
[1,2][3,4], [1,3][2,4], [1,4][2,3], \Big[\big[[1,2],3\big],4\Big], \Big[\big[[1,2],4\big],3\Big],\\\Big[\big[[1,3],2\big],4\Big],
\Big[\big[[1,3],4\big],2\Big],
\Big[\big[[1,4],2\big],3\Big],
\Big[\big[[1,4],3\big],2\Big].
\end{multline*}

The basis $\Pi_{3}^{*}(3)$ for $\mathcal{T}_{4}^{3}$ described by Proposition \ref{basis for top homology of inj word} is
\[
\Big[\big[[1,2],3\big],4\Big], \Big[\big[[1,2],4\big],3\Big],\\\Big[\big[[1,3],2\big],4\Big],
\Big[\big[[1,3],4\big],2\Big],
\Big[\big[[1,4],2\big],3\Big],
\Big[\big[[1,4],3\big],2\Big].
\]
\end{exam}

While the bases of Proposition \ref{basis for Cnp} might seem superfluous at first, they will prove to be highly useful for calculating differentials in the Mayer--Vietoris spectral sequence.
In the next subsection, we explore this idea in more depth by connecting our Mayer--Vietoris spectral sequences to the arc resolution spectral sequence.

\subsection{The Arc Resolution Spectral Sequence}

We recall the definitions of the arc complex and arc resolution spectral sequence, and prove that for $F_{n}(\R^{2};i_{0}, \dots, i_{p})$, this spectral sequence is equivalent to the augmented Mayer--Vietoris spectral sequence $E\big[F_{n}(\R^{2};i_{0}, \dots, i_{p})\big]$ defined in Subsection \ref{our mv ss}.

\begin{defn}
Let $M$ be the interior of a manifold $\overline{M}$ with nonempty boundary $\partial M$.
Fix an embedding $\gamma:[0,1]\to \partial M$.
Let
\[
\text{Arc}_{p}\big(F_{n}(M)\big)\subset F_{n}(M)\times \text{Emb}\big(\sqcup_{p+1}[0,1], \overline{M}\big)
\]
be the subspace of points and arcs $(x_{1},\dots, x_{n};\alpha_{0}, \dots, \alpha_{p})$ satisfying the following conditions:
\begin{itemize}
    \item $\alpha_{i}(0)\in\gamma\big([0,1]\big)$,
    \item $\alpha_{i}(1)\in\{x_{1},\dots, x_{n}\}$,
    \item $\alpha_{i}(t)\notin \partial M\cup \{x_{0}, \dots, x_{n}\}$ for $t\in (0,1)$, and 
    \item $\gamma^{-1}\big(\alpha_{j_{1}}(0)\big)>\gamma^{-1}\big(\alpha_{j_{2}}(0)\big)$ whenever $j_{1}>j_{2}$.
\end{itemize}
\end{defn}

In the case of $F_{n}(\R^{2})$, we realize $\R^{2}$ as the interior of the closed rectangle $\overline{R_{n,n}}$ and set $\gamma\big([0,1]\big)$ to be the right side of the rectangle parametrized by the constant speed path that starts at the top corner.
For the case of $F_{n}(\R^{2}; i_{0}, \dots, i_{p})$, we use Proposition \ref{forget points on the right} to define $\text{Arc}_{s}\big(F_{n}(\R^{2}; i_{0}, \dots, i_{p})\big)$ to be $\text{Arc}_{s}\big(F_{\{1,\dots, n\}-\{i_{0},\dots, i_{p}\}}(\R^{2})\big)$. 

Letting $p$ vary, one gets an augmented semi-simplicial space $\text{Arc}_{\bullet}\big(F_{n}(M)\big)$, where the $j$-th face map
\[
d_{j}:\text{Arc}_{p}\big(F_{n}(M)\big)\to \text{Arc}_{p-1}\big(F_{n}(M)\big)
\]
is given by forgetting the $j$-th arc $\alpha_{j}$.
Noting that the space $\text{Arc}_{-1}\big(F_{n}(M)\big)$ is homeomorphic to $F_{n}(M)$, one sees that there is an augmentation map
\[
\big\|\text{Arc}_{\bullet}\big(F_{n}(M)\big)\big\|\to F_{n}(M).
\]
This gives rise to an augmented spectral sequence called the \emph{arc resolution spectral sequence}, which we denote by $arcE\big[F_{n}(M)\big]$.

\begin{prop}\label{differential definition of arc}
(Bendersky--Gilter \cite[Prop 1.2]{bendersky1991cohomology} and Miller--Wilson \cite[Prop 3.13]{miller2019higher})
Let $A_{\bullet}$ be an augmented semi-simplicial space.
Beginning on the $E^{1}$-pages, the geometric realization spectral sequence agrees with the spectral sequence for the double complex $C_{*}(A_{\bullet})$ that has $d^{0}$ differential induced by $\partial$ and $d^{1}$ induced by the alternating sum of the face maps.
\end{prop}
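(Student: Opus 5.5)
The proposition (Bendersky--Gitler, Miller--Wilson) compares two spectral sequences built from the same augmented semi-simplicial space $A_{\bullet}$. The first is the geometric realization spectral sequence, obtained by filtering $\|A_{\bullet}\|$ (with its augmentation) by skeleta. The second comes from the double complex $C_{p,q}=C_{q}(A_{p})$. Both have the same $E^{1}$-page, $E^{1}_{p,q}=H_{q}(A_{p})$ for $p\ge -1$, so the task is to show the filtrations inducing them agree up to a filtered quasi-isomorphism. The plan is to build such a quasi-isomorphism and then invoke the comparison theorem for spectral sequences of filtered chain complexes.

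First I fix the filtration on the realization side. Let $\|A_{\bullet}\|^{(p)}$ be the image of $\bigsqcup_{j\le p}A_{j}\times\Delta^{j}$, and form the augmented version by taking the mapping cone of $\|A_{\bullet}\|\to A_{-1}$, with filtration degree $-1$ for the $A_{-1}$ part. The filtration quotients are
\[
\|A_{\bullet}\|^{(p)}/\|A_{\bullet}\|^{(p-1)}\cong A_{p+}\wedge(\Delta^{p}/\partial\Delta^{p}),
\]
so by the suspension isomorphism $E^{1}_{p,q}=H_{q}(A_{p})$.

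Second I build the comparison map. On $A_{p}\times\Delta^{p}$ I use the Eilenberg--Zilber cross product with the fundamental chain $\iota_{p}\in C_{p}(\Delta^{p})$, giving $\Phi(c)=c\times\iota_{p}$ pushed into $\|A_{\bullet}\|^{(p)}$. Since $\partial\iota_{p}=\sum(-1)^{j}\delta^{j}\iota_{p-1}$ and the realization glues $d_{j}a\times t$ to $a\times\delta^{j}t$, the Leibniz rule gives
\[
\partial(c\times\iota_{p})=\partial c\times\iota_{p}+(-1)^{q}\sum_{j}(-1)^{j}d_{j}c\times\iota_{p-1}.
\]
So $\Phi$ is a chain map from the total complex, with $D=d+(-1)^{p}\partial$ after reconciling the sign convention by a sign twist. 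It respects filtrations, and on associated graded pieces it is the suspension isomorphism $C_{q}(A_{p})\to \widetilde{C}_{p+q}(A_{p+}\wedge S^{p})$ up to quasi-isomorphism. The augmentation degree $-1$ is handled by the identity on $C(A_{-1})$.

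Third I apply the comparison theorem. A filtered chain map inducing an isomorphism on $E^{1}$ induces isomorphisms on all later pages, compatible with the differentials. On the double-complex side, $d^{0}$ is $\partial$ and $d^{1}$ is induced by $\sum(-1)^{j}d_{j}$. Transporting these through $\Phi$ shows that the realization spectral sequence's $d^{1}$ is the alternating sum of the face maps, and the two sequences agree from $E^{1}$ onward.

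The main obstacle is sign bookkeeping, together with checking that the quotient identification really is the suspension isomorphism and not some other isomorphism. I would settle this by verifying the $p=1$ case explicitly and then arguing naturality in the simplices.
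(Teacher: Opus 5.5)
The paper gives no proof of its own here---it simply quotes Bendersky--Gitler and Miller--Wilson---and your argument is the standard proof behind those citations and is correct: take the filtered chain map $c\mapsto c\times\iota_p$ from the total complex into the skeletally filtered chains of the augmented realization, then apply the comparison theorem. The two load-bearing checks go through uniformly in $p$, so no separate $p=1$ verification is needed: the twist $(-1)^{pq}\Phi$ turns your Leibniz formula into compatibility with $D=d+(-1)^{p}\partial$, and the map on $E^{1}$ is cross product with the generator $[\iota_p]\in H_p(\Delta^p,\partial\Delta^p)$ (relative K\"unneth) followed by excision for the good pair $(\|A_\bullet\|^{(p)},\|A_\bullet\|^{(p-1)})$, which is exactly the usual identification $E^{1}_{p,q}\cong H_q(A_p)$.
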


In the case $M=\R^{2}$, the arc resolution spectral sequence is the same as the augmented Mayer--Vietoris spectral sequence of Subsection \ref{our mv ss}.

\begin{lem}\label{MV is Arc}
Recall that $E\big[F_{n}(\R^{2})\big]$ denotes the augmented Mayer--Vietoris spectral sequence arising from covering $F_{n}(\R^{2})$ by sets of the from $F_{n}(\R^{2};i)$, where $i\in \{1,\dots, n\}$.
There is an isomorphism of spectral sequences
\[
E\big[F_{n}(\R^{2})\big]\cong arcE\big[F_{n}(\R^{2})\big]
\]
such that the $d^{r}$-differential commute.
\end{lem}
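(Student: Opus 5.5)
The plan is to compare the two spectral sequences at the level of double complexes and then invoke Proposition \ref{differential definition of arc}. Both $E\big[F_{n}(\R^{2})\big]$ and $arcE\big[F_{n}(\R^{2})\big]$ come from double complexes whose $p$-th columns are chains on spaces indexed by injective words of length $p+1$, with horizontal differential the alternating sum of face maps. So I will build a zig-zag of augmented semi-simplicial spaces relating $\text{Arc}_{\bullet}\big(F_{n}(\R^{2})\big)$ to the semi-simplicial space $U_{\bullet}$ whose $p$-simplices are $\bigsqcup_{(j_{0},\dots,j_{p})}F_{n}(\R^{2};j_{0},\dots,j_{p})$. Here $U_{-1}=F_{n}(\R^{2})$, and the face maps $d_{l}$ are the inclusions $F_{n}(\R^{2};j_{0},\dots,j_{p})\hookrightarrow F_{n}(\R^{2};j_{0},\dots,\widehat{j_{l}},\dots,j_{p})$. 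By Proposition \ref{intersection is shuffle}, the $p$-fold intersections of the cover decompose into exactly these pieces, so this is precisely the double complex defining $E\big[F_{n}(\R^{2})\big]$.

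Next I will construct the comparison. First I decompose $\text{Arc}_{p}\big(F_{n}(\R^{2})\big)$ by the ordered tuple $(j_{0},\dots,j_{p})$ of endpoint labels of the arcs $\alpha_{0},\dots,\alpha_{p}$; since the arcs are disjoint and start in order along $\gamma$, this label tuple is locally constant. For each tuple, the natural candidate is a map from the corresponding component to $F_{n}(\R^{2};j_{0},\dots,j_{p})$, and it goes the wrong way for direct use, so I instead use an intermediate space. Let $X_{p}$ be the subspace of $\text{Arc}_{p}$ in which the arcs are horizontal segments from the right side of $\overline{R_{n,n}}$ to points $j_{l}$ that are right-most and stacked in order. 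There are then maps $\text{Arc}_{p}\leftarrow X_{p}\rightarrow U_{p}$, where the second map forgets the arcs. Both maps commute with the face maps, because forgetting an arc corresponds to removing a letter. I will show that each map is a levelwise homotopy equivalence. For $X_{p}\to U_{p}$ this follows by pushing the marked points right (Proposition \ref{move squares to right side}), after which the arcs are canonical. For $X_{p}\to\text{Arc}_{p}$, I will shrink each arc toward the boundary by an isotopy dragging its endpoint along it, pushing the other points away; this is a standard "arcs can be straightened" argument. Both sides are then homotopy equivalent to $F_{n-p-1}(\R^{2})$ by Proposition \ref{forget points on the right}.

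With this in hand, a levelwise weak equivalence of augmented semi-simplicial spaces induces an isomorphism of geometric realization spectral sequences from $E^{1}$ onward, with all $d^{r}$ commuting. Proposition \ref{differential definition of arc} identifies these spectral sequences with the double-complex spectral sequences, and the $U_{\bullet}$ double complex is exactly $C_{p,q}$ from Subsection \ref{defn of mv ss section}. Chaining the isomorphisms gives $E\big[F_{n}(\R^{2})\big]\cong arcE\big[F_{n}(\R^{2})\big]$. One caveat: the Mayer--Vietoris construction uses cellular chains of subcomplexes while the semi-simplicial version uses singular chains. I will handle this with the standard natural cellular-to-singular comparison, which is compatible with inclusions and therefore with $d$.

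The main obstacle is the homotopy equivalence $X_{p}\simeq\text{Arc}_{p}$ on each label component, since arcs in $\text{Arc}_{p}$ can wind around other points. I will first try a fibration argument. Forgetting the points other than the $j_{l}$ and recording only the arcs, the arc space with fixed endpoints on $\gamma$ is contractible, because disjoint arc systems from the boundary in a disk are isotopic rel boundary. The fiber is the configuration space of the remaining points in the complement of the arcs, and that complement is homeomorphic to $\R^{2}$. This makes both $\text{Arc}_{p}$ and $X_{p}$ equivalent to $F_{n-p-1}(\R^{2})$ compatibly with the inclusion.
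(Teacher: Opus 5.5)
Your proposal is correct and follows the paper's route. The paper likewise identifies $\bigsqcup F_{n}(\R^{2};i_{0},\dots,i_{p})$ with $\text{Arc}_{p}\big(F_{n}(\R^{2})\big)$ by contracting the arcs, observes that the face maps and internal differentials match, and concludes with Proposition~\ref{differential definition of arc}. Your zig-zag $\text{Arc}_{\bullet}\leftarrow X_{\bullet}\rightarrow U_{\bullet}$ and fibration argument simply make rigorous what the paper loosely calls an ``isomorphism,'' which is really a levelwise homotopy equivalence.

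Two small corrections. First, the nerve-indexed Mayer--Vietoris complex takes its signs from a fixed order on $\{1,\dots,n\}$, while your $U_{\bullet}$ complex takes them from vertical position. The two agree only after twisting each summand by the sign of the permutation sorting its word, so they are isomorphic rather than ``precisely'' equal, which is harmless. Second, the space of arc systems is contractible because the arcs can be shrunk toward $\gamma$; their being isotopic only gives path-connectedness.
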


\begin{proof}
This follows from the isomorphism between $\bigsqcup F_{n}(\R^{2}; i_{0}, \dots, i_{p})$ and $Arc_{p}\big(F_{n}(\R^{2})\big)$, noting that we can model $\R^{2}$ by $R_{n,n}$ and contract the arcs, giving us an isomorphism between the double complexes.

Inspecting the two pairs of differentials $d$ and $\partial$ defining $E\big[F_{n}(\R^{2})\big]$ and $arcE\big[F_{n}(\R^{2})\big]$, we see that they are naturally equivalent, which along with Proposition \ref{differential definition of arc} yields the desired isomorphism.
\end{proof}

Unfortunately, no such result holds for general square configuration spaces. 
Indeed, even finding a reasonable definition for the arc complex for $SF_{n}(R_{w,h})$ remains elusive.
Still, we will be able to use this equivalence of spectral sequences in the $F_{n}(i_{0}, \dots, i_{p})$ setting to calculate the differentials of augmented Mayer--Vietoris spectral sequence for $SF_{n}(R_{w,h}; i_{0}, \dots, i_{p})$.

\subsection{The $d^{r}$-Differentials of Our Augmented Mayer--Vietoris Spectral Sequences}

In this subsection, we calculate the differentials of our augmented Mayer--Vietoris spectral sequence provided there is ``enough space'' in $R_{w,h}$.
We first prove the differentials of our spectral sequence satisfy a Leibniz rule.
Then we focus on the bottom row of the spectral sequence in the $SF_{n}(R_{n, n})\simeq F_{n}(\R^{2})$ setting and determine the $d^{r}$-differentials, and we use these calculations and the Leibniz rule to calculate the $r$-th page of our Mayer--Vietoris spectral sequence for $F_{n}(\R^{2})$.
Finally, we use the Leibniz rule to compute the differentials for more general $SF_{n}(R_{w, h})$ provided we have ``enough space.''

We begin by noting that in general the differentials of our spectral sequences satisfy a Leibniz rule.
Namely, there is an inclusion
\[
\iota:R_{w_{1}, h_{1}}\sqcup R_{w_{2}, h_{2}}\hookrightarrow R_{w, h_{1}+h_{2}},
\]
where $w=\max\{w_{1}, w_{2}\}$, that sends $R_{w_{1}, h_{1}}$ to the top right of $R_{w, h_{1}+h_{2}}$ and $R_{w_{2}, h_{2}}$ to the bottom right of $R_{w, h_{1}+h_{2}}$.
This map induces an embedding
\[
\iota_{n_{1}, n_{2}}:SF_{n_{1}}(R_{w_{1}, h_{1}})\times SF_{n_{2}}(R_{w_{2}, h_{2}}\big)\to SF_{n_{1}+n_{2}}(R_{w, h_{1}+h_{2}}),
\]
which leads to a Leibniz rule on the differentials of $E\big[SF_{n_{1}+n_{2}}(R_{w, h_{1}+h_{2}})\big]$.

\begin{prop}\label{Leibniz Rule}
The maps $\iota_{n_{1}, n_{2}}$ induce maps
\[
\iota^{r}:E^{r}_{p_{1},q_{1}}\big[SF_{n_{1}}(R_{h_{1}, w_{1}})\big]\otimes E^{r}_{p_{2},q_{2}}\big[SF_{n_{2}}(R_{w_{2}, h_{2}})\big]\to E^{r}_{p_{1}+p_{2}+1, q_{1}+q_{2}}\big[SF_{n_{1}+n_{2}}(R_{w, h_{1}+h_{2}})\big].
\]
Moreover, these maps satisfy a Leibniz rule with respect to the differentials: Given $a\in E^{r}_{p_{1},q_{1}}\big[SF_{n_{1}}(R_{n_{1}, n_{1}})\big]$ and $b\in E^{r}_{p_{2},q_{2}}\big[SF_{n_{2}}(R_{n_{2}, n_{2}})\big]$, we have that
\[
d^{r}\big(\iota^{r}(a\otimes b)\big)=\iota^{r}\big(d^{r}(a)\otimes b\big)+(-1)^{p_{1}+q_{1}}\iota^{r}\big(a\otimes d^{r}(b)\big).
\]
\end{prop}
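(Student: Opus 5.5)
The plan is to build the maps $\iota^{r}$ at the chain level, as a map of filtered double complexes, and then get the Leibniz rule from the standard fact that a filtration-preserving chain map satisfying a Leibniz identity induces maps of spectral sequences with the same property.

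\textbf{Step 1: the cover.} The embedding $\iota_{n_{1},n_{2}}$ sends the top rectangle $R_{w_{1},h_{1}}$ and the bottom rectangle $R_{w_{2},h_{2}}$ to right-aligned positions in $R_{w,h_{1}+h_{2}}$. So if $j_{0},\dots,j_{p_{1}}$ are the right-most squares of the first factor and $j'_{0},\dots,j'_{p_{2}}$ (relabeled by adding $n_{1}$) are the right-most squares of the second, then the concatenated word $j_{0}\cdots j_{p_{1}}j'_{0}\cdots j'_{p_{2}}$ gives right-most squares of the image, ordered top to bottom. Since all right-most squares are tangent to the right side by Proposition~\ref{move squares to right side}, this defines a map
\[
SF_{n_{1}}(R_{w_{1},h_{1}};j_{0},\dots,j_{p_{1}})\times SF_{n_{2}}(R_{w_{2},h_{2}};j'_{0},\dots,j'_{p_{2}})\to SF_{n_{1}+n_{2}}(R_{w,h_{1}+h_{2}};j_{0},\dots,j_{p_{1}},j'_{0},\dots,j'_{p_{2}}).
\]
To make this literal rather than up to homotopy, I would restrict to the deformation-retracted subcomplexes in which the distinguished squares touch the right wall. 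Alternatively, I would note that $x$-coordinates are preserved, so right-most in a factor is right-most in the image.

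\textbf{Step 2: the chain map.} Using the cellular (subdivided cube-complex) structures, the cross product $C_{q_{1}}\otimes C_{q_{2}}\to C_{q_{1}+q_{2}}$ of products of cells, combined with concatenation of the indexing injective words, gives a map
\[
C_{p_{1},q_{1}}\otimes C_{p_{2},q_{2}}\to C_{p_{1}+p_{2}+1,\,q_{1}+q_{2}}.
\]
I would then check two identities.

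For the horizontal differential, $d(AB)=d(A)B+(-1)^{|A|}A\,d(B)$ when $A$ and $B$ are concatenated words, because deleting the $j$-th letter of $B$ inside $AB$ carries the sign $(-1)^{p_{1}+1+j}$. There is a boundary issue: deleting letters of a length-one word gives the augmentation term. This must be handled by convention, treating the augmented $(-1)$-column as words of length zero.

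For the vertical differential $\partial$, the usual cross-product Leibniz rule applies. Combining the two with the sign convention $D=d+(-1)^{p}\partial$, the total differential satisfies
\[
D(a\times b)=D(a)\times b+(-1)^{p_{1}+q_{1}+\epsilon}\,a\times D(b),
\]
where $\epsilon$ is a fixed shift coming from the $+1$ in the filtration degree. I would absorb $\epsilon$ by a sign twist on the product, for instance multiplying by $(-1)^{q_{1}(p_{2}+1)}$, chosen so the stated sign $(-1)^{p_{1}+q_{1}}$ comes out.

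\textbf{Step 3: passing to the spectral sequence.} The product maps $F_{s}\otimes F_{t}$ into $F_{s+t+1}$, so it is filtration-compatible up to a uniform shift. The standard argument is that $Z^{r}$ and $B^{r}$ are respected and the differential on $E^{r}$ is induced by $D$. Applied here, it shows the product descends to $\iota^{r}$ on every page, with $d^{r}$ a derivation.

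I expect the main obstacle to be the sign bookkeeping in Step 2, specifically reconciling the $(-1)^{p}$ twist in $D$ with the shifted filtration degree $p_{1}+p_{2}+1$ so that exactly $(-1)^{p_{1}+q_{1}}$ appears. I would first try verifying the identity on the generators $E^{1}$ (homology classes tensored with words) and then invoke the inductive structure of the higher pages.
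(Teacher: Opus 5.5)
Your plan is correct and is essentially the paper's argument. The paper only says the proof is identical to Miller--Wilson's Lemma~3.15 with the Mayer--Vietoris double complexes in place of the arc-resolution ones. That is exactly your approach: a chain-level pairing of filtered double complexes (concatenate words, cross product on chains) that raises filtration by one and satisfies a Leibniz identity, hence induces $\iota^{r}$ with $d^{r}$ acting as a derivation.

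Two small fixes are needed.

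\begin{itemize}
\item Your ``alternative'' in Step~1 fails. Membership in $SF_{n}(R_{w,h};i_{0},\dots,i_{p})$ forces all distinguished squares to share the maximal $x$-coordinate, and the right-most squares of the two factors need not do so. Use the right-wall subcomplexes instead; they form a sub-double complex whose inclusion is an isomorphism on $E^{1}$.
\item With $D=d+(-1)^{p}\partial$, your sample twist $(-1)^{q_{1}(p_{2}+1)}$ gives the sign $(-1)^{p_{1}+q_{1}+1}$. To obtain the stated $(-1)^{p_{1}+q_{1}}$, use, e.g., $(-1)^{q_{1}(p_{2}+1)+p_{2}+q_{2}}$.
\end{itemize}
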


The proof of this proposition is identical to that of \cite[Lemma 3.15]{miller2019higher} if we instead consider the filtered double complexes defining our augmented Mayer--Vietoris spectral sequences in place of the filtered double complex defining their arc resolution spectral sequences, so we direct the reader there for more details.

Next, we focus on the $F_{n}(\R^{2})\simeq SF_{n}(R_{n,n})$ case, where computing the action of the differentials is straight-forward.
Namely, given any class $[\alpha]\in H_{q}\big(F_{n}(\R^{2};  i_{0}, \dots, i_{p})\big)$ and any $\tau\in S_{p+1}$, there is a class $\big[\tau(\alpha)\big]\in H_{q}\big(F_{n}(\R^{2};  i_{\tau(0)}, \dots, i_{\tau(p)})\big)$ that arises by applying $\tau$ to any chain representing $[\alpha]$.
There are $(p+1)!$ such permutations of $[\alpha]$, which are linearly independent in $\bigoplus_{\tau\in S_{p+1}}H_{q}\big(F_{n}(\R^{2};  i_{\tau(0)}, \dots, i_{\tau(p)})\big)$.
Since $F_{n}(\R^{2}; i_{0}, \dots, i_{p})\simeq F_{n-p-1}(\R^{2})$ by Proposition \ref{forget points on the right}, and $H_{q}\big(F_{n-p-1}(\R^{2})\big)$ is torsion free by Proposition \ref{basis for homology of FnR2}, the span of the $\big[\tau(\alpha)\big]$ is naturally isomorphic to $C^{(p+1)}_{p}$, and we can use this isomorphism and Proposition \ref{basis for inj} to describe a basis for this space in terms of Lie polynomials.
This basis can be extended to give a basis for $E^{1}_{p,q}\big[SF_{n}(R_{n,n})\big]$.
Namely, if $B$ is a basis for $H_{q}\big(F_{n-p-1}(\R^{2})\big)$, then $B\Pi^{n}_{p}$ is a basis for $\bigoplus_{\tau\in S_{p+1}}\bigoplus_{1\le i_{0}<\cdots<i_{p}\le n}H_{q}\big(SF_{n}(R_{n,n};  i_{\tau(0)}, \dots, i_{\tau(p)})\big)$.
In particular, if $p+1=n$, then, since $H_{0}\big(SF_{n}(R_{n,n};  i_{\tau(0)}, \dots, i_{\tau(n-1)})\big)\cong H_{0}\big(SF_{0}(R_{n,n})\big) \cong\Z$, we see that the $E^{1}_{n-1, 0}$-entry of our augmented Mayer--Vietoris spectral sequence for $SF_{n}(R_{n,n})$ is $\Z^{(n-1)!}$, and the basis $\Pi^{*}_{n-1}$ described in Proposition \ref{basis for top homology of inj word} works as a basis for this entry.
With this in mind, we calculate the images of the differentials on some of the elements of this basis.

\begin{lem}\label{differentials on Reut}
Let
\[
L_{n}=\Bigg[\bigg[\Big[\cdots\big[1,i_{1}],i_{2}\big],\cdots\Big],i_{n-2}\bigg], i_{n-1}\Bigg]
\]
be an element of Reutenauer's basis for $\mathcal{L}_{n}\subset \mathcal{T}_{n}\cong E^{1}_{n-1, 0}\big[SF_{n}(R_{n,n})\big]$.
Then $d^{r}(L_{n})=0$ for $r<n$, and 
\[
d^{n}(L_{n})=W(1, i_{1},\dots, i_{n-1})\in E^{n}_{-1, n-1}\big[SF_{n}(R_{n,n})\big].
\]
\end{lem}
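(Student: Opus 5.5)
We will argue by induction on $n$, using the Leibniz rule of Proposition \ref{Leibniz Rule} and the identification of $E\big[SF_{n}(R_{n,n})\big]$ with the arc resolution spectral sequence from Lemma \ref{MV is Arc}. The base case $n=2$ is a direct computation. Here $E^{1}_{1,0}\cong\Z^{2}$ has basis the two words $12$ and $21$, corresponding to the two components $F_{2}(\R^{2};1,2)$ and $F_{2}(\R^{2};2,1)$. The $d^{1}$-differential is the alternating sum of face maps, which lands in $E^{1}_{0,0}=H_{0}\big(F_{2}(\R^{2};1)\big)\oplus H_{0}\big(F_{2}(\R^{2};2)\big)$. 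It sends $[1,2]=12+21$ to zero, since each face contributes with cancelling signs. The class $d^{2}([1,2])$ is then computed by the zig-zag in the double complex. We choose paths inside $F_{2}(\R^{2};1)$ and $F_{2}(\R^{2};2)$ connecting the configuration ``$1$ above $2$'' to ``$2$ above $1$''. Pushed into $C_{1}(F_{2}(\R^{2}))$, these paths concatenate with the appropriate signs into the loop in which point $2$ circles point $1$ once. That loop is $W(1,2)$, with the orientation convention fixing the sign. Since $E^{2}_{-1,1}=H_{1}(F_{2}(\R^{2}))$ (nothing else can hit it), we get $d^{2}(L_{2})=W(1,2)$.

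For the inductive step, write $L_{n}=[L_{n-1},i_{n-1}]=L_{n-1}\,i_{n-1}-(-1)^{n-1}i_{n-1}L_{n-1}$, where $L_{n-1}$ is a Reutenauer element on $\{1,i_{1},\dots,i_{n-2}\}$. Realize the concatenation $AB$ of words as $\iota^{r}(A\otimes B)$ for the stacking map $\iota_{n-1,1}$, placing the $L_{n-1}$ block above or below the single point $i_{n-1}$. The Leibniz rule, together with $d^{r}(i_{n-1})=0$ for $r\ge 2$ (as $E^{1}_{0,0}[F_{1}]\to E^{1}_{-1,0}$ is an isomorphism already killed on the first page) and the inductive hypothesis $d^{r}(L_{n-1})=0$ for $r<n-1$, gives $d^{r}(L_{n})=0$ for $r<n-1$. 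At $r=n-1$ we get $d^{n-1}(L_{n})=W(1,i_{1},\dots,i_{n-2})\cdot i_{n-1}\pm i_{n-1}\cdot W(1,i_{1},\dots,i_{n-2})$ in $E^{n-1}_{0,n-2}$. Through the product, $E^{1}_{0,n-2}$ is a sum of $H_{n-2}\big(F_{n}(\R^{2};j)\big)\cong H_{n-2}(F_{n-1}(\R^{2}))$. In that group both terms are the same class, namely the wheel with $i_{n-1}$ placed at the right, lying in the summand $j=i_{n-1}$. With the sign $-(-1)^{n-1}$ and the Koszul sign $(-1)^{p_{1}+q_{1}}$ they cancel, so $d^{n-1}(L_{n})=0$. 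We must also check that $d^{r}(L_{n})$ vanishes for all $r<n$, which includes the sources it passes through on intermediate pages. This follows from the fact that the total-complex representatives of the two summands become equal after one zig-zag.

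The main obstacle is computing $d^{n}(L_{n})$, because it is a genuinely new transgression and not a formal consequence of the Leibniz rule. Our plan is to build the zig-zag explicitly. By the inductive zig-zag, $L_{n-1}\cdot i_{n-1}$ lifts to a chain whose bottom component is the cycle $W(1,\dots,i_{n-2})$ with point $i_{n-1}$ held to its right. The difference of the two lifts is a cycle in $C_{n-2}\big(F_{n}(\R^{2};i_{n-1})\big)$ whose two pieces are $i_{n-1}$ above and $i_{n-1}$ below the wheel. This difference bounds an $(n-1)$-chain given by sweeping $i_{n-1}$ around the right half of the wheel. Pushing this chain to $C_{n-1}(F_{n}(\R^{2}))$ and combining it with the other half, we obtain $i_{n-1}$ orbiting the entire wheel. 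This is exactly the Browder bracket $\psi\big(W(1,\dots,i_{n-2}),W(i_{n-1})\big)=W(1,i_{1},\dots,i_{n-1})$, and it is the same geometric mechanism as in the $n=2$ case, applied in a family. We will do this bookkeeping in the arc model via Lemma \ref{MV is Arc}, following Miller--Wilson's computation for the arc resolution spectral sequence. We will track the signs carefully there, since that is where errors are most likely.
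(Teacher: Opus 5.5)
Your base case is fine, and your closing plan (transport to the arc resolution spectral sequence via Proposition~\ref{homotopy equivalence} and Lemma~\ref{MV is Arc}, then follow Miller--Wilson) is in fact the paper's entire proof, which simply cites \cite[Lemma 3.17]{miller2019higher}. The inductive argument you place in front of it, however, has a genuine gap at the Leibniz step.

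\textbf{The gap.} Since $E^{1}_{0,0}\big[F_{1}(\R^{2})\big]\to E^{1}_{-1,0}\big[F_{1}(\R^{2})\big]$ is an isomorphism, $E^{2}\big[F_{1}(\R^{2})\big]=0$. So $i_{n-1}$ is not a class on any page $r\ge 2$. On those pages $\iota^{r}(L_{n-1}\otimes i_{n-1})=0$, and Proposition~\ref{Leibniz Rule} tells you nothing about $L_{n}$.

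Put differently, the summands $L_{n-1}i_{n-1}$ and $i_{n-1}L_{n-1}$ are not even $d^{1}$-cycles. By the Leibniz rule on $E^{1}$, where it does apply, each has $d^{1}$ equal, up to sign, to $L_{n-1}$ with $i_{n-1}$ freed. That is a nonzero element of $E^{1}_{n-2,0}$, and these terms cancel only in the bracket.

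Consequently, the following steps are unsupported:
\begin{enumerate}
\item your derivation of $d^{r}(L_{n})=0$ for $r<n-1$;
\item the ``cancellation'' formula for $d^{n-1}(L_{n})$;
\item the remark that representatives ``become equal after one zig-zag.''
\end{enumerate}
This is not a formality. The case $r=n-1\ge 2$ is automatic, since $\mathcal{T}_{1}=0$ forces $E^{2}_{0,*}=0$ by Proposition~\ref{E2pageforFnR2}. But for $2\le r\le n-2$ the target $E^{r}_{n-1-r,r-1}$ is already nonzero on $E^{2}$; for $n=4$, $r=2$ it contains the classes $W(a,b)[c,d]$. So vanishing there is a real statement.

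\textbf{The missing argument.} You need a chain-level version of your last paragraph. It also supplies the ``other half'' of the orbit, which your sketch asserts without producing.
\begin{enumerate}
\item In the filtered total complex, use the stacking product $\mu$ at chain level, as in the proof of Proposition~\ref{Leibniz Rule}.
\item By induction, choose $\tilde\ell$ with top component $L_{n-1}$ and $D\tilde\ell=\omega\in C_{-1,n-2}$, a cycle representing $W(1,i_{1},\dots,i_{n-2})$.
\item Let $y\in C_{0,0}$ be the point $i_{n-1}$, so that $Dy=y_{-1}$ is the free point.
\end{enumerate}
Then $D$ of the chain-level bracket of $\tilde\ell$ and $y$ has two pieces.
\begin{enumerate}
\item The difference of $\mu(\omega\otimes y)$ and $\mu(y\otimes\omega)$ in $C_{0,n-2}$. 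This is $\partial K$ for the chain $K$ that sweeps the rightmost point $i_{n-1}$ around the right side of the wheel.
\item The difference of $\mu(\tilde\ell\otimes y_{-1})$ and $\mu(y_{-1}\otimes\tilde\ell)$, i.e.\ the whole zig-zag of $\tilde\ell$ with the free point $i_{n-1}$ below versus above. This is the term your page-level argument discards. It equals $D(H\tilde\ell)$ up to $\pm H\omega$, where $H$ moves the free point around the far left of every configuration; this stays inside every cover set.
\end{enumerate}
Subtracting $K$ and $H\tilde\ell$ gives a chain with top component $L_{n}$ whose boundary lies entirely in $C_{-1,n-1}$. That boundary is the right sweep followed by the left sweep $H\omega$, i.e.\ $i_{n-1}$ orbiting the wheel: $\psi\big(W(1,\dots,i_{n-2}),W(i_{n-1})\big)=W(1,i_{1},\dots,i_{n-1})$. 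This single computation gives both $d^{r}(L_{n})=0$ for all $r<n$ and the formula for $d^{n}(L_{n})$.

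Alternatively, drop the induction and invoke Miller--Wilson's lemma directly, as the paper does.
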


\begin{proof}
This follows directly from the equivalence between $E[SF_{n}(R_{n, n})]$ and $E[F_{n}(\R^{2})]$ induced by Proposition \ref{homotopy equivalence}, the equivalence between $E[F_{n}(\R^{2})]$ and $arcE[F_{n}(\R^{2})]$ described in Lemma \ref{MV is Arc}, and Miller and Wilson's calculation of the corresponding differential \cite[Lemma 3.17]{miller2019higher}.
\end{proof}

Since the $d^{r}$-differential frees $r$ squares, it follows that if $r<n$, then $d^{r}(L_{n})=0$ holds in $SF_{n}(R_{r+1,n})$, since at most $r+1$ squares can be horizontally aligned in $d^{r}(L)$.

Having determined how the differentials act on elements of Reutenauer's basis for $\mathcal{L}_{n}$, we calculate the $E^{r}$-page of our augmented Mayer--Vietoris spectral sequence for $SF_{n}(R_{n,n})\simeq F_{n}(\R^{2})$.
Since we are comparing this spectral sequence to the augmented Mayer--Vietoris spectral sequence for $SF_{n}(R_{w,h})$ to determine conditions on $n$, $w$, $h$, and $k$ that guarantee that 
\[
H_{k}\big(SF_{n}(R_{w,h})\big)\cong H_{k}\big(F_{n}(\R^{2})\big),
\]
knowing the $r$-th page will greatly expedite the process.
Before we do these calculations, we recall the definitions of the twisted algebras corresponding to higher order representation stability in an effort to simplify the calculations.

Consider the twisted algebra
\[
\mathcal{A}_{r}:=\begin{cases}
\text{Sym} H_{r-1}\big(F_{r}(\R^{2})\big),&\text{for } r \text{ odd,}\\
\bigwedge H_{r-1}\big(F_{r}(\R^{2})\big),&\text{for } r \text{ even}.
\end{cases}
\]
These are the twisted algebras generated by the wheels $W(1,\dots, r)$ subject to the (anti)-commutation relations of Proposition \ref{little cube operad structure}.
Given a twisted algebra $\mathcal{A}$ and an $\mathcal{A}$-module $\mathcal{V}$, we write
\[
H^{\mathcal{A}}_{0}(\mathcal{V})_{S}:=\text{coker}\bigg(\bigoplus_{S=P\sqcup Q, P\neq \emptyset}\mathcal{A}_{P}\otimes \mathcal{V}_{Q}\to \mathcal{V}_{S}\bigg)
\]
for the \emph{zeroth $\mathcal{A}$-homology of $\mathcal{V}$}.
Specializing to $F_{\bullet}(\R^{2})$, we see that the maps
\[
H_{r-1}\big(F_{r}(\R^{2})\big)\otimes H_{k}\big(F_{n}(\R^{2})\big)\to H_{k+r-1}\big(F_{n+r}(\R^{2})\big) 
\]
corresponding to multiplying a class in $H_{k}\big(F_{n}(\R^{2})\big)$ by a wheel $W(i_{0}, \dots, i_{r-1})$ gives $H_{k+j(r-1)}\big(F_{\bullet}(\R^{2})\big)$ the structure of an $\mathcal{A}_{r}$-module. 
In particular, Proposition \ref{little cube operad structure} shows that this module is free, and that taking the zeroth $\mathcal{A}_{r}$ homology of $H_{k+j(r-1)}\big(F_{\bullet}(\R^{2})\big)$ has the effect of quotienting by all classes that contain a wheel on $r$ points as a factor.
Additionally, 
\[
H^{\mathcal{A}_{r}}\bigg(\cdots H^{\mathcal{A}_{1}}\Big(H_{k}\big(F(\R^{2})\big)\Big)\bigg)_{n}
\]
is well-defined; it is the subspace of $H_{k}\big(F_{n}(\R^{2})\big)$ spanned by products of wheels on at least $r+1$ points.
Moreover, the sub-basis of Proposition \ref{basis for homology of FnR2} consisting of products of wheels on at least $r+1$ points is a basis for this vector space.
For more on twisted algebras in general, see Sam and Snowden's introduction to the subject \cite{sam2012introduction}, and for more on the $\mathcal{A}_{r}$ in particular, see \cite{church2017homology, nagpal2019noetherianity, miller2019higher}.

Using the free $\mathcal{A}_{1}$-structure of $H_{k}\big(F_{\bullet}(M)\big)$, where $M$ is a connected non-compact finite type manifold of dimension at least $2$, Miller and Wilson computed the $E^{2}$-page of the arc resolution spectral sequence.

\begin{prop}\label{E2pageforFnR2}
(Miller--Wilson \cite[Proposition 3.10]{miller2019higher})
Let $M$ be a connected non-compact finite type manifold of dimension at least $2$, then the $E^{2}$-page of the augmented arc resolution spectral sequence satisfies
\[
arcE^{2}_{p,q}\big[F_{n}(M)\big]\cong \text{Ind}^{S_{n}}_{S_{n-p-1}\times S_{p+1}}H^{\mathcal{A}_{1}}_{0}\Big(H_{q}\big(F(M)\big)\Big)_{n-p-1}\boxtimes \mathcal{T}_{p+1}.
\]
\end{prop}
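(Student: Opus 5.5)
This is a result cited from Miller--Wilson, so the plan is to reproduce their argument for the augmented arc resolution spectral sequence. The $E^{1}$-page is computed first. Each arc in $\text{Arc}_{p}\big(F_{n}(M)\big)$ pins down a labelled point near the boundary interval $\gamma$. For an injective word $i_{0}\cdots i_{p}$, the component of $\text{Arc}_{p}$ in which $\alpha_{l}$ ends at $x_{i_{l}}$ deformation retracts, by contracting the arcs into a collar of $\partial M$, onto $F_{n-p-1}(M)$ with the remaining labels. This is the same argument as Proposition \ref{forget points on the right}.

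Summing over words gives
\[
arcE^{1}_{p,q}\big[F_{n}(M)\big]\cong \bigoplus_{|T|=p+1}H_{q}\big(F_{\{1,\dots,n\}-T}(M)\big)\otimes C^{T}_{p},
\]
which is $\text{Ind}^{S_{n}}_{S_{n-p-1}\times S_{p+1}}H_{q}\big(F_{n-p-1}(M)\big)\boxtimes C^{(p+1)}_{p}$. By Proposition \ref{differential definition of arc}, the $d^{1}$-differential is the alternating sum of the face maps. Forgetting the arc $\alpha_{j}$ frees the point $i_{j}$ and reinserts it near the boundary. On homology this is the stabilization map, i.e.\ multiplication by the class $W(i_{j})$, which gives the $\mathcal{A}_{1}$-module structure on $H_{q}\big(F(M)\big)$.

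So for fixed $q$, the row $\big(E^{1}_{*,q},d^{1}\big)$ is the Koszul-type complex $H_{q}\big(F(M)\big)\otimes_{\mathcal{A}_{1}}(\text{words})$. Concretely, it is the tensor product of the $\mathcal{A}_{1}$-module $H_{q}\big(F(M)\big)$ with the augmented complex of injective words, where the letters of each word act through $\mathcal{A}_{1}$. The next step uses that this module is \emph{free}, which follows from the McDuff--Segal splitting $H_{q}\big(F_{n}(M)\big)\cong\bigoplus \text{Ind}\,H^{\mathcal{A}_{1}}_{0}\big(H_{q}(F(M))\big)\otimes\mathcal{A}_{1}$. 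Because of freeness, the row complex splits as a direct sum, indexed by the free generators $H^{\mathcal{A}_{1}}_{0}\big(H_{q}(F(M))\big)_{m}$, of copies of the augmented injective-word complexes $C^{(n-m)}_{*}$ shifted by one.

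By Farmer's theorem, $C^{(n-m)}_{*}$ has homology concentrated in the top degree, namely $\mathcal{T}_{n-m}$ in degree $n-m-1$. Hence the generator living on $n-p-1$ points contributes $\text{Ind}\,H^{\mathcal{A}_{1}}_{0}\big(H_{q}(F(M))\big)_{n-p-1}\boxtimes\mathcal{T}_{p+1}$ in column $p$, which is the claimed formula.

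The main obstacle is justifying the splitting at the chain level. One must check that the identification of $d^{1}$ with multiplication by $W(i_{j})$ is compatible with the chosen free basis, so that the row complex really is $\bigoplus_{m}\text{Ind}\,(\text{generators})\otimes C^{(n-m)}_{*}$ and not merely filtered by it. I would handle this by choosing generators supported away from the collar, so that freed points never interact with them, and by working over each subset $T$ separately before inducing.
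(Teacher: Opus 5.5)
Your argument is correct and is essentially the Miller--Wilson proof that the paper cites without reproducing: identify $arcE^{1}_{*,q}$ with the augmented injective-word complex with coefficients in the $\mathcal{A}_{1}$-module (FI-module) $H_{q}\big(F(M)\big)$, use that this module is free, and apply Farmer's theorem. Two minor points. First, freeness comes from the Church--Ellenberg--Farb FI$\#$-module structure on $H_{q}\big(F(M)\big)$, not from McDuff--Segal. Second, your ``main obstacle'' disappears without any geometric choice of generators: the row complex is functorial in the FI-module, so an FI-isomorphism of $H_{q}\big(F(M)\big)$ with a sum of free modules on the $H^{\mathcal{A}_{1}}_{0}\big(H_{q}(F(M))\big)_{m}$ induces a chain isomorphism, and on a free module the structure maps preserve the generating subset $A$, which gives the splitting $\bigoplus_{|A|=m}\big(\text{generators on }A\big)\otimes C^{\{1,\dots,n\}-A}_{*}$ directly.
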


Miller and Wilson's proof does not explicitly use the calculation of $d^{1}$-differentials of this spectral sequence; instead it follows from a fact about the complex of injective words.
This does not readily lend itself to computing the $E^{r}$-page for $r\ge2$.
In the case $M=\R^{2}$, there is a free $\mathcal{A}_{r}$-module structure on $H_{*}\big(F_{\bullet}(\R^{2})\big)$, so Proposition \ref{Leibniz Rule} and Lemma \ref{differentials on Reut} allow us to compute the $r$-th page from our knowledge of the differentials.

\begin{prop}\label{Er page for FnR2}
For $r\ge 2$, the $E^{r}$-page of the augmented Mayer--Vietoris spectral sequence for $F_{n}(\R^{2})$ satisfies
\[
E^{r}_{p,q}\big[F_{n}(\R^{2})\big]\cong arcE^{r}_{p,q}\big[F_{n}(\R^{2})\big]\cong \text{Ind}^{S_{n}}_{S_{n-p-1}\times S_{p+1}}H^{\mathcal{A}_{r-1}}\bigg(\cdots H^{\mathcal{A}_{1}}\Big(H_{q}\big(F(\R^{2})\big)\Big)\bigg)_{n-p-1}\boxtimes \mathcal{T}^{r}_{p+1}.
\]
\end{prop}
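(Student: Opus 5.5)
I will argue by induction on $r$. The base case $r=2$ is Proposition \ref{E2pageforFnR2} with $M=\R^{2}$, transported along the isomorphism $E[F_{n}(\R^{2})]\cong arcE[F_{n}(\R^{2})]$ of Lemma \ref{MV is Arc}. The first isomorphism in the statement also comes from Lemma \ref{MV is Arc}, because that isomorphism commutes with all $d^{r}$. So it suffices to compute $arcE^{r}$, or equivalently $E^{r}$, for every $r\ge 2$.

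For the inductive step I assume the formula holds for $E^{r}$ and compute $d^{r}$ on an explicit basis. Let $B^{(r-1)}_{q,m}$ be the basis of $H^{\mathcal{A}_{r-1}}\big(\cdots H^{\mathcal{A}_{1}}(H_{q}(F(\R^{2})))\big)_{m}$ consisting of products of wheels on at least $r$ points. Let $\Pi^{*}_{p}(r)$ be the basis of $\mathcal{T}^{r}_{p+1}$, made of products $P_{1}\cdots P_{l}$ of Reutenauer Lie polynomials, each of degree at least $r$. Then $E^{r}_{p,q}$ has basis $\{b\cdot P_{1}\cdots P_{l}\}$, with the induction accounting for the choice of labels.

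Each such element is an iterated image of the maps $\iota^{r}$ of Proposition \ref{Leibniz Rule}. One factor is $b$, sitting in the $E_{-1,q}$ row of a smaller configuration space, and the others are the $P_{i}$, sitting in the bottom row. The Leibniz rule therefore reduces $d^{r}$ to its values on the individual factors. On $b$ it vanishes, since $E^{r}_{-1-r,\ast}=0$. On a Reutenauer element $P_{i}$ of degree $d_{i}$, Lemma \ref{differentials on Reut} gives $d^{r}(P_{i})=0$ when $d_{i}>r$, and $d^{r}(P_{i})=W(\dots)$, a wheel on $r$ points, when $d_{i}=r$.

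Thus $d^{r}$ sends $b\cdot P_{1}\cdots P_{l}$ to a signed sum. In each term one degree-$r$ factor $P_{i}$ is replaced by the corresponding wheel $W$ on $r$ points, which is multiplied onto $b$. I then use that $H_{\ast}(F_{\bullet}(\R^{2}))$ is a free $\mathcal{A}_{r}$-module. This follows from Proposition \ref{little cube operad structure} and the product-of-wheels basis of Proposition \ref{basis for homology of FnR2}. The complex formed by the $d^{r}$ is then identified, factorwise, with a Koszul-type complex. On one side are the degree-$r$ Lie factors, a free module in exterior or symmetric form that matches $\mathcal{A}_{r}$ with the parity shift. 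On the other side is the free $\mathcal{A}_{r}$-module of wheel products.

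The homology of this Koszul complex keeps exactly two kinds of elements. The first kind are products $b$ with no wheel on exactly $r$ points; this is the quotient $H^{\mathcal{A}_{r}}_{0}$. The second kind are Lie products with no factor of degree exactly $r$; these span $\mathcal{T}^{r+1}_{p+1}$. Together these give the $E^{r+1}$ formula.

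The main obstacle is precisely this Koszul-acyclicity and cancellation step. Every element with a degree-$r$ Lie factor must be cancelled against a wheel on $r$ points. Checking this requires the signs from the Leibniz rule and the (anti)commutation relations of $\mathcal{A}_{r}$ to match. I would first verify the matching on the basis: the pairing of Lie factors of degree $r$ with wheels of size $r$ should be a bijection between the respective label sets. After that, exactness follows from the standard acyclicity of the Koszul complex of a free (exterior or symmetric) algebra, handled separately for $r$ odd and even.
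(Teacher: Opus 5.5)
Your proposal is correct and follows the paper's proof. It inducts on $r$, with the base case given by Proposition~\ref{E2pageforFnR2} transported through Lemma~\ref{MV is Arc}. It then uses Proposition~\ref{Leibniz Rule} and Lemma~\ref{differentials on Reut} to compute $d^{r}$ on the basis of products of wheels and Reutenauer Lie polynomials, and cancels every basis element that contains either a wheel on $r$ points or a Lie factor of degree $r$.

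Your Koszul-acyclicity step is the paper's simplex argument in different language. For a fixed collection of $m+l$ pieces of size-$r$ label data, the elements obtained by declaring each piece a wheel or a Lie polynomial form the augmented chain complex of an $(m+l-1)$-simplex, which is acyclic.
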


\begin{proof}
Lemma \ref{MV is Arc} tells us that $E^{r}_{p,q}\big[F_{n}(\R^{2})\big]\cong arcE^{r}_{p,q}\big[F_{n}(\R^{2})\big]$, so it suffices to compute $arcE^{r}_{p,q}\big[F_{n}(\R^{2})\big]$.
We do this by inducting on $r$, noting that Proposition \ref{E2pageforFnR2} handles the base case.

Assume that the proposition holds for all $r\le \rho$.
We wish to show that it holds for $r=\rho+1$.
By assumption
\[
arcE^{\rho}_{p,q}\big[F_{n}(\R^{2})\big]\cong \text{Ind}^{S_{n}}_{S_{n-p-1}\times S_{p+1}}H^{\mathcal{A}_{\rho-1}}\bigg(\cdots H^{\mathcal{A}_{1}}\Big(H_{q}\big(F(\R^{2})\big)\Big)\bigg)_{n-p-1}\boxtimes \mathcal{T}^{\rho}_{p+1}.
\]
Moreover, by Propositions \ref{little cube operad structure} and \ref{basis for homology of FnR2} and our assumptions, we have a basis for
\[
\text{Ind}^{S_{n}}_{S_{n-p-1}\times S_{p+1}}H^{\mathcal{A}_{\rho-1}}\bigg(\cdots H^{\mathcal{A}_{1}}\Big(H_{q}\big(F(\R^{2})\big)\Big)\bigg)_{n-p-1}\boxtimes \mathcal{T}^{\rho}_{p+1},
\]
consisting of products of wheels on at least $\rho$ letters and Lie polynomials of degree at least $\rho$.

Next, we will show that
\[
\text{Ind}^{S_{n}}_{S_{n-p-1}\times S_{p+1}}
H^{\mathcal{A}_{\rho}}\Bigg(
H^{\mathcal{A}_{\rho-1}}\bigg(\cdots H^{\mathcal{A}_{1}}\Big(H_{q}\big(F(\R^{2})\big)\Big)\bigg)\Bigg)_{n-p-1}\boxtimes \mathcal{T}^{\rho+1}_{p+1}
\]
is a summand of $arcE^{\rho+1}_{p,q}\big[F_{n}(\R^{2})\big]$.
Proposition \ref{Leibniz Rule} and Lemma \ref{differentials on Reut} prove that the $d^{\rho}$-differential sends all elements of $\mathcal{T}^{\rho+1}_{s+1}$ to $0$, so 
\[
d^{\rho}\Bigg(\text{Ind}^{S_{n}}_{S_{n-s-1}\times S_{s-1}}
H^{\mathcal{A}_{\rho}}\Bigg(
H^{\mathcal{A}_{\rho-1}}\bigg(\cdots H^{\mathcal{A}_{1}}\Big(H_{q}\big(F(\R^{2})\big)\Big)\bigg)\Bigg)_{n-s-1}\boxtimes \mathcal{T}^{\rho+1}_{s+1}\Bigg)=0\in arcE^{\rho}_{s-\rho, t+\rho-1}\big[F_{n}(\R^{2})\big].
\]
Furthermore, nothing in
\[
\text{Ind}^{S_{n}}_{S_{n-p-1}\times S_{p+1}}
H^{\mathcal{A}_{\rho}}\Bigg(
H^{\mathcal{A}_{\rho-1}}\bigg(\cdots H^{\mathcal{A}_{1}}\Big(H_{q}\big(F(\R^{2})\big)\Big)\bigg)\Bigg)_{n-s-1}\boxtimes \mathcal{T}^{\rho+1}_{p+1}\subset arcE^{\rho}_{p,q}\big[F_{n}(\R^{2})\big]
\]
lies in the image of $d^{\rho}$.
This follows from Propositions \ref{little cube operad structure} and \ref{basis for homology of FnR2} and the fact that there are no wheels on $\rho$ elements in this subspace.

Finally, we show every element in
\[
arcE^{\rho}_{p,q}\big[F_{n}(\R^{2})\big]/\Bigg(\text{Ind}^{S_{n}}_{S_{n-p-1}\times S_{p+1}}
H^{\mathcal{A}_{\rho}}\Bigg(
H^{\mathcal{A}_{\rho-1}}\bigg(\cdots H^{\mathcal{A}_{1}}\Big(H_{q}\big(F(\R^{2})\big)\Big)\bigg)\Bigg)_{n-s-1}\boxtimes \mathcal{T}^{\rho+1}_{p}\Bigg)
\]
in the kernel of $d^{\rho}$ is in the image of $arcE^{\rho}_{p+\rho, q-\rho+1}\big[F_{n}(\R^{2})\big]$ under $d^{\rho}$.

By Propositions \ref{basis for homology of FnR2} and \ref{basis for top homology of inj word} we have a basis for 
\[
arcE^{\rho}_{p,q}\big[F_{n}(\R^{2})\big]/\Bigg(\text{Ind}^{S_{n}}_{S_{n-p-1}\times S_{p+1}}
H^{\mathcal{A}_{\rho}}\Bigg(
H^{\mathcal{A}_{\rho-1}}\bigg(\cdots H^{\mathcal{A}_{1}}\Big(H_{q}\big(F(\R^{2})\big)\Big)\bigg)\Bigg)_{n-s-1}\boxtimes \mathcal{T}^{\rho+1}_{p}\Bigg)
\]
such that every element either has a wheel on $\rho$ letters as a factor or a Lie polynomial of degree $\rho$ as a factor.
By Lemma \ref{differentials on Reut} the differential $d^{\rho}$ has non-trivial image on the Lie polynomials of degree $\rho$, and it can only produce wheels on $\rho$ letters.
Proposition \ref{little cube operad structure} tells us there cannot be relations between wheels on different sets of letters or of different sizes, so we only need to consider the wheels of size $\rho$ and the Lie polynomial of degree $\rho$ factors of the basis elements.
Moreover, Proposition \ref{little cube operad structure} and the Lie polynomial relations tell us that we only need to consider sequences where the same wheels and the corresponding Lie polynomials appear.
With this in mind, one can observe that if we have a basis element of $E^{\rho}_{p,q}\big[F_{n}(\R^{2})\big]$ that has exactly $m$ wheels on $\rho$-elements: 
\[
W(i_{1, 1}, \dots, i_{\rho, 1}), \dots, W(i_{1, m}, \dots, i_{\rho, m})
\]
and $l$ Lie polynomials: 
\[
\Big[\big[[i_{1,m+1}, i_{2, m+1}], \dots\big], i_{\rho, m+1}\Big], \dots, \Big[\big[[i_{1,m+l}, i_{2, m+l}], \dots\big], i_{\rho, m+l}\Big],
\]
then there are $\binom{m+l}{l}$ such elements in $E^{\rho}_{p,q}\big[F_{n}(\R^{2})\big]$ consisting of permutations of the ordered sets $(i_{1, j},\dots,i_{\rho, j})$ as wheels or Lie polynomials.
In $E^{\rho}_{p+\rho,q-\rho+1}\big[F_{n}(\R^{2})\big]$ there are $\binom{m+l}{l+1}$ such permutations of basis elements where $l+1$ of these $(i_{1, j},\dots,i_{\rho, j})$ are Lie polynomials and the remaining $m-1$ are wheels.
In fact, we see that there is a sequence of subspaces whose elements are contained in $$\Big\{E^{\rho}_{p-(l-k)\rho,q-(l-k)\rho+l}\big[F_{n}(\R^{2})\big]\Big\}_{k=0}^{m+l}$$ that comes from permuting the roles of the ordered sets $(i_{1, j},\dots,i_{\rho, j})$ as wheels and as Lie polynomials.
Basis elements for this sequence correspond the cells of an $(m+l-1)$-simplex, and the boundary map on this simplex corresponds to the $d^{\rho}$-differential of the spectral sequence. 
Since the $(m+l-1)$-simplex is contractible and there cannot be relations between terms coming from different sequences due to Proposition \ref{little cube operad structure} and the Lie polynomial relations, we see that the basis elements of $E^{\rho}_{p,q}\big[F_{n}(\R^{2})\big]$ that have either a wheel on $\rho$ letters or a Lie polynomial of degree $\rho$ do not survive to the $E^{\rho+1}$-page, completing the proof.
\end{proof}

Given that we have calculated the terms of the augmented Mayer--Vietoris spectral sequence for $F_{n}(\R^{2})$ in Proposition \ref{Er page for FnR2} and that we need to compare them to the corresponding terms in the augmented Mayer--Vietoris spectral sequence for $SF_{n}(R_{w,h})$, we turn to calculating the differentials of this second spectral sequence.
We will see that if $w$ and $h$ are large enough with respect to $p$, $q$ and $n$, then we only need to consider the action of these differentials on the elements of $\mathcal{T}_{p}$.
Provided that the corresponding entries are isomorphic, this will allow us prove Theorem \ref{space homological stability}.

Consider the map
\[
\text{proj}_{l}:SF_{n}(R_{w,h})\to R_{w,h}
\]
that projects the $l$-th square in a configuration onto its location in $R_{w,h}$.
There are $n$ such maps, and they induce maps on $C_{q}\big(SF_{n}(R_{w,h})\big)$ that we also call $\text{proj}_{l}$.
If most of these maps avoid enough of $R_{w,h}$, then, as a Corollary of Propositions \ref{Leibniz Rule} and \ref{differentials on Reut}, we can calculate the image of the differentials of our spectral sequence.

\begin{prop}\label{differentials are well defined as long as we have enough space}
Let $[\alpha]\in H_{t}\big(SF_{n}(R_{w,h}; j_{0},\dots, j_{s}; i_{0}, \dots, i_{p})\big)$ be a class that can be realized by a $q$-cycle $\alpha$ such that for $l\neq j_{0}, \dots, j_{s}$ the image
\[
\text{proj}_{l}(\alpha)\subset R_{w,h}
\]
does not intersect the $(r+1)\times (s+1)$ rectangle at the top right of $R_{w,h}$, whereas for $l=j_{0}, \dots,j_{s}$ this image is contained in this $(r+1)\times (s+1)$ rectangle.
Let 
\[
\sum_{\tau\in S_{s+1}}a_{\tau}\big[\tau(\alpha)\big]\in \bigoplus_{\tau\in S_{s+1}}H_{q}\big(SF_{n}(R_{w,h}; j_{\tau(0)},\dots, j_{\tau(s)}; i_{0},\dots, i_{p})\big)\subset E^{1}_{s,t}\big[SF_{n}(R_{w,h};i_{0},\dots, i_{p})\big]
\]
be the class that arises by having $S_{s+1}$ act on the $j_{0}, \dots, j_{s}$ by permuting the indices.

Then
\[
d^{r}\Bigg(\sum_{\tau\in S_{s+1}}a_{\tau}\big[\tau(\alpha)\big])\Bigg)=[\alpha]d^{r}\Bigg(\sum_{\tau\in S_{s+1}}a_{\tau}j_{\tau(0)}\cdots j_{\tau(p)}\Bigg),
\]
where, by abuse of notation $[\alpha]$ is a class on $n-s-1$ squares in $R_{w,h}$ that avoids the $(r+1)\times (s+1)$ rectangle containing the $j_{m}$, and 
\[
d^{r}\Bigg(\sum_{\tau\in S_{s+1}}a_{\tau}j_{\tau(0)}\cdots j_{\tau(p)}\Bigg)
\]
is the class in $E^{r}_{s-r, t+r-1}\big[SF_{s+1}(R_{r+1, s+1})\big]$ that arises from applying $d^{r}$ to $\sum_{\tau\in S_{s+1}}a_{\tau}j_{\tau(0)}\cdots j_{\tau(s)}$ as an element of $E^{r}_{s-1, 0}\big[SF_{s+1}(R_{r+1, s+1})\big]$, and this class is contained in $R_{r+1, s+1}$ at the top right of $R_{w,h}$.
\end{prop}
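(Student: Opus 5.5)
The plan is to realize the class as an external product and then apply the Leibniz rule of Proposition \ref{Leibniz Rule} together with the local computation of Lemma \ref{differentials on Reut}. The hypothesis on $\text{proj}_{l}(\alpha)$ says that $\alpha$ factors, up to the chosen representative, through the embedding of a product of two square configuration spaces. The first factor is a configuration of the $n-s-1$ squares other than $j_{0},\dots,j_{s}$, living in the complement of the $(r+1)\times(s+1)$ top-right rectangle. The second factor is a configuration of $j_{0},\dots,j_{s}$ inside that rectangle. Since $\alpha$ lies in $SF_{n}(R_{w,h};j_{0},\dots,j_{s};i_{0},\dots,i_{p})$, the $j$'s are stacked vertically in the rectangle, and by Proposition \ref{squares in the two rightmost columns} we may take them in its right column. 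So the $j$-part of $\alpha$ is a $0$-cycle, namely the point of $SF_{s+1}(R_{r+1,s+1};j_{0},\dots,j_{s})$. Thus $[\tau(\alpha)]=\iota^{1}\big([\alpha']\otimes j_{\tau(0)}\cdots j_{\tau(s)}\big)$, where $\alpha'$ is $\alpha$ with the $j$'s forgotten and $\iota$ is the analogue of $\iota_{n_{1},n_{2}}$ for this splitting. This splitting is a region of $R_{w,h}$ minus a top-right rectangle, glued to that rectangle, rather than the stacked $R_{w_{1},h_{1}}\sqcup R_{w_{2},h_{2}}$ of Proposition \ref{Leibniz Rule}.

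First I would check that the construction in Proposition \ref{Leibniz Rule} works for this decomposition. The argument of \cite[Lemma 3.15]{miller2019higher} uses only that the embedding of the product respects the covers by ``right-most'' subspaces. Here the free factor contributes only the term $E^{1}_{-1,t}$ relative to the $j$-cover: its squares are never right-most among the $j$'s and always sit to the left of or below the rectangle. So the product map sends the double complex for the $j$-factor, tensored with the chains of $\alpha'$, into the filtered double complex for $E[SF_{n}(R_{w,h};i_{0},\dots,i_{p})]$, compatibly with filtrations. Second, since $\alpha'$ is a cycle concentrated in filtration $-1$ for its factor, every differential on it vanishes. The Leibniz rule then gives
\[
d^{r}\Big(\sum a_{\tau}[\tau(\alpha)]\Big)=\pm[\alpha']\,d^{r}\Big(\sum a_{\tau}j_{\tau(0)}\cdots j_{\tau(s)}\Big).
\]
The sign is $+$ once $[\alpha']$ is placed as the first factor with the appropriate degree convention.

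Third, I would identify $d^{r}$ on the $j$-factor inside $E[SF_{s+1}(R_{r+1,s+1})]$. The differential $d^{r}$ frees at most $r$ of the $j$'s, so the zigzag defining it needs at most $r+1$ horizontal positions and $s+1$ vertical ones. Hence the zigzag can be carried out inside $R_{r+1,s+1}$, matching the remark after Lemma \ref{differentials on Reut}. It is also compatible with the computation in $SF_{s+1}(R_{s+1,s+1})\simeq F_{s+1}(\R^{2})$ via Lemma \ref{MV is Arc}, so the value is well defined. The resulting chains stay inside the rectangle and therefore stay disjoint from $\alpha'$, which justifies the product expression.

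The main obstacle is the first step: making the Leibniz rule rigorous for this non-stacked decomposition. In particular, one must check that the intermediate chains in the zigzag, which are constructed in the rectangle and certify $d^{r}$, remain compatible with the subcomplex structure after the hyperplane subdivisions used to make $SF_{n}(R_{w,h};j;\dots)$ cellular. I would handle this by choosing the cellular subdivision so that the product of the two factor subdivisions is a subcomplex, and then following Miller--Wilson's filtered-complex argument verbatim.
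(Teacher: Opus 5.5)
Your proposal is correct and is essentially the paper's argument. The paper gives no separate proof: it presents the proposition as a corollary of the Leibniz rule (Proposition \ref{Leibniz Rule}) and Lemma \ref{differentials on Reut}, and your version is more careful than that, since you notice that the splitting here is not the stacked one and that giving the complement factor the trivial filtration makes $e\mapsto e\times\alpha'$ a morphism of spectral sequences $E[SF_{s+1}(R_{r+1,s+1})]\to E[SF_{n}(R_{w,h};i_{0},\dots,i_{p})]$ commuting with $d^{r}$ up to a sign convention. The one detail to make explicit is that the local chains should keep the right-most $j$'s pinned to the right edge of the rectangle, via the retraction of Proposition \ref{move squares to right side}; otherwise a free square below the rectangle could lie to the right of a right-most $j$ that has drifted left, and the product would leave $SF_{n}(R_{w,h};\sigma;i_{0},\dots,i_{p})$.
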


In particular, if the inclusion of $SF_{n}(R_{w,h}; j_{0}, \dots, j_{s}; i_{0}, \dots i_{p})$ into $F_{n}(\R^{2}; j_{0}, \dots, j_{s}; i_{0}, \dots i_{p})$ induces an isomorphism 
\[
H_{k}\big(SF_{n}(R_{w,h}; j_{0}, \dots, j_{s}; i_{0}, \dots i_{p})\big)\cong H_{k}\big(F_{n}(\R^{2}; j_{0}, \dots, j_{s}; i_{0}, \dots i_{p})\big),
\]
then we get a basis for $H_{k}\big(SF_{n}(R_{w,h}; j_{0}, \dots, j_{s}; i_{0}, \dots i_{p})\big)$ in terms of big wheels as long as $\min\{w,h\}\ge k+2$ and $wh-n\ge \max\big\{(k+1)(k+2), hk+2\big\}$.
Realizing these wheels as big squares, Construction \ref{first construction for connectivity} yields a situation where Proposition \ref{differentials are well defined as long as we have enough space} holds, allowing us to calculate the differentials of our spectral sequence since
\[
E^{1}_{s,t}\big[SF_{n}(R_{w,h}; i_{0}, \dots, i_{p})\big]\cong \bigoplus_{\substack{j_{0}, \dots, j_{s}\in \{1,\dots, n\}-\{i_{0}, \dots, i_{p}\}\\j_{0}, \dots, j_{s} \text{ distinct}}} H_{t}\big(SF_{n}(R_{w, h}; j_{0}, \dots, j_{s}; i_{0}, \dots, i_{p})\big)
\]

In conjuncture with Proposition \ref{move the big stretchy squares around}, this will allow us to prove that many of the differentials of our augmented Mayer--Vietoris spectral sequence for $SF_{n}(R_{w,h}; i_{0},\dots, i_{p})$ are the same as the differentials of our augmented Mayer--Vietoris spectral sequence for $F_{n}(\R^{2}; i_{0},\dots, i_{p})$.
Next, we turn to proving Theorem \ref{space homological stability}, relying heavily on the results of this subsection to do so.

\section{Proof of Theorem \ref{space homological stability}}\label{section main theorem}

In this section we prove Theorem \ref{space homological stability} via a spectral sequence argument.
Namely, we compare the pair of augmented Mayer--Vietoris spectral sequences coming from the covers of $SF_{n}(R_{w, h})$ and $F_{n}(\R^{2})$ described in the previous section and prove that their entries and differentials are isomorphic in a range.
Since the $E^{1}_{p,q}$-entries of these spectral sequences are isomorphic to direct sums of terms of the form $H_{q}\big(SF_{n}(R_{w, h};i_{0},\dots, i_{p})\big)$ and $H_{q}\big(F_{n}(\R^{2};i_{0},\dots, i_{p})\big)$, respectively, we need another pair of augmented Mayer--Vietoris spectral sequences to prove that these homology groups are isomorphic.
One might worry that such an argument might need to be repeated ad infinitum in order to compare the terms of the successive spectral sequences.
Fortunately, this is not the case, as we are able to get away with only one iteration and a series of inductive arguments due to the technical Lemma \ref{cover j,i}, which we delay until the end of this section.
We begin by recalling the statement of Theorem \ref{space homological stability}.

\begin{T1}
  \thmtext
\end{T1}

As mentioned above, an attempt to use the augmented Mayer--Vietoris spectral sequences $E\big[SF_{n}(R_{w,h})\big]$ and $E\big[F_{n}(\R^{2})\big]$ to prove this theorem would only work if we knew that the inclusion of $SF_{n}(R_{w, h}; i_{0}, \dots, i_{p})$ into $F_{n}(\R^{2}; i_{0}, \dots, i_{p})$ induces an isomorphism
\[
H_{q}\big(SF_{n}(R_{w, h}; i_{0}, \dots, i_{p})\big)\cong H_{q}\big(F_{n}(\R^{2}; i_{0}, \dots, i_{p})\big)
\]
for sufficiently many $p$ and $q$.
As such, we prove the following stronger theorem.

\begin{thm}\label{generalized main theorem}
Let $n,w,h, p$ and $k$ be such that either
\begin{enumerate}
    \item $h\ge w=k+2$, $wh-n\ge (k+1)(k+2)$, and  $p=-1$, or
    \item $\min\{w-1,h\}\ge k+2$, $wh-n\ge \max\big\{(k+1)(k+2), hk+2\big\}$, and $0\le p+1\le h$, 
\end{enumerate}
then the inclusion of $SF_{n}(R_{w,h};i_{0},\dots, i_{p})$ into $F_{n}(\R^{2}; i_{0}, \dots, i_{p})$ induces an isomorphism
\[
H_{k}\big(SF_{n}(R_{w,h};i_{0},\dots, i_{p})\big)\cong H_{k}\big(F_{n}(\R^{2}; i_{0}, \dots, i_{p})\big).
\]
\end{thm}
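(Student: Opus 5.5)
I will prove the theorem by a multiple induction: on the homological degree $k$, and, for fixed $k$, downward on $p$ (the number $p+1$ of squares pinned to the right). For $k=0$ the statement is Propositions \ref{n le wh-2 is connected} and \ref{connected single right configuration space}. For the induction on $p$, the top case $p+1=h$ is the base: then $SF_{n}(R_{w,h};i_{0},\dots,i_{p})\simeq SF_{n-h}(R_{w,h})$, with the free squares confined to the left $R_{w-1,h}$. The hypothesis $wh-n\ge hk+2$ makes the empty space $(w-1)h-(n-h)$ large enough to apply the statement to this smaller rectangle; the conditions on $w-1$ are exactly case (1) or case (2) with $p=-1$, one size down. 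Combined with Proposition \ref{forget points on the right}, this identifies $H_{k}$ with $H_{k}\big(F_{n-h}(\R^{2})\big)$.

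For general $p$ (including $p=-1$), I will compare the augmented Mayer--Vietoris spectral sequences $E\big[SF_{n}(R_{w,h};i_{0},\dots,i_{p})\big]$ and $E\big[F_{n}(\R^{2};i_{0},\dots,i_{p})\big]$ from the covers by the sets $SF_{n}(R_{w,h};j_{0},\dots,j_{s};i_{0},\dots,i_{p})$. Both are augmented, so all entries vanish at $E^{\infty}$. Hence $H_{k}$ of the space, which is the $E^{1}_{-1,k}$-entry, is determined by the entries $E^{1}_{s,t}$ with $s+t=k$, $s\ge 0$, $t<k$, together with the differentials $d^{r}$ landing in $E^{r}_{-1,k}$ for $1\le r\le k+1$. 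So it suffices to prove two things.

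First, for $t\le k-1$, the groups $H_{t}\big(SF_{n}(R_{w,h};j_{0},\dots,j_{s};i_{0},\dots,i_{p})\big)$ should agree with their point analogues. Proposition \ref{in points can move two columns left} identifies the point analogues with $H_{t}\big(F_{n}(\R^{2};j_{0},\dots,j_{s},i_{0},\dots,i_{p})\big)$. This is where Lemma \ref{cover j,i} enters: I would cover the two-column space by a further augmented Mayer--Vietoris sequence and use the inductive hypothesis in lower degree $t<k$ for the single-column spaces $SF_{n}(R_{w,h};\sigma)$, where $\sigma$ runs over shuffles (Proposition \ref{intersection is shuffle}). Lowering $t$ weakens the bounds, since $(t+1)(t+2)\le (k+1)(k+2)$ and $ht+2\le hk+2$, so the hypotheses persist even though more squares are pinned. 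When $s+1+p+1>h$, the $j$'s sit in the second column and the $w-1$ margin is used.

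Second, the differentials must agree. Using the bases of Propositions \ref{basis for homology of FnR2} and \ref{basis for top homology of inj word}, the $E^{1}$ classes are products of wheels times Lie polynomials in the $j$'s. I would realize the wheels as big squares via Construction \ref{first construction for connectivity}, which is possible because $\sum(w_a-1)\le k$ and the space bounds hold. By Proposition \ref{differentials are well defined as long as we have enough space}, $d^{r}$ is then $[\alpha]$ times the point-differential of the Lie polynomial, which is computed by Lemma \ref{differentials on Reut}. Proposition \ref{move the big stretchy squares around} puts the resulting configurations in the same component as the standard configurations $\mathbf{c}'$, so the images coincide under the isomorphisms. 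Proposition \ref{Er page for FnR2} then describes every page, and a five-lemma/comparison argument page by page gives $E^{r}_{-1,k}$ isomorphic for all $r$, hence the theorem.

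The main obstacle is the first step: showing that the two-column spaces have the correct lower homology without starting an infinite regress. I would first try to make Lemma \ref{cover j,i} concentrate the problem in its $E^{1}_{0,t}$ entry, so that only single-column spaces, already covered by the induction, appear. I would also track the bookkeeping carefully, so that the inductive hypotheses are applied only with $t<k$ or with larger $p$, and in each case I would verify the inequalities $wh-n\ge\max\{(t+1)(t+2),ht+2\}$ after pinning.
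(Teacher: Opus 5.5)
\textbf{Main gap: the induction is never grounded in case (1).}

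Your base case $p+1=h$ reduces $SF_{n}(R_{w,h};i_{0},\dots,i_{h-1})$ to $SF_{n-h}(R_{w-1,h})$ in the same degree $k$. So you are implicitly also inducting on the width. Note that the empty space is unchanged there, since $(w-1)h-(n-h)=wh-n$, so $hk+2$ plays no role at that step.

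When $w=k+3$, this reduction lands in case (1): a strip of width $k+2$ and arbitrary height $h\ge k+2$. Nothing in your proposal proves case (1), and your Mayer--Vietoris comparison cannot do it directly:
\begin{itemize}
\item At width $k+2$, the $E^{1}_{0,k}$-entries $\bigoplus_{j}H_{k}\big(SF_{n}(R_{k+2,h};j)\big)$ violate the hypothesis $w-1\ge k+2$.
\item Running your downward $p$-induction at width $k+2$ instead bottoms out at $p+1=h$ in $SF_{n-h}(R_{k+1,h})$. That has width $k+1$, where the isomorphism genuinely fails (Remark~\ref{wh ge k+2}).
\item For $k\ge 2$ and $h$ large, the inductive hypothesis does not even cover the lower-degree entries. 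Case (1) only guarantees $(k+2)h-n\ge (k+1)(k+2)$, which is smaller than the bound $h(k-1)+2$ needed in degree $k-1$.
\end{itemize}

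The missing idea is the symmetry $SF_{n}(R_{w,h})\cong SF_{n}(R_{h,w})$, combined with the right ordering of the inductions. The paper proceeds as follows:
\begin{enumerate}
\item It first fixes the height $h=k+2$ and inducts on the width. The base $w=h=k+2$ forces $n\le k+2$, so Proposition~\ref{homotopy equivalence} applies. In this strip $(k+2)k+2<(k+1)(k+2)$, so only the bound $(k+1)(k+2)$ matters.
\item It then rotates $R_{k+2,h}$ to $R_{h,k+2}$, which gives case (1) for every $h$.
\item Only then does it run the width induction for general $h\ge k+2$. Each step is a downward induction on $p$ (Proposition~\ref{super proposition}).
\end{enumerate}

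\textbf{Secondary issue: the top-degree entry.}

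Your claim that only entries with $t<k$ matter is wrong. The $d^{1}$ into $E^{1}_{-1,k}$ comes from $E^{1}_{0,k}=\bigoplus_{j}H_{k}\big(SF_{n}(R_{w,h};j;i_{0},\dots,i_{p})\big)$, which is top-degree homology of two-column spaces. This is precisely where the downward induction on $p$ is used. Lemma~\ref{cover j,i}, applied in degree $k$, identifies this group with $H_{k}\big(SF_{n}(R_{w,h};j,i_{0},\dots,i_{p})\big)$. That space has more squares pinned, so it is covered by the $p$-induction.

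In that lemma, the lower-degree hypothesis is needed for the pairwise intersections $W_{\sigma}\cap W_{\tau}\simeq SF_{n-p-1}(R_{w-1,h};j_{0},\dots,j_{s})$ in the narrower rectangle. It is not needed for the single-column pieces $SF_{n}(R_{w,h};\sigma)$, as you describe. This is where $wh-n\ge hk+2$ is genuinely needed, because it guarantees $(w-1)h-(n-p-1)\ge h(k-1)+2$.
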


\begin{proof}
We proceed by a nested induction, first inducting on $k$.
After noting that the theorem holds in the case $k=0$, we assume that the theorem holds for all $k\le \kappa$, and prove that it holds for $k=\kappa+1$.
To do this, we fix $h=\kappa+3$ and induct on $w$.
Finally, we turn our rectangle $R_{w,\kappa+3}$ on its side, and again induct on $w$, obtaining the theorem for all $k$, $w$, $h$, and $p$.

In the base case, where $k=0$, Proposition \ref{n le wh-2 is connected} proves that if $\min\{w,h\}\ge 2$ and $wh-n\ge 2$, then the inclusion of $SF_{n}(R_{w,h})$ into $F_{n}(\R^{2})$ induces an isomorphism
\[
H_{0}\big(SF_{n}(R_{w,h})\big)\cong H_{0}\big(F_{n}(\R^{2})\big).
\]
Additionally, Proposition  \ref{connected single right configuration space} proves that if $\min\{w-1,h\}\ge 2$, $wh-n\ge 2$, and $0\le p+1\le h$, then the inclusion of $SF_{n}(R_{w,h}; i_{0}, \dots, i_{p})$ into $F_{n}(\R^{2}; i_{0}, \dots, i_{p})$ induces an isomorphism
\[
H_{0}\big(SF_{n}(R_{w,h}; i_{0}, \dots, i_{p})\big)\cong H_{0}\big(F_{n}(\R^{2}; i_{0}, \dots, i_{p})\big),
\]
completing the base case of our induction on $k$.

Next, assume that the theorem holds for all $k\le \kappa$, i.e., if $k\le \kappa$ and either 
\begin{enumerate}
    \item $h\ge w=k+2$, $wh-n\ge (k+1)(k+2)$, and $p=-1$, or
    \item $\min\{w-1,h\}\ge k+2$, $wh-n\ge \max\big\{(k+1)(k+2), hk+2\big\}$, $0\le p+1\le h$,
\end{enumerate}
then the inclusion of $SF_{n}(R_{w,h};i_{0}, \dots, i_{p})$ into $F_{n}(\R^{2};i_{0}, \dots, i_{p})$ induces an isomorphism
\[
H_{k}\big(SF_{n}(R_{w,h};i_{0}, \dots, i_{p})\big)\cong H_{k}\big(F_{n}(\R^{2};i_{0}, \dots, i_{p})\big).
\]

We wish to show that for all $n$, $w$, $h$, and $p$, such that either
\begin{enumerate}
\item $h\ge w=\kappa+3$, $wh-n\ge (\kappa+2)(\kappa+3)$, and $p=-1$, or
\item $\min\{w-1,h\}\ge \kappa+3$, $wh-n\ge \max\big\{(\kappa+2)(\kappa+3), h(\kappa+1)+2\big\}$, and $0\le p+1\le h$, 
\end{enumerate}
then the inclusion of $SF_{n}(R_{w,h};i_{0}, \dots, i_{p})$ into $F_{n}(\R^{2};i_{0}, \dots, i_{p})$ induces an isomorphism
\[
    H_{\kappa+1}\big(SF_{n}(R_{w,h};i_{0}, \dots, i_{p})\big)\cong H_{\kappa+1}\big(F_{n}(\R^{2};i_{0}, \dots, i_{p})\big).
\]

We proceed by fixing $h=\kappa+3$ and inducting on $w$.
Note that in this case,
\[
(\kappa+2)(\kappa+3)>(\kappa+3)(\kappa+1)+2,
\]
so we can ignore the restriction that $wh-n\ge h(\kappa+1)+2$.

For the base case, $w=\kappa+3$, so we only need to consider $n$ at most $\kappa+3$.
In this setting, Proposition \ref{homotopy equivalence} tells us that
\[
SF_{n}\big(R_{\kappa+3,\kappa+3})\simeq F_{n}(\R^{2}),
\] 
so 
\[
H_{\kappa+1}\big(SF_{n}\big(R_{\kappa+3,\kappa+3})\big)\cong H_{\kappa+1}\big(F_{n}(\R^{2})\big).
\]
To finish the base case, we need to prove that if $w=\kappa+4$, $h=\kappa+3$, $wh-n\ge(\kappa+2)(\kappa+3)$, and $0\le  p+1\le \kappa+3$, then the inclusion of $SF_{n}\big(R_{\kappa+4,\kappa+3};i_{0}, \dots, i_{p})$ into $F_{n}(\R^{2}; i_{0}, \dots, i_{p})$ induces an isomorphism 
\[
H_{\kappa+1}\big(SF_{n}(R_{\kappa+4, \kappa+3};i_{0}, \dots, i_{p})\big)\cong H_{\kappa+1}\big(F_{n}(\R^{2};i_{0}, \dots, i_{p})\big).
\]
This follows from Proposition \ref{super proposition}.

Next, still holding $h=\kappa+3$, we complete the inductive step on $w$. 
Assume that there is some $\omega\ge \kappa+4$ such if $n$, $w$, and $p$ are such that $\omega\ge w\ge \kappa+4$, $wh-n\ge(\kappa+2)(\kappa+3)$, and $0\le p+1\le \kappa+3$, then the inclusion of $SF_{n}\big(R_{w,\kappa+3};i_{0}, \dots, i_{p})$ into $F_{n}(\R^{2}; i_{0}, \dots, i_{p})$ induces an isomorphism 
\[
H_{\kappa+1}\big(SF_{n}(R_{w, \kappa+3};i_{0}, \dots, i_{p})\big)\cong H_{\kappa+1}\big(F_{n}(\R^{2};i_{0}, \dots, i_{p})\big).
\]
In particular, this holds if $p=-1$.
We wish to prove that for $w=\omega+1$, $(\omega+1)h-n\ge(\kappa+2)(\kappa+3)$, and $0\le p+1\le \kappa+3$, the inclusion of $SF_{n}\big(R_{\omega+1,\kappa+3};i_{0}, \dots, i_{p})$ into $F_{n}(\R^{2}; i_{0}, \dots, i_{p})$ induces an isomorphism 
\[
H_{\kappa+1}\big(SF_{n}(R_{\omega+1, \kappa+3};i_{0}, \dots, i_{p})\big)\cong H_{\kappa+1}\big(F_{n}(\R^{2};i_{0}, \dots, i_{p})\big).
\]
This also follows from Proposition \ref{super proposition}, completing our induction on $w$ if we assume that $h=\kappa+3$.

To allow for general $h\ge \kappa+3$, we flip our rectangle on its side and again induct on $w$; namely, we show that if either 
\begin{enumerate}
    \item $h\ge w=\kappa+3$, $wh-n\ge(\kappa+2)(\kappa+3)$, and $p=-1$, or
    \item $\min\{w-1, h\}\ge \kappa+3$, $wh-n\ge  \max\big\{(\kappa+2)(\kappa+3), h(\kappa+1)+2\big\}$, and $0\le p+1\le h$,
\end{enumerate}
then the inclusion of $SF_{n}\big(R_{w,h};i_{0}, \dots, i_{p})$ into $F_{n}(\R^{2}; i_{0}, \dots, i_{p})$ induces an isomorphism 
\[
H_{\kappa+1}\big(SF_{n}(R_{w, h};i_{0}, \dots, i_{p})\big)\cong H_{\kappa+1}\big(F_{n}(\R^{2};i_{0}, \dots, i_{p})\big).
\]

We proceed by induction on $w$.
By our previous induction on $w$, when we fixed $h=\kappa+3$, we have that if $(\kappa+3)h-n\ge (\kappa+2)(\kappa+3)$, then the inclusion of $SF_{n}\big(R_{\kappa+3,h})\simeq SF_{n}\big(R_{h,\kappa+3})$ into $F_{n}(\R^{2})$ induces isomorphisms
\[
H_{\kappa+1}\big(SF_{n}(R_{\kappa+3, h})\big)\cong H_{\kappa+1}\big(SF_{n}(R_{h, \kappa+3})\big)\cong  H_{\kappa+1}\big(F_{n}(\R^{2})\big).
\]
This handles the first part of the base case.
For the second part, note that Proposition \ref{super proposition} tells us that if $(\kappa+4)h-n\ge\max\{(\kappa+2)(\kappa+3), h(\kappa+1)+2\big\}$ and $0\le p+1\le h$, the inclusion of $SF_{n}\big(R_{\kappa+4,h}; i_{0}, \dots, i_{p})$ into $F_{n}(\R^{2}; i_{0}, \dots, i_{p})$ induces an isomorphism
\[
H_{\kappa+1}\big(SF_{n}(R_{\kappa+4, h}; i_{0}, \dots, i_{p})\big)\cong H_{\kappa+1}\big(F_{n}(\R^{2}; i_{0}, \dots, i_{p})\big),
\]
which completes the base case of our induction on $w$ for general $h\ge \kappa+3$.

Now assume  that there is some $\omega\ge \kappa+4$ such if $n$, $w$, $h$, and $p$ are such that $h\ge \kappa+3$, $\omega\ge w\ge \kappa+4$, $wh-n\ge \max\big\{(\kappa+2)(\kappa+3), h(\kappa+1)+2\big\}$, and $0\le p+1\le h$, the inclusion of $SF_{n}\big(R_{w,h};i_{0}, \dots, i_{p})$ into $F_{n}(\R^{2}; i_{0}, \dots, i_{p})$ induces an isomorphism 
\[
H_{\kappa+1}\big(SF_{n}(R_{w, h};i_{0}, \dots, i_{p})\big)\cong H_{\kappa+1}\big(F_{n}(\R^{2};i_{0}, \dots, i_{p})\big).
\]
In particular, this holds if $p=-1$.
We wish to prove that if $w=\omega+1$, $(\omega+1)h-n\ge\max\big\{(\kappa+2)(\kappa+3), h(\kappa+1)+2\big\}$, and $0\le p+1\le h$, then the inclusion of $SF_{n}\big(R_{\omega+1,h};i_{0}, \dots, i_{p})$ into $F_{n}(\R^{2}; i_{0}, \dots, i_{p})$ induces an isomorphism 
\[
H_{\kappa+1}\big(SF_{n}(R_{\omega+1, h};i_{0}, \dots, i_{p})\big)\cong H_{\kappa+1}\big(F_{n}(\R^{2};i_{0}, \dots, i_{p})\big).
\]
This follows from Proposition \ref{super proposition}, so we have completed the inductive step on $w$ for arbitrary $h\ge \kappa+3$.
This completes the inductive argument on $k$, completing the proof of the theorem.
\end{proof}

Setting $p=-1$ in both cases in the statement of Theorem \ref{generalized main theorem}, we see that if $\min\{w,h\}\ge k+2$ and $wh-n\ge \max\big\{(k+1)(k+2), hk+2\big\}$, then the inclusion of $SF_{n}\big(R_{w,h})$ into $F_{n}(\R^{2})$ induces an isomorphism 
\[
H_{k}\big(SF_{n}(R_{w, h})\big)\cong H_{k}\big(F_{n}(\R^{2})\big).
\]
Since $SF_{n}(R_{w, h})\simeq SF_{n}(R_{h, w})$, we can interchange the roles of $w$ and $h$ to see that if $w\ge h\ge k+2$ and $wh-n\ge \max\big\{(k+1)(k+2), hk+2\big\}$, then the inclusion of $SF_{n}\big(R_{w,h})$ into $F_{n}(\R^{2})$ induces an isomorphism 
\[
H_{k}\big(SF_{n}(R_{w, h})\big)\cong H_{k}\big(F_{n}(\R^{2})\big),
\]
i.e., Theorem \ref{space homological stability} holds.

It remains to prove Proposition \ref{super proposition}, which handles the inductive step on $w$ in Theorem \ref{generalized main theorem}.
The proof of this proposition is a downward induction on $p$; we handle the base case in the following proposition.

\begin{prop}\label{beginning of inductive step on w}
If the inclusion of $SF_{n}(R_{w,h})$ into $F_{n}(\R^{2})$ induces an isomorphism
\[
H_{k}\big(SF_{n}(R_{w,h})\big)\cong H_{k}\big(F_{n}(\R^{2})\big),
\]
then the inclusion of $SF_{n+h}(R_{w+1,h};i_{0},\dots, i_{h-1})$ into $F_{n+h}(\R^{2};i_{0},\dots, i_{h-1})$ induces an isomorphism
\[
H_{k}\big(SF_{n+h}(R_{w+1,h};i_{0},\dots, i_{h-1})\big)\cong H_{k}\big(F_{n+h}(\R^{2};i_{0},\dots, i_{h-1})\big).
\]
\end{prop}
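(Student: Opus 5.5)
The plan is to show that when $p+1=h$ the right-most column of $R_{w+1,h}$ is entirely occupied by the squares $i_{0},\dots,i_{h-1}$. The space $SF_{n+h}(R_{w+1,h};i_{0},\dots,i_{h-1})$ should then deformation retract onto a copy of $SF_{n}(R_{w,h})$, compatibly with the analogous equivalence $F_{n+h}(\R^{2};i_{0},\dots,i_{h-1})\simeq F_{n}(\R^{2})$ of Proposition \ref{forget points on the right}. Once this is in place, the hypothesis on $SF_{n}(R_{w,h})$ transfers directly.

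First, apply Proposition \ref{move squares to right side} to deformation retract onto the subspace where $i_{0},\dots,i_{h-1}$ are tangent to the right side of $R_{w+1,h}$. Since these $h$ unit squares have strictly decreasing $y$-coordinates, are pairwise separated by Chebyshev distance $1$, and lie in a column of height $h$, their centers are forced to be $(-\frac12,-l-\frac12)$ for $l=0,\dots,h-1$. So after the retraction the squares $i_{l}$ are fixed. The constraint that no free square lies to the right of any $i_{l}$ is now automatic. The remaining condition, that a free square not overlap the column, says its center has $x\le -\frac32$; indeed a free square with $x>-\frac32$ would meet some $i_{l}$, since the column is full. Hence the retracted subspace is exactly $\{\text{fixed column}\}\times SF_{n}(R_{w,h})$, where $R_{w,h}$ is the left $w\times h$ subrectangle. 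Translating by one unit identifies this with the standard $SF_{n}(R_{w,h})$.

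Second, I check that this is compatible with the point side. The inclusion $SF_{n+h}(R_{w+1,h};i_{0},\dots,i_{h-1})\hookrightarrow F_{n+h}(\R^{2};i_{0},\dots,i_{h-1})$, precomposed with the retraction, is the map $SF_{n}(R_{w,h})\to F_{n+h}(\R^{2};i_{0},\dots,i_{h-1})$ that adjoins the fixed column of points. Following with the homotopy equivalence of Proposition \ref{forget points on the right}, which pushes the points $i_{l}$ to the fixed positions $(-\frac12,-l+\frac12)$ and the free points into the open half-plane $x<-\frac12$, I get a commutative-up-to-homotopy square. In it, the composite $SF_{n}(R_{w,h})\to F_{n}(\{x<-\frac12\})\cong F_{n}(\R^{2})$ is homotopic to the standard inclusion. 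This holds because the free squares already lie in $x\le -\frac32$, and the straight-line homotopy shifting the column of points only moves the $i_{l}$, which stay to the right of everything.

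The step needing the most care is this last compatibility. The identification $F_{n}(\{x<-\frac12\})\cong F_{n}(\R^{2})$ must be homotopic to the inclusion on the relevant subspace. I will handle it by choosing the homeomorphism of the half-plane with $\R^{2}$ to be isotopic to the identity on a compact region containing $R_{w,h}$, for instance a radial reparametrization in the $x$-direction fixing $x\le -1$. Then the vertical arrows are equivalences, and the bottom arrow is the given inclusion inducing an isomorphism on $H_{k}$ by hypothesis. So the top inclusion also induces an isomorphism on $H_{k}$.
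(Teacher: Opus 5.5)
Your proposal is correct and follows the paper's proof, which likewise chains $H_{k}\big(SF_{n+h}(R_{w+1,h};i_{0},\dots,i_{h-1})\big)\cong H_{k}\big(SF_{n}(R_{w,h})\big)\cong H_{k}\big(F_{n}(\R^{2})\big)\cong H_{k}\big(F_{n+h}(\R^{2};i_{0},\dots,i_{h-1})\big)$. The three steps use Proposition~\ref{move squares to right side} with the fact that the right-most column is full, then the hypothesis, then Proposition~\ref{forget points on the right}. Your homotopy-commutative square simply spells out the compatibility with the inclusion, which the paper asserts in one line.
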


\begin{proof}
We have that
\begin{align*}
H_{k}\big(SF_{n+h}(R_{w+1,h};i_{0},\dots, i_{h-1})\big)&\cong H_{k}\big(SF_{n}(R_{w,h})\big)\\
&\cong H_{k}\big(F_{n}(\R^{2})\big)\\
&\cong H_{k}\big(F_{n+h}(\R^{2};i_{0},\dots, i_{h-1})\big),
\end{align*}
where the first isomorphism is given by Proposition \ref{move squares to right side} and the fact only $h$ squares can fit in the right-most column of $R_{w+1,h}$, the second isomorphism is by assumption, and the third is given by Proposition \ref{forget points on the right}.
Moreover, this shows that the composite isomorphism is given by the inclusion of $SF_{n+h}(R_{w+1,h};i_{0},\dots, i_{h-1})$ into $F_{n+h}(\R^{2};i_{0},\dots, i_{h-1})$.
\end{proof}

With the base case of the downward induction on $p$ handled, we complete our inductive step on $w$ by completing the inductive step on $p$.

\begin{prop}\label{super proposition} 
Fix $\kappa\ge 0$, and assume that for all $n$, $w$, $h$, $k$ and $p$ such that $k\le\kappa$, $\min\{w-1, h\}\ge k+2$, $wh-n\ge \max\big\{(k+1)(k+2), hk+2\big\}$, and $0\le p+1\le h$, the inclusion of $SF_{n}(R_{w,h}; i_{0}, \dots, i_{p})$ into $F_{n}(\R^{2}; i_{0}, \dots, i_{p})$ induces an isomorphism
\[
H_{k}\big(SF_{n}(R_{w,h}; i_{0}, \dots, i_{p})\big)\cong H_{k}\big(F_{n}(\R^{2}; i_{0}, \dots, i_{p})\big).
\]
Moreover, assume that there is some $\omega\ge \kappa+3$ and some $h\ge \kappa+3$ such that for all $n$ and $w$ where $\omega\ge w\ge \kappa+3$, and $wh-n\ge\max\big\{(\kappa+2)(\kappa+3), h(\kappa+1)+2\big\}$, the inclusion of $SF_{n}(R_{w,h})$ into $F_{n}(\R^{2})$ induces an isomorphism
\[
H_{\kappa+1}\big(SF_{n}(R_{w,h})\big)\cong H_{\kappa+1}\big(F_{n}(\R^{2})\big).
\]
Then, for this $h$ and all $n$ and $p$ such that  $(\omega+1)h-n\ge\max\big\{(\kappa+2)(\kappa+3), h(\kappa+1)+2\big\}$ and $0\le p+1\le h$, the inclusion of $SF_{n}(R_{\omega+1,h}; i_{0}, \dots, i_{p})$ into $F_{n}(\R^{2}; i_{0}, \dots, i_{p})$ induces an isomorphism
\[
H_{\kappa+1}\big(SF_{n}(R_{\omega+1,h}; i_{0}, \dots, i_{p})\big)\cong H_{\kappa+1}\big(F_{n}(\R^{2}; i_{0}, \dots, i_{p})\big).
\]
\end{prop}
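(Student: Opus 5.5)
We argue by downward induction on $p$, from $p+1=h$ down to $p=-1$. For the base case $p+1=h$, the configuration space $SF_{n}(R_{\omega+1,h};i_{0},\dots,i_{h-1})$ deformation retracts (Proposition \ref{move squares to right side}) onto a copy of $SF_{n-h}(R_{\omega,h})$. The empty space is unchanged, since $\omega h-(n-h)=(\omega+1)h-n$. Hence the inductive hypothesis in $w$ (with $w=\omega$) gives $H_{\kappa+1}\big(SF_{n-h}(R_{\omega,h})\big)\cong H_{\kappa+1}\big(F_{n-h}(\R^{2})\big)$. Proposition \ref{beginning of inductive step on w} then upgrades this to the required isomorphism, compatibly with inclusion.

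For the inductive step, suppose the statement holds for all $p'$ with $p<p'\le h-1$. We compare the augmented Mayer--Vietoris spectral sequences $E\big[SF_{n}(R_{\omega+1,h};i_{0},\dots,i_{p})\big]$ and $E\big[F_{n}(\R^{2};i_{0},\dots,i_{p})\big]$, obtained by covering with the subspaces in which a free square $j$ is next right-most. The inclusion induces a map of spectral sequences. Since $E^{\infty}=0$, the $E^{1}_{-1,\kappa+1}$-entry is determined by entries $E^{r}_{s,t}$ with $s\ge 0$ and $s+t\in\{\kappa+1,\kappa+2\}$, together with the differentials among them. By a five-lemma/comparison argument it therefore suffices to check two things: the map is an isomorphism on $E^{1}_{s,t}$ for $s\ge 0$, $t\le \kappa+1$ with $s+t\le\kappa+2$ (surjectivity suffices on the boundary $s+t=\kappa+2$), and the differentials landing in the $(-1)$-column agree.

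The entries are $H_{t}\big(SF_{n}(R_{\omega+1,h};j_{0},\dots,j_{s};i_{0},\dots,i_{p})\big)$. Here I would invoke Lemma \ref{cover j,i}, which should identify these, in the relevant range, with $H_{t}$ of $SF_{n}(R_{\omega+1,h};j_{0},\dots,j_{s},i_{0},\dots,i_{p})$, summed over shuffles of the $j$'s into the $i$'s, mirroring Proposition \ref{in points can move two columns left}. Those spaces have strictly more right-most squares ($p'=p+s+1>p$), so they are isomorphic to the point-configuration side by the downward inductive hypothesis when $t=\kappa+1$. When $t\le\kappa$ they are isomorphic by the outer hypothesis on $k\le\kappa$, using $(\omega+1)h-n\ge\max\{(t+1)(t+2),ht+2\}$ and $\min\{\omega,h\}\ge t+2$. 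If the shuffle count exceeds $h$, the entries vanish on both sides. I expect establishing Lemma \ref{cover j,i} in exactly this range to be the main obstacle, since squares cannot be ignored the way points can: it needs its own auxiliary spectral sequence whose problematic term is concentrated in $E^{1}_{0,k}$.

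Finally, to match the differentials, I would use the basis of products of wheels from Proposition \ref{basis for homology of FnR2}, tensored with the Lie-polynomial basis of Proposition \ref{basis for top homology of inj word}. I would realize each wheel as a big square placed as in Construction \ref{first construction for connectivity}, which is possible because of the bounds $wh-n\ge\max\{(\kappa+2)(\kappa+3),h(\kappa+1)+2\}$. Proposition \ref{differentials are well defined as long as we have enough space} then computes $d^{r}$ locally via Lemma \ref{differentials on Reut} and Proposition \ref{Leibniz Rule}. Proposition \ref{move the big stretchy squares around} identifies the resulting configurations with the corresponding classes on the $F_{n}(\R^{2})$ side. Consequently the $E^{r}$-pages agree with Proposition \ref{Er page for FnR2} in the range, and the isomorphism on $E^{1}_{-1,\kappa+1}$ follows.
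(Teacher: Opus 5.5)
Your overall structure matches the paper's: downward induction on $p$ with the base case from Proposition~\ref{beginning of inductive step on w}, a comparison of the two augmented Mayer--Vietoris spectral sequences, and Lemma~\ref{cover j,i} combined with the downward hypothesis at $t=\kappa+1$ and the outer hypothesis at $t\le\kappa$. Your base case is fine, and is more careful than the paper about $n$ versus $n-h$.

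\textbf{The gap.} The problem is your treatment of the entries with $s+1+p+1>h$, which you say ``vanish on both sides.'' Neither side vanishes.
\begin{itemize}
\item On the point side, $F_{n}(\R^{2};j_{0},\dots,j_{s};i_{0},\dots,i_{p})\simeq F_{n-p-1}(\R^{2};j_{0},\dots,j_{s})$, which is never empty.
\item On the square side, the $j$'s cannot join the $i$'s in the right-most column, but they can sit in the second right-most column (Proposition~\ref{squares in the two rightmost columns}). So the space is $\simeq SF_{n-p-1}(R_{\omega,h};j_{0},\dots,j_{s})$, which is nonempty since $s+1\le h$.
\end{itemize}
These entries do lie in your range. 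Examples are $p=h-2$, $s=1$ with any $t\le\kappa$, and also $(s,t)=(1,\kappa+1)$, where your comparison argument needs surjectivity. Lemma~\ref{cover j,i} does not cover them, because it requires $s+1+p+1\le h$.

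The paper handles this case by that retraction plus the outer hypothesis applied to the smaller rectangle $R_{\omega,h}$ with $n-p-1$ squares. The bound $(\omega+1)h-n\ge h(\kappa+1)+2$ gives $\omega h-(n-p-1)\ge h\kappa+p+3\ge\max\{(t+1)(t+2),ht+2\}$ for $t\le\kappa$, and $\omega-1\ge t+2$. This is exactly where the $h(\kappa+1)+2$ bound is used. Your proposal only checks bounds for $R_{\omega+1,h}$ and never uses it this way. At $(1,\kappa+1)$ with $p=h-2$ you also owe surjectivity. You can get it by realizing the wheel basis in $R_{\omega+1,h}$ via Construction~\ref{first construction for connectivity}, which places the $j$'s in the second column in this case.

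\textbf{The shuffle description.} You describe Lemma~\ref{cover j,i} as producing $H_{t}\big(SF_{n}(R_{\omega+1,h};j_{0},\dots,j_{s},i_{0},\dots,i_{p})\big)$ ``summed over shuffles.'' Taken literally, this breaks the comparison. The point-side entry is a single copy of $H_{t}\big(F_{n}(\R^{2};j_{0},\dots,j_{s},i_{0},\dots,i_{p})\big)$ by Proposition~\ref{in points can move two columns left}, whereas a sum over the $\binom{s+p+2}{s+1}$ shuffles has strictly larger rank. The content of the lemma is that the shuffle summands $\bigoplus_{\sigma}H_{\kappa+1}(W_{\sigma})$ are identified with one another. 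This happens through $d^{1}$ from the pairwise intersections $W_{\sigma}\cap W_{\tau}$, which are configurations with the $j$'s in the second column. What remains is one copy with the fixed order $j_{0},\dots,j_{s},i_{0},\dots,i_{p}$.

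Your comparison-theorem framing is otherwise sound, and tidier than the paper's ``too large/too small'' bookkeeping. The mismatched columns $s\ge h$ have total degree at least $h\ge\kappa+3$, so they are outside the range you need. But that framing makes the $E^{1}$-isomorphisms (and the boundary surjections) in the non-fitting case indispensable, and those are exactly what you skipped.
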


\begin{proof}
We prove this by a downwards induction on $p$, starting with $p+1=h$.
That is we show it is true if $(\omega+1)h-n\ge\max\big\{(\kappa+2)(\kappa+3), h(\kappa+1)+2\big\}$ and $p+1=h$.
By assumption, we have that if $\omega h-n\ge\max\big\{(\kappa+2)(\kappa+3), h(\kappa+1)+2\big\}$, then
\[
H_{\kappa+1}\big(SF_{n}(R_{\omega,h})\big)\cong H_{\kappa+1}\big(F_{n}(\R^{2})\big),
\]
so Proposition \ref{beginning of inductive step on w} tells us that
\[
H_{\kappa+1}\big(SF_{n}(R_{\omega+1,h}; i_{0}, \dots, i_{h-1})\big)\cong  H_{\kappa+1}\big(F_{n}(\R^{2}; i_{0}, \dots, i_{h-1})\big),
\]
completing the base case of our induction.

Assume that we have proven the proposition for all $p>\pi$; we wish to show that it holds for $p=\pi$.
We proceed by comparing a pair of augmented Mayer--Vietoris spectral sequences that arise from covering $SF_{n}(R_{\omega+1, h};i_{0}, \dots, i_{\pi})$ and $F_{n}(\R^{2}; i_{0}, \dots, i_{\pi})$ with sets of the form $SF_{n}(R_{\omega+1, h};j;i_{0}, \dots, i_{\pi})$ and $F_{n}(\R^{2}; j; i_{0}, \dots, i_{\pi})$, respectively, where $j$ ranges over $\{1,\dots, n\}-\{i_{0},\dots, i_{\pi}\}$.
By Proposition \ref{E0E1EinftyMV}, the $E^{1}_{s,t}$-entries of the resulting spectral sequences are
\[
E^{1}_{s,t}\big[SF_{n}(R_{\omega+1, h};i_{0}, \dots, i_{\pi})\big]\cong \bigoplus_{\substack{j_{0}, \dots, j_{s}\in \{1,\dots, n\}-\{i_{0}, \dots, i_{\pi}\}\\j_{0}, \dots, j_{s} \text{ distinct}}} H_{t}\big(SF_{n}(R_{\omega+1, h}; j_{0}, \dots, j_{s}; i_{0}, \dots, i_{\pi})\big)
\]
and 
\[
E^{1}_{s,t}\big[F_{n}(\R^{2};i_{0}, \dots, i_{\pi})\big]\cong \bigoplus_{\substack{j_{0}, \dots, j_{s}\in \{1,\dots, n\}-\{i_{0}, \dots, i_{\pi}\}\\j_{0}, \dots, j_{s} \text{ distinct}}} H_{t}\big(F_{n}(\R^{2}; j_{0}, \dots, j_{s}; i_{0}, \dots, i_{\pi})\big).
\]

By Proposition \ref{higher terms in spectral sequence}, it would suffice to show that if $s=0$ and $t=k+1$ or $0\le s\le h-1$ and $t<\kappa+1$, then these entries are isomorphic via the inclusion of $SF_{n}(R_{\omega+1, h}; j_{0}, \dots, j_{s}; i_{0}, \dots, i_{\pi})$ into  $F_{n}(\R^{2}; j_{0}, \dots, j_{s}; i_{0}, \dots, i_{\pi})$. 
We do this by proving that 
\[
H_{t}\big(SF_{n}(R_{\omega+1, h}; j_{0}, \dots, j_{s}; i_{0}, \dots, i_{\pi})\big)\cong H_{t}\big(F_{n}(\R^{2}; j_{0}, \dots, j_{s}; i_{0}, \dots, i_{\pi})\big)
\]
for the desired $s$ and $t$.

There are two cases to consider, namely, $ s+1+\pi+1\le h$ and $s+1+\pi+1>h$.
In the latter case, the $j_{0}, \dots, j_{s}$ and $i_{0}, \dots, i_{\pi}$ cannot all fit into the same column in $R_{\omega+1, h}$.
Thus, it follows from Proposition \ref{squares in the two rightmost columns} that 
\[
SF_{n}(R_{\omega+1, h}; j_{0}, \dots, j_{s}; i_{0}, \dots, i_{\pi})\simeq SF_{n-\pi-1}(R_{\omega, h}; j_{0}, \dots, j_{s}),
\]
so
\begin{align*}
H_{t}\big(SF_{n}(R_{\omega+1, h}; j_{0}, \dots, j_{s}; i_{0}, \dots, i_{\pi})\big)&\cong H_{t}\big(SF_{n-\pi-1}(R_{\omega, h}; j_{0}, \dots, j_{s})\big)\\
&\cong H_{t}\big(F_{n-\pi-1}(\R^{2}; j_{0}, \dots, j_{s})\big)\\
&\cong H_{t}\big(F_{n}(\R^{2}; j_{0}, \dots, j_{s}; i_{0}, \dots, i_{\pi})\big).
\end{align*}
The second isomorphism follows from our assumption that $(\omega+1)h-n\ge\max\big\{(\kappa+2)(\kappa+3), h(\kappa+1)+2\big\}$, and the third follows from Proposition \ref{forget points on the right}.
Moreover, Construction 
\ref{first construction for connectivity} 
shows that one can realize the basis elements for $H_{t}\big(F_{n}(\R^{2}; j_{0}, \dots, j_{s}; i_{0}, \dots, i_{\pi})\big)$ described in Proposition \ref{basis for homology of FnR2} in $SF_{n}(R_{\omega+1, h}; j_{0}, \dots, j_{s}; i_{0}, \dots, i_{\pi})$.
It follows that this isomorphism is induced by the inclusion of $SF_{n}(R_{\omega+1, h}; j_{0}, \dots, j_{s}; i_{0}, \dots, i_{\pi})$ into the point configuration space $F_{n}(\R^{2}; j_{0}, \dots, j_{s}; i_{0}, \dots, i_{\pi})$.

In the former case, where $s+1+\pi+1\le h$, Lemma \ref{cover j,i} tells us that
\[
H_{t}\big(SF_{n}(R_{\omega+1, h}; j_{0}, \dots, j_{s}; i_{0}, \dots, i_{\pi})\big)\cong H_{t}\big(SF_{n}(R_{\omega+1, h}; j_{0}, \dots, j_{s}, i_{0}, \dots, i_{\pi})\big).
\]
Since
\begin{align*}
H_{t}\big(SF_{n}(R_{\omega+1, h}; j_{0}, \dots, j_{s}, i_{0}, \dots, i_{\pi})\big)&\cong H_{t}\big(F_{n}(\R^{2}; j_{0}, \dots, j_{s}, i_{0}, \dots, i_{\pi})\big)\\
&\cong H_{t}\big(F_{n}(\R^{2}; j_{0}, \dots, j_{s}; i_{0}, \dots, i_{\pi})\big)
\end{align*}
where the first isomorphism follows from assumption in the case $t\le \kappa$ and by the inductive hypothesis in the case $t=\kappa+1$, and the second isomorphism follows from Proposition \ref{in points can move two columns left}, we
have that
\[
H_{t}\big(SF_{n}(R_{\omega+1, h}; j_{0}, \dots, j_{s}; i_{0}, \dots, i_{\pi})\big)\cong H_{t}\big(F_{n}(\R^{2}; j_{0}, \dots, j_{s}; i_{0}, \dots, i_{\pi})\big).
\]
In the case $t<\kappa+1$, since $\omega h-n\ge h(\kappa+1)+2$, we further have that these homology groups are isomorphic to 
\[
H_{t}\big(F_{n-\pi-1}(\R^{2}; j_{0}, \dots, j_{s})\big)\\
\cong H_{t}\big(SF_{n-\pi-1}(R_{\omega, h}; j_{0}, \dots, j_{s})\big).
\]

Applying Proposition \ref{higher terms in spectral sequence}, we see that the inclusion of $SF_{n}(R_{\omega+1, h}; i_{0}, \dots, i_{\pi})$ into $F_{n}(\R^{2}; i_{0}, \dots, i_{\pi})$ induces an isomorphism
\[
H_{\kappa+1}\big(SF_{n}(R_{\omega+1, h}; i_{0}, \dots, i_{\pi})\big)\cong H_{\kappa+1}\big(F_{n}(\R^{2}; i_{0}, \dots, i_{\pi})\big),
\]
completing the inductive step and therefore the proof of the proposition.
\end{proof}

To complete our proof of Proposition \ref{super proposition}, we show that if the $E^{1}$-pages of the augmented Mayer--Vietoris spectral sequences for $SF_{n}(R_{w,h}; i_{0}, \dots, i_{p})$ and $F_{n}(\R^{2}; i_{0}, \dots, i_{p})$ agree on enough entries, then
\begin{align*}
H_{\kappa+1}\big(SF_{n}(R_{\omega+1, h}; i_{0}, \dots, i_{\pi})\big)&\cong E^{1}_{-1, k}\big[SF_{n}(R_{\omega+1, h}; i_{0}, \dots, i_{\pi})\big]\\&\cong E^{1}_{-1, k}\big[F_{n}(\R^{2}; i_{0}, \dots, i_{\pi})\big]\\&\cong H_{\kappa+1}\big(F_{n}(\R^{2}; i_{0}, \dots, i_{\pi})\big).
\end{align*}

\begin{prop}\label{higher terms in spectral sequence}
Assume that $\min\{w-1, h\}\ge k+2$, $wh-n\ge \max\big\{(k+1)(k+2), hk+2\big\}$, and $0\le p+1\le h$.
Moreover, assume that the inclusion of $SF_{n}(R_{w,h}; j_{0}; i_{0}, \dots, i_{p})$ into $F_{n}(\R^{2}; j_{0}; i_{0}, \dots, i_{p})$
induces an isomorphism
\[
H_{k}\big(SF_{n}(R_{w,h}; j_{0}; i_{0}, \dots, i_{p})\big)\cong H_{k}\big(F_{n}(\R^{2}; j_{0}; i_{0}, \dots, i_{p})\big), 
\]
and for $0\le s<h$ and $t<k$, the inclusion of $SF_{n}(R_{w,h}; j_{0}, \dots, j_{s}; i_{0}, \dots, i_{p})$ into $F_{n}(\R^{2}; j_{0}, \dots, j_{s}; i_{0}, \dots, i_{p})$ induces isomorphisms
\begin{align*}
H_{t}\big(SF_{n}(R_{w,h}; j_{0}, \dots, j_{s}; i_{0}, \dots, i_{p})\big)&\cong H_{t}\big(SF_{n-p-1}(R_{w-1,h}; j_{0}, \dots, j_{s})\big)\\&\cong H_{t}\big(F_{n-p-1}(\R^{2}; j_{0}, \dots, j_{s})\big)\\&\cong H_{t}\big(F_{n}(\R^{2}; j_{0}, \dots, j_{s}; i_{0}, \dots, i_{p})\big).
\end{align*}
Then the inclusion of $SF_{n}(R_{w,h}; i_{0}, \dots, i_{p})$ into $F_{n}(\R^{2};  i_{0}, \dots, i_{p})$ induces an isomorphism
\[
H_{k}\big(SF_{n}(R_{w,h}; i_{0}, \dots, i_{p})\big)\cong H_{k}\big(F_{n}(\R^{2}; i_{0}, \dots, i_{p})\big).
\]
\end{prop}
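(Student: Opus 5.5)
We compare the two augmented Mayer--Vietoris spectral sequences $E\big[SF_{n}(R_{w,h}; i_{0}, \dots, i_{p})\big]$ and $E\big[F_{n}(\R^{2}; i_{0}, \dots, i_{p})\big]$ from Subsection~\ref{our mv ss}, obtained by covering with the sets indexed by a next right-most square $j$. The inclusion of the cover of $SF_{n}(R_{w,h}; i_{0},\dots,i_{p})$ into the cover of $F_{n}(\R^{2}; i_{0},\dots,i_{p})$ induces a morphism of double complexes, hence a morphism of spectral sequences $\Phi^{r}$ commuting with all $d^{r}$. Since both sequences are augmented, Proposition~\ref{E0E1EinftyMV} gives $E^{\infty}=0$. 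The $E^{1}_{-1,k}$-entries are exactly $H_{k}$ of the two spaces. The plan is therefore a five-lemma/comparison argument, showing that $\Phi^{1}_{-1,k}$ is an isomorphism.

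\textbf{Step 1: the relevant $E^{1}$-entries agree.} Classes in $E^{1}_{-1,k}$ can only be killed by $d^{r}$ out of $E^{r}_{r-1,k-r+1}$ for $1\le r\le k+1$. Similarly, $E^{1}_{-1,k}$ can map nontrivially only to entries $E^{r}_{-1-r, \ast}$, which vanish. Hence the entries that matter are
\begin{itemize}
\item $E^{1}_{0,k}$, which is governed by the hypothesis on $H_{k}\big(SF_{n}(R_{w,h}; j_{0}; i_{0},\dots,i_{p})\big)$;
\item $E^{1}_{s,t}$ with $t<k$ and $s\le k$, which are governed by the second hypothesis. Entries with $s\ge h$ vanish because at most $h$ squares fit in a column.
\end{itemize}
For the injectivity half we also need the targets $E^{1}_{s-r,t+r-1}$ of differentials leaving these entries; these again lie in the range $t<k$ or $(s,t)=(0,k)$. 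So $\Phi^{1}$ is an isomorphism on the whole region $\{(0,k)\}\cup\{(s,t): t<k\}$ together with the column $s=-1$ in degrees below $k$. The degree-below-$k$ part of that column holds by the outer induction, and otherwise we use only the region.

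\textbf{Step 2: identify the differentials.} I would argue by induction on $r$ that $\Phi^{r}$ is an isomorphism on this region, keeping track of the entries of the form $E^{r}_{s,t}$ with $s+t\le k+1$. The key point is that a map of spectral sequences which is an isomorphism on $E^{r}$ in a region closed under incoming and outgoing $d^{r}$ remains an isomorphism on $E^{r+1}$ there. Our region is not quite closed, since $d^{r}$ out of $(s,t)$ with $t<k$ can land in $(s-r,t+r-1)$ with $t+r-1=k$ and $s-r\ge 0$. Two observations handle this:
\begin{itemize}
\item For $s-r=-1$, the target is precisely the entry we are computing.
\item For targets $(0,k)$ we have the hypothesis. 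Targets with $s-r\ge 1$ and $t+r-1=k$ have total degree $>k$ and are irrelevant for $H_{k}$ of the abutment.
\end{itemize}
To make this rigorous without closure, I would use explicit descriptions instead of abstract comparison. By the hypotheses, the bases given by Proposition~\ref{basis for homology of FnR2} and Proposition~\ref{basis for Cnp} (wheel products times Lie polynomials) are realized on the square side via Construction~\ref{first construction for connectivity}. Proposition~\ref{differentials are well defined as long as we have enough space}, together with Proposition~\ref{move the big stretchy squares around}, then shows that $d^{r}$ on a basis element $[\alpha]\cdot L$ equals $[\alpha]\,d^{r}(L)$ computed inside an $(r+1)\times(s+1)$ box. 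By Lemma~\ref{differentials on Reut} and Proposition~\ref{Leibniz Rule}, this is the same formula as for $F_{n}(\R^{2})$. Hence the $E^{r}$-pages agree with Proposition~\ref{Er page for FnR2} in the region, page by page.

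\textbf{Step 3: conclude.} On both sides, $E^{\infty}_{-1,k}=0$, and $E^{1}_{-1,k}$ has a finite filtration whose graded pieces are the images of $d^{r}\colon E^{r}_{r-1,k-r+1}\to E^{r}_{-1,k}$ for $r=1,\dots,k+1$. Since $\Phi$ identifies these sources and the differentials, it identifies the filtration quotients. A five-lemma induction up the filtration then gives that $\Phi^{1}_{-1,k}$ is an isomorphism, and this map is the one induced by inclusion.

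\textbf{Main obstacle.} I expect the main difficulty to be Step 2: showing that the square-side differentials agree with the point-side ones. This requires that every class in the relevant entries has a representative avoiding the top-right $(r+1)\times(s+1)$ box, and that the resulting configurations after freeing $a'$ lie in the component of the standard construction. This is exactly where the space bound $wh-n\ge\max\{(k+1)(k+2),hk+2\}$ and the plastic pyramid moves of Proposition~\ref{move the big stretchy squares around} enter. I would first check the extremal case $w=k+3$, $h=k+2$ carefully.
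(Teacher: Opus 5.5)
Your outline matches the paper's. You compare $E\big[SF_{n}(R_{w,h};i_{0},\dots,i_{p})\big]$ with $E\big[F_{n}(\R^{2};i_{0},\dots,i_{p})\big]$, match the $E^{1}$-entries using the hypotheses, and match the differentials via Construction~\ref{first construction for connectivity} and Propositions~\ref{move the big stretchy squares around}, \ref{Leibniz Rule}, \ref{differentials are well defined as long as we have enough space}.

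\textbf{First gap: entries with $s\ge h$.} Step~1 makes a false claim, and it hides the issue the paper's proof spends most of its effort on. For $s\ge h$ only the \emph{square} entries vanish. On the point side, $E^{1}_{s,t}\big[F_{n}(\R^{2};i_{0},\dots,i_{p})\big]$ is a sum of copies of $H_{t}\big(F_{n-p-s-2}(\R^{2})\big)$, which is nonzero as soon as there are $s+1$ free labels. So $\Phi^{1}$ is not an isomorphism on $\{(s,t):t<k\}$, and the page-by-page agreement claimed in Step~2 fails. For example, when $h=k+2$ the point-side $E^{2}_{k+1,0}$ is quotiented by the (generally nonzero) image of $d^{1}$ from $E^{1}_{k+2,0}$, while the square-side entry is not. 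The entry $(k+1,0)$ has total degree $k+1$, so its $d^{r}$ land on exactly the total-degree-$k$ entries that feed $E_{-1,k}$. This is what the paper's ``too large/too small'' discussion handles.

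\textbf{Second gap: kernels of $d^{r}$ into $E_{-1,k}$.} Step~3 needs the square-side kernel of $d^{r}\colon E^{r}_{r-1,k-r+1}\to E^{r}_{-1,k}$ to be the full preimage under $\Phi$ of the point-side kernel. A formula for $d^{r}$ on basis elements cannot give this, because its target is the unknown group $H_{k}\big(SF_{n}(R_{w,h};i_{0},\dots,i_{p})\big)$. Since $E^{\infty}=0$, those kernels are accounted for by differentials out of total degree $k+1$. These include $d^{1}$ out of $E^{1}_{1,k}=\bigoplus H_{k}\big(SF_{n}(R_{w,h};j_{0},j_{1};i_{0},\dots,i_{p})\big)$, which neither hypothesis covers and your Step~1 never mentions. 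The paper gets this injectivity geometrically: relations that arise in the point picture by freeing $j_{0}$ are realized on the square side via Proposition~\ref{move the big stretchy squares around}.

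\textbf{A repair using your morphism $\Phi$.} Since you already have a genuine morphism $\Phi$ of spectral sequences, drop the ``closed region'' idea and use Zeeman-type bookkeeping in the unaugmented Mayer--Vietoris sequence (columns $s\ge 0$, abutting to $H_{*}$). Require that $\Phi^{1}$ be
\begin{itemize}
\item an isomorphism in total degree $k$,
\item injective in total degree $k-1$,
\item surjective in total degree $k+1$ for $s\ge 1$.
\end{itemize}
These conditions propagate from page to page:
\begin{itemize}
\item In total degree $k$, $E^{r+1}$ is a kernel modulo an image. The kernel needs injectivity on the target, and the image needs surjectivity on the source.
\item Surjectivity in total degree $k+1$ only needs injectivity in total degree $k$. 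Quotienting by a smaller image on the square side only helps, so the ``too large'' entries are harmless.
\end{itemize}
Because $h\ge k+2$, every entry involved has $s\le k+1<h$. So the hypotheses supply everything except surjectivity at $(1,k)$. That follows from Construction~\ref{first construction for connectivity}, which realizes the wheel basis of $H_{k}\big(F_{n}(\R^{2};j_{0},j_{1};i_{0},\dots,i_{p})\big)$.

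It follows that $\Phi^{\infty}$ is an isomorphism in total degree $k$; these are your graded pieces $\text{im}\,d^{r}\subset E^{r}_{-1,k}$. Your five-lemma step then finishes the proof, and the geometric identification of the higher differentials in Step~2 is no longer needed.
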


\begin{proof}
Consider $E\big[SF_{n}(R_{w,h};i_{0}, \dots, i_{p})\big]$ and $E\big[F_{n}(\R^{2};i_{0}, \dots, i_{p})\big]$.
By assumption, the inclusion of the square configuration space $SF_{n}(R_{w, h}; j_{0}, \dots, j_{s}; i_{0}, \dots, i_{p})$ into $F_{n}(\R^{2}; j_{0}, \dots, j_{s}; i_{0}, \dots, i_{p})$ induces isomorphisms
\begin{align*}
E^{1}_{s,t}\big[SF_{n}(R_{w,h};i_{0}, \dots, i_{p})\big]&\cong\bigoplus_{\substack{j_{0}, \dots, j_{s}\in \{1,\dots, n\}-\{i_{0}, \dots, i_{p}\}\\j_{0}, \dots, j_{s} \text{ distinct}}} H_{t}\big(SF_{n}(R_{w, h}; j_{0}, \dots, j_{s}; i_{0}, \dots, i_{p})\big)\\
&\cong \bigoplus_{\substack{j_{0}, \dots, j_{s}\in \{1,\dots, n\}-\{i_{0}, \dots, i_{p}\}\\j_{0}, \dots, j_{s} \text{ distinct}}} H_{t}\big(F_{n}(\R^{2}; j_{0}, \dots, j_{s}; i_{0}, \dots, i_{p})\big)\\
&\cong E^{1}_{s,t}\big[F_{n}(\R^{2};i_{0}, \dots, i_{p})\big].
\end{align*}
for $s=0$ and $t=k$ and for $0\le s<h$ and $t<k$.
Moreover, if $s\ge h$, then $E^{1}_{s,t}\big[SF_{n}(R_{w,h}; i_{0},\dots, i_{p})\big]=0$, since $SF_{n}\big(R_{w,h}; j_{0}, \dots, j_{s}; i_{0}, \dots, i_{p})$ is empty as there is no way to vertically align $s+1$ unit squares in $R_{w,h}$. 

Next, we prove that the $d^{1}$-differentials from $E^{1}_{s,t}\big[SF_{n}(R_{w,h};i_{0}, \dots, i_{p})\big]$ and $E^{1}_{s,t}\big[F_{n}(\R^{2};i_{0}, \dots, i_{p})\big]$ can be naturally identified for $s=0$ and $t=k$ and for $0\le s<h$ and $t<k$.
To see this we consider two cases, namely $s=0$ and $0<s<h$.

In the case $s=0$, we have that
\begin{align*}
E^{1}_{0,t}\big[SF_{n}(R_{w,h};i_{0}, \dots, i_{p})\big]&\cong\bigoplus_{j_{0}\in \{1,\dots, n\}-\{i_{0}, \dots, i_{p}\}} H_{t}\big(SF_{n}(R_{w, h}; j_{0}; i_{0}, \dots, i_{p})\big)\\
&\cong \bigoplus_{j_{0}\in \{1,\dots, n\}-\{i_{0}, \dots, i_{p}\}} H_{t}\big(F_{n}(\R^{2}; j_{0}; i_{0}, \dots, i_{p})\big)\\
&\cong E^{1}_{0,t}\big[F_{n}(\R^{2};i_{0}, \dots, i_{p})\big],
\end{align*}
and the $d^{1}$-differentials have the affect of ``freeing'' $j_{0}$. 
By assumption, the inclusion of $SF_{n}(R_{w, h}; j_{0}; i_{0}, \dots, i_{p})$ into $F_{n}(\R^{2}; j_{0}; i_{0}, \dots, i_{p})$ induces an isomorphism $H_{t}\big(SF_{n}(R_{w, h}; j_{0}; i_{0}, \dots, i_{p})\big)\cong H_{t}\big(F_{n}(\R^{2}; j_{0}; i_{0}, \dots, i_{p})\big)$; moreover, we can realize the wheels constituting the basis for $H_{t}\big(SF_{n}(R_{w, h}; j_{0}; i_{0}, \dots, i_{p})\big)$ via Construction \ref{first construction for connectivity}.  
It follows from Proposition \ref{move the big stretchy squares around} that any relation in $\bigoplus_{j_{0}\in \{1,\dots, n\}-\{i_{0}, \dots, i_{p}\}} H_{t}\big(F_{n}(\R^{2}; j_{0}; i_{0}, \dots, i_{p})\big)$ that arises by freeing $j_{0}$  must be satisfied in $\bigoplus_{j_{0}\in \{1,\dots, n\}-\{i_{0}, \dots, i_{p}\}} H_{t}\big(SF_{n}(R_{w, h}; j_{0}; i_{0}, \dots, i_{p})\big)$.
Therefore, the image of $E^{1}_{0,t}\big[SF_{n}(R_{w,h};i_{0}, \dots, i_{p})\big]$ must be spanned by the image of $E^{1}_{0,t}\big[F_{n}(\R^{2};i_{0}, \dots, i_{p})\big]$.
Since any relations in $SF_{n}(R_{w,h};i_{0}, \dots, i_{p})$, would yield relations in the $F_{n}(\R^{2};i_{0}, \dots, i_{p})$ by including the square configuration space into the point configuration space, we can identify these images in the $E^{1}_{-1, 0}$-terms.

In the $0<s<h$ setting, we once again use the fact that the inclusion of 
configuration spaces induces an isomorphism between $H_{t}\big(SF_{n}(R_{w,h}; j_{0}, \dots, j_{s}; i_{0}, \dots, i_{p})\big)$ and $H_{t}\big(F_{n}(\R^{2}; j_{0}, \dots, j_{s}; i_{0}, \dots, i_{p})\big)$.
In particular, since $wh-n\ge hk+2$, by Lemma \ref{cover j,i} we have isomorphisms
\begin{align*}
H_{t}\big(SF_{n}(R_{w,h}; j_{0}, \dots, j_{s}; i_{0}, \dots, i_{p})\big)&\cong H_{t}\big(SF_{n-p-1}(R_{w-1,h}; j_{0}, \dots, j_{s})\big)\\
&\cong H_{t}\big(F_{n-p-1}(\R^{2}; j_{0}, \dots, j_{s})\big)\\
&\cong H_{t}\big(F_{n}(\R^{2}; j_{0}, \dots, j_{s}; i_{0}, \dots, i_{p})\big).
\end{align*}
Realizing the wheel basis of Proposition \ref{basis for homology of FnR2} for $H_{t}\big(SF_{n-p-1}(R_{w-1,h}; j_{0}, \dots, j_{s})\big)$ via Construction \ref{first construction for connectivity}, and including into $SF_{n}(R_{w,h}; j_{0}, \dots, j_{s}; i_{0}, \dots, i_{p})$ by including $R_{w-1,h}$ into $R_{w,h}$ on the left, we see that any arrangement of the same set of wheels in $R_{w,h}$ must be homologous, as this is the case in $\R^{2}$, and the homologies are isomorphic via an inclusion.
It follows from Propositions \ref{Leibniz Rule} and \ref{differentials are well defined as long as we have enough space} and Lemma \ref{differentials on Reut}, that the images and kernels of these differentials can be identified in Construction \ref{first construction for connectivity} since the column of squares immediately to the left of $j_{0},\dots, j_{s}$ is empty.
Therefore, for $0\le s<h-1$ and $t<k$ there is a natural isomorphism
$E^{2}_{s,t}\big[SF_{n}(R_{w,h};i_{0}, \dots, i_{p})\big]\cong E^{2}_{s,t}\big[F_{n}(\R^{2};i_{0}, \dots, i_{p})\big]$.

If $n-p-1<h$, the proposition follows from Proposition \ref{E2pageforFnR2}, as there is an isomorphism between all terms of these spectral sequences and there is enough space in $R_{w,h}$ to ensure the differentials are the same.
Similarly, if $n-p-1>2h$, all of the entries $E^{2}_{s,t}$ are $0$ for $s\ge 0$, $t<k$, and $s+t\le k$ in both sequences.
It follows that for $r\ge 2$, no non-zero entry maps into the $E^{\infty}_{-1,k}$ entry of either spectral sequence, yielding the proposition.
It remains to handle the middle cases.

We begin by noting that the $E^{2}_{h-1, t}\big[SF_{n}(R_{w,h}; i_{0}, \dots, i_{p})\big]$-entries might be ``too big'' as though the kernels of the $d^{1}$-differentials from $E^{1}_{h-1, t}\big[SF_{n}(R_{w,h}; i_{0}, \dots, i_{p})\big]$ and $E^{1}_{h-1, t}\big[F_{n}(\R^{2}; i_{0}, \dots, i_{p})\big]$ are isomorphic, the image of $E^{1}_{h, t}\big[F_{n}(\R^{2}; i_{0}, \dots, i_{p})\big]$ in  $E^{1}_{h-1, t}\big[F_{n}(\R^{2}; i_{0}, \dots, i_{p})\big]$ might be non-zero whereas the image of $E^{1}_{h, t}\big[SF_{n}(R_{w,h}; i_{0}, \dots, i_{p})\big]$ in $E^{1}_{h-1, t}\big[SF_{n}(R_{w,h}; i_{0}, \dots, i_{p})\big]$must be zero, as this entry is zero.
We will see that this potential excess is not a problem. 

On the $E^{r}$-page of the augmented--Mayer--Vietoris spectral sequence for $SF_{n}(R_{w,h}; i_{0}, \dots, i_{p})$ the $E^{r}_{s,t}$-entries for $0\le s<h$ and $t<k$ can be one of three things: isomorphic to the corresponding entry in the Mayer--Vietoris spectral sequence for $F_{n}(\R^{2}; i_{0}, \dots, i_{p})$, ``too large'' compared to the corresponding entry, or ``too small'' compared to the corresponding entry.
In each of these three cases, we consider the images of the differential from these entries.

In the first case, where $E^{r}_{s,t}\big[SF_{n}(R_{w,h}; i_{0}, \dots, i_{p})\big]\cong E^{r}_{s,t}\big[F_{n}(\R^{2}; i_{0}, \dots, i_{p})\big]$, we consider two possibilities.
If $s\neq r-1$ and $t+r-1>k$, then these entries map into the $E^{r}_{s-r, t+r-1}$-entry, which has no effect on the $E^{r'}_{-1, k}$-entries for any $r'\ge r$, so the differentials do not matter.
Otherwise, note that since $wh-n\ge hk+2$, we can use Construction \ref{first construction for connectivity} to get a representative for each basis element of $E^{r}_{s,t}\big[SF_{n}(R_{w,h}; i_{0}, \dots, i_{p})\big]$ such that the $r$ columns immediately to the left of the $j_{0}, \dots, j_{s}$ are unoccupied, so Propositions \ref{move the big stretchy squares around}, \ref{Leibniz Rule}, and \ref{differentials are well defined as long as we have enough space} and Lemma \ref{differentials on Reut} tell us that the images and kernels of these differentials can be identified.

We say that the $E^{r}_{s,t}\big[SF_{n}(R_{w,h}; i_{0}, \dots, i_{p})\big]$-entry is ``too large'' if for $r'<r$, $E^{r'}_{s,t}\big[SF_{n}(R_{w,h}; i_{0}, \dots, i_{p})\big]$-entry is mapped into by $E^{r'}_{s,t}\big[SF_{n}(R_{w,h}; i_{0}, \dots, i_{p})\big]$ where $s>h-1$.
This domain might be zero, whereas the corresponding term in the $F_{n}(\R^{2}; i_{0}, \dots, i_{p})$ spectral sequence is nonzero.
Note that since $h\ge k+2$, these never lie on the $s+t=k$ diagonal of these spectral sequences.
It follows that these too large entries never map into $E^{r}_{-1, k}\big[SF_{n}(R_{w,h}; i_{0}, \dots, i_{p})\big]$.
That said, it could be the case that these too large entries have too big of an image in $E^{r}_{s-r, t+r-1}\big[SF_{n}(R_{w,h}; i_{0}, \dots, i_{p})\big]$ compared to the corresponding entry in the $F_{n}(\R^{2}; i_{0}, \dots, i_{p})$ augmented Mayer--Vietoris spectral sequence, and, as a result, quotient too much of their target, creating ``too small'' entries.
We claim that this is not a problem as the images from the these too small entries are isomorphic to the images of the corresponding entries in the $F_{n}(\R^{2}; i_{0}, \dots, i_{p})$ augmented Mayer--Vietoris spectral sequence.

Let $r$ be the smallest number such that $E^{r}_{s, t}\big[SF_{n}(R_{w,h}; i_{0}, \dots, i_{p})\big]$ is too small, that is, we have an isomorphism $E^{r-1}_{s, t}\big[SF_{n}(R_{w,h}; i_{0}, \dots, i_{p})\big]\cong E^{r-1}_{s, t}\big[F_{n}(\R^{2}; i_{0}, \dots, i_{p})\big]$, and the former is mapped onto by a too big entry.
Since $E^{r-1}_{s, t}\big[SF_{n}(R_{w,h}; i_{0}, \dots, i_{p})\big]$ and $E^{r-1}_{s, t}\big[F_{n}(\R^{2}; i_{0}, \dots, i_{p})\big]$ are isomorphic, the images and kernels of the $d^{r-1}$-differential on these entries are isomorphic via the above discussion.
Thus the basis for $E^{r}_{s, t}\big[F_{n}(\R^{2}; i_{0}, \dots, i_{p})\big]$ that arises via Proposition \ref{Er page for FnR2}, is a generating set for $E^{r}_{s, t}\big[SF_{n}(R_{w,h}; i_{0}, \dots, i_{p})\big]$, since we are quotienting by the image $E^{r-1}_{s+r-1, t-r+2}\big[SF_{n}(R_{w,h}; i_{0}, \dots, i_{p})\big]$, which contains the image of $E^{r-1}_{s+r-1, t-r+2}\big[F_{n}(\R^{2}; i_{0}, \dots, i_{p})\big]$.
Using Construction \ref{first construction for connectivity} and Propositions \ref{move the big squares around}, \ref{Leibniz Rule}, and \ref{differentials are well defined as long as we have enough space}, we can consider the $d^{r}$-differential on this generating set, and see that its image is the same as the $d^{r}$-differential on $E^{r}_{s, t}\big[F_{n}(\R^{2}; i_{0}, \dots, i_{p})\big]$.
It follows that the kernel of this differential $E^{r}_{s, t}\big[SF_{n}(R_{w,h}; i_{0}, \dots, i_{p})\big]$ is spanned by the basis for the kernel of the corresponding differential from $E^{r}_{s, t}\big[F_{n}(\R^{2}; i_{0}, \dots, i_{p})\big]$.
If $s\le h-r$, this kernel can only be quotiented by something whose image contains the image of the corresponding terms in the $F_{n}(\R^{2}; i_{0}, \dots, i_{p})$ augmented Mayer--Vietoris spectral sequence, so the basis for $E^{r+1}_{s, t}\big[F_{n}(\R^{2}; i_{0}, \dots, i_{p})\big]$ serves as a generating set for $E^{r+1}_{s, t}\big[SF_{n}(R_{w,h}; i_{0}, \dots, i_{p})\big]$ and the process repeats.
If $s> h-r$, then this image can only be quotiented by something that is contained in the image of the corresponding terms in the $F_{n}(\R^{2}; i_{0}, \dots, i_{p})$ augmented Mayer--Vietoris spectral sequence.
It follows that this entry becomes ``too large,'' as we can always consider the images of the chains representing the corresponding terms inside $E^{r'}_{s,t}\big[F_{n}(\R^{2}; i_{0}, \dots, i_{p})\big]$.

Thus, an entry in our augmented Mayer--Vietoris spectral sequence being ``too small'' or ``too large'' is inconsequential in our efforts to determine $E_{-1,k}^{\infty}\big[SF_{n}(R_{w,h}; i_{0}, \dots, i_{p})\big]$, as only isomorphic and ``too small'' entries can map into $E^{r}_{-1,k}\big[SF_{n}(R_{w,h}; i_{0}, \dots, i_{p})\big]$, and these have images that are isomorphic to the images of the corresponding terms in $E^{r}_{-1,k}\big[F_{n}(R_{w,h}; i_{0}, \dots, i_{p})\big]$.
By Proposition \ref{Er page for FnR2}, these images are all free $\Z$-modules, so $E^{1}_{-1,k}\big[SF_{n}(R_{w,h}; i_{0}, \dots, i_{p})]\cong H_{k}\big(SF_{n}(R_{w,h}; i_{0}, \dots, i_{p})\big)$ and $E^{1}_{-1,k}\big[F_{n}(\R^{2}; i_{0}, \dots, i_{p})]\cong H_{k}\big(F_{n}(\R^{2}; i_{0}, \dots, i_{p})\big)$ are isomorphic to their direct sum.
It follows that the inclusion of $SF_{n}(R_{w,h}; i_{0}, \dots, i_{p})$ into $F_{n}(\R^{2}; i_{0}, \dots, i_{p})$ yields the desired isomorphism. 
\end{proof}

All that remains to do to complete the proof of Proposition \ref{super proposition}, hence Theorem \ref{generalized main theorem}, is to show that under the appropriate circumstances
\[
H_{\kappa+1}\big(SF_{n}(R_{w, h};j_{0}, \dots, j_{s};i_{0},\dots, i_{p})\big)\cong H_{\kappa+1}\big(SF_{n}(R_{w, h};j_{0}, \dots, j_{s},i_{0},\dots, i_{p})\big).
\]
We do this in the following lemma, which in many ways is the technical crux of this paper. Here the assumption that $wh-n\ge hk+2$ plays a small but critical role, as it guarantees that for $k<\kappa+1$,
\[
H_{k}\big(SF_{n-p-1}(R_{w-1, h};j_{0}, \dots, j_{s})\big)\cong H_{k}\big(F_{n-p-1}(\R^{2};j_{0}, \dots, j_{s})\big).
\]

\begin{lem}\label{cover j,i}
Fix $\kappa\ge0$ and assume that for all $n$, $w$, $h$, $k$, and $p$ such that if $k\le \kappa$, $\min\{w-1, h\}\ge k+2$, $wh-n\ge \max\big\{(k+1)(k+2), hk+2\big\}$, and $0\le p+1\le h$, then the inclusion of $SF_{n}(R_{w, h};i_{0},\dots, i_{p})$ into $F_{n}(\R^{2};i_{0},\dots, i_{p})$ induces an isomorphism
\[
H_{k}\big(SF_{n}(R_{w, h};i_{0},\dots, i_{p})\big)\cong H_{k}\big(F_{n}(\R^{2};i_{0},\dots, i_{p})\big).
\]
Additionally, fix $\pi\ge0$, and assume that if $\min\{w-1,h\}\ge \kappa+3$, $wh-n\ge\max\big\{(\kappa+2)(\kappa+3), h(\kappa+1)+2\big\}$, and $\pi+1\le p+1\le h$, then the inclusion of 
$SF_{n}(R_{w, h};i_{0},\dots, i_{p})$ into $F_{n}(\R^{2};i_{0},\dots, i_{p})$ 
induces an isomorphism
\[
H_{\kappa+1}\big(SF_{n}(R_{w, h};i_{0},\dots, i_{p})\big)\cong H_{\kappa+1}\big(F_{n}(\R^{2};i_{0},\dots, i_{p})\big).
\]
Then if $n$, $w$, $h$, $p$ and $s$ are such that $\min\{w-1, h\}\ge \kappa+3$, $wh-n\ge\max\big\{(\kappa+2)(\kappa+3), h(\kappa+1)+2\big\}$, and $\pi+1\le s+1+p+1\le h$, there is an isomorphism
\[
H_{\kappa+1}\big(SF_{n}(R_{w, h};j_{0}, \dots, j_{s};i_{0},\dots, i_{p})\big)\cong H_{\kappa+1}\big(SF_{n}(R_{w, h};j_{0}, \dots, j_{s},i_{0},\dots, i_{p})\big).
\]
\end{lem}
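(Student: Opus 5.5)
The plan is to compare $X:=SF_{n}(R_{w,h};j_{0},\dots,j_{s};i_{0},\dots,i_{p})$ with its subspace $Y:=SF_{n}(R_{w,h};j_{0},\dots,j_{s},i_{0},\dots,i_{p})$ through a second augmented Mayer--Vietoris spectral sequence. By Proposition \ref{squares in the two rightmost columns} we may assume the $i_{l}$ are tangent to the right side and the $j_{m}$ lie in the two right-most columns. We then cover $X$ according to the relative vertical position of the $j$-block and the $i$-block in the right-most column. For each shuffle $\sigma\in\Sigma\big((j_{0},\dots,j_{s}),(i_{0},\dots,i_{p})\big)$, let $U_{\sigma}$ be the subspace of configurations in which each $i_{l}$ either lies strictly to the right of the $j$'s or is interleaved with them in the order prescribed by $\sigma$. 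These are subcomplexes after the hyperplane subdivision. The sets $U_{\sigma}$ cover $X$, and the intersections are indexed by collections of shuffles. The shuffle placing all $j$'s above all $i$'s gives the piece containing $Y$.

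The key steps are as follows.

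First, show that each $U_{\sigma}$, and each intersection of these sets, deformation retracts onto $SF_{n}(R_{w,h};\sigma)$ or onto a space where the $j$-squares sit in the second column beside some of the $i$'s. This follows from the straight-line homotopies of Proposition \ref{move squares to right side}. By Proposition \ref{intersection is shuffle} and Proposition \ref{move squares to right side}, each such space is homeomorphic to $SF_{n}(R_{w,h};\sigma')$ for a shuffle $\sigma'$. When some $j$'s are forced into the second column, it is homotopy equivalent to a configuration space in $R_{w-1,h}$ with the $i$'s forgotten.

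Second, compute the $E^{1}$-page. In degrees $t\le\kappa$ every entry is identified, via the first hypothesis together with $wh-n\ge h(\kappa+1)+2$, with the corresponding point configuration homology. For the $(w-1)$-column pieces, the spare space $(w-1)h-(n-p-1)\ge h\kappa+2-\dots$ is exactly what the bound $hk+2$ buys. Hence the rows $t\le\kappa$ of this spectral sequence agree with those of the analogous sequence for $F_{n}(\R^{2};j_{0},\dots,j_{s};i_{0},\dots,i_{p})$. In the point setting, Proposition \ref{in points can move two columns left} shows that this sequence is acyclic away from the $Y$-column. In degree $t=\kappa+1$, the $E^{1}_{0,\kappa+1}$-entries are $H_{\kappa+1}$ of spaces $SF_{n}(R_{w,h};\sigma)$ with $|\sigma|=s+1+p+1\ge\pi+1$. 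The second hypothesis identifies these with point configuration homology.

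Third, identify the differentials. We realize the wheel bases with Construction \ref{first construction for connectivity} and use Proposition \ref{move the big stretchy squares around} together with Proposition \ref{differentials are well defined as long as we have enough space}. This shows that the differentials hitting the $(-1,\kappa+1)$ spot and the row $t=\kappa+1$ coincide with those in the point spectral sequence. In particular, all contributions other than the one from $Y$ cancel, exactly as they do for points. The resulting edge map $H_{\kappa+1}(Y)\to H_{\kappa+1}(X)$ is then an isomorphism.

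I expect the main obstacle to be this third step. The interleaving shuffles require sliding $j$'s past $i$'s, which can fail when the right-most column is nearly full, as happened in the $s+1+p+1=h$ case of Proposition \ref{connected double right configuration space}. The point-configuration cancellation therefore has to be reproduced using only the empty column guaranteed by $wh-n\ge h(\kappa+1)+2$. I would handle this first in the extreme case $s+1+p+1=h$, and then reduce the general case to it by the same ``too large / too small'' bookkeeping used in Proposition \ref{higher terms in spectral sequence}.
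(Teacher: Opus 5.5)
Your cover $U_\sigma$ is essentially the paper's $W_\sigma$, but the way you propose to compute with it has two gaps.

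\emph{First gap.} The claim that $U_\sigma$ deformation retracts onto $SF_{n}(R_{w,h};\sigma)$ is not justified. $U_\sigma$ contains every configuration in which the $j$'s sit in the second column. That includes heights incompatible with $\sigma$, say $j_{0}$ in the top cell while $\sigma$ puts $i_{0}$ above it. The straight-line push of Proposition~\ref{move squares to right side} cannot move such configurations into the last column, and reordering would require moving free squares. So $U_\sigma$ is $SF_{n}(R_{w,h};\sigma)$ glued to the second-column part, and its homotopy type is not controlled. Consequently, your second step cannot read off $\tilde E^{1}_{0,\kappa+1}=\bigoplus_\sigma H_{\kappa+1}(U_\sigma)$, nor identify the entries $\tilde E^{1}_{0,t}$, $t\le\kappa$, with point homology. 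Similarly, an intersection of two or more distinct $U_\sigma$ is never a space $SF_{n}(R_{w,h};\sigma')$. It contains no configuration with the $j$'s in the last column, and all such intersections are one space $Z\simeq SF_{n-p-1}(R_{w-1,h};j_{0},\dots,j_{s})$.

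\emph{Second gap.} Comparing row $\kappa+1$ with the point spectral sequence would require $H_{\kappa+1}(Z)\cong H_{\kappa+1}\big(F_{n-p-1}(\R^{2};j_{0},\dots,j_{s})\big)$. The free room in $R_{w-1,h}$ is only guaranteed to satisfy $(w-1)h-(n-p-1)\ge h\kappa+p+3$. Since $p+1\le h-1$, this is less than $h(\kappa+1)+2$, so neither hypothesis applies. The ``too large/too small'' bookkeeping does not supply this either.

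\emph{What the paper does instead.} The missing idea is that none of these entries needs to be identified.
\begin{itemize}
\item Every intersection of at least two pieces is $Z$, so the nerve is a full simplex.
\item For $t\le\kappa$, the map $H_{t}(Z)\to H_{t}(U_\sigma)$ is injective. Its composite into $H_{t}\big(F_{n}(\R^{2};j_{0},\dots,j_{s};i_{0},\dots,i_{p})\big)\cong H_{t}\big(F_{n-p-1}(\R^{2};j_{0},\dots,j_{s})\big)$ is an isomorphism, by the first hypothesis on $R_{w-1,h}$ (where room $h\kappa+2$ suffices) together with Proposition~\ref{in points can move two columns left}.
\item Hence $\tilde E^{2}_{a,t}=0$ for $a\ge 1$, $t\le\kappa$. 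So $H_{\kappa+1}(X)$ is just the $d^{1}$-image of $\bigoplus_\sigma H_{\kappa+1}(U_\sigma)$.
\end{itemize}
Degree $\kappa+1$ is then handled by hand.
\begin{itemize}
\item Modulo $d^{1}$ of classes supported in $Z$, a class of $U_\sigma$ comes from $SF_{n}(R_{w,h};\sigma)$.
\item By the second hypothesis, such a class is a combination of wheel products, realized as in Construction~\ref{first construction for connectivity}. That construction leaves the column to the left of the $W(j_{l})$ empty.
\item Sliding the $W(j_{l})$ left then gives a class in $Z$ whose $d^{1}$ is $[\alpha_\sigma]-[\alpha_\tau]$, where $\tau$ swaps some $j_{l}$ with the $i_{m}$ just above it. This answers your worry about a full last column and shows $H_{\kappa+1}(Y)$ surjects onto $H_{\kappa+1}(X)$.
\item Injectivity of $H_{\kappa+1}(Y)\to H_{\kappa+1}(X)$ follows by composing with the inclusion into $F_{n}(\R^{2};j_{0},\dots,j_{s};i_{0},\dots,i_{p})$. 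The composite is an isomorphism by the second hypothesis and Proposition~\ref{in points can move two columns left}.
\end{itemize}
At no point is $H_{\kappa+1}(Z)$ or $H_{t}(U_\sigma)$ computed.
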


\begin{proof}
We do this by considering an augmented Mayer--Vietoris spectral sequence distinct from one described in Subsection \ref{our mv ss}.
Recall that the set of shuffles of the ordered set $(j_{0},\dots, j_{s})$ into the ordered set $(i_{0},\dots, i_{p})$ is denoted by $\Sigma\big((j_{0},\dots, j_{s}), (i_{0},\dots, i_{p})\big)$; additionally, recall that Proposition \ref{squares in the two rightmost columns} tells us that in $SF_{n}(R_{w, h};j_{0}, \dots, j_{s};i_{0},\dots, i_{p})$, we may assume that the squares $i_{0}, \dots, i_{p}$ are in the right-most column of $R_{w, h}$, and the squares $j_{0}, \dots, j_{s}$ are in the two right-most columns.
Given $\sigma\in\Sigma\big((j_{0},\dots, j_{s}), (i_{0},\dots, i_{p})\big)$, let $W_{\sigma}$ denote the subspace of configurations in $SF_{n}(R_{w, h};j_{0}, \dots, j_{s};i_{0},\dots, i_{p})$ in which the squares $j_{0}, \dots, j_{s}$ are not in the right-most column of $R_{w, h}$ union the subspace of configurations in which the squares $j_{0}, \dots, j_{s}$ are either sliding into the right-most column of $R_{w, h}$ or are already in the right-most column of $R_{w, h}$ such that the squares $j_{0}, \dots, j_{s}, i_{0},\dots, i_{p}$ are arranged as in the shuffle $\sigma$ when read from top to bottom.
In particular, we can view $W_{\sigma}$ as a union of $SF_{n-p-1}(R_{w-1,h}; j_{0},\dots j_{s})$ and $SF_{n}(R_{w,h}; \sigma)$.
The $W_{\sigma}$ cover $SF_{n}(R_{w, h};j_{0}, \dots, j_{s};i_{0},\dots, i_{p})$, and, by subdividing the cube complex $DF_{n}(R^{*}_{w, h})$, we can ensure that these subspaces are cellular.
This yields an augmented Mayer--Vietoris spectral sequence, which we denote by $\tilde{E}\big[SF_{n}(R_{w, h};j_{0}, \dots, j_{s};i_{0},\dots, i_{p})\big]$. 
We will show that if $0\le a$ and $0\le a+b\le \kappa+1$ or $a=-1$ and $b\le \kappa+1$, then $\tilde{E}^{2}_{a,b}\big[SF_{n}(R_{w, h};j_{0}, \dots, j_{s};i_{0},\dots, i_{p})\big]=0$.
This would imply that 
\[
\tilde{E}^{1}_{-1,\kappa+1}\big[SF_{n}(R_{w, h};j_{0}, \dots, j_{s};i_{0},\dots, i_{p})\big]=H_{\kappa+1}\big(SF_{n}(R_{w, h};j_{0}, \dots, j_{s};i_{0},\dots, i_{p})\big)
\]
is the image of the $d^{1}$-differential from
\[
\tilde{E}^{1}_{0,\kappa+1}\big[SF_{n}(R_{w, h};j_{0}, \dots, j_{s};i_{0},\dots, i_{p})\big]=\bigoplus_{\sigma\in\Sigma\big((j_{0},\dots, j_{s}), (i_{0},\dots, i_{p})\big)} H_{\kappa+1}(W_{\sigma}),
\]
as every other entry that maps into the $\tilde{E}^{r}_{-1,\kappa+1}\big[SF_{n}(R_{w, h};j_{0}, \dots, j_{s};i_{0},\dots, i_{p})\big]$-entry is $0$.
Proving that this image is isomorphic to $H_{\kappa+1}\big(SF_{n}(R_{w, h},j_{0}, \dots, j_{s},i_{0},\dots, i_{p})\big)$ would complete the proof of the lemma.

Given two shuffles $\sigma, \tau\in \Sigma\big((j_{0},\dots, j_{s}), (i_{0},\dots, i_{p})\big)$, note that
\[
W_{\sigma}\cap W_{\tau}\simeq SF_{n-p-1}(R_{w-1, h}; j_{0}, \dots, j_{s}),
\]
as this intersection consists of all configurations in which the squares $j_{0},\dots, j_{s}$ are in the second right-most column of $R_{w,h}$ and nothing else.
Therefore, for $a\ge 1$ and all $b$,
\begin{align*}
\tilde{E}^{1}_{a,b}\big[SF_{n}(R_{w, h};j_{0}, \dots, j_{s};i_{0},\dots, i_{p})\big]&=\bigoplus_{\substack{I\subset \Sigma\big((j_{0},\dots, j_{s}), (i_{0},\dots, i_{p})\big)\\|I|=a+1}}H_{b}\left(\bigcap_{i\in I}W_{\sigma_{i}}\right)\\
&\cong\bigoplus_{\substack{I\subset \Sigma\big((j_{0},\dots, j_{s}), (i_{0},\dots, i_{p})\big)\\|I|=a+1}} H_{b}\big(SF_{n-p-1}(R_{w, h};j_{0},\dots, j_{s})\big).
\end{align*}
Furthermore, for $a=0$ and $b\le \kappa$, we have that
\begin{align*}
\tilde{E}^{1}_{0,b}\big[SF_{n}(R_{w, h};j_{0}, \dots, j_{s};i_{0},\dots, i_{p})\big]&=\bigoplus_{\sigma\in\Sigma\big((j_{0},\dots, j_{s}), (i_{0},\dots, i_{p})\big)}H_{b}(W_{\sigma_{i}})\\
&\supseteq\bigoplus_{\sigma\in\Sigma\big((j_{0},\dots, j_{s}), (i_{0},\dots, i_{p})\big)}H_{b}\big(SF_{n-p-1}(R_{w-1,h};j_{0}, \dots, j_{s})\big)
\end{align*}
as the inclusion of $SF_{n-p-1}(R_{w, h}; j_{0},\dots, j_{s})$ into $W_{\sigma}$ induces an injection on homology.
This follows from the assumption that $wh-n\ge h(\kappa+1)+2$, so the inclusion of $SF_{n-p-1}(R_{w,h};j_{0}, \dots, j_{s})$ into $F_{n-p-1}(\R^{2};j_{0}, \dots, j_{s})$ induces an isomorphism
\[
H_{b}\big(SF_{n-p-1}(R_{w,h};j_{0}, \dots, j_{s})\big)\cong H_{b}\big(F_{n-p-1}(\R^{2};j_{0}, \dots, j_{s})\big),
\]
and from Proposition \ref{in points can move two columns left}, which proves that this holds in the $F_{n}(\R^{2}; j_{0}, \dots, j_{s}; i_{0},\dots, i_{p})$ setting, with the inclusion being an equality.
It follows from the definition that for $b\le \kappa$, the $d^{1}$-differential on the $\tilde{E}^{1}_{a,b}\big[SF_{n}(R_{w, h};j_{0}, \dots, j_{s};i_{0},\dots, i_{p})\big]$-entries of this augmented Mayer--Vietoris spectral sequence is equivalent to the boundary map on the nerve of the cover $\{W_{\sigma}\}$ for $a\ge 1$ and $b\le \kappa$.
Since this nerve is homotopy equivalent to an $\big|\Sigma\big((j_{0},\dots, j_{s}), (i_{0},\dots, i_{p})\big)\big|$-simplex, we see that 
\[
\tilde{E}^{2}_{a,b}\big[SF_{n}(R_{w, h};j_{0}, \dots, j_{s};i_{0},\dots, i_{p})\big]=0
\]
for all $a\le 1$ and all $b\le \kappa$.
Since augmented Mayer--Vietoris spectral sequences collapse to $0$, this holds for $a=0$ and $a=-1$ for all $b\le \kappa$.

Therefore, the only $d^{r}$-differential into the $\tilde{E}^{r}_{-1,\kappa+1}\big[SF_{n}(R_{w, h};j_{0}, \dots, j_{s};i_{0},\dots, i_{p})\big]$-entry with potentially non-zero image is
\[
d^{1}:\tilde{E}^{1}_{0,\kappa+1}\big[SF_{n}(R_{w, h};j_{0}, \dots, j_{s};i_{0},\dots, i_{p})\big] \to \tilde{E}^{1}_{-1,\kappa+1}\big[SF_{n}(R_{w, h};j_{0}, \dots, j_{s};i_{0},\dots, i_{p})\big].
\]
Since
\[
\tilde{E}^{r}_{0,b}\big[SF_{n}(R_{w, h};j_{0}, \dots, j_{s};i_{0},\dots, i_{p})\big]=\bigoplus_{\sigma\in\Sigma\big((j_{0},\dots, j_{s}), (i_{0},\dots, i_{p})\big)}H_{b}(W_{\sigma_{i}}),
\]
we study the $H_{\kappa+1}(W_{\sigma_{i}})$.

Given a class $[\alpha_{\sigma}]$ in $H_{\kappa+1}(W_{\sigma})$, there are two distinct possibilities: In every cellular $(\kappa+1)$-chain representing $[\alpha_{\sigma}]$, the squares $j_{0}, \dots, j_{s}$ are never in a position in which they can be slid into the right-most column of $R_{w, h}$ so that $j_{0}, \dots, j_{s}, i_{0}, \dots, i_{p}$ are arranged in the shuffle $\sigma$; alternatively, there is some $(\kappa+1)$-chain representing $[\alpha_{\sigma}]$ in which we can slide the squares $j_{0}, \dots, j_{s}$ to be in the right-most column of $R_{w, h}$ so that the squares $j_{0}, \dots, j_{s};i_{0},\dots, i_{p}$ are arranged as in the shuffle $\sigma$.
In the former case, we can interpret $[\alpha_{\sigma}]$ as a class $[\alpha]$ of $H_{\kappa+1}\big(SF_{n-p-1}(R_{w, h}; j_{0}, \dots, j_{s})\big)\subset E^{1}_{1, \kappa+1}\big[SF_{n}(R_{w, h};j_{0}, \dots, j_{s};i_{0},\dots, i_{p})\big]$.
Since $s+1+p+1\le h+1$, there must be some shuffle $\tau$ for which there is some element of some $(\kappa+1)$-chain representing $[\alpha]$, such that we can slide the squares $j_{0}, \dots, j_{s}$ into the right-most column of $R_{w, h}$ so that the squares $j_{0}, \dots, j_{s}, i_{0}, \dots, i_{p}$ are arranged in the shuffle $\tau$.
We write $[\alpha_{\tau}]$ for the corresponding class in $H_{\kappa+1}(W_{\tau})$.
Since $d^{1}\big([\alpha]\big)=[\alpha_{\sigma}]-[\alpha_{\tau}]$, viewing $[\alpha]$ as a class in $H_{\kappa+1}(W_{\sigma}\cap W_{\tau})$, we may assume without loss of generality there is some cellular $(\kappa+1)$-chain representing $[\alpha_{\sigma}]$ such there is a configuration in some cell in this chain in which we may slide the $j_{0}, \dots, j_{s}$ into the right-most column of $R_{w, h}$ so that the squares $j_{0}, \dots, j_{s}, i_{0}, \dots, i_{p}$ are arranged as in the shuffle $\sigma$.

Given such an $[\alpha_{\sigma}]\in H_{\kappa+1}(W_{\sigma})$, we know that there is some cell in a cellular $(\kappa+1)$-chain representing $[\alpha_{\sigma}]$ in which we may push the squares $j_{0}, \dots, j_{s}$ into the right-most column of $R_{w, h}$ so that the squares $j_{0}, \dots, j_{s}, i_{0}, \dots, i_{p}$ are arranged as in the shuffle $\sigma$.
By sliding these squares into this column, at this point, and keeping them vertically aligned with the squares $i_{0}, \dots, i_{p}$ while the other $n-s-1-p-1$ squares move about $R_{w, h}$ as in the representative for $[\alpha_{\sigma}]$ gives us a homologous cycle. 
This follows from the fact that none of these other squares can be to the right of the $j_{0}, \dots, j_{s}$ and if they are in the same column as $i_{0}, \dots, i_{p}$, then the squares $j_{0}, \dots, j_{s}, i_{0}, \dots, i_{p}$ are arranged as in the shuffle $\sigma$.
It follows that $[\alpha_{\sigma}]$ can be viewed as a class in $H_{\kappa+1}\big(SF_{n}(R_{w, h}; \sigma)\big)$.
Thus the image of
\[
d^{1}:\bigoplus_{\sigma\in\Sigma\big((j_{0},\dots, j_{s}), (i_{0},\dots, i_{p})\big)}H_{\kappa+1}(W_{\sigma})\to H_{\kappa+1}\big(SF_{n}(R_{w, h}; j_{0}, \dots, j_{s}; i_{0}, \dots, i_{p})\big)
\]
is the same as the image 
\[
d^{1}:\bigoplus_{\sigma\in\Sigma\big((j_{0},\dots, j_{s}), (i_{0},\dots, i_{p})\big)}H_{\kappa+1}(SF_{n}(R_{w, h}; \sigma)\big)\to H_{b}\big(SF_{n}(R_{w, h}; j_{0}, \dots, j_{s}; i_{0}, \dots, i_{p})\big),
\]
where this map is induced by the inclusion $SF_{n}(R_{w, h}; \sigma)\subset W_{\sigma}$.
As such it remains show that the coimage of 
\begin{equation}\label{avoid repetitions}
d^{1}:\bigoplus_{\substack{I\subset \Sigma\big((j_{0},\dots, j_{s}), (i_{0},\dots, i_{p})\big)\\|I|=2}}H_{\kappa+1}\left(\bigcap_{i\in I}W_{\sigma_{i}}\right)\to \bigoplus_{\sigma\in\Sigma\big((j_{0},\dots, j_{s}), (i_{0},\dots, i_{p})\big)}H_{\kappa+1}\big(SF_{n}(R_{w, h}; \sigma)\big)
\end{equation}
is isomorphic to $H_{\kappa+1}\big(SF_{n}(R_{w, h}; j_{0}, \dots, j_{s}, i_{0}, \dots, i_{p})\big)$.

To determine the coimage of this map, note that by assumption
\[
H_{\kappa+1}\big(SF_{n}(R_{w, h}; \sigma)\big)\cong H_{\kappa+1}\big(F_{n}(\R^{2}; \sigma)\big).
\]
As such, every class $[\alpha_{\sigma}]$ in $H_{\kappa+1}\big(SF_{n}(R_{w, h}; \sigma)\big)$ can be represented by a product of wheels.
Viewing the wheels as ``big squares,'' we may use Construction \ref{first construction for connectivity} to embed these big squares in $R_{w, h}$ so there is an unoccupied unit square immediately to the left of the wheels/squares $W(j_{0}),\dots, W(j_{s})$.
Moreover, that construction allows us to assume that the square immediately above the square to the left of the first $W(j_{l})$ that has an $W(i_{m})$ above it, is also unoccupied.
As such, sliding the $W(j_{0}),\dots, W(j_{s})$ to left, yields a class $[\alpha]\in H_{\kappa+1}(W_{\sigma}\cap W_{\tau})$, where $\tau$ results from swapping $j_{l}$ and $i_{m}$ in the shuffle $\sigma$.
It follows that $d^{1}\big([\alpha]\big)=[\alpha_{\sigma}]-[\alpha_{\tau}]$, where $[\alpha_{\tau}]$ is the corresponding class in $H_{\kappa+1}\big(SF_{n}(R_{w, h}; \tau)\big)$.
Therefore, the coimage of (\ref{avoid repetitions})
is isomorphic to a quotient of $H_{\kappa+1}\big(SF_{n}(R_{w, h}; j_{0}, \dots, j_{s}, i_{0}, \dots, i_{p})\big)$.

The image of a class $[a_{\sigma}]$ in $H_{\kappa+1}\big(SF_{n}(R_{w, h}; j_{0}, \dots, j_{s}, i_{0}, \dots, i_{p})\big)$ under the $d^{1}$-differential cannot be trivial in $H_{\kappa+1}\big(SF_{n}(R_{w, h}; j_{0}, \dots, j_{s}; i_{0}, \dots, i_{p})\big)$.
This follows from the fact that this would imply that the corresponding class in $H_{\kappa+1}\big(F_{n}(\R^{2}; j_{0}, \dots, j_{s}, i_{0}, \dots, i_{p})\big)$ is trivial in $H_{\kappa+1}\big(F_{n}(\R^{2}; j_{0}, \dots, j_{s}; i_{0}, \dots, i_{p})\big)$ under the $d^{1}$-differential of the corresponding spectral sequence for $F_{n}(\R^{2}; j_{0}, \dots, j_{s}; i_{0}, \dots, i_{p})$, a contradiction to the fact that in this case the $d^{1}$-differential restricted to $H_{\kappa+1}\big(F_{n}(\R^{2}; j_{0}, \dots, j_{s}, i_{0}, \dots, i_{p})\big)$ is an isomorphism induced by the homotopy equivalence 
\[
F_{n}(\R^{2}; j_{0}, \dots, j_{s}; i_{0}, \dots, i_{p})\simeq F_{n}(\R^{2}; j_{0}, \dots, j_{s}, i_{0}, \dots, i_{p})
\]
of Proposition \ref{in points can move two columns left}.
Therefore
\[
H_{\kappa+1}\big(SF_{n}(R_{w, h}; j_{0}, \dots, j_{s}; i_{0}, \dots, i_{p})\big)\cong H_{\kappa+1}\big(SF_{n}(R_{w, h}; j_{0}, \dots, j_{s}, i_{0}, \dots, i_{p})\big),
\]
completing the proof.
\end{proof}

Having completed our proof of Theorem \ref{space homological stability}, we turn to removing the constraint that $wh-n\ge hk+2$ in the case $k=1$.
Like Lemma \ref{cover j,i}, this will involve trying to compute various $H_{k}\big(SF_{n}(R_{w,h}; j_{0}, \dots,j_{s}; i_{0},\dots, i_{p})\big)$ via augmented Mayer--Vietoris spectral sequences, but unlike that lemma we will see that these homology groups might not be isomorphic to the corresponding $H_{k}\big(F_{n}(\R^{2}; j_{0}, \dots,j_{s}; i_{0},\dots, i_{p})\big)$.
Instead, using a different cover of $SF_{n}(R_{w,h}; j_{0}, \dots,j_{s}; i_{0},\dots, i_{p})$, we find a generating set for these homology groups and determine how these elements behave under the $d^{1}$- and $d^{2}$-differentials of the $E\big[SF_{n}(R_{w,h}; i_{0},\dots, i_{p})\big]$ augmented Mayer--Vietoris spectral sequence.

\section{A Tighter Bound for $k=1$}\label{tighter bound section}

In this section, we prove that if $k=1$, then we can sharpen the bounds of Theorem \ref{space homological stability}, removing the constraint that $wh-n\ge h+2$, that is, we prove:
\begin{T2}
  \thmtextone
\end{T2} 

As we did in our proof of Theorem \ref{space homological stability}, we instead prove a stronger theorem, which yields Theorem \ref{almost sharp homological stability for k=1} as a corollary.
Our proof of this stronger theorem is similar to that of our proof of Theorem \ref{generalized main theorem}, being a series of intermediate propositions that establish the steps of a doubly inductive argument.

\begin{thm}\label{k 1 bound p}
Let $n, w, h$, and $p$ be such that either
\begin{enumerate}
    \item $h\ge w=3$, $wh-n\ge 6$, and $p=-1$, or
    \item $\min\{w-1,h\}\ge 3$, $wh-n\ge 6$, and $0\le p+1\le h$,
\end{enumerate}
then, the inclusion of $SF_{n}(R_{w,h};i_{0},\dots, i_{p})$ into $F_{n}(\R^{2}; i_{0}, \dots, i_{p})$ induces an isomorphism
\[
H_{1}\big(SF_{n}(R_{w,h};i_{0},\dots, i_{p})\big)\cong H_{1}\big(F_{n}(\R^{2}; i_{0}, \dots, i_{p})\big).
\]
\end{thm}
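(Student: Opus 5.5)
We will mirror the proof of Theorem \ref{generalized main theorem} with $\kappa=0$, so that the target degree is $1$, while removing every use of the bound $wh-n\ge h+2$. The base input in degree $0$ is already available without that bound: Propositions \ref{n le wh-2 is connected}, \ref{connected single right configuration space} and \ref{connected double right configuration space} give path-connectedness of $SF_{n}(R_{w,h};i_{0},\dots,i_{p})$ and of $SF_{n}(R_{w,h};j_{0},\dots,j_{s};i_{0},\dots,i_{p})$ under $wh-n\ge 6$, or under $wh-n\ge\max\{6,h-p-1\}$ in the double case. We again run a double induction on $w$.

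First we fix $h=3$ and induct on $w$. Then we turn the rectangle on its side and induct on $w$ for general $h\ge 3$. For $w=3$ and $n\le 3$, Proposition \ref{homotopy equivalence} applies. For the column-full case $p+1=h$, Proposition \ref{beginning of inductive step on w} reduces to $SF_{n-h}(R_{w-1,h})$, which lies in the range by the inductive hypothesis on $w$, since the empty space is unchanged. We then induct downward on $p$ and compare the spectral sequences $E\big[SF_{n}(R_{w,h};i_{0},\dots,i_{p})\big]$ and $E\big[F_{n}(\R^{2};i_{0},\dots,i_{p})\big]$. Only the entries that can affect $E_{-1,1}$ matter. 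These are $E^{1}_{0,1}$, whose $d^{1}$ lands in $E^{1}_{-1,1}$; the row $t=0$, which is $E^{1}_{s,0}$ for $0\le s<h$; and the $d^{2}$ from $E^{2}_{1,0}$ into $E^{2}_{-1,1}$. By Proposition \ref{connected double right configuration space}, the row $t=0$ agrees with the point case. Lemma \ref{differentials on Reut}, applied to $[j_{0},j_{1}]$, computes the relevant $d^{2}$: it produces the wheel $W(j_{0},j_{1})$ in a $2\times 2$ box at the top right. Proposition \ref{free aj in k=1}, applied to a single $2\times 2$ big square, shows that all placements of this wheel are homologous. This requires $(k+1)(k+2)=6$ empty squares, which is exactly our hypothesis.

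The main obstacle is the $E^{1}_{0,1}$ entry, namely $H_{1}\big(SF_{n}(R_{w,h};j_{0};i_{0},\dots,i_{p})\big)$. Without $wh-n\ge h+2$ we cannot invoke Lemma \ref{cover j,i}, and this group need not agree with the point version. Here we would use the cover $\{W_{\sigma}\}$ of Lemma \ref{cover j,i}, with $s=0$ so that the shuffles are just the insertion positions of $j_{0}$ among the $i$'s. In that augmented spectral sequence, the problem moves to the terms $\tilde E^{p+1}_{p,1-p}$ for $p=0,1$. For $p=1$ this amounts to the connectivity of the pairwise intersections $SF_{n-p-1}(R_{w-1,h};j_{0})$ and of $SF_{n}(R_{w,h};\sigma)$, which follows from Propositions \ref{connected single right configuration space} and \ref{connected double right configuration space}.

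From this we extract a generating set for $H_{1}\big(SF_{n}(R_{w,h};j_{0};i_{0},\dots,i_{p})\big)$. It should consist of classes from $H_{1}\big(SF_{n}(R_{w,h};\sigma)\big)$, which are in range by the downward induction on $p$, together with loops in which $j_{0}$ leaves the right column, travels, and re-enters at another shuffle position. Rather than proving that this group is isomorphic to its point analogue, we would show two things. First, the images of these generators under $d^{1}$ to $E_{-1,1}$ agree with the images of the corresponding point classes. Second, the extra loops are either killed or identified with images of elements of $E^{1}_{1,0}$, where the row-$0$ entries already match.

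This comparison of $d^{1}$ and $d^{2}$ images is where we expect the difficulty. We would handle it by realizing every generator explicitly through Construction \ref{first construction for connectivity}, using $2\times 2$ big squares, and then using the slack of $6$ empty squares to carry out the necessary slides. Finally, as at the end of the proof of Proposition \ref{higher terms in spectral sequence}, freeness of the $F_{n}(\R^{2})$ entries gives the isomorphism on $E^{\infty}_{-1,1}$, and hence the isomorphism on $H_{1}$.
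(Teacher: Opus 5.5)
Your outline follows the paper's architecture: double induction on $w$, the column-full base case via Proposition \ref{beginning of inductive step on w}, downward induction on $p$, only $E^{1}_{0,1}$ and $E^{2}_{1,0}$ feeding $E_{-1,1}$, and an auxiliary cover of $SF_{n}(R_{w,h};j_{0};i_{0},\dots,i_{p})$ indexed by where $j_{0}$ enters among the $i$'s. However, at both points where you appeal to earlier connectivity results, the hypothesis $wh-n\ge 6$ is too weak, and the argument that must replace them is missing.

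\textbf{First gap: the row $t=0$.} This row does not agree with the point case in general. Proposition \ref{connected double right configuration space} needs $wh-n\ge\max\{6,h-p-1\}$, and you drop the second term. Suppose $n>(w-1)h+p+1$, which is compatible with $wh-n\ge 6$ as soon as $h\ge p+8$. Then the free squares cannot all fit in $R_{w-1,h}$, so $j_{0},\dots,j_{s}$ are trapped in the right-most column. Consequently $SF_{n}(R_{w,h};j_{0},\dots,j_{s};i_{0},\dots,i_{p})$ has one component per shuffle, and $E^{1}_{s,0}$ and $E^{1}_{0,1}$ are strictly bigger than for $F_{n}(\R^{2};i_{0},\dots,i_{p})$. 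The paper treats this regime separately in Proposition \ref{inductive step on p for k=1 and n large}. It proves $E^{2}_{1,0}=0$ by showing that the nerve of the cover by the sets $SF_{n}(R_{w,h};i_{0},\dots,i_{l},j,i_{l+1},\dots,i_{p})$ has trivial $H_{1}$: loops of length $2$ and $3$ bound using a third free square $j_{2}$, and longer loops can be shortened. It then identifies the $d^{1}$-images of the $p+2$ summands of each $H_{1}\big(SF_{n}(R_{w,h};j_{0};i_{0},\dots,i_{p})\big)$ using Proposition \ref{free aj in k=1}.

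\textbf{Second gap: the auxiliary spectral sequence.} You claim the intersections $SF_{n-p-1}(R_{w-1,h};j_{0})$ are connected by Proposition \ref{connected single right configuration space}. That proposition requires $(w-1)h-(n-p-1)\ge 2$, i.e.\ $wh-n\ge h-p+1$. This is an $h$-dependent bound of the same kind as the hypothesis $wh-n\ge h(\kappa+1)+2$ of Lemma \ref{cover j,i}, which this theorem exists to remove. Without it these intersections are disconnected; when $R_{w-1,h}$ is full they have factorially many components. Then the auxiliary $E^{2}_{1,0}$ is large, and $H_{1}\big(SF_{n}(R_{w,h};j_{0};i_{0},\dots,i_{p})\big)$ genuinely differs from its point analogue. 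Your sentence that the extra loops are ``either killed or identified'' is where the proof must happen, and you give no mechanism.

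The paper's mechanism generates that auxiliary $E^{2}_{1,0}$ by two kinds of classes:
\begin{enumerate}
\item[(a)] differences of base configurations in which $j_{0}$ sits in adjacent rows of the second column;
\item[(b)] differences in which two adjacent free squares are transposed (adjacent transpositions generate the symmetric group).
\end{enumerate}
Apply $d'^{2}$ and then free $j_{0}$ by $d^{1}$. A class of type (a) involves only three moving squares. Since $n\le wh-6$, they can be isolated in a $3\times 3$ box, where the loop is null-homotopic. A class of type (b) becomes $W(a_{1},a_{2})$ times singleton wheels. Proposition \ref{free aj in k=1} (one $2\times 2$ big square, six empty grid-squares) shows this is homologous to the image of a class already coming from some $H_{1}\big(SF_{n}(R_{w,h};\sigma)\big)$. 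This is what makes the $d^{1}$-image in $E^{1}_{-1,1}$ agree with the point case even though $E^{1}_{0,1}$ itself does not.

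A minor point: for $h=3$ you need no induction at all. There $hk+2=5<6$, so Theorem \ref{generalized main theorem} applies directly.
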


To recover Theorem \ref{almost sharp homological stability for k=1} from this theorem, it suffices to only consider $p=-1$.

\begin{proof}
Our proof is doubly inductive.
After proving that the theorem holds for $h=3$ and all $w$, we flip the rectangle on its side, while holding $h\ge 3$ fixed, and induct on $w$.
To complete the inductive step on $w$, we induct downward on $p$.

Fixing $h=3$, the theorem holds for $w=3$, $3h-n\ge6$, and $p=-1$, and for all $w\ge 4$, $wh-n\ge6$, and $0\le p+1\le 3$.
This follows directly from Theorem \ref{generalized main theorem}, as $6>3+2$.

Next, we flip our rectangle on its side, and hold $h\ge 3$ fixed while inducting on $w$.
We begin by noting that the $h=3$ case proves that if $h\ge w=3$ and $wh-n\ge 6$, then the inclusion of $SF_{n}(R_{3, h})\simeq SF_{n}(R_{h, 3})$ into $F_{n}(\R^{2})$ induces isomorphisms
\[
H_{1}\big(SF_{n}(R_{3, h})\big)\cong H_{1}\big(SF_{n}(R_{h, 3})\big)\cong H_{1}\big(F_{n}(\R^{2})\big).
\]

To see that the theorem holds for $w=4$, $4h-n\ge 6$, and $0\le p+1\le h$, note that by Proposition \ref{beginning of inductive step on w} the theorem holds for $w=4$, $p+1=h$, and $4h-n\ge 6$. 
Next, assuming that the theorem holds for $p>\pi$, Proposition \ref{inductive step on p for k=1} tells us that it holds for $p=\pi$, completing the inductive step on $p$ for $w=4$, completing the base case of our inductive argument on $w$.

Finally, assuming that there is some $\omega\ge 4$, such that the theorem holds for $w$ such that $\omega\ge w\ge 4$, $wh-n\ge 6$, and $0\le p+1\le h$, we wish to show that it holds for $w=\omega+1$.
Applying Proposition \ref{beginning of inductive step on w}, we see that it holds for $w=\omega+1$ and $p+1=h$.
Inducting downwards on $p$ via Proposition \ref{inductive step on p for k=1} tells us that it holds for all $0\le p+1\le h$.
This completes the inductive step on $w$ and, thus, the proof.
\end{proof}

All that remains is to prove Proposition \ref{inductive step on p for k=1}, the sharper $k=1$ analogue of Proposition \ref{super proposition}, which completes the inductive step of our downward induction on $p$.

\begin{prop}\label{inductive step on p for k=1}
Fix $\pi$, and let $n$, $w$, and $h$ be such that $\min\{w-1,h\}\ge 3$ and $wh-n\ge 6$.
If the inclusion of $SF_{n}(R_{w,h}; i_{0},\dots, i_{\pi+1})$ into $F_{n}(\R^{2}; i_{0},\dots, i_{\pi+1})$ induces an isomorphism 
\[
H_{1}\big(SF_{n}(R_{w,h}; i_{0},\dots, i_{\pi+1})\big)\cong H_{1}\big(F_{n}(\R^{2}; i_{0},\dots, i_{\pi+1})\big),
\]
then the inclusion of $SF_{n}(R_{w,h}; i_{0},\dots, i_{\pi})$ into $F_{n}(\R^{2}; i_{0},\dots, i_{\pi})$ induces an isomorphism 
\[
H_{1}\big(SF_{n}(R_{w,h}; i_{0},\dots, i_{\pi})\big)\cong H_{1}\big(F_{n}(\R^{2}; i_{0},\dots, i_{\pi})\big).
\]
\end{prop}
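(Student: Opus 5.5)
The plan is to compare the augmented Mayer--Vietoris spectral sequences $E\big[SF_{n}(R_{w,h}; i_{0},\dots, i_{\pi})\big]$ and $E\big[F_{n}(\R^{2}; i_{0},\dots, i_{\pi})\big]$, built from the covers by the sets $SF_{n}(R_{w,h}; j; i_{0},\dots, i_{\pi})$ and $F_{n}(\R^{2}; j; i_{0},\dots, i_{\pi})$, exactly as in the proof of Proposition \ref{super proposition}. Since $k=1$, only three kinds of entries can contribute to $E^{\infty}_{-1,1}$:
\begin{itemize}
\item the $E^{1}_{0,1}$-entries, through $d^{1}$;
\item the $E^{2}_{1,0}$-entries, through $d^{2}$;
\item the $E^{1}_{s,0}$-entries for $s\ge 0$, through the $d^{1}$-differentials among them.
\end{itemize}
The $H_{0}$-entries are handled directly by Proposition \ref{connected double right configuration space}. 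That proposition gives path-connectedness of $SF_{n}(R_{w,h}; j_{0},\dots, j_{s}; i_{0},\dots, i_{\pi})$ for $s\le 2$ under $wh-n\ge 6$ (and $wh-n\ge h-\pi-1$, which I will check follows since the $i$'s occupy the right column). So the bottom row agrees with the point case for $s\le 2$, which is all that matters for total degree up to $2$. The $d^{2}$-differential on $\mathcal{T}^{2}_{2}$, i.e.\ $[j_{0},j_{1}]\mapsto W(j_{0},j_{1})$, is computed by Proposition \ref{differentials are well defined as long as we have enough space} and Lemma \ref{differentials on Reut}. This needs a free $2\times 2$ block at the top right, available once the $i$'s are pushed down, using $wh-n\ge 6$ and the big-square connectivity of Proposition \ref{free aj in k=1} with $k=1$.

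The main new work is the $E^{1}_{0,1}$-entry, namely $H_{1}\big(SF_{n}(R_{w,h}; j; i_{0},\dots, i_{\pi})\big)$. Without the hypothesis $wh-n\ge h+2$, Lemma \ref{cover j,i} is unavailable, and this group need not be isomorphic to the point version. Instead I would cover this space by the sets $W_{\sigma}$ as in the proof of Lemma \ref{cover j,i}, where $\sigma$ ranges over the shuffles of $j$ into $(i_{0},\dots, i_{\pi})$, i.e.\ the positions of $j$. This gives a spectral sequence $\tilde{E}$ whose entries of total degree $\le 1$ are:
\begin{itemize}
\item $\tilde{E}^{1}_{0,1}=\bigoplus_{\sigma} H_{1}(W_{\sigma})$;
\item $\tilde{E}^{1}_{a,0}$, given by $H_{0}$ of the intersections $SF_{n-\pi-1}(R_{w-1,h}; j)$, which is $\Z$ by Proposition \ref{connected single right configuration space} provided $(w-1)h-(n-\pi-1)\ge 2$;
\item $\tilde{E}^{1}_{a,1}$ for $a\ge 1$, given by $H_{1}\big(SF_{n-\pi-1}(R_{w-1,h}; j)\big)$.
\end{itemize}
The last group is \emph{not} assumed to agree with the point case, so these terms can contribute through the $\tilde{E}^{2}_{1,1}$-entry. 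I would not try to compute $H_{1}$ of this space exactly. Instead I would extract a generating set from the spectral sequence. It consists of:
\begin{itemize}
\item classes coming from $H_{1}\big(SF_{n}(R_{w,h}; \sigma)\big)$, which equals the point homology by the inductive hypothesis on $\pi+1$ applied to the shuffled word;
\item classes from $\tilde{E}^{2}_{1,0}$, namely loops in which $j$ passes between two positions in the column. These come from the nerve of the cover, i.e.\ a simplex after augmentation, so they contribute nothing new once the bottom row is connected.
\end{itemize}
Any exotic classes would therefore be those supported in $R_{w-1,h}$, together with $j$ leaving the right column.

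The final step, which I expect to be the main obstacle, is to show that these possibly extra classes in $E^{1}_{0,1}$ die or map correctly under $d^{1}$ into $E^{1}_{-1,1}$. Concretely, I must show two things:
\begin{itemize}
\item the image of $d^{1}$ from $\bigoplus_{j} H_{1}\big(SF_{n}(R_{w,h}; j; i_{0},\dots, i_{\pi})\big)$ equals the image of its sub-sum generated by wheel products (one $W(a,b)$ as a $2\times 2$ big square, all else unit squares);
\item the relations imposed by $d^{1}$ from $E^{1}_{1,1}$ match those in the point case.
\end{itemize}
My approach is to use Proposition \ref{free aj in k=1} with $k=1$: any two placements of a single $2\times 2$ big square among unit squares with $\pi+1$ fixed right-most squares are connected, given $wh-n\ge 6$. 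This shows every generator, after freeing $j$, is homologous to the standard Construction \ref{first construction for connectivity} configuration. Then I would argue, as in Proposition \ref{higher terms in spectral sequence}, that ``too large'' entries never reach the $s+t=1$ diagonal, since $h\ge 3$. The comparison of $E^{\infty}_{-1,1}$ then forces the inclusion to induce the isomorphism on $H_{1}$, with freeness from Proposition \ref{Er page for FnR2}. The delicate point is verifying that the exotic $H_{1}$ classes of $SF_{n-\pi-1}(R_{w-1,h}; j)$ with only $\ge 6-h$ free cells really become null or wheel-expressible once $j$ may re-enter the right column. I would attack it using the extra column freed by sliding $j$ right, which gives $(wh-n)+$(cells below $i_{\pi}$) of room.
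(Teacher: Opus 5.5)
There is a genuine gap, and it lies exactly where the $k=1$ sharpening has to be earned. Your treatment of the bottom row fails in general. Proposition \ref{connected double right configuration space} needs $s+1+\pi+1\le h$ and $wh-n\ge h-\pi-1$. The latter is equivalent to $n\le (w-1)h+\pi+1$, and it does not follow from $wh-n\ge 6$ (e.g.\ $w=4$, $h=10$, $\pi=0$, $n=34$). When $n>(w-1)h+\pi+1$, the free squares do not fit in $R_{w-1,h}$. So $j_{0},\dots,j_{s}$ are forced into the right-most column, and $SF_{n}(R_{w,h};j_{0},\dots,j_{s};i_{0},\dots,i_{\pi})$ has $\binom{s+\pi+2}{\pi+1}$ components, whereas the point analogue is connected. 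Thus $E^{1}_{s,0}$ is strictly larger than in the point case, and you need an independent proof that $E^{2}_{1,0}\big[SF_{n}(R_{w,h};i_{0},\dots,i_{\pi})\big]=0$. The paper obtains this by showing that the nerve complex of these components has vanishing $H_{1}$: loops of length $2$ and $3$ bound explicit $2$-chains, and longer loops are shortened using a spare free label $j_{2}$. Nothing in your proposal covers this regime.

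In the complementary regime $n\le (w-1)h+\pi+1$, your inner spectral sequence is only under control when $(w-1)h-(n-\pi-1)\ge 2$. That is essentially the hypothesis of Lemma \ref{cover j,i}, which this section is meant to remove. By Remark \ref{not really that much extra}, the real content is at $n=(w-1)h+\pi$ and $n=(w-1)h+\pi+1$. There the overlaps (with $j$ in the second column and $R_{w-1,h}$ nearly full) can be disconnected, so the inner $E^{2}_{1,0}$-entry is nonzero and ``a simplex after augmentation'' does not dispose of it. The paper's argument has two ingredients you are missing.

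First, it uses the cover $\{V_{l}\}_{l=-1}^{\pi}$, where $V_{l}$ contains only those second-column configurations from which $j_{0}$ can be slid into the slot between $i_{l}$ and $i_{l+1}$. Hence $V_{l}$ deformation retracts onto $SF_{n}(R_{w,h};i_{0},\dots,i_{l},j_{0},i_{l+1},\dots,i_{\pi})$, whose $H_{1}$ is known by hypothesis, and exotic $H_{1}$ of the overlaps can only contribute relations. With your $W_{\sigma}$, each of which contains every second-column configuration, those exotic classes sit inside $H_{1}(W_{\sigma})$ itself. This is why you end with an unresolved ``main obstacle.''

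Second, it writes down generators of the inner $E'^{2}_{1,0}$: differences of base configurations in which $j_{0}$ moves by one cell, and differences by adjacent transpositions of free squares. It then computes their images after freeing $j_{0}$. The former are null-homologous because the three moving squares fit in a $3\times 3$ box; this is where $wh-6\ge n$ enters. The latter become $W(a_{1},a_{2})$ times singleton wheels, which Proposition \ref{free aj in k=1} identifies with classes already coming from $E'^{1}_{0,1}$.

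Your final paragraph states what would have to be shown but gives no argument for it, so as written the proposal does not prove the statement.
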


As one can see, this is where we tighten our bounds on $w$ and $h$---one should compare this result to Proposition \ref{super proposition}.
To prove this proposition, we take a different cover of $SF_{n}(R_{w,h}; j_{0}; i_{0}, \dots, i_{\pi})$ than we did in Lemma \ref{cover j,i}.
Unlike the previous cover, which resulted in an augmented Mayer--Vietoris spectral sequence where the challenge is concentrated in the $E^{1}_{0,\kappa+1}$-entry, this cover yields an augmented Mayer--Vietoris spectral sequence where the $E^{2}_{1,0}$-entry is the problem.
Fortunately, this entry only depends on determining the connectedness of a square configuration space, which is within our grasp. 

To simplify our proof of Proposition \ref{inductive step on p for k=1} we break it into two cases that have rather different proofs and use the fact that $wh-n\ge 6$ is equivalent to $wh-6\ge n$.

\begin{prop}\label{inductive step on p for k=1 and n small}
Fix $\pi$, and let $n$, $w$, and $h$ be such that $\min\{w-1,h\}\ge 3$ and $\min\big\{wh-6, (w-1)h+\pi+1\big\}\ge n$.
If the inclusion of $SF_{n}(R_{w,h}; i_{0},\dots, i_{\pi+1})$ into $F_{n}(\R^{2}; i_{0},\dots, i_{\pi+1})$ induces an isomorphism 
\[
H_{1}\big(SF_{n}(R_{w,h}; i_{0},\dots, i_{\pi+1})\big)\cong H_{1}\big(F_{n}(\R^{2}; i_{0},\dots, i_{\pi+1})\big),
\]
then the inclusion of $SF_{n}(R_{w,h}; i_{0},\dots, i_{\pi})$ into $F_{n}(\R^{2}; i_{0},\dots, i_{\pi})$ induces an isomorphism 
\[
H_{1}\big(SF_{n}(R_{w,h}; i_{0},\dots, i_{\pi})\big)\cong H_{1}\big(F_{n}(\R^{2}; i_{0},\dots, i_{\pi})\big).
\]
\end{prop}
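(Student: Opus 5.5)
We will run the same comparison as in Proposition \ref{higher terms in spectral sequence}, but only up to total degree $1$, using the covers $\{SF_{n}(R_{w,h};j;i_{0},\dots,i_{\pi})\}_{j}$ and $\{F_{n}(\R^{2};j;i_{0},\dots,i_{\pi})\}_{j}$. For $k=1$ the entries that can hit $E^{r}_{-1,1}$ are the following:
\begin{itemize}
\item $E^{1}_{0,1}$, via $d^{1}$;
\item $E^{2}_{1,0}$, via $d^{2}$.
\end{itemize}
We also need the entries $E^{1}_{s,0}$, $0\le s<h$, in order to compute $E^{2}_{1,0}$. So three things must be established.
\begin{enumerate}
\item $H_{0}\big(SF_{n}(R_{w,h};j_{0},\dots,j_{s};i_{0},\dots,i_{\pi})\big)\cong\Z$ for the relevant $s$.
\item $H_{1}\big(SF_{n}(R_{w,h};j_{0};i_{0},\dots,i_{\pi})\big)$ surjects onto the correct image in $E_{-1,1}$.
\item The $d^{2}$ from $E^{2}_{1,0}$ agrees with the point case, namely $d^{2}([j_{0},j_{1}])=W(j_{0},j_{1})$ by Lemma \ref{differentials on Reut}.
\end{enumerate}

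For (1) we use the hypothesis $n\le(w-1)h+\pi+1$. This is precisely the condition $wh-n\ge h-\pi-1$ in Proposition \ref{connected double right configuration space}, together with $wh-n\ge 6$ and $\min\{w-1,h\}\ge 3$. So for $s\le 2$ that proposition gives path-connectedness. For $s+1+\pi+1>h$, Proposition \ref{squares in the two rightmost columns} reduces to $SF_{n-\pi-1}(R_{w-1,h};j_{0},\dots,j_{s})$, and Proposition \ref{connected single right configuration space} applies there, since $(w-1)h-(n-\pi-1)\ge 0$. We must check that this leaves at least $2$ free spots; where it does not, the space is empty or contractible and matches the point count. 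Only $s\le 2$ matters for degrees $\le 1$: $E^{1}_{s,0}$ with $s\ge 3$ feeds only into $E_{1,0}$ through $d^{1}$ and $d^{2}$, and there it kills nothing needed. Hence $E^{2}_{s,0}$ agrees with $\mathcal{T}_{s+1}$-type entries for $s\le 1$, and in particular $E^{2}_{1,0}$ is spanned by the brackets $[j_{0},j_{1}]$.

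For (2) and (3), we use the swap move in Construction \ref{first construction for connectivity} with $\kappa=1$. Freeing $j_{0}$ and $j_{1}$ requires a $2\times 2$ block at the top right with an empty column to its left. This is available because $wh-n\ge 6$: at most $\pi+1$ further squares fill the right column, and by connectivity the free squares can be pushed away. Proposition \ref{differentials are well defined as long as we have enough space} with $r=2$ then identifies $d^{2}$ on $E^{2}_{1,0}$ with the wheel $W(j_{0},j_{1})$ placed in that block.

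The main obstacle is $E^{1}_{0,1}$. Here $H_{1}\big(SF_{n}(R_{w,h};j_{0};i_{0},\dots,i_{\pi})\big)$ need not equal the point answer, since the bound $wh-n\ge h+2$ is absent, so Lemma \ref{cover j,i} does not apply directly. I would use the cover $\{W_{\sigma}\}$ from that lemma. Its $d^{1}$ from $\tilde E^{1}_{0,1}$ has image generated by two kinds of classes:
\begin{itemize}
\item classes in $H_{1}\big(SF_{n}(R_{w,h};\sigma)\big)$, which are isomorphic to the point case by the hypothesis for $\pi+1$;
\item $H_{1}$ of $SF_{n-\pi-1}(R_{w-1,h};j_{0})$-type pieces, from $\tilde E^{1}_{1,0}$, which enter through $\tilde E^{2}_{1,0}$.
\end{itemize}
In the paper's language, $\tilde E^{2}_{1,0}$ is the problem entry, and it is controlled by $H_{0}$ of the intersections $W_{\sigma}\cap W_{\tau}$ being $\Z$. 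That follows from Proposition \ref{connected single right configuration space} applied in $R_{w-1,h}$.

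I would therefore only show that the images of these generators under the outer $d^{1}$ land in the span of the images coming from the point spectral sequence. For the first kind this holds because those classes are products of a wheel with unit squares realized via Proposition \ref{free aj in k=1}. For the second kind it follows from the nerve-simplex contractibility argument. Once that is done, $E^{\infty}_{-1,1}=0$ forces $H_{1}\big(SF_{n}(R_{w,h};i_{0},\dots,i_{\pi})\big)$ to be generated by the point-basis wheels. Injectivity comes from composing with the inclusion into $F_{n}(\R^{2};i_{0},\dots,i_{\pi})$, which is free on the same basis.
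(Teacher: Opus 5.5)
\textbf{The main gap: the problem entry does not vanish in two allowed cases.} The gap is in your treatment of $E^{1}_{0,1}$, at the claim that the problem entry $\tilde E^{2}_{1,0}$ vanishes because $H_{0}(W_{\sigma}\cap W_{\tau})\cong\Z$.

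Since $W_{\sigma}\cap W_{\tau}\simeq SF_{n-\pi-1}(R_{w-1,h};j_{0})$, Proposition~\ref{connected single right configuration space} needs $(w-1)h-(n-\pi-1)\ge 2$, i.e.\ $n\le (w-1)h+\pi-1$. The hypothesis only gives $n\le (w-1)h+\pi+1$. The two leftover cases $n=(w-1)h+\pi$ and $n=(w-1)h+\pi+1$ do occur (e.g.\ $w=4$, $h=7$, $\pi=0$, $n=22$). In them, $j_{0}$ and the free squares leave one hole or none in the left $R_{w-1,h}$, with three consequences:
\begin{itemize}
\item the double intersections have many components (for $n=(w-1)h+\pi+1$ they form a discrete set of $h\big((w-1)h-1\big)!$ points);
\item $\tilde E^{2}_{1,0}$ is far from zero;
\item its $d^{2}$-images are genuinely new $1$-cycles in $H_{1}\big(SF_{n}(R_{w,h};j_{0};i_{0},\dots,i_{\pi})\big)$. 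These are loops in which $j_{0}$ trades places with the square below it, and loops in which two adjacent free squares are swapped (Figures~\ref{differenceofbaseclassesk1} and~\ref{differenceofnonbaseclassesk1}).
\end{itemize}
Contractibility of the nerve says nothing about these loops. By Remark~\ref{not really that much extra}, these two values of $n$ are exactly the ones where the generators the paper constructs for this entry are nontrivial.

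What is missing is an argument that such loops die, or become known classes, once the outer $d^{1}$ frees $j_{0}$. A loop of the first kind moves only three squares, and $wh-n\ge 6$ lets you isolate them in a $3\times 3$ box, where the loop is null-homotopic. A loop of the second kind becomes $W(a_{1},a_{2})$ times singleton wheels, and Proposition~\ref{free aj in k=1} identifies that with a class already coming from the known pieces. Your proposal never uses $wh-n\ge 6$ in the $E^{1}_{0,1}$ analysis, which is a symptom of this missing case.

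\textbf{A second, unjustified step.} You also assert without argument that the $d^{1}$-image of $\tilde E^{1}_{0,1}=\bigoplus_{\sigma}H_{1}(W_{\sigma})$ is generated by classes from $H_{1}\big(SF_{n}(R_{w,h};\sigma)\big)$. But $W_{\sigma}$ contains \emph{every} configuration with $j_{0}$ in the second column. That is a full copy of $SF_{n-\pi-1}(R_{w-1,h};j_{0})$ with very few holes, whose $H_{1}$ no available hypothesis controls.

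\textbf{How the paper avoids both problems.} The paper uses a finer cover (Proposition~\ref{k=1 images of d1 differentials are the same n small}): sets $V_{l}$, $-1\le l\le \pi$, each containing only those second-column configurations in which $j_{0}$'s row lets it slide into the slot between $i_{l}$ and $i_{l+1}$. Each $V_{l}$ deformation retracts onto $SF_{n}(R_{w,h};i_{0},\dots,i_{l},j_{0},i_{l+1},\dots,i_{\pi})$, whose $H_{1}$ is known from the $\pi+1$ hypothesis. So the crowded left region enters $H_{1}$ only through $H_{0}(V_{l}\cap V_{l+1})$, i.e.\ through the entry discussed above.

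\textbf{A minor point in your step (1).} When $s+\pi+2>h$ with $s\le 2$, the bound $n\le wh-6$ already leaves at least $\pi+7-h\ge 4$ holes in $R_{w-1,h}$, so Proposition~\ref{connected single right configuration space} does apply. Your fallback that a jammed region is ``empty or contractible'' is false, though not needed.
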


\begin{prop}\label{inductive step on p for k=1 and n large}
Fix $\pi$, and let $n$, $w$, and $h$ be such that $\min\{w-1,h\}\ge 3$ and $wh-6\ge n>(w-1)h+\pi+1.$
If the inclusion of $SF_{n}(R_{w,h}; i_{0},\dots, i_{\pi+1})$ into $F_{n}(\R^{2}; i_{0},\dots, i_{\pi+1})$ induces an isomorphism 
\[
H_{1}\big(SF_{n}(R_{w,h}; i_{0},\dots, i_{\pi+1})\big)\cong H_{1}\big(F_{n}(\R^{2}; i_{0},\dots, i_{\pi+1})\big),
\]
then the inclusion of $SF_{n}(R_{w,h}; i_{0},\dots, i_{\pi})$ into $F_{n}(\R^{2}; i_{0},\dots, i_{\pi})$ induces an isomorphism 
\[
H_{1}\big(SF_{n}(R_{w,h}; i_{0},\dots, i_{\pi})\big)\cong H_{1}\big(F_{n}(\R^{2}; i_{0},\dots, i_{\pi})\big).
\]
\end{prop}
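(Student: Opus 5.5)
We prove Proposition \ref{inductive step on p for k=1 and n large} with the augmented Mayer--Vietoris spectral sequence $E\big[SF_{n}(R_{w,h};i_{0},\dots,i_{\pi})\big]$ coming from the cover by the sets $SF_{n}(R_{w,h};j;i_{0},\dots,i_{\pi})$, and compare it with $E\big[F_{n}(\R^{2};i_{0},\dots,i_{\pi})\big]$ as in Proposition \ref{higher terms in spectral sequence}. For $k=1$ only three kinds of entries can hit $E^{r}_{-1,1}$: the $E^{1}_{0,1}$-entries via $d^{1}$, the $E^{2}_{1,0}$-entries via $d^{2}$, and, for $d^{1}$ into $E_{-1,0}$ and the cancellation of $E^{2}_{0,0}$, the $H_{0}$-entries.

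\emph{Degree zero.} The hypothesis $n>(w-1)h+\pi+1$ forces the right-most column to be nearly full. Every $SF_{n}(R_{w,h};j_{0},\dots,j_{s};i_{0},\dots,i_{\pi})$ with $s\le 2$ and $s+1+\pi+1\le h$ is path-connected by Proposition \ref{connected double right configuration space}. This uses $wh-n\ge 6$, and $wh-n\ge h-\pi-1$ also holds because $wh-n\ge 6$ and the free squares do not fit in the left $R_{w-1,h}$. Hence the $H_{0}$ entries agree with those for points for $s\le 2$, which is all that matters for total degree $\le 2$. When $s+1+\pi+1>h$, the space is, by Proposition \ref{squares in the two rightmost columns}, homotopy equivalent to $SF_{n-\pi-1}(R_{w-1,h};j_{0},\dots,j_{s})$. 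It is connected by Proposition \ref{connected single right configuration space}, since $(w-1)h-(n-\pi-1)\ge wh-n-(h-\pi-1)$, and one checks this is at least $2$ in the relevant range. The $d^{1}$ maps on $H_{0}$ are determined combinatorially by the injective-word complex, so Lemma \ref{differentials on Reut} and Proposition \ref{differentials are well defined as long as we have enough space} identify $E^{2}_{1,0}$ and its $d^{2}$-image (wheels $W(j_{0},j_{1})$ on two squares, realizable since the column left of $j_{0},j_{1}$ can be emptied) with the point case.

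\emph{The $E^{1}_{0,1}$ entries.} Unlike Lemma \ref{cover j,i}, we cannot assume $H_{1}\big(SF_{n}(R_{w,h};j;i_{0},\dots,i_{\pi})\big)$ agrees with the point case. Instead we cover this space by the sets $W_{\sigma}$ of Lemma \ref{cover j,i}, indexed by the shuffles $\sigma$ of $j$ into $(i_{0},\dots,i_{\pi})$, and use the resulting spectral sequence $\tilde E$ only to produce a \emph{generating set}. In $\tilde E$, the $E^{1}_{0,1}$ entry is generated by the $H_{1}\big(SF_{n}(R_{w,h};\sigma)\big)$ together with classes of $H_{1}(W_{\sigma})$ in which $j$ stays in the second column. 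The $E^{1}_{1,0}$ entry is a sum of $H_{0}$ of the pairwise intersections $W_{\sigma}\cap W_{\tau}\simeq SF_{n-\pi-1}(R_{w-1,h};j)$, which is connected by Proposition \ref{connected single right configuration space}. The $d^{2}$ from $\tilde E^{2}_{1,0}$ produces loops in which $j$ leaves the right column and re-enters at a different slot. By the inductive hypothesis, $H_{1}\big(SF_{n}(R_{w,h};\sigma)\big)\cong H_{1}\big(F_{n}(\R^{2};\sigma)\big)$. So $H_{1}$ of the cover set is generated by products of wheels, given by Construction \ref{first construction for connectivity} with a single $2\times2$ big square, together with these ``passing'' loops.

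\emph{Main step: $d^{1}$ on these generators.} We then show that $d^{1}$ of each generator into $H_{1}\big(SF_{n}(R_{w,h};i_{0},\dots,i_{\pi})\big)$ coincides with the image of the corresponding point class. For wheel products this uses Proposition \ref{free aj in k=1} with $k=1$: the configuration with one $2\times 2$ square and the $i_{l}$ right-most is path-connected, because $wh-n\ge 6=(k+1)(k+2)$. Hence all placements of $W(a,b)$ are homologous once $j$ is freed. The passing loops should become null-homologous, or products of $W(j,i_{l})$-type terms, after $j$ is freed. The plan is to exhibit an explicit $2$-chain in a $2\times3$ or $3\times2$ window using the $6$ empty grid-squares and the pyramid move of Remark \ref{pyramids are cool too}.

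We expect this last point to be the main obstacle: controlling $H_{1}$ of the cover sets, which may be larger than in the point case, and checking that the excess dies under $d^{1}$. Once it is done, the argument of Proposition \ref{higher terms in spectral sequence} finishes the proof. The differentials into $E_{-1,1}$ have matching images, both $E^{\infty}$ vanish, and freeness from Proposition \ref{Er page for FnR2} gives $H_{1}\big(SF_{n}(R_{w,h};i_{0},\dots,i_{\pi})\big)\cong H_{1}\big(F_{n}(\R^{2};i_{0},\dots,i_{\pi})\big)$ via the inclusion.
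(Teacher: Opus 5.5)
\textbf{The degree-zero step is wrong, so the proposal has a genuine gap.} You invoke Proposition~\ref{connected double right configuration space}, but its hypothesis $wh-n\ge h-\pi-1$ is equivalent to $n\le (w-1)h+\pi+1$. That is exactly the negation of the standing assumption $n>(w-1)h+\pi+1$. Your justification runs the inequality backwards: the free squares not fitting into the left $R_{w-1,h}$ means $n-\pi-1>(w-1)h$, i.e.\ $wh-n<h-\pi-1$.

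In this regime the spaces are in fact disconnected. The more than $(w-1)h$ squares not labeled $i_0,\dots,i_\pi$ cannot all lie in the left $R_{w-1,h}$. So $j_0,\dots,j_s$ are trapped in the right-most column and can never pass an $i_l$. Hence $SF_n(R_{w,h};j_0,\dots,j_s;i_0,\dots,i_\pi)$ is the disjoint union over shuffles $\sigma$ of the connected spaces $SF_n(R_{w,h};\sigma)$, with $H_0\cong\Z^{\binom{s+\pi+2}{\pi+1}}$. The point analogue, by contrast, is connected. So the $E^1_{s,0}$-entries do not agree with the point case, and your identification of $E^2_{1,0}$ and its $d^2$-image via the injective-word complex has nothing to stand on. Likewise, your ``at least $2$'' count in the case $s+1+\pi+1>h$ is actually negative here.

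The missing idea is to prove directly that the square-side $E^2_{1,0}$ vanishes. The point-side entry vanishes because $n-\pi-3>0$ (Propositions~\ref{forget points on the right} and~\ref{E2pageforFnR2}). The paper handles the square side as follows:
\begin{itemize}
\item It reads the rows $s\le 2$ as the $2$-skeleton of a complex $K$. The vertices are the components $SF_n(R_{w,h};i_0,\dots,i_l,j_0,i_{l+1},\dots,i_\pi)$, and the edges and triangles are shuffles of two and three $j$'s.
\item It shows $H_1(K)=0$. Loops of length $2$ and $3$ bound explicit $2$-chains.
\item Longer loops are shortened by routing through the vertex where an auxiliary square $j_2$ sits above all the $i_l$. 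This uses that there are more than $9$ squares not labeled $i_l$.
\end{itemize}

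\textbf{The same pigeonhole observation undercuts your $E^1_{0,1}$ analysis.} The intersection $W_\sigma\cap W_\tau\simeq SF_{n-\pi-1}(R_{w-1,h};j)$ is empty, not connected, so there are no ``passing'' loops at all. The space $SF_n(R_{w,h};j;i_0,\dots,i_\pi)$ is simply the disjoint union of the $\pi+2$ components $SF_n(R_{w,h};i_0,\dots,i_l,j,i_{l+1},\dots,i_\pi)$. Their $H_1$ is controlled by the hypothesis, and the $d^1$-images are matched using Proposition~\ref{free aj in k=1}, as in Proposition~\ref{k=1 images of d1 differentials are the same n large}.

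So the step you flag as the main obstacle does not arise in this regime, while the real obstacle, the excess in degree zero, is missed. Your sketch is essentially an outline of the complementary case $n\le (w-1)h+\pi+1$ (Propositions~\ref{inductive step on p for k=1 and n small} and~\ref{k=1 images of d1 differentials are the same n small}). That is the case where Proposition~\ref{connected double right configuration space} applies and where $j$ can leave the right-most column.
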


Though the proofs of these propositions are highly dependent on $n$, the overarching proof structure is the same.
Namely, we first prove that the $E^{1}_{0,1}$-terms of our spectral sequences do not give us any trouble; then we tame the $E^{2}_{1,0}$-terms.
The next pair of propositions establishes the first step in these arguments.

\begin{prop}\label{k=1 images of d1 differentials are the same n small}
Fix $\pi$, and let $n$, $w$, and $h$ be such that $\min\{w-1,h\}\ge 3$ and $\min\big\{wh-6, (w-1)h+\pi+1\big\}\ge n.$
Moreover, assume that the inclusion of $SF_{n}(R_{w,h}; i_{0},\dots, i_{\pi+1})$ into $F_{n}(\R^{2}; i_{0},\dots, i_{\pi+1})$ induces an isomorphism 
\[
H_{1}\big(SF_{n}(R_{w,h}; i_{0},\dots, i_{\pi+1})\big)\cong H_{1}\big(F_{n}(\R^{2}; i_{0},\dots, i_{\pi+1})\big).
\]
If we cover $SF_{n}(R_{w,h}; i_{0}, \dots, i_{\pi})$ and $F_{n}(\R^{2}; i_{0}, \dots, i_{\pi})$ by sets of the form $SF_{n}(R_{w,h}; j_{0}; i_{0}, \dots, i_{\pi})$ and $F_{n}(\R^{2}; j_{0}; i_{0}, \dots, i_{\pi})$, respectively, then the inclusion of $SF_{n}(R_{w,h}; i_{0}, \dots, i_{\pi})$ into $F_{n}(\R^{2}; i_{0}, \dots, i_{\pi})$ induces an isomorphism between the images of the $E^{1}_{0,1}\big[SF_{n}(R_{w,h}; i_{0}, \dots, i_{\pi})\big]$- and $E^{1}_{0,1}\big[F_{n}(\R^{2}; i_{0}, \dots, i_{\pi})\big]$-entries under their respective $d^{1}$-differentials in the corresponding augmented Mayer--Vietoris spectral sequences.
\end{prop}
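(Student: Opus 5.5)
The plan is to compute both $E^{1}_{0,1}$-entries summand by summand and then compare the images of $d^{1}$, which on each summand is induced by the inclusion $SF_{n}(R_{w,h}; j_{0}; i_{0}, \dots, i_{\pi})\hookrightarrow SF_{n}(R_{w,h}; i_{0}, \dots, i_{\pi})$ (``freeing'' $j_{0}$). The first step is to understand $H_{1}\big(SF_{n}(R_{w,h}; j_{0}; i_{0}, \dots, i_{\pi})\big)$, using the cover $\{W_{\sigma}\}$ from the proof of Lemma \ref{cover j,i}, indexed by the $\pi+2$ shuffles of $j_{0}$ into $(i_{0},\dots,i_{\pi})$. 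Each $W_{\sigma}$ retracts onto the union of $SF_{n}(R_{w,h};\sigma)$ and $SF_{n-\pi-1}(R_{w-1,h};j_{0})$, and pairwise intersections are $SF_{n-\pi-1}(R_{w-1,h};j_{0})$. By Proposition \ref{connected double right configuration space} (with $s=0$) and Proposition \ref{connected single right configuration space} (which applies because $n\le (w-1)h+\pi+1$ leaves at least $2$ empty squares in $R_{w-1,h}$ when $w-1\ge 3$, and at least $6$ in general), all these pieces and intersections are path-connected. Hence the row $b=0$ of this auxiliary spectral sequence is the augmented chain complex of a simplex and is exact, and $H_{1}\big(SF_{n}(R_{w,h}; j_{0}; i_{0}, \dots, i_{\pi})\big)$ is generated by the images of the $H_{1}(W_{\sigma})$.

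Next, I would argue as in Lemma \ref{cover j,i} that every class in $H_{1}(W_{\sigma})$ is homologous to a class supported either in $SF_{n}(R_{w,h};\sigma)$ or in the free part $SF_{n-\pi-1}(R_{w-1,h};j_{0})$. Classes of the first kind are products of a single wheel $W(a,b)$ with unit squares, by the hypothesis on $H_{1}\big(SF_{n}(R_{w,h};i_{0},\dots,i_{\pi+1})\big)$ (noting $SF_{n}(R_{w,h};\sigma)$ is such a space with $\pi+2$ right-most squares). Classes of the second kind I would not try to identify with point classes: $wh-n\ge h+2$ is not assumed, so $H_{1}$ of the free part may contain exotic classes. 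Instead I would only track their images under $d^{1}$.

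So the image of $d^{1}$ on the square side is spanned by images of (i) $W(a,b)$-classes realized via Construction \ref{first construction for connectivity} with $j_{0}$ right-most, and (ii) images of $H_{1}$ of the left part. For (i), once $j_{0}$ is freed, Proposition \ref{free aj in k=1} (with $k=1$, one big square of side $2$; its hypothesis $wh-n\ge 6$ holds) says that the big-square configuration space is path-connected. So any two realizations of $W(a,b)\cdot(\text{unit squares})$ become homologous after freeing $j_{0}$, and the image is exactly the span of the wheel basis of $H_{1}\big(F_{n}(\R^{2};i_{0},\dots,i_{\pi})\big)$ realized in $R_{w,h}$. By inclusion into $F_{n}(\R^{2};i_{0},\dots,i_{\pi})$, where these classes are independent, the span has no extra relations.

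The main obstacle is (ii): showing that a possibly exotic class of $H_{1}\big(SF_{n-\pi-1}(R_{w-1,h};j_{0})\big)$ lands, after freeing $j_{0}$, in the span of the wheel classes. My first attempt is to use the extra column: in $R_{w,h}$ there are $\pi+1$ right-most squares but $h-\pi-1$ spare grid positions in that column, and $n\le (w-1)h+\pi+1$. Using Proposition \ref{n le wh-2 is connected}, I would homotope any cycle so that some free square moves into the right column below $i_{\pi}$. This rewrites the cycle as a sum of classes in the spaces $SF_{n}(R_{w,h};i_{0},\dots,i_{\pi},j')$, where the hypothesis identifies $H_{1}$ with point homology; hence the image is a wheel combination. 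Compatibility with the $F_{n}(\R^{2})$ side then follows because both images are computed through the same inclusion.
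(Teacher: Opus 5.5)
There is a genuine gap, and it sits exactly where the paper's proof does most of its work. You claim that the overlaps $W_\sigma\cap W_\tau\simeq SF_{n-\pi-1}(R_{w-1,h};j_0)$ are path-connected, and this is false in the relevant range. When $j_0$ sits in the second right-most column, all $n-\pi-1$ squares not labeled $i_0,\dots,i_\pi$ lie in the left $R_{w-1,h}$. The hypothesis $n\le (w-1)h+\pi+1$ only gives $(w-1)h-(n-\pi-1)\ge 0$ empty cells there, not $2$. The six empty cells coming from $wh-n\ge 6$ can all lie in the right-most column below $i_\pi$.

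For example, take $w=4$, $h=7$, $\pi=0$, $n=22$. All hypotheses hold, the block $R_{3,7}$ is completely full, and $SF_{21}(R_{3,7};j_0)$ is a finite discrete set of $7\cdot 20!$ points. So Proposition~\ref{connected single right configuration space} does not apply when $n\in\{(w-1)h+\pi,\,(w-1)h+\pi+1\}$, which are precisely the cases singled out in Remark~\ref{not really that much extra}.

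In these cases the $b=0$ row of your auxiliary spectral sequence is not exact: $E^{2}_{1,0}\neq 0$ (in the example it has rank $5\cdot 20!-1$). Since the augmented spectral sequence converges to $0$, $d^{2}$ embeds $E^{2}_{1,0}$ into $E^{2}_{-1,1}$. Hence $H_1\big(SF_n(R_{w,h};j_0;i_0,\dots,i_\pi)\big)$ contains classes outside the image of $\bigoplus_\sigma H_1(W_\sigma)$. These are loops made of a path through $SF_n(R_{w,h};\sigma)$ and a path through $SF_n(R_{w,h};\tau)$ joining two different configurations of the left part; during each path, $j_0$ sits in the right column and the nearly full left block is rearranged using the vacated cell. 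Your proposal never shows that these land in the span of wheel classes once $j_0$ is freed.

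The paper is built to handle exactly these classes. It covers $SF_n(R_{w,h};j_0;i_0,\dots,i_\pi)$ by sets $V_l$, $-1\le l\le \pi$, containing only those left configurations from which $j_0$ itself can slide right between $i_l$ and $i_{l+1}$. Each $V_l$ therefore retracts onto $SF_n(R_{w,h};i_0,\dots,i_l,j_0,i_{l+1},\dots,i_\pi)$. This makes $E'^{1}_{0,1}$ entirely controlled by the hypothesis on $\pi+2$ right-most squares, and your step (i), via Proposition~\ref{free aj in k=1}, matches the paper's treatment of it.

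It also means $H_1$ of the left block never enters, so your obstacle (ii) does not arise. As written, (ii) is not an argument in any case: path-connectedness from Proposition~\ref{n le wh-2 is connected} does not let you homotope a $1$-cycle so that a square moving in it stays parked in the right column.

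The remaining work in the paper is $E'^{2}_{1,0}$. It counts the components of $V_l\cap V_{l+1}$ (padding with ghost squares, there are $((w-1)h-1)!\,(h-\pi-1)$ of them). It takes generators given by moving $j_0$ to an adjacent cell and by adjacent transpositions of left-block squares. It then shows that the $d'^{2}$-images of these generators, after freeing $j_0$, fall into two kinds:
\begin{enumerate}
\item cycles that are null-homotopic inside a $3\times 3$ box, which is where $wh-6\ge n$ is used;
\item $W(a_1,a_2)$ times singleton wheels, which Proposition~\ref{free aj in k=1} places in the image of $E'^{1}_{0,1}$.
\end{enumerate}
Without an analysis of this kind your argument is incomplete precisely in the cases that matter.
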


\begin{prop}\label{k=1 images of d1 differentials are the same n large}
Fix $\pi$, and let $n$, $w$, and $h$ be such that $\min\{w-1,h\}\ge 3$ and $wh-6\ge n>(w-1)h+\pi+1.$
Moreover, assume that the inclusion of $SF_{n}(R_{w,h}; i_{0},\dots, i_{\pi+1})$ into $F_{n}(\R^{2}; i_{0},\dots, i_{\pi+1})$ induces an isomorphism 
\[
H_{1}\big(SF_{n}(R_{w,h}; i_{0},\dots, i_{\pi+1})\big)\cong H_{1}\big(F_{n}(\R^{2}; i_{0},\dots, i_{\pi+1})\big),
\]
If we cover $SF_{n}(R_{w,h}; i_{0}, \dots, i_{\pi})$ and $F_{n}(\R^{2}; i_{0}, \dots, i_{\pi})$ by sets of the form $SF_{n}(R_{w,h}; j_{0}; i_{0}, \dots, i_{\pi})$ and $F_{n}(\R^{2}; j_{0}; i_{0}, \dots, i_{\pi})$, respectively, then the inclusion of $SF_{n}(R_{w,h}; i_{0}, \dots, i_{\pi})$ into $F_{n}(\R^{2}; i_{0}, \dots, i_{\pi})$ induces an isomorphism between the images of the $E^{1}_{0,1}\big[SF_{n}(R_{w,h}; i_{0}, \dots, i_{\pi})\big]$- and $E^{1}_{0,1}\big[F_{n}(\R^{2}; i_{0}, \dots, i_{\pi})\big]$-entries under their respective $d^{1}$-differentials in the corresponding augmented Mayer--Vietoris spectral sequences.
\end{prop}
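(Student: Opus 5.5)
The plan is to compare the two $d^{1}$-differentials
\[
d^{1}:\bigoplus_{j_{0}}H_{1}\big(SF_{n}(R_{w,h}; j_{0}; i_{0},\dots, i_{\pi})\big)\to H_{1}\big(SF_{n}(R_{w,h}; i_{0},\dots, i_{\pi})\big)
\]
and its point analogue directly, using the fact that both are induced by inclusions forgetting that $j_{0}$ is right-most. Since we are in the regime $n>(w-1)h+\pi+1$, the free squares cannot all fit in the left $R_{w-1,h}$. Hence in every configuration of $SF_{n}(R_{w,h}; i_{0},\dots,i_{\pi})$ some free square lies in the right-most column. In particular, the complement of the $i_{l}$ in that column is nearly full, and the $j_{0}$ of a cover element sits in the right-most column rather than the second one. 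So I would first show, via Proposition \ref{squares in the two rightmost columns} and the straight-line retraction of Proposition \ref{move squares to right side}, that $SF_{n}(R_{w,h}; j_{0}; i_{0},\dots,i_{\pi})$ deformation retracts onto the union of the pieces $SF_{n}(R_{w,h};\sigma)$, where $\sigma$ ranges over shuffles of $j_{0}$ into $(i_{0},\dots,i_{\pi})$. Each of these is a copy of $SF_{n}(R_{w,h}; i'_{0},\dots,i'_{\pi+1})$ with $\pi+2$ right-most squares. Its $H_{1}$ agrees with $H_{1}(F_{n}(\R^{2};\sigma))$ by the hypothesis of the proposition, applied with relabelled indices.

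Next I would produce an explicit generating set for the source. By the hypothesis and Proposition \ref{basis for homology of FnR2}, $H_{1}$ of each piece is spanned by single wheels $W(a,b)$ on two free labels, realized as a $2\times 2$ big square via Construction \ref{first construction for connectivity} with $\kappa=1$. Thus $H_{1}(SF_{n}(R_{w,h};j_{0};i_{0},\dots,i_{\pi}))$ is generated by the images of these wheel classes over all shuffles, together possibly with classes in which $j_{0}$ moves between shuffle positions. Those extra classes are exactly the $E^{2}_{1,0}$-type contributions of the auxiliary cover, and by Proposition \ref{connected double right configuration space} they are governed by path-connectedness. I would check that $wh-n\ge 6$ and the side conditions satisfy its hypotheses with $s=0$, so that such a class is a loop in which $j_{0}$ slides past some $i_{l}$ and back. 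These loops map under $d^{1}$ to classes homologous to wheels $W(j_{0},c)$, or to $0$, inside $SF_{n}(R_{w,h}; i_{0},\dots,i_{\pi})$.

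The key comparison is that each wheel generator maps under $d^{1}$ to the same wheel class in $H_{1}(SF_{n}(R_{w,h}; i_{0},\dots,i_{\pi}))$ regardless of which shuffle or which $j_{0}$ it came from. For this I would invoke Proposition \ref{free aj in k=1}, which asserts that the space with one $2\times 2$ big square and unit squares is path-connected under $wh-n\ge 6$, $\min\{w-1,h\}\ge 3$. Freeing $j_{0}$ and moving the big square to its target position identifies all realizations. The image of $d^{1}$ on the square side is therefore spanned by the wheels $W(a,b)$ with $a,b$ free. On the point side, Proposition \ref{forget points on the right} together with the $E^{2}$ computation of Proposition \ref{E2pageforFnR2} shows that the image is also spanned by these wheels with no relations among them. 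Since the inclusion carries wheels to wheels and the point-side wheels are linearly independent, the two images are isomorphic via the inclusion.

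I expect the main obstacle to be the middle step: controlling the classes in which $j_{0}$ changes its position relative to the $i_{l}$ when the right column is nearly full. Only $wh-n\ge 6$ is available, not the $h+2$ used in Lemma \ref{cover j,i}, so there may be too little room to slide $j_{0}$ left. To handle this, I would first use the $2\times 3$ pyramid move of Remark \ref{pyramids are cool too} near the bottom right corner, exploiting the few empty cells, to show that each such loop is homologous to a product of a wheel with a constant path.
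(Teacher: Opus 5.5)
The gap is in your middle step, which is exactly where the hypothesis $n>(w-1)h+\pi+1$ should be used. You treat $SF_{n}(R_{w,h};j_{0};i_{0},\dots,i_{\pi})$ as a union of the shuffle pieces and then allow for extra $H_{1}$-classes in which $j_{0}$ changes position among the $i_{l}$. You propose to control these with Proposition \ref{connected double right configuration space} and a pyramid move. Neither tool applies here:
\begin{itemize}
\item Proposition \ref{connected double right configuration space} requires $wh-n\ge h-p-1$. For $p=\pi$ this is equivalent to $n\le (w-1)h+\pi+1$, the exact negation of your standing assumption. Indeed, for $\pi\ge 0$ the space is not even path-connected.
\item No loop can slide $j_{0}$ past an $i_{l}$, so there is nothing for a pyramid move to act on. The $n-\pi-1>(w-1)h$ non-$i$ squares must lie weakly left of $j_{0}$, and at most $(w-1)h$ unit squares fit in a rectangle of height $h$ and width less than $w$. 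Hence $x_{j_{0}}=-\tfrac{1}{2}$, and therefore $x_{i_{l}}=-\tfrac{1}{2}$ for every $l$, at every point of the space.
\end{itemize}

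The missing idea is the consequence of this pinning. Squares sharing the right-most column can never pass one another, so the vertical order of $j_{0},i_{0},\dots,i_{\pi}$ is locally constant. Thus $SF_{n}(R_{w,h};j_{0};i_{0},\dots,i_{\pi})$ is the \emph{disjoint} union of the $\pi+2$ pieces $SF_{n}(R_{w,h};i_{0},\dots,i_{l},j_{0},i_{l+1},\dots,i_{\pi})$, each path-connected by Proposition \ref{connected single right configuration space}. Its $H_{1}$ is simply the direct sum of the pieces' $H_{1}$, and there are no extra classes to control. This is precisely what the paper uses. The auxiliary cover and the $E'^{2}_{1,0}$ analysis you anticipate are needed only in the complementary case, Proposition \ref{k=1 images of d1 differentials are the same n small}.

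With that replacement the rest of your plan works and matches the paper, with one correction. Realize the wheels via the target configuration of Proposition \ref{free aj in k=1}, not Construction \ref{first construction for connectivity}. The construction with $\kappa=1$ assumes $wh-n\ge h+2$, whereas here $wh-n<h-\pi-1$. Then Proposition \ref{free aj in k=1} identifies all realizations of a given wheel after freeing $j_{0}$. Your observation that these wheels map to a linearly independent set in $H_{1}\big(F_{n}(\R^{2};i_{0},\dots,i_{\pi})\big)$ then shows the induced map between the two $d^{1}$-images is an isomorphism.
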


\begin{proof}[Proof of Proposition \ref{k=1 images of d1 differentials are the same n small}]
Since
\[
E^{1}_{0,1}\big[SF_{n}(R_{w, h}; i_{0}, \dots, i_{\pi})\big]=\bigoplus_{j_{0}\neq i_{0}, \dots, i_{\pi}}H_{1}\big(SF_{n}(R_{w, h}; j_{0}; i_{0}, \dots, i_{\pi})\big),
\]
and 
\begin{align*}
E^{1}_{0,1}\big[F_{n}(\R^{2}; i_{0}, \dots, i_{\pi})\big]&=\bigoplus_{j_{0}\neq i_{0}, \dots, i_{\pi}}H_{1}\big(F_{n}(\R^{2}; j_{0}; i_{0}, \dots, i_{\pi})\big)\\
&\cong\bigoplus_{j_{0}\neq i_{0}, \dots, i_{\pi}}H_{1}\big(F_{n}(\R^{2}; j_{0}, i_{0}, \dots, i_{\pi})\big),
\end{align*}
where the isomorphism is given by Proposition \ref{in points can move two columns left}, we need to see how $H_{1}\big(SF_{n}(R_{w, h}; j_{0}; i_{0}, \dots, i_{\pi})\big)$ compares to $H_{1}\big(F_{n}(\R^{2}; j_{0}; i_{0}, \dots, i_{\pi})\big)\cong H_{1}\big(F_{n}(\R^{2}; j_{0}, i_{0}, \dots, i_{\pi})\big)$.

By Proposition \ref{squares in the two rightmost columns}, we can assume that the square $j_{0}$ is in one of the two right-most columns of $R_{w, h}$.
With this in mind, we cover $SF_{n}(R_{w, h}; j_{0}; i_{0}, \dots, i_{\pi})$ with sets $V_{l}$ where $-1\le l\le \pi$.
Here we define $V_{l}$ to be the set of configurations in $SF_{n}(R_{w, h}; j_{0}; i_{0}, \dots, i_{\pi})$ in which the squares $j_{0}, i_{0}, \dots, i_{\pi}$ are in the same column with $i_{l}$ above $j_{0}$, which, in turn, is above $i_{l+1}$, together with the configurations, such that after sliding $i_{0}, \dots, i_{\pi}$ up and down and $j_{0}$ directly to the right, we have that $j_{0}$ is between $i_{l}$ and $i_{l+1}$, along the configurations in which $j_{0}$ is moving vertically between two grid-squares of $R_{w,h}$ such that at once it stops in one of the gird-squares, we can slide the $i_{0}, \dots, i_{\pi}$ up and down so that $j_{0}$ can be slid directly to the right to be between $i_{l}$ and $i_{l+1}$.
This is well-defined since $(w-1)h+\pi+1\ge n$ and we only need to concern ourselves with $\pi<h-1$.
Moreover, by taking the barycentric subdivision of $R^{*}_{w,h}$ and further subdividing the resulting discrete configuration space by hyperplanes of the form $x_{j}=x_{m}$ for $m\in \{1,\dots, n\}$, we get a cellular model for $SF_{n}(R_{w, h}; i_{0}, \dots, i_{\pi})$ that contains the $V_{l}$ as a subspace.
This allows us to construct an augmented Mayer--Vietoris spectral sequence arising from the cover $\{V_{l}\}^{\pi}_{l=-1}$, which we call $E'\big[SF_{n}(R_{w, h}; j_{0}; i_{0}, \dots, i_{\pi})\big]$.
Note that the $E'^{1}_{-1, 1}\big[SF_{n}(R_{w, h}; j_{0}; i_{0}, \dots, i_{\pi})\big]$-entry of this spectral sequence is $H_{1}\big(SF_{n}(R_{w, h}; j_{0}; i_{0}, \dots, i_{\pi})\big)$.
We wish to know how the images of classes coming from $E'^{1}_{0, 1}\big[SF_{n}(R_{w, h}; j_{0}; i_{0}, \dots, i_{\pi})\big]$ and $E'^{2}_{1, 0}\big[SF_{n}(R_{w, h}; j_{0}; i_{0}, \dots, i_{\pi})\big]$ in $E'^{r}_{-1, 1}\big[SF_{n}(R_{w, h}; j_{0}; i_{0}, \dots, i_{\pi})\big]$ under the $d'^{r}$-differentials behave under the $d^{1}$-differential of the $E\big[SF_{n}(R_{w, h}; i_{0}, \dots, i_{\pi})\big]$ spectral sequence.

By construction, $V_{l}$ deformation retracts onto $SF_{n}(R_{w, h}; i_{0}, \dots, i_{l}, j_{0}, i_{l+1},\dots, i_{\pi})$, so
\begin{align*}
E'^{1}_{0,1}\big[SF_{n}(R_{w, h}; j_{0}; i_{0}, \dots, i_{\pi})\big]&=\bigoplus_{-1\le l\le \pi}H_{1}(V_{l})\\&\cong\bigoplus_{-1\le l\le \pi}H_{1}\big(SF_{n}(R_{w, h}; i_{0}, \dots, i_{l}, j_{0}, i_{l+1},\dots, i_{\pi})\big).
\end{align*}
By definition, the $d'^{1}$-differential treats these classes as classes in $H_{1}\big(SF_{n}(R_{w, h}; j_{0}; i_{0}, \dots, i_{\pi})\big)$.

Since $H_{1}\big(SF_{n}(R_{w, h}; i_{0}, \dots, i_{l}, j_{0}, i_{l+1},\dots, i_{\pi})\big)\cong H_{1}\big(F_{n}(\R^{2}; i_{0}, \dots, i_{l}, j_{0}, i_{l+1},\dots, i_{\pi})\big)$ by assumption, we can use Propositions \ref{basis for homology of FnR2} and \ref{forget points on the right} to get a basis for $H_{1}\big(SF_{n}(R_{w, h}; i_{0}, \dots, i_{l}, j_{0}, i_{l+1},\dots, i_{\pi})\big)$ consisting of products of wheels.
Consider a pair of basis elements $[\alpha_{l}]\in H_{1}\big(SF_{n}(R_{w, h}; i_{0}, \dots, i_{l}, j_{0}, i_{l+1},\dots, i_{\pi})\big)$ and $[\alpha_{m}]\in H_{1}\big(SF_{n}(R_{w, h}; i_{0}, \dots, i_{m}, j_{0}, i_{m+1},\dots, i_{\pi})\big)$ that only differ by how $j_{0}$ is shuffled into $(i_{0}, \dots, i_{\pi})$.
Treating the wheels constituting these classes as big squares, place them in $R_{w,h}$ as in the target configuration of Proposition \ref{free aj in k=1}.
As the $d^{1}$-differential of the $E\big[SF_{n}(R_{w,h}; i_{0}, \dots, i_{\pi})\big]$ spectral sequence has the effect of freeing the square $j_{0}$, Proposition \ref{free aj in k=1} tells us that $d^{1}\Big(d'^{1}\big([\alpha_{l}]\big)\Big)$ and $d^{1}\Big(d'^{1}\big([\alpha_{m}]\big)\Big)$ correspond to big-square configurations in the same path-component, so they represent homologous classes.  
It follows that
\begin{align*}
d'^{1}\Big(E'^{1}_{0,1}\big[SF_{n}(R_{w, h}; j_{0}; i_{0}, \dots, i_{\pi})\big]\Big)&=d'^{1}\Big(\bigoplus_{-1\le l\le \pi}H_{1}\big(SF_{n}(R_{w, h}; i_{0}, \dots, i_{l}, j_{0}, i_{l+1},\dots, i_{\pi})\big)\Big)\\
&=d'^{1}\Big(H_{1}\big(SF_{n}(R_{w, h}; j_{0}, i_{0}, \dots, i_{\pi})\big)\Big).
\end{align*}
Furthermore, note that 
\begin{align*}
H_{1}\big(SF_{n}(R_{w, h}; j_{0}, i_{0}, \dots, i_{\pi})\big)&\cong H_{1}\big(F_{n}(\R^{2}; j_{0}, i_{0}, \dots, i_{\pi})\big)\\
&\cong H_{1}\big(F_{n}(\R^{2}; j_{0}; i_{0}, \dots, i_{\pi})\big)
\end{align*}
where the first isomorphism follows by assumption, and the second by Proposition \ref{in points can move two columns left}.
This tells us that 
\[
d'^{1}\Big(E'^{1}_{0,1}\big[SF_{n}(R_{w, h}; j_{0}; i_{0}, \dots, i_{\pi})\big]\Big)\cong H_{1}\big(SF_{n}(R_{w, h}; j_{0}, i_{0}, \dots, i_{\pi})\big),
\]
as if this did not hold, the corresponding statement would not be true for $H_{1}\big(SF_{n}(R_{w, h}; j_{0}, i_{0}, \dots, i_{\pi})\big)$.
As such, the only meaningful discrepancies between $E^{1}_{0,1}\big[SF_{n}(R_{w, h}; i_{0}, \dots, i_{\pi})\big]$ and $E^{1}_{0,1}\big[F_{n}(\R^{2}; i_{0}, \dots, i_{\pi})\big]$ can come from $E'^{2}_{1, 0}\big[SF_{n}(R_{w, h}; j_{0}; i_{0}, \dots, i_{\pi})\big]$.
Next, we bound the size of $E'^{2}_{1, 0}\big[SF_{n}(R_{w, h}; j_{0}; i_{0}, \dots, i_{\pi})\big]$ and determine its image in $H_{1}\big(SF_{n}(R_{w, h}; j_{0}; i_{0}, \dots, i_{\pi})\big)$ under the composition $d^{1}\circ d'^{2}$.

We will show that the $E'^{2}_{1,0}\big[SF_{n}(R_{w, h}; j_{0}; i_{0}, \dots, i_{\pi})\big]$-entry is isomorphic to a quotient of the group
\[ \Z^{(\pi+1)\Big(\big((w-1)h-1\big)!(h-\pi-1)-1\Big)}.\]
Given this, we compute a generating set for the images of these classes in $H_{1}\big(SF_{n}(R_{w, h}; j_{0}; i_{0}, \dots, i_{\pi})\big)$.

First, note that the structure of our cellular model for $SF_{n}(R_{w, h}; j_{0}; i_{0}, \dots, i_{\pi})$ tells us that any configuration of squares in $R_{w,h}$ can be pushed onto a configuration in which all the squares lie in grid-square.
By Proposition \ref{squares in the two rightmost columns}, we may assume that in $V_{l}$, if the square $j_{0}$ is in the second right-most column of $R_{w, h}$, then it must be in either the $(l+1)^{\text{th}}$, $(l+2)^{\text{th}}$, $\dots$, or $(h-\pi+l)^{\text{th}}$ grid-square in this column.
It follows that in $V_{l}\cap V_{l+1}$ the square $j_{0}$ may be in any one of the $(l+3)^{\text{th}}$-through $\big(h-\pi+l)^{\text{th}}$-grid-squares in the second right-most column of $R_{w, h}$.
Therefore, in $V_{l}\cap V_{l+1}$, the square $j_{0}$ can be in one of $h-\pi-1$ grid-squares of the $R_{w-1, h}$ found in the left side of $R_{w, h}$.
Moreover, since the remaining $n-\pi-2$ squares cannot be to the right of $j_{0}$, there are $\big((w-1)h-1\big)!$ ways of arranging them in $R_{w-1, h}$.
Note, that by adding ``ghost squares'' in the unoccupied squares in $R_{w-1,h}$, we may assume that $n-\pi-1=(w-1)h$.
While $V_{l}$ might have far fewer components in actuality, this over-counting is not an issue as we are dealing with first homology and only seeking a generating set.
Since only $(w-1)h$ many squares can fit in this rectangle, none of the squares not named $i_{0}, \dots, i_{\pi}$ can move, so there are $\big((w-1)h-1\big)!(h-\pi -1)$ components in $V_{l}\cap V_{l+1}$.
Letting $l$ range from $0$ to $\pi$, we see that $\bigsqcup_{l=0}^{\pi}V_{l}\cap V_{l+1}$ has $(\pi+1)\big((w-1)h-1\big)!(h-\pi -1)$ components.

Next, note that we need only to be concerned with the intersection of $V_{l}$ and $V_{l+1}$, as if we have any other intersection $V_{l}\cap V_{m}$ where $|m-l|>1$, then every component of $V_{l}\cap V_{m}$, i.e., a generator of $H_{0}(V_{l}\cap V_{m})$, is equivalent to a sum of elements in $\bigcup_{t=l}^{t=m-1} H_{0}(V_{t}\cap V_{t+1})$ via a sum of elements in $\bigcup_{t=l}^{t=m-2} H_{0}(V_{t}\cap V_{t+1}\cap V_{m})$.
Since the $1$-skeleton of the nerve complex of the cover $\{V_{l}\}_{l=-1}^{\pi}$ is connected and $E'^{1}_{0,0}\big[SF_{n}(R_{w, h}; j_{0};i_{0}, \dots, i_{\pi})\big]\cong \Z^{\pi+2}$,
we have that $E'^{2}_{1,0}\big[SF_{n}(R_{w, h}; j_{0};i_{0}, \dots, i_{\pi})\big]$
is a quotient of $\Z^{(\pi+1)\Big(\big((w-1)h-1\big)!(h-\pi-1)-1\Big)}$.
In particular, we get a generating set for this entry.
Namely, for each of the $h-\pi-1$ possible locations of $j_{0}$ in $R_{w, h}$ in a component of $V_{l}\cap V_{l+1}$, fix a base configuration.
There are $\big((w-1)h-1\big)!-1$ other configurations of $V_{l}\cap V_{l+1}$, where $j_{0}$ is in the same location.
Take all $\big((w-1)h-1\big)!-1$ differences of these configurations with the base configuration.
Letting $j_{0}$ range over the $h-\pi-1$ possible locations gives $\Big(\big((w-1) h-1\big)!-1\Big)(h-\pi-1)$ linearly independent classes.
Additionally, take the $h-\pi-2$ differences of base configurations where we only take the differences where the two $j_{0}$s are in adjacent squares.
Together these yield $\big((w-1)h-1\big)!(h-\pi-1)-1$ classes.
Letting $l$ vary from $0$ to $\pi$ gives a generating set for all of $E'^{2}_{1,0}\big[SF_{n}(R_{w, h}; j_{0};i_{0}, \dots, i_{\pi})\big]$.

Having found a generating set for $E'^{2}_{1,0}\big[SF_{n}(R_{w, h}; j_{0};i_{0}, \dots, i_{\pi})\big]$, we wish to find representatives for the images of these elements under the $d'^{2}$ differential. 
There are two such classes we need to consider, namely, the $(\pi+1)(h-\pi-2)$ differences of the base classes $[\beta]$, and the $(\pi+1)\Big(\big((w-1)h-1\big)!(h-\pi-2)\Big)$ other classes that arise from fixing $l$ and a location for $j_{0}$.

To handle the difference of base classes $[\beta_{1}]-[\beta_{2}]$, we may assume that the corresponding configurations differ only by switching location of $j_{0}$ and the square immediately below it, see Figure \ref{twobaseclassesk1}.
It follows from the definition of the differentials of the double complex giving rise to $E'\big[SF_{n}(R_{w, h}; j_{0};i_{0}, \dots, i_{\pi})\big]$ that one can represent the image of such a class in $H_{1}\big(SF_{n}(R_{w, h}; j_{0};i_{0}, \dots, i_{\pi})\big)$ by the cycle depicted in Figure \ref{differenceofbaseclassesk1}.
Applying the $d^{1}$-differential to these classes as elements of  $E^{1}_{0,1}\big[SF_{n}(R_{w,h}; i_{0}, \dots, i_{\pi})\big]$, that is their images under $d'^{2}$, gives the trivial class in $E^{1}_{0,1}\big[SF_{n}(R_{w,h}; i_{0}, \dots, i_{\pi})\big]$.
This follows from the fact that $wh-6\ge n$, so we can isolate the three moving squares in a $3$ by $3$ subrectangle of $R_{w,h}$.
Inside this $R_{3,3}$ the corresponding $1$-cycle is contractible; see Figure \ref{classH1SF43}. 

\begin{figure}[h]
\centering
\captionsetup{width=.8\linewidth}
\includegraphics[width=12cm]{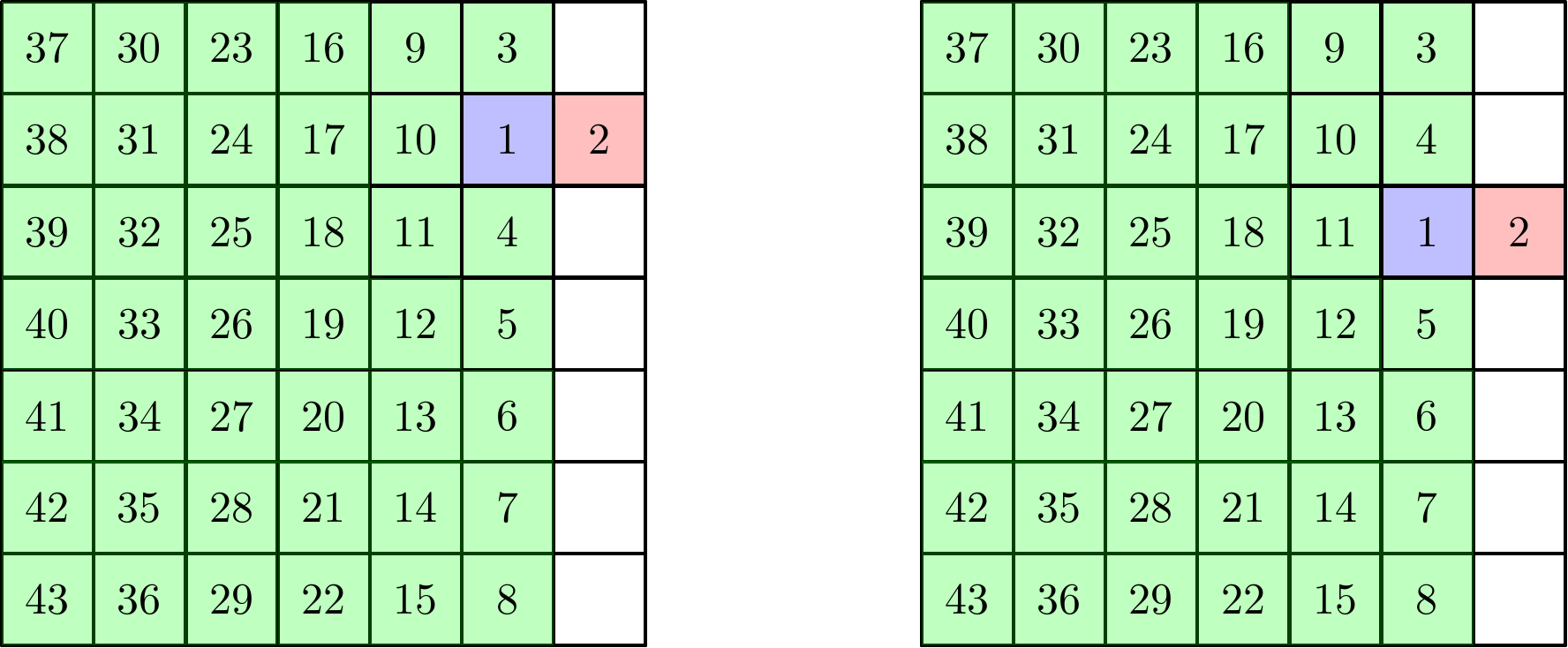}
\caption{Two base classes $[\beta_{1}]$ and $[\beta_{2}]$, whose difference yields a class in $H_{1}\big(SF_{43}(R_{7,7}; 1;2)\big)$.
See Figure \ref{differenceofbaseclassesk1}.
}
\label{twobaseclassesk1}
\end{figure}

\begin{figure}[h]
\centering
\captionsetup{width=.8\linewidth}
\includegraphics[width=12cm]{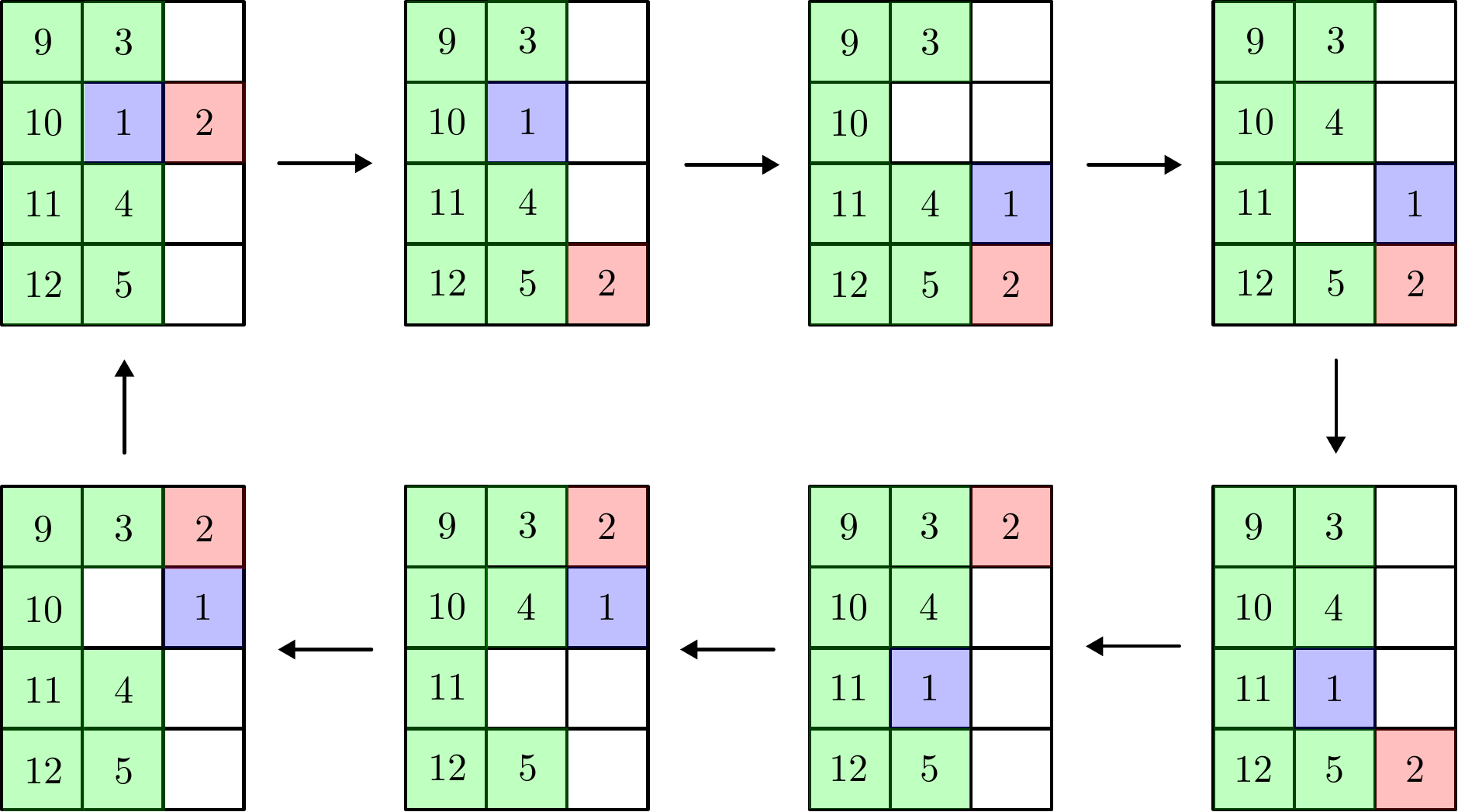}
\caption{The class in $H_{1}\big(SF_{43}(R_{7,7}; 1;2)\big)$ arising from the differences of the classes depicted in Figure \ref{twobaseclassesk1}.
We have zoomed in to the top right corner of $R_{7,7}$.
}
\label{differenceofbaseclassesk1}
\end{figure}

\begin{figure}[h]
\centering
\captionsetup{width=.8\linewidth}
\includegraphics[width=12cm]{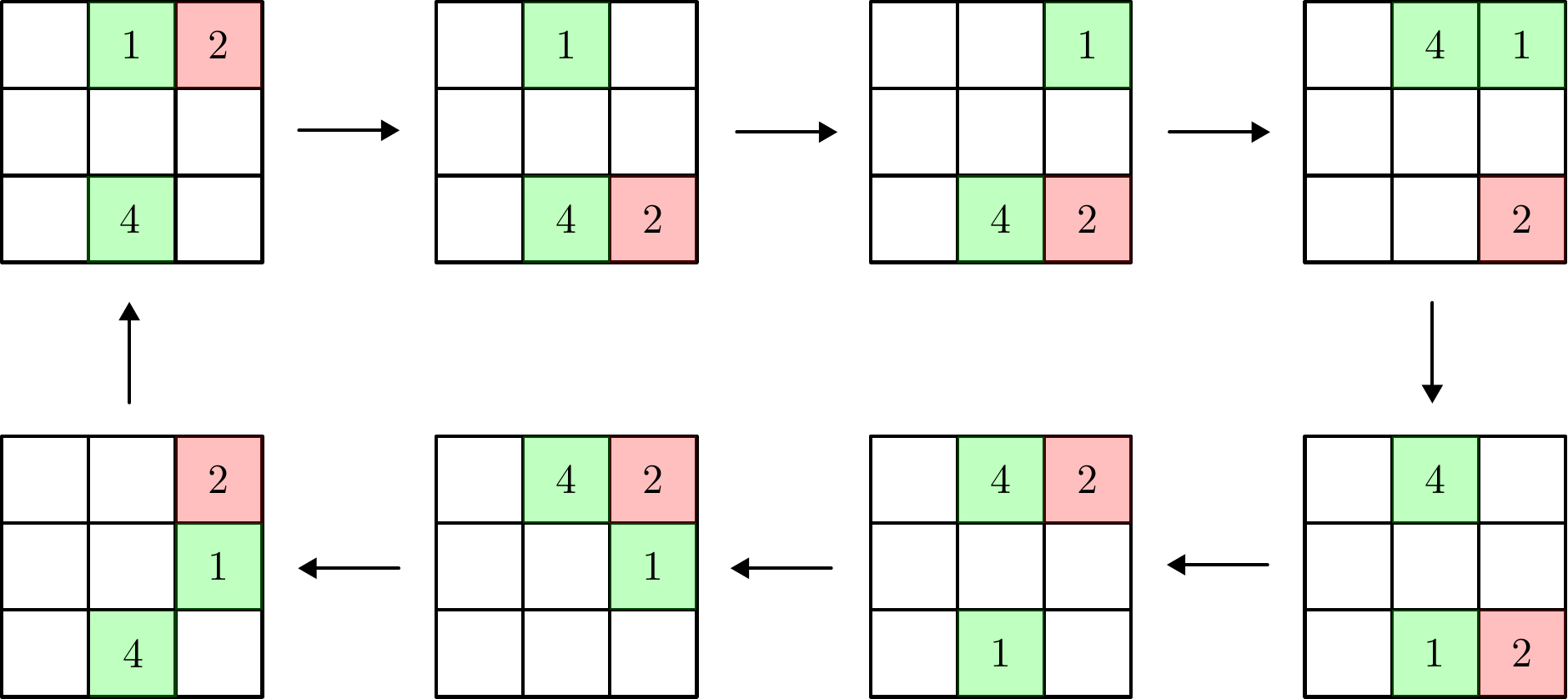}
\caption{The class depicted in Figure \ref{differenceofbaseclassesk1}, after applying the $d^{1}$-differential of the $E\big[SF_{43}(R_{7,7}; 2)\big]$ spectral sequence.
This differential frees square $1$, hence the color change from blue to green, and allows us to clear out a $3\times 3$ square in $R_{7,7}$, which is only occupied by squares $1$, $2$, and $4$.
One can easily see that this class is trivial, as the cycle representing it is contractible.
}
\label{classH1SF43}
\end{figure}

Handling the other classes directly is slightly harder, and instead of working with the given generating set, we work with a generating set corresponding to transpositions.
Recall that the symmetric group $S_{(w-1)h-1}$ is generated by transpositions; in particular, it is generated by transpositions of consecutive elements.
It follows that if we fix $l$ and the location of $j_{0}$, the span of these elements in $E'^{2}_{1,0}\big[SF_{n}(R_{w, h}; j_{0};i_{0}, \dots, i_{\pi})\big]$ is generated by taking all configurations in $H_{0}\big(V_{l}\cap V_{l+1})$ with $j_{0}$ fixed, and taking the difference with all configurations where we have swapped the position of one pair of adjacent squares $a_{1}$ and $a_{2}$, see Figure \ref{twononbaseclassesk1}.
By the definition of the differentials of the augmented Mayer--Vietoris spectral sequence $E'\big[SF_{n}(R_{w, h}; j_{0};i_{0}, \dots, i_{\pi})\big]$ a representative for the image of such a class in $H_{1}\big(SF_{n}(R_{w, h}; j_{0};i_{0}, \dots, i_{\pi})\big)$ under the $d'^{2}$-differential is depicted in Figure \ref{differenceofnonbaseclassesk1}.
Applying the $d^{1}$-differential to these classes as elements of $E^{1}_{0,1}\big[SF_{n}(R_{w,h}; i_{0}, \dots, i_{\pi})\big]$ yields a class in $E^{1}_{0,1}\big[SF_{n}(R_{w,h}; i_{0}, \dots, i_{\pi})\big]$, that is, the product of $W(a_{1}, a_{2})$ with singleton wheels. 
By Proposition \ref{free aj in k=1}, we can see that the corresponding big square configuration space with one $2\times 2$ square and at most $wh-8$ unit squares is path-connected. 
As a result, the image of this class is homologous to the image of a class coming from the $E'^{1}_{0,1}\big[SF_{n}(R_{w,h}; j_{0}; i_{0}, \dots, i_{\pi})\big]$-entry, so it does not contribute anything new to $H_{1}\big(SF_{n}(R_{w,h}; i_{0}, \dots, i_{\pi})\big)$.

\begin{figure}[h]
\centering
\captionsetup{width=.8\linewidth}
\includegraphics[width=12cm]{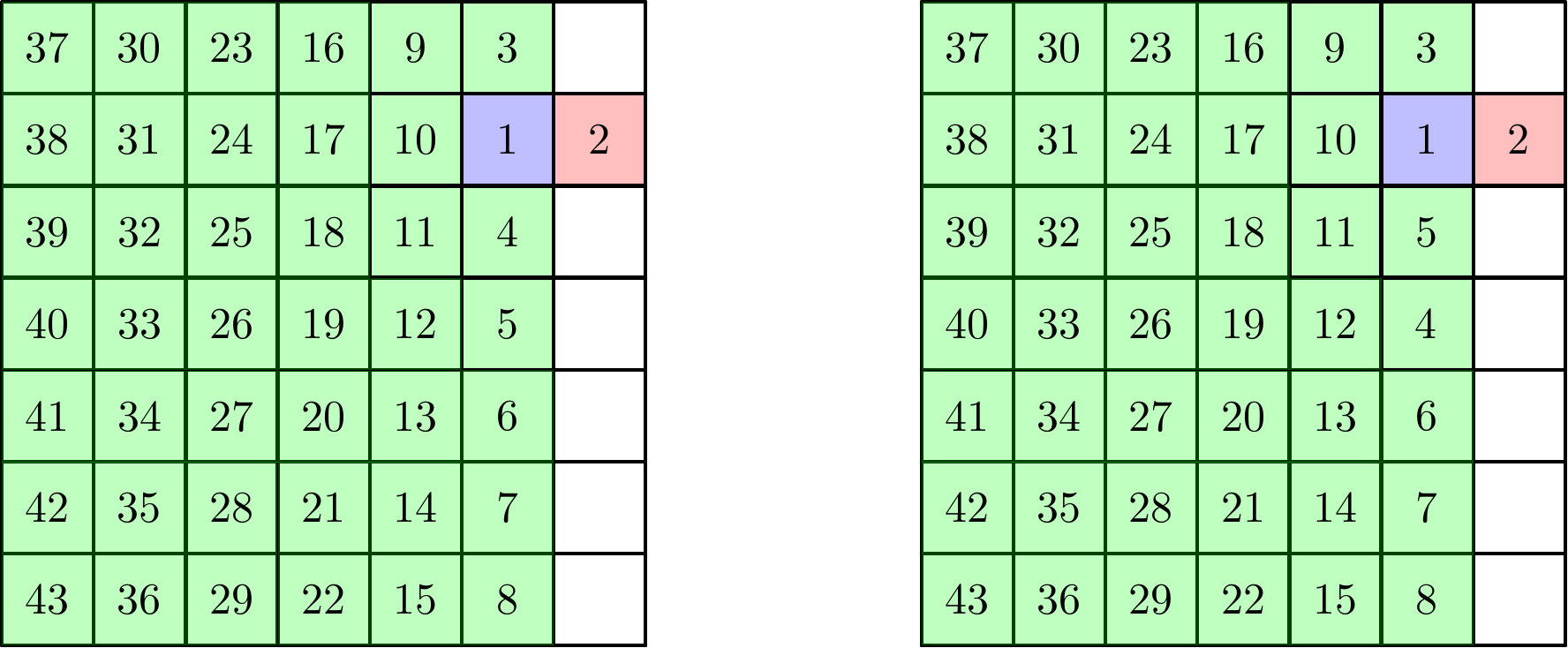}
\caption{Two classes in $E'^{1}_{1,0}\big[SF_{43} (R_{7.7};1;2)\big]$ whose difference yields a class in $H_{1}\big(SF_{43}(R_{7,7}; 1;2)\big)$; see Figure \ref{differenceofnonbaseclassesk1}.
}
\label{twononbaseclassesk1}
\end{figure}

\begin{figure}[h]
\centering
\captionsetup{width=.8\linewidth}
\includegraphics[width=16cm]{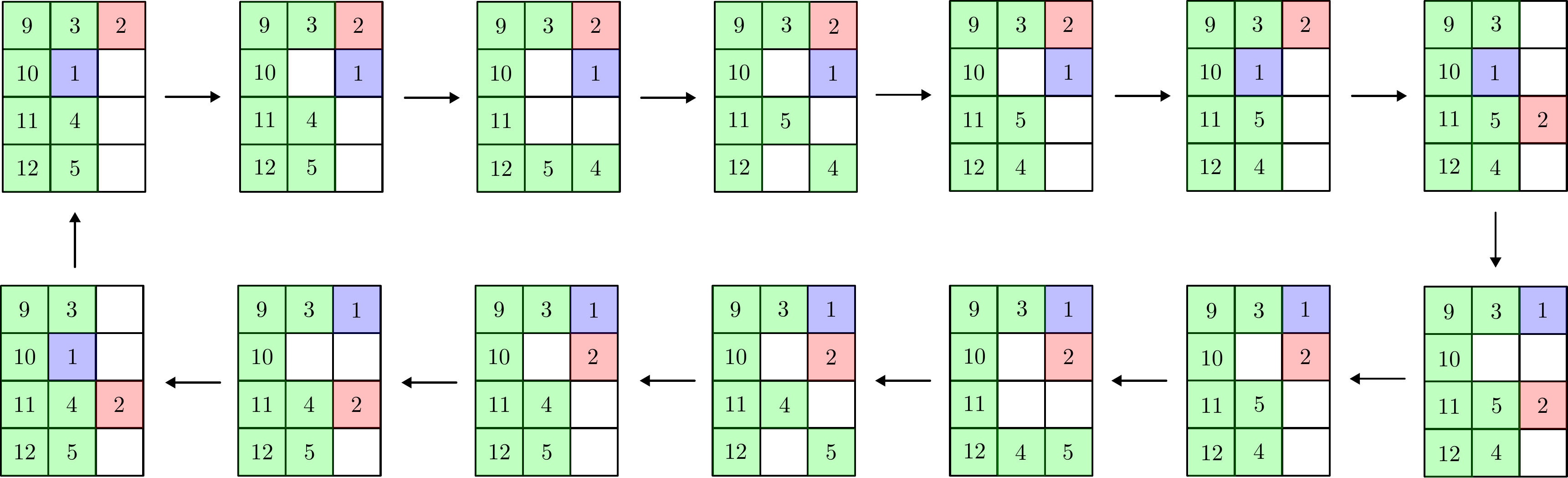}
\caption{The class in $H_{1}\big(SF_{43}(R_{7,7}; 1;2)\big)$ arising from taking the difference of the two classes in $E'^{1}_{1,0}\big[SF_{43}(R_{7,7}; 1;2)\big]$ depicted in  Figure 
\ref{twononbaseclassesk1}.
Note that by freeing square $1$, i.e., acting by the $d^{1}$-differential of the augmented Mayer--Vietoris spectral sequence $E\big[SF_{43}(R_{7,7}; 2)\big]$, the class in $H_{1}\big(SF_{43}(R_{7,7}; 2)\big)$ is the product of the wheel $W(4,5)$ with the wheels  $W(1)$, $W(2)$, $W(3)$, $W(6)$, $\dots$, $W(43)$.
Given any two differences of classes arising from swapping adjacent free squares, we may homotope the resulting cycle to look like this one.
}
\label{differenceofnonbaseclassesk1}
\end{figure}

Thus, the image of the $d^{1}$-differential from $E^{1}_{0,1}\big[SF_{n}(R_{w,h};i_{0}, \dots, i_{\pi})\big]$-entry of the augmented Mayer--Vietoris spectral sequence for $SF_{n}(R_{w,h};i_{0}, \dots, i_{\pi})$ is naturally isomorphic to the image of the $d^{1}$-differential from the $E^{1}_{0,1}$-entry of the augmented Mayer--Vietoris spectral sequence for $F_{n}(\R^{2}; i_{0}, \dots, i_{\pi})$.
\end{proof}

\begin{remark}\label{not really that much extra}
In most cases we constructed far more classes than necessary in $E'^{2}_{1,0}\big[SF_{n}(R_{w,h}; j_{0}; i_{0}, \dots, i_{\pi})\big]$.
Indeed, if $(w-1)h+\pi-1\ge n$, then one can show that all of the generators we constructed represent trivial class, and if $(w-1)h+\pi= n$, there are only the $\pi+1$ difference of base configuration classes. 
Only in the $(w-1)h+\pi+1=n$ case do all of the classes that we constructed actually come into play.
\end{remark}

\begin{proof}[Proof of Proposition \ref{k=1 images of d1 differentials are the same n large}]

Recall that
\[
E^{1}_{0,1}\big[SF_{n}(R_{w, h}; i_{0}, \dots, i_{\pi})\big]=\bigoplus_{j_{0}\neq i_{0}, \dots, i_{\pi}}H_{1}\big(SF_{n}(R_{w, h}; j_{0}; i_{0}, \dots, i_{\pi})\big),
\]
and 
\begin{align*}
E^{1}_{0,1}\big[F_{n}(\R^{2}; i_{0}, \dots, i_{\pi})\big]&=\bigoplus_{j_{0}\neq i_{0}, \dots, i_{\pi}}H_{1}\big(F_{n}(R_{w, h}; j_{0}; i_{0}, \dots, i_{\pi})\big)\\
&\cong\bigoplus_{j_{0}\neq i_{0}, \dots, i_{\pi}}H_{1}\big(F_{n}(R_{w, h}; j_{0}, i_{0}, \dots, i_{\pi})\big),
\end{align*}
where the isomorphism is given by Proposition \ref{in points can move two columns left}, we need to see how $H_{1}\big(SF_{n}(R_{w, h}; j_{0}; i_{0}, \dots, i_{\pi})\big)$ compares to $H_{1}\big(F_{n}(\R^{2}; j_{0}; i_{0}, \dots, i_{\pi})\big)\cong H_{1}\big(F_{n}(\R^{2}; j_{0}, i_{0}, \dots, i_{\pi})\big)$.

Since $n-\pi-1>(w-1)h$, the square $j_{0}$ must be in the right-most column of $R_{w,h}$.
It follows from Proposition \ref{connected single right configuration space} that the square configuration space $SF_{n}(R_{w, h}; j_{0}; i_{0}, \dots, i_{\pi})$ consists of $\pi+2$ different components $SF_{n}(R_{w, h}; i_{0}, \dots, i_{l}, j_{0}, i_{l+1},\dots, i_{\pi})$, where $l$ ranges from $-1$ to $\pi$.
Therefore
\[
H_{1}\big(SF_{n}(R_{w, h}; j_{0}; i_{0}, \dots, i_{\pi})\big)=\bigoplus_{-1\le l\le \pi}H_{1}\big(SF_{n}(R_{w, h}; i_{0}, \dots, i_{l}, j_{0}, i_{l+1},\dots, i_{\pi})\big).
\]
By assumption, the inclusion of $SF_{n}(R_{w, h}; i_{0}, \dots, i_{l}, j_{0}, i_{l+1},\dots, i_{\pi})$ into $F_{n}(\R^{2}; i_{0}, \dots, i_{l}, j_{0}, i_{l+1},\dots, i_{\pi})$ induces an isomorphism 
\[
H_{1}\big(SF_{n}(R_{w, h}; i_{0}, \dots, i_{l}, j_{0}, i_{l+1},\dots, i_{\pi})\big)\cong H_{1}\big(F_{n}(\R^{2}; i_{0}, \dots, i_{l}, j_{0}, i_{l+1},\dots, i_{\pi})\big).
\]
It follows that 
\begin{align*}
E^{1}_{0,1}\big[SF_{n}(R_{w, h}; i_{0}, \dots, i_{\pi})\big]&=\bigoplus_{j_{0}\neq i_{0}, \dots, i_{\pi}}\bigoplus_{-1\le l\le \pi}H_{1}\big(SF_{n}(R_{w, h}; i_{0}, \dots, i_{l}, j_{0}, i_{l+1},\dots, i_{\pi})\big)\\
&\cong \bigoplus_{j_{0}\neq i_{0}, \dots, i_{\pi}}\bigoplus_{-1\le l\le \pi}H_{1}\big(F_{n}(\R^{2}; i_{0}, \dots, i_{l}, j_{0}, i_{l+1},\dots, i_{\pi})\big).
\end{align*}

By Propositions \ref{basis for homology of FnR2} and \ref{forget points on the right} we can get a basis for $H_{1}\big(SF_{n}(R_{w, h}; i_{0}, \dots, i_{l}, j_{0}, i_{l+1},\dots, i_{\pi})\big)$ consisting of products of wheels.
Consider a distinct pair of basis classes $[\alpha_{l}]\in H_{1}\big(SF_{n}(R_{w, h}; i_{0}, \dots, i_{l}, j_{0}, i_{l+1},\dots, i_{\pi})\big)$ and $[\alpha_{m}]\in H_{1}\big(SF_{n}(R_{w, h}; i_{0}, \dots, i_{m}, j_{0}, i_{m+1},\dots, i_{\pi})\big)$ that only differ by how $j_{0}$ is shuffled into $(i_{0}, \dots, i_{\pi})$.
Treating the wheels comprising $[\alpha_{l}]$ and $[\alpha_{m}]$ as big squares, we can place them in $R_{w,h}$ as in the target configuration of Proposition \ref{free aj in k=1}.
One can check that upon freeing $j_{0}$, i.e., applying $d^{1}$, we get two path-connected big square configurations in $SF_{n}(R_{w, h}; i_{0},\dots, i_{\pi})$, so they are homologous.
Thus,
\begin{align*}
d^{1}\Big(E^{1}_{0,1}\big[SF_{n}(R_{w, h}; i_{0}, \dots, i_{\pi})\big]\Big)&=d^{1}\left(\bigoplus_{j_{0}\neq i_{0}, \dots, i_{\pi}}\bigoplus_{-1\le l\le \pi}H_{1}\big(SF_{n}(R_{w, h}; i_{0}, \dots, i_{l}, j_{0}, i_{l+1},\dots, i_{\pi})\big)\right)\\
&=d^{1}\left(\bigoplus_{j_{0}\neq i_{0}, \dots, i_{\pi}}H_{1}\big(SF_{n}(R_{w, h};j_{0},i_{0},\dots, i_{\pi})\big)\right).
\end{align*}
Proposition \ref{free aj in k=1}, tell us that this is isomorphic to the image of $E^{1}_{0,1}\big[F_{n}(\R^{2}; i_{0}, \dots, i_{\pi})\big]$ under its $d^{1}$-differential.
\end{proof}

Having established the first step of our proofs of Propositions \ref{inductive step on p for k=1 and n small} and \ref{inductive step on p for k=1 and n large}, we now complete their proofs by determining how the images of the $E^{2}_{1,0}$-entries behave.

\begin{proof}[Proof of Proposition \ref{inductive step on p for k=1 and n small}]
Letting $j_{0}$ range over $\{1, \dots, n\}-\{i_{0}, \dots, i_{\pi}\}$, we can cover $SF_{n}(R_{w,h}; i_{0}, \dots, i_{\pi})$ and $F_{n}(\R^{2}; i_{0}, \dots, i_{\pi})$ with sets of the form $SF_{n}(R_{w, h}; j_{0}; i_{0}, \dots, i_{\pi})$ and $F_{n}(\R^{2}; j_{0}; i_{0}, \dots, i_{\pi})$, respectively.
Consider the resulting augmented Mayer--Vietoris spectral sequences; note
$E^{1}_{-1,1}\big[SF_{n}(R_{w, h};i_{0},\dots, i_{\pi})\big]=H_{1}\big(SF_{n}(R_{w, h};i_{0},\dots, i_{\pi})\big)$ and $E^{1}_{-1,1}\big[F_{n}(\R^{2};i_{0},\dots, i_{\pi})\big]=H_{1}\big(F_{n}(\R^{2};i_{0},\dots, i_{\pi})\big)$ by Proposition \ref{E0E1EinftyMV} , and that for $r\ge 1$, the only potentially non-zero terms that map into the $E^{r}_{-1,1}$-entries under the $d^{r}$-differentials are $E^{1}_{0,1}$ and $E^{2}_{1,0}$.
As such, it suffices to calculate these terms and their images under the corresponding differentials. 

By Proposition \ref{k=1 images of d1 differentials are the same n small}, we have that 
\[
d^{1}\Big(E^{1}_{0,1}\big[SF_{n}(R_{w, h}; i_{0}, \dots, i_{\pi})\big]\Big)\cong d^{1}\Big(E^{1}_{0,1}\big[F_{n}(\R^{2}; i_{0}, \dots, i_{\pi})\big]\Big),
\]
so it suffices to prove that the images of the $d^{2}$-differentials are isomorphic.

If $2\ge s\ge 0$ and $s+1+\pi+1\le h$, Proposition \ref{connected double right configuration space} tells us that
\begin{align*}
H_{0}\big(SF_{n}(R_{w, h};j_{0}, \dots, j_{s};i_{0},\dots, i_{\pi})\big)&\cong H_{0}\big(SF_{n}(R_{w, h};j_{0}, \dots, j_{s}, i_{0},\dots, i_{\pi})\big)\\ &\cong H_{0}\big(F_{n}(\R^{2};j_{0}, \dots, j_{s}, i_{0},\dots, i_{\pi})\big)\\
&\cong H_{0}\big(F_{n}(\R^{2};j_{0}, \dots, j_{s}; i_{0},\dots, i_{\pi})\big).
\end{align*}

Otherwise, if $2\ge s\ge 0$ and  $s+1+\pi+1> h$, then our assumptions on $n$ tell us that
\begin{align*}
H_{0}\big(SF_{n}(R_{w, h};j_{0}, \dots, j_{s};i_{0},\dots, i_{\pi})\big)&\cong H_{0}\big(SF_{n-\pi-1}(R_{w, h};j_{0}, \dots, j_{s})\big)\\ &\cong H_{0}\big(F_{n-\pi-1}(\R^{2};j_{0}, \dots, j_{s})\big)\\
&\cong H_{0}\big(F_{n}(\R^{2};j_{0}, \dots, j_{s}; i_{0},\dots, i_{\pi})\big).
\end{align*}
As a result, the $E^{1}_{0,0}$, $E^{1}_{1,0}$, and $E^{1}_{2,0}$-entries of the Mayer--Vietoris spectral sequence for $SF_{n}(R_{w,h}; i_{0}, \dots, i_{\pi})$ are isomorphic to the corresponding entries in the $E\big[F_{n}(\R^{2}; i_{0}, \dots, i_{\pi})\big]$.

By Proposition \ref{free aj in k=1}, the $d^{1}$-differentials between these entries can be identified, so the $E^{2}_{1,0}$-entries of these spectral sequences can be identified. 
By Lemma \ref{MV is Arc} and Propositions \ref{forget points on the right} and \ref{E2pageforFnR2}, it follows that 
\begin{align*}
E^{2}_{1,0}\big[SF_{n}(R_{w, h};i_{0},\dots, i_{\pi})\big]&\cong E^{2}_{1,0}\big[F_{n}(\R^{2};i_{0},\dots, i_{\pi})\big]\\
&\cong \text{Ind}^{S_{n-\pi-3}}_{S_{n-\pi-1}\times S_{2}}H^{\mathcal{A}_{1}}_{0}\Big(H_{0}\big(F(\R^{2})\big)\Big)_{n-\pi-3}\boxtimes \mathcal{T}_{2}.
\end{align*}
By Proposition \ref{basis for homology of FnR2}, this is $0$ unless $n=\pi+3$, in which case it is $\mathcal{T}_{2}=[j_{0}, j_{1}]$.
Moreover, if $n=\pi+3$, 
\[
E^{1}_{0,1}\big[SF_{\pi+3}(R_{w,h}; i_{0}, \dots, i_{\pi})\big]\cong E^{1}_{0,1}\big[F_{\pi+3}(\R^{2}; i_{0}, \dots, i_{\pi})\big]=0,
\]
so it suffices to note that 
\[
d^{2}\Big(E^{2}_{1,0}\big[SF_{n}(R_{w, h};i_{0},\dots, i_{\pi})\big]\Big)\cong d^{2}\Big(E^{2}_{1,0}\big[F_{n}(\R^{2};i_{0},\dots, i_{\pi})\big]\Big),
\]
which follows directly from Proposition \ref{move the big squares around}, completing the proof.
\end{proof}

\begin{proof}[Proof of Proposition \ref{inductive step on p for k=1 and n large}]
Letting $j_{0}$ range over $\{1, \dots, n\}-\{i_{0}, \dots, i_{\pi}\}$, we can cover $SF_{n}(R_{w,h}; i_{0}, \dots, i_{\pi})$ and $F_{n}(\R^{2}; i_{0}, \dots, i_{\pi})$ with sets of the form $SF_{n}(R_{w, h}; j_{0}; i_{0}, \dots, i_{\pi})$ and $F_{n}(\R^{2}; j_{0}; i_{0}, \dots, i_{\pi})$, respectively.
Consider the resulting augmented Mayer--Vietoris spectral sequences; note $E^{1}_{-1,1}\big[SF_{n}(R_{w, h};i_{0},\dots, i_{\pi})\big]=H_{1}\big(SF_{n}(R_{w, h};i_{0},\dots, i_{\pi})\big)$ and $E^{1}_{-1,1}\big[F_{n}(\R^{2};i_{0},\dots, i_{\pi})\big]=H_{1}\big(F_{n}(\R^{2};i_{0},\dots, i_{\pi})\big)$ by Proposition \ref{E0E1EinftyMV}, and that for $r\ge 1$, the only potentially non-zero terms that map into the $E^{r}_{-1,1}$-entries under the $d^{r}$-differentials are $E^{1}_{0,1}$ and $E^{2}_{1,0}$.
As such, it suffices to calculate these terms and their images under the corresponding differentials. 

In Proposition \ref{k=1 images of d1 differentials are the same n large}, we have that 
\[
d^{1}\Big(E^{1}_{0,1}\big[SF_{n}(R_{w, h}; i_{0}, \dots, i_{\pi})\big]\Big)\cong d^{1}\Big(E^{1}_{0,1}\big[F_{n}(\R^{2}; i_{0}, \dots, i_{\pi})\big]\Big),
\]
so it suffices to prove that the images of the $d^{2}$-differentials are isomorphic.

Since $(w-1)h<n-\pi-1$, the configuration spaces $SF_{n}(R_{w, h};j_{0};i_{0},\dots, i_{\pi})$, $SF_{n}(R_{w, h};j_{0}, j_{1};i_{0},\dots, i_{\pi})$, and $SF_{n}(R_{w, h};j_{0}, j_{1}, j_{2};i_{0},\dots, i_{\pi})$ are not connected.
This follows from the pigeonhole principle, as for any $s>0$, the $j_{0}, \dots, j_{s}$ cannot be in the second right-most column of $R_{w, h}$ since they must be the right-most of the $n-\pi-1$ squares not labeled $i_{0}, \dots, i_{\pi}$.
It follows that any configuration in $SF_{n}(R_{w, h}; j_{0}, \dots, j_{s}; i_{0}, \dots, i_{\pi})$ can be viewed as a configuration in $SF_{n}(R_{w, h}; \sigma)$, where $\sigma\in \Sigma\big((j_{0}, \dots, j_{s}), (i_{0},\dots, i_{\pi})\big)$ is a shuffle of $(j_{0}, \dots, j_{s})$ into $(i_{0},\dots, i_{\pi})$.
By Proposition \ref{connected single right configuration space}, the space $SF_{n}(R_{w, h}; \sigma)$ is connected, so for $2\ge s\ge 0$, we have that 
\[
H_{0}\big(SF_{n}(R_{w, h}; j_{0}, \dots, j_{s}; i_{0}, \dots, i_{\pi} )\big)\cong \Z^{\binom{s+\pi+2}{\pi+1}}.
\]
It follows that, for $0\le s\le 2$
\[
E^{1}_{s,0}\big[SF_{n}(R_{w, h}; i_{0}, \dots, i_{\pi})\big]\cong \Z^{(n-\pi-1)\cdots(n-\pi-1-s)\binom{s+\pi+2}{\pi+1}}.
\]
These entries correspond to the $2$-skeleton of the nerve complex $K$ of the cover of $SF_{n}(R_{w,h};i_{0}, \dots, i_{\pi})$ by the $l+2$ sets of the form $SF_{n}(R_{w,h};i_{0}, \dots, i_{l}, j_{0}, i_{l+1},\dots, i_{\pi})$, and that the boundary operator of this complex is the $d^{1}$-differential of our augmented Mayer--Vietoris spectral sequence.
We will show that $K$ has trivial first homology, proving that the $E^{2}_{1,0}\big[SF_{n}(R_{w, h}; i_{0}, \dots, i_{\pi})\big]=0$---Propositions \ref{forget points on the right} and \ref{E2pageforFnR2} prove that the same is true for $E^{2}_{1,0}\big[F_{n}(\R^{2}; i_{0}, \dots, i_{\pi})\big]$.
To see that $H_{1}(K)=0$, we will prove that all cellular loops of length $2$ or $3$ in $K$ are contractible and that all loops of length $4$ or more can be shortened.
Since $K$ has no loops of length $1$, this shows that all loops in $K$ are contractible.

First, note that any loop $\gamma_{2}$ of length $2$ in $K$ connects a pair of vertices that are indexed by a pair of spaces of the form $SF_{n}(R_{w, h}; i_{0}, \dots, i_{a}, j_{0}, i_{a+1}, \dots, i_{\pi})$ and $SF_{n}(R_{w, h}; i_{0}, \dots, i_{a}, j_{1}, i_{a+1}, \dots, i_{\pi}),$ where $j_{0}\neq j_{1}$.
Between such a pair of vertices, there are precisely two edges indexed by $SF_{n}(R_{w, h}; i_{0}, \dots, i_{a}, j_{0}, j_{1}, i_{a+1}, \dots, i_{\pi})$ and $SF_{n}(R_{w, h}; i_{0}, \dots, i_{a}, j_{1}, j_{0}, i_{a+1}, \dots, i_{\pi})$.
It follows that
\[
\gamma_{2}=SF_{n}(R_{w, h}; i_{0}, \dots, i_{a}, j_{0}, j_{1}, i_{a+1}, \dots, i_{\pi})+SF_{n}(R_{w, h}; i_{0}, \dots, i_{a}, j_{1}, j_{0}, i_{a+1}, \dots, i_{\pi}).
\]
To see this, note that there are no edges between vertices of the form $SF_{n}(R_{w, h}; i_{0}, \dots, i_{a}, j_{0}, i_{a+1}, \dots, i_{\pi})$ and $SF_{n}(R_{w, h}; i_{0}, \dots, i_{b}, j_{0}, i_{b+1}, \dots, i_{\pi})$ for $a\neq b$, and between vertices $SF_{n}(R_{w, h}; i_{0}, \dots, i_{a}, j_{0}, i_{a+1}, \dots, i_{\pi})$ and $SF_{n}(R_{w, h}; i_{0}, \dots, i_{b}, j_{1}, i_{b+1}, \dots, i_{\pi})$ where $j_{0}\neq j_{1}$ and $a<b$ there is only one edge, which is indexed by $SF_{n}(R_{w, h}; i_{0}, \dots, i_{a}, j_{0}, i_{a+1}, \dots,i_{b}, j_{1}, i_{b+1}, \dots,  i_{\pi})$.
As such, it cannot be part of a loop of length $2$.

Next, we check that $\gamma_{2}$ is contractible.
Since $\min\{w-1, h\}\ge 3$ and $n-\pi-1\ge (w-1)h$, there are more than 9 squares not labeled $i_{0}, \dots, i_{\pi}$. 
Letting $j_{2}$ be any one of the squares not labeled $j_{0}$ or $j_{1}$, we have that  $\gamma_{2}$ is the boundary of the $2$-chain
\[
SF_{n}(R_{w, h}; i_{0}, \dots, i_{a}, j_{0}, j_{1}, j_{2}, i_{a+1}, \dots, i_{\pi})+SF_{n}(R_{w, h}; i_{0}, \dots, i_{a}, j_{1}, j_{0}, j_{2}, i_{a+1}, \dots, i_{\pi}),
\]
making it contractible.

Next, we show that all loops $\gamma_{3}$ of length $3$ are contractible. 
The above remark proves that such loops connect three vertices of the form $SF_{n}(R_{w, h}; i_{0}, \dots, i_{a}, j_{0}, i_{a+1}, \dots, i_{\pi})$, $SF_{n}(R_{w, h}; i_{0}, \dots, i_{b}, j_{1}, i_{b+1}, \dots, i_{\pi})$, and $SF_{n}(R_{w, h}; i_{0}, \dots, i_{c}, j_{2}, i_{c+1}, \dots, i_{\pi})$, where $j_{0}$, $j_{1}$, and $j_{2}$ are all distinct.
Note, there need not be any relation between $a$, $b$, and $c$, so without loss of generality we assume that $a\le b\le c$, and if any of $a$, $b$, and $c$ are equal, the length $2$ case shows that we can replace an edge with another.
It follows that we can write $\gamma_{3}$ in the form
\begin{multline*}
\gamma_{3}=SF_{n}(R_{w, h}; i_{0}, \dots, i_{a}, j_{0}, i_{a+1}, \dots, i_{b}, j_{1}, i_{b+1},\dots, i_{\pi})\\
+SF_{n}(R_{w, h}; i_{0}, \dots, i_{b}, j_{1}, i_{b+1}, \dots, i_{c}, j_{2}, i_{c+1},\dots, i_{\pi})\\
-SF_{n}(R_{w, h}; i_{0}, \dots, i_{a}, j_{0}, i_{a+1}, \dots, i_{c}, j_{2}, i_{c+1},\dots, i_{\pi}).
\end{multline*}
This loop is the boundary of the $2$-cell $SF_{n}(R_{w, h}; i_{0},\dots, i_{a}, j_{0}, i_{a+1},\dots, i_{b}, j_{1}, i_{b+1}, \dots, i_{c}, j_{2}, i_{c+1}, \dots, i_{\pi}),$ so it is contractible.

Finally, we wish to show that any loop $\gamma_{l}$ of length $l\ge 4$ can be shortened to a loop of length at most $l-1$, and is therefore, contractible. 
Let $\delta_{3}\subset \gamma_{l}$ be a path of length $3$ in $\gamma_{l}$.
We will show that there is a path $\delta_{2}$ of length $2$, such that $\delta_{3}\delta_{2}^{-1}$ is contractible in $K$.
The $\gamma_{2}$ and $\gamma_{3}$ cases allow us to assume that $\delta_{3}$ is of the form
\begin{multline*}
 \delta_{3}=SF_{n}(R_{w, h}; i_{0}, \dots, i_{a}, j_{0}, i_{a+1}, \dots, i_{b}, j_{1}, i_{b+1},\dots, i_{\pi})\\
 +SF_{n}(R_{w, h}; i_{0}, \dots, i_{b}, j_{1}, i_{b+1}, \dots, i_{c}, j_{0}, i_{c+1},\dots, i_{\pi})\\
 +SF_{n}(R_{w, h}; i_{0}, \dots, i_{c}, j_{0}, i_{c+1}, \dots, i_{d}, j_{1}, i_{d+1},\dots, i_{\pi}).
\end{multline*}
Note, we might have that $c<b$ or $d<c$, but $d\neq a$.
Since $n-\pi-1\ge (w-1)h\ge 9$, there are more than 9 squares not labeled $i_{0}, \dots, i_{\pi}$, so there is some square $j_{2}$ not labeled $j_{0}, j_{1}$.
Setting 
\[
\delta_{2}=SF_{n}(R_{4, h}; j_{2}, i_{0}, \dots, i_{a}, j_{0}, i_{a+1}, \dots, i_{\pi})+SF_{n}(R_{4, h}; j_{2}, i_{0}, \dots, i_{d}, j_{1}, i_{d+1}, \dots, i_{\pi}),
\]
we see that $\delta_{3}\delta_{2}^{-1}$ is the boundary of the $2$-chain
\begin{multline*}
 SF_{n}(R_{w, h}; j_{2}, i_{0}, \dots, i_{a}, j_{0}, i_{a+1}, \dots, i_{b}, j_{1}, i_{b+1},\dots, i_{\pi})\\
 +SF_{n}(R_{w, h}; j_{2}, i_{0}, \dots, i_{b}, j_{1}, i_{b+1}, \dots, i_{c}, j_{0}, i_{c+1},\dots, i_{\pi})\\
 +SF_{n}(R_{w, h}; j_{2}, i_{0}, \dots, i_{c}, j_{0}, i_{c+1}, \dots, i_{d}, j_{1}, i_{d+1},\dots, i_{\pi}).
\end{multline*}
It follows that $\gamma_{l}$ can be shortened, hence contracted.
Thus $H_{1}(K)\cong E^{2}_{1,0}\big[SF_{n}(R_{w, h}; i_{0}, \dots, i_{\pi})\big]=0$, so the image of this entry in $E^{2}_{1,0}\big[SF_{n}(R_{w, h}; i_{0}, \dots, i_{\pi})\big]$ must be trivial, just like the image of $E^{2}_{1,0}\big[F_{n}(\R^{2}; i_{0}, \dots, i_{\pi})\big]$ in $E^{2}_{1,0}\big[F_{n}(\R^{2}; i_{0}, \dots, i_{\pi})\big]$, completing the proof.
\end{proof}

\bibliographystyle{amsalpha}
\bibliography{StabilityinSpaceDirection}

\bigskip

Department of Mathematics, Cinvestav, Mexico City, Mexico.

\textit{Email address:} jesus.glz-espino@cinvestav.mx and jesus@math.cinvestav.mx

\medskip

Department of Mathematics, The Ohio State University, Columbus, OH 43210, USA.

\textit{Email address:} mkahle@math.osu.edu 

\medskip
Department of Mathematics, The University of Chicago, Chicago, IL 60605, USA.

\textit{Email address:} wawrykow@uchicago.edu

\end{document}